\newif\iffull
\newtheorem{thm}{Theorem}[section]
\newtheorem{lemma}[thm]{Lemma}
\newtheorem{prop}[thm]{Proposition}
\newtheorem{cor}[thm]{Corollary}
\theoremstyle{definition}
\newtheorem{df}[thm]{Definition}
\newtheorem{ex}[thm]{Example}
\newtheorem{conv}[thm]{Convention}
\newtheorem{rmk}[thm]{Remark}
\newtheorem{claim}{Claim}
 \newenvironment{claimproof}{\begin{proof}}{\end{proof}}
\renewcommand{\r}{\mathbb{R}}
\newcommand{\Z}{\mathbb{Z}}
\newcommand{\setmid}{ :}
\newcommand{\curly}[1]{\mathcal{#1}}
\newcommand{\B}{\curly{B}}
\newcommand{\M}{\mathfrak{M}}
\newcommand{\N}{\mathfrak{N}}
\newcommand{\n}{\mathbb{N}}
\newcommand{\cU}{\curly{U}}
\newcommand{\cN}{\mathcal{N}}
\newcommand{\cM}{\mathcal{M}}
\newcommand{\la}{\curly{L}}
\renewcommand{\to}{\rightarrow}
\def \T{\operatorname{\mathcal{T}}}
\def \q {{\mathbb Q}}
\def \<{\langle}
\def \>{\rangle}
\def \*Z {{{^*}\Z}}
\def \((  {(\!(}
\def \)) {)\!)}
\def \tp{\operatorname{tp}}
\numberwithin{equation}{section}
\def \U{\mathcal{U}}
\def \AML{\operatorname{AML}}
\def \dfbl{\operatorname{Def}}
\def \rk{\operatorname{rk}}
\def \G{\mathfrak{G}}
\def\dotminussym#1#2{%
  \setbox0=\hbox{$\m@th#1-$}%
  \kern.5\wd0%
  \hbox to 0pt{\hss\hbox{$\m@th#1-$}\hss}%
  \raise.6\ht0\hbox to 0pt{\hss$\m@th#1.$\hss}%
  \kern.5\wd0}
\subjclass[2010]{Primary 03B48; Secondary 03C20, 60A10}
\keywords{first-order logic, ultraproducts, measure theory}
\begin{document}

\title{An Approximate Logic for Measures}
\author{Isaac Goldbring and Henry Towsner}
\thanks{Goldbring's work was partially supported by NSF grant DMS-1007144 and Towsner's work was partially supported by NSF grant DMS-1001528.}
\address {University of Illinois at Chicago, Department of Mathematics, Statistics, and Computer Science, Science and Engineering Offices (M/C 249), 851 S. Morgan St., Chicago, IL 60607-7045, USA}
\email{isaac@math.uic.edu}
\urladdr{www.math.uic.edu/~isaac}

\address {University of Pennsylvania, Department of Mathematics, 209 S. 33rd Street, Philadelphia, PA 19104-6395}
\email{htowsner@math.upenn.edu}
\urladdr{www.sas.upenn.edu/~htowsner}

\begin{abstract}
We present a logical framework for formalizing connections between finitary combinatorics and measure theory or ergodic theory that have appeared in various places throughout the literature.  We develop the basic syntax and semantics of this logic and give applications, showing that the method can express the classic Furstenberg correspondence and to short proofs of the Szemer\'edi Regularity Lemma and the hypergraph removal lemma.  We also derive some connections between the model-theoretic notion of stability and the Gowers uniformity norms from combinatorics.
\end{abstract}

\maketitle

\section{Introduction}

Since the 1970's, diagonalization arguments (and their generalization to ultrafilters) have been used to connect results in finitary combinatorics with results in measure theory or ergodic theory \cite{furstenberg77}.  Although it has long been known that this connection can be described with first-order logic, a recent striking paper by Hrushovski \cite{hrushovski} demonstrated that modern developments in model theory can be used to give new substantive results in this area as well.

Papers on various aspects of this interaction have been published from a variety of perspectives, with almost as wide a variety of terminology \cite{tao07,elek07,elek08,austin:MR2747063,towsner:MR2529651,furstenberg:MR1039473,MR891243}.  Our goal in this paper is to present an assortment of these techniques in a common framework.  We hope this will make the entire area more accessible.

We are typically concerned with the situation where we wish to prove a statement 1) about sufficiently large finite structures which 2) concerns the finite analogs of measure-theoretic notions such as density or integrals.  The classic example of such a statement is Szemer\'edi's Theorem (other examples can be found later in the paper and in the references):
\begin{thm}[Szemer\'edi's Theorem]
For each $\epsilon>0$ and each $k\in \n^{>0}$, there is an $N$ such that whenever $n>N$ and $A\subseteq [1,n]:=\{1,\ldots,n\}$ with $\frac{|A|}{n}\geq\epsilon$, there exist $a,d$ such that $a,a+d,a+2d,\ldots,a+(k-1)d\in A$.
\end{thm}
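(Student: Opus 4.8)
The plan is to prove the contrapositive through a correspondence argument of exactly the kind this framework is built to formalize: transfer a putative family of finite counterexamples to a single infinite measure-preserving system, and there invoke the ergodic-theoretic multiple recurrence phenomenon. So suppose the conclusion fails for some fixed $\epsilon>0$ and $k\in\n^{>0}$. Then for every $N$ there is an $n=n_N>N$ and a set $A_N\subseteq[1,n_N]$ with $|A_N|/n_N\geq\epsilon$ that contains no nontrivial $k$-term arithmetic progression. First I would organize these finite structures $([1,n_N],A_N)$ --- each equipped with the normalized counting measure and the shift $x\mapsto x+1$ --- as a single limit object, either via a nonprincipal ultrafilter or, in the language of this paper, by passing to the semantics of the approximate logic.

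Second, I would read off from this limit a genuine measure-preserving system. The normalized counting measures converge to a probability measure $\mu$ on the limit space $X$; the shift passes to a measure-preserving transformation $T$; and the sets $A_N$ converge to a measurable set $A$ with $\mu(A)\geq\epsilon$. Measure preservation is where one must be slightly careful, since the integer shift only approximately preserves counting measure on $[1,n]$, but the boundary effects are negligible in the limit. Crucially, the hypothesis that each $A_N$ is free of $k$-term progressions translates to the assertion that
\[
\mu\bigl(A\cap T^{-d}A\cap T^{-2d}A\cap\cdots\cap T^{-(k-1)d}A\bigr)=0
\]
for every $d\neq 0$.

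Third, I would apply the Furstenberg multiple recurrence theorem to $(X,\mu,T)$: since $\mu(A)\geq\epsilon>0$, there exists some $d>0$ with
\[
\mu\bigl(A\cap T^{-d}A\cap T^{-2d}A\cap\cdots\cap T^{-(k-1)d}A\bigr)>0,
\]
directly contradicting the previous display. This positivity is precisely a nonzero density of $k$-term progressions, which when pulled back through the correspondence forces an actual progression $a,a+d,\dots,a+(k-1)d$ to appear in $A_N$ for cofinally many $N$, completing the contradiction.

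The main obstacle is plainly the multiple recurrence theorem itself, which is no easier than Szemer\'edi's Theorem --- indeed the two are equivalent, so this argument is really a reduction rather than an independent proof. The contribution of the logical framework is not to evade that depth but to make the two transfers rigorous and routine: the construction of $(X,\mu,T,A)$ from the finite data, and the retranslation of a positive-measure statement about $T$-orbits back into the existence of a concrete progression in a large finite $A_N$. The subtle points to get right are the measurability of the limit set $A$ and the approximate measure-preservation of the shift, and I expect both to be handled by the boundary-negligibility estimates that the semantics of the logic is designed to supply.
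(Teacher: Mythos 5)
Your proposal is correct and is essentially the paper's own approach: the introduction sketches exactly this ultraproduct transfer, and Section \ref{sec:correspondence} makes it rigorous via the Furstenberg correspondence (there the finite models use the \emph{cyclic} shift on $[n,m]$, which makes the transformation an honest measure-preserving bijection and disposes of the boundary issue you flag), with Furstenberg's multiple recurrence theorem supplying the deep external ingredient, just as in your reduction. The only thing worth adding is that the paper also contains a second, more self-contained route to Szemer\'edi's Theorem, via the hypergraph removal lemma of Section \ref{sec:hypergraph} together with the standard hypergraph encoding given in the closing remark of that section.
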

Suppose this statement were false, and therefore that for some particular $\epsilon$, some $k$, and infinitely many $n$, there were sets $A_n\subseteq[1,n]$ serving as counter-examples to the theorem. For each $n$, we could view the set $[1,n]$, together with the subset $A_n$ and the function $+\mod n$, as a \emph{structure} $\M_n$ of first-order logic\iffull\footnote{To keep this paper relatively self-contained, Appendix \ref{appendix:FOL} gives a brief introduction to the parts of first-order logic used in this paper.}\fi.  Using the ultraproduct construction\iffull\footnote{See Appendix \ref{appendix:ultraproduct}{}.}\fi, these finite models can be assembled into a single \emph{infinite} model $\mathfrak{M}$ satisfying the \L o\'s Theorem, which, phrased informally, states:
\begin{thm} 
A first-order sentence is true in $\mathfrak{M}$ if and only the sentence is true in $\mathfrak{M}_n$ for ``almost every'' $n$.
\end{thm}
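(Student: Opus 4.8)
The plan is to prove the statement by induction on the complexity of first-order formulas; the essential point, familiar from the standard treatment of ultraproducts, is that to make the induction go through one must strengthen it from sentences to arbitrary formulas interpreted by elements of the ultraproduct. Fix the ultrafilter $\mathcal{U}$ on $\n$ witnessing ``almost every'' (non-principal, so that membership in $\mathcal{U}$ refines ``all but finitely many''), and recall that an element of $\M = \prod_n \M_n / \mathcal{U}$ is an equivalence class $[f]$ of sequences with $f(n) \in M_n$, where $f \sim g$ iff $\{n : f(n) = g(n)\} \in \mathcal{U}$. The statement I would actually prove, of which the theorem is the case $k = 0$, is that for every formula $\varphi(x_1, \dots, x_k)$ and all $[f_1], \dots, [f_k] \in \M$,
\[ \M \models \varphi([f_1], \dots, [f_k]) \iff \{n : \M_n \models \varphi(f_1(n), \dots, f_k(n))\} \in \mathcal{U}. \]

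First I would dispatch the preliminaries. Verifying that $\M$ is even well-defined already consumes the filter axioms: the interpretations of the function, relation, and constant symbols on equivalence classes are independent of the chosen representatives precisely because $\mathcal{U}$ is closed under finite intersections and supersets. I would then record a lemma for terms, proved by a routine induction on term structure: for every term $t(x_1, \dots, x_k)$ the value $t^{\M}([f_1], \dots, [f_k])$ is the class of the sequence $n \mapsto t^{\M_n}(f_1(n), \dots, f_k(n))$.

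The main induction then proceeds through the logical connectives and quantifiers. The atomic case is immediate from the term lemma together with the definition of the interpreted relations. For conjunction I would use that $A \cap B \in \mathcal{U}$ iff both $A, B \in \mathcal{U}$, which is again a filter property, and for negation I would use the defining feature of an ultrafilter, that $A \in \mathcal{U}$ iff $\n \setminus A \notin \mathcal{U}$; the remaining connectives follow.

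The step I expect to be the crux is the existential quantifier $\exists x\, \varphi(x, \bar y)$, and in particular its right-to-left direction. Left to right is straightforward: a witness in $\M$ is a class $[g]$, and the induction hypothesis applied to $\varphi$ with the additional parameter $[g]$ pushes the witness down to $\mathcal{U}$-almost every $\M_n$. Conversely, suppose $A := \{n : \M_n \models \exists x\, \varphi\} \in \mathcal{U}$. For each $n \in A$ I would \emph{choose} an element $a_n \in M_n$ with $\M_n \models \varphi(a_n, f_1(n), \dots, f_k(n))$ --- this is where the Axiom of Choice enters --- and assemble the $a_n$ into a sequence $g$, setting $g(n) = a_n$ on $A$ and arbitrarily off $A$. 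The set on which $\M_n \models \varphi(g(n), f_1(n), \dots, f_k(n))$ holds then contains $A$, hence lies in $\mathcal{U}$, so the induction hypothesis for $\varphi$ yields $\M \models \varphi([g], [f_1], \dots, [f_k])$ and thus $\M \models \exists x\, \varphi$. The universal quantifier reduces to this via $\forall = \neg\,\exists\,\neg$. The real subtlety, as noted, is structural rather than computational: one must carry the inductive hypothesis for formulas-with-parameters throughout, since the sentence-only version simply cannot be closed under the quantifier steps.
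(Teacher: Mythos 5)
Your proof is correct and is essentially the same argument the paper relies on: the paper states this result informally and, in its appendix, sketches exactly this proof (induction on complexity of formulas, atomic case from the definition of the ultraproduct structure, the ultrafilter property being essential for the negation clause), deferring full details to Chang--Keisler. The points you highlight --- strengthening the induction from sentences to formulas with parameters from the ultraproduct, and invoking the Axiom of Choice to assemble witnesses in the existential step --- are precisely the details that the standard complete proof supplies.
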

The existence of $a$ and $d$ satisfying the theorem can be expressed by a first-order sentence, so if we could show that this sentence was true in the infinite structure $\M$, then the same sentence would be true in many of the finite structures $\M_n$, which would contradict our assumption.

In each of the finite models, we have a natural measure on $[1,n]$, namely the (normalized) counting measure $\mu_n$, and the assumption that $\frac{|A_n|}{n}\geq\epsilon$ is simply the statement that $\mu_n(A_n)\geq\epsilon$.  We would like to produce a measure on $\M$ which satisfies some analog of the \L o\'s theorem.

One way to accomplish this is to expand the language of our models with many new predicates for measures, one for each definable set $S$ and rational $\delta$, and specify that the formula $m_{S,\delta}$ holds in $\M_n$ exactly if $\mu_n(S)<\delta$.  Then, for instance, the failure of $m_{A,\epsilon}$ in the finite structures $\M_n$ implies the failure of the same formula in $\M$.  With a bit of work, these new predicates are enough to ensure that we can construct a genuine measure, the Loeb measure (see \cite{goldblatt}), on the structure $\M$ whose properties are related to those of the finite structures via the \L o\'s theorem.  The extension of a language of first-order logic by these predicates has been used in \cite{hrushovski}, but the details of the construction have not appeared in print.

The applications of these predicates for measures are sufficiently compelling to justify the study of this extension of first-order logic in its own right.  It would be quite interesting to see certain results about first-order logic proven for this extension; for example, some of the results in \cite{pillay:MR1650667} can be viewed as studying the addition of a ``random'' predicate $P$ to a language of first-order logic, a notion which has natural application in the presence of measures if the result can be extended to include $\AML$.  

In this paper we define \emph{approximate measure logic}, or \emph{AML}, to be an extension of first-order logic in which these new predicates are always present.  This serves two purposes.  First, to give the full details of extending first-order logic by measure predicates in a fairly general way.  Second, we hope that providing a precise formulation will spur the investigation of the distinctive properties of AML.

In general, a logic which intends to talk about measure has to make some kind of compromise, since there is a tension between talking about measures in a natural way and talking about the usual first-order notions (which, for instance, impose the presence of projections of sets).  AML is not the first attempt at such a combination (see \cite{keisler:MR819545,MR2505436}), but differs because it stays much closer to conventional first-order logic.  The main oddity, relative to what a naive attempt at a logic for measures might look like, is that the description of measures in the logic is \emph{approximate}: there can be a slight mismatch between which sentences are true in a structure and what is true about the actual measure.  For instance, it is possible to have a structure $\M$ together with a measure $\mu$ so that $\M$ satisfies a sentence saying the measure of a set $B$ is strictly less than $\epsilon$ while in fact the measure of $B$ is precisely $\epsilon$.  This is necessary to accommodate the ultraproduct construction.

The next three sections lay out the basic definitions of AML and describe some properties of structures of AML, culminating in the construction of ultraproducts which have well-behaved measures.  In Section \ref{sec:correspondence}, we present two examples of simple applications, giving a proof of the Furstenberg correspondence between finite sets and dynamical systems, and a proof of the Szemer\'edi Regularity Lemma.

In Section \ref{sec:first_order_translation} we prove a Downward L\"owenheim-Skolem theorem for $\AML$.  In Section \ref{sec:applications}, we illustrate several useful model-theoretic techniques in AML and apply these to show a connection between definability, the model-theoretic notion of stability, and combinatorial quantities known as the Gowers uniformity norms.  We intend this section to be a survey of methods that have proven useful.  These methods are either more fully developed in the references, or await a fuller development.

\iffull Finally, in Section \ref{sec:completeness}, we present axioms for AML and show that these axioms are sound and complete.\fi

In this article, $m$ and $n$ range over $\n:=\{0,1,2,\ldots\}$.  For a relation $R\subseteq X\times Y$, $x\in X$, and $y\in Y$, we set $R^x:=\{y\in Y \setmid (x,y)\in R\}$ and $R_y:=\{x\in X\setmid (x,y)\in R\}$.

The authors thank Ehud Hrushovski, Lou van den Dries, and the members of the UCLA reading seminar on approximate groups:  Matthias Aschenbrenner, Greg Hjorth, Terence Tao, and Anush Tserunyan,  for helpful conversations on the topic of this paper.

\section{Syntax and Semantics}
\label{sec:syntax_semantics}

A \emph{signature} for \emph{approximate measure logic} $(\AML)$ is nothing more than a first-order signature.  Suppose that $\la$ is a first-order signature.  One forms $\AML$ $\la$-terms as usual, by applying function symbols to constant symbols and variables.  We let $\T$ denote the set of $\la$-terms.

\begin{df}
We define the \emph{$\la$-formulae} of $\AML$ as follows:
\begin{itemize}
\item Any first-order atomic $\la$-formula is an $\AML$ $\la$-formula.
\item The $\AML$ $\la$-formulae are closed under the use of the usual connectives $\neg$ and $\wedge$.
\item If $\varphi$ is an $\AML$ $\la$-formula and $x$ is a variable, then $\exists x\varphi$ and $\forall x\varphi$ are $\AML$ $\la$-formulae.
\item If $\vec{x}=(x_1,\ldots,x_n)$ is a sequence of distinct variables, $q$ is a non-negative rational, $\bowtie \in\{<,\leq\}$, and $\varphi$ is an $\la$-formulae, then $m_{\vec{x}}\bowtie q.\varphi$ is an $\la$-formula.  
\end{itemize}
\end{df}

The last constructor, which we will refer to as the \emph{measure constructor}, in the above definition behaves like a quantifier in the sense that any of the variables $x_1,\ldots,x_n$ appearing free in $\varphi$ become bound in the resulting formula.

From now on, $\bowtie$ will always denote $<$ or $\leq$.
\begin{conv}
It will become convenient to introduce the following notations:
\begin{itemize}
\item $m_{\vec{x}}\geq q.\varphi$ is an abbreviation for $\neg m_{\vec{x}}<q.\varphi$
\item $m_{\vec{x}}>q.\varphi$ is an abbreviation for $\neg m_{\vec{x}}\leq q.\varphi$
\end{itemize}
\end{conv}

\begin{rmk} 
It is not necessary that $\AML$ contain precisely one measure for each tuple length.  A more general definition could allow multiple measures at each arity.  In this case, one would also want to track which measures on pairs, for instance, correspond to the products of which measures on singletons, and so on.  There are even situations where it is natural to have a measure on pairs which does not correspond to any product of measures on singletons.\footnote{A typical example is the proofs in extremal graph theory of sparse analogs of density statements, as in the regularity lemma for sparse graphs \cite{kohayakawa:MR1661982}, where the counting measure on pairs is replaced by counting the proportion of pairs from some fixed (typically small) set of pairs.  If the fixed set of pairs satisfies certain randomness properties, this measure cannot be viewed as any product measure.}  Further generalizations could involve such things as measures valued in ordered rings.  A fully general definition of $\AML$ would include in the signature $\la$, for each arity, a list of measures together with the ordered ring in which they are valued.  In this paper we eschew these complications and focus only on the simplest case, with a single real-valued measure; the changes needed to handle more general cases are largely routine.
\end{rmk}

Particular examples of $\AML$ structures are given by finite structures with the normalized counting measure and by structures with definable Keisler measures.  However we would like to define an \emph{a priori} notion of semantics for $\AML$: analogously to the definition of a structure of first-order logic, we would like to first state that a certain object is an $\AML$ structure and then interpret formulas and sentences within that object.  The heart of such a definition should be that an $\AML$ structure consists of a first-order structure together with a measure such that all definable sets are measurable.  Since identifying the definable sets requires interpreting the logic, this makes it difficult to recognize a valid $\AML$ structure without simultaneously interpreting the logic.

We adopt a compromise: an \emph{$\AML$ quasistructure} is an object which would be an $\AML$ structure except that definable sets may not be measurable.  We define the rank of a formula to be the level of nesting of measure quantifiers, and we then, by simultaneous recursion, identify the definable sets of rank $n$ and the quasistructures in which all sets defined by formulas of rank $n$ are measurable; as long as all sets defined by formulas of rank $n$ are measurable, it will be possible to interpret formulas of rank $n+1$.  An \emph{$\AML$ structure} will then be a quasistructure in which this recursion continues through all ranks.

We face one additional obstacle: a first-order structure together with a measure does not provide enough information to completely specify the semantics.  In order to satisfy compactness, we need to allow the possibility that 
\[\M\vDash m_{\vec x}<r+\epsilon.\varphi(\vec x)\]
for all $\epsilon>0$, but 
\[\M\not\vDash m_{\vec x}\leq r.\varphi(\vec x).\]
In such a structure, the measure of the set defined by $\varphi$ should be $r$, but this structure must have a different semantics from a structure $\N$ where the measure of the set defined by $\varphi$ is $r$ and
\[\N\vDash m_{\vec x}\leq r.\varphi(\vec x).\]
To accomodate this, we pair the measure portion of a structure with a function $v$ which assigns to each set $X$ a value $\odot$, $\oplus$, or $\ominus$.  The values $\oplus$ and $\ominus$ indicate that the structure can only identify the measure of the set $X$ as it is approximated from above or from below, respectively, while the value $\odot$ indicates that the structure actually identifies the measure of the set $X$ exactly.  This issue can only occur when the value of the measure is rational, so if the measure of $X$ is irrational, we always set $v(X)=\odot$.

\begin{df}
Let $\mathcal L$ be a first-order signature.  An \emph{$\AML$ $\la$-quasistructure} consists of:
\begin{itemize}
\item a first-order $\la$-structure $\M$;
\item for each $n\geq 1$, an algebra $\B^\M_n$ of subsets of $M^n$ and a finitely additive measure $\mu^\M_n$ on $\B^\M_n$ such that, for any $m,n$, $\B^\M_{m+n}$ extends $\B^\M_m\otimes\B^\M_n$ and $\mu^\M_{m+n}$ extends $\mu^\M_m\times\mu^\M_n$;
\item for each set $X\in \B^\M_n$, an assignment of a value $v^\M_n(X)\in \{\oplus,\ominus,\odot\}$ such that $v^\M_n(X)=\odot$ whenever $\mu^\M_n(X)\notin \q$.
\end{itemize}
\end{df}

If the structure $\M$ and the arity $n$ are clear from context, we will write $\mu$ and $v$ instead of $\mu^\M_n$ and $v^\M_n$.

Suppose that $\M$ is an $\AML$ $\la$-quasistructure.  Let $V$ be the set of variables and suppose also that $s:V\to M$ is a function; we call such an $s$ a \emph{valuation on $\M$}.  
Given any sequence of distinct variables $\vec{v}=(v_1,\ldots,v_n)$ and any sequence $\vec{c}=(c_1,\ldots,c_n)$ from $M$, we let $s(\vec{c}/\vec{v})$ be the valuation obtained from $s$ by redefining (if necessary) $s$ at $v_i$ to take the value $c_i$.  

We would now like to define the satisfaction relation $\M\models \varphi[s]$ by recursion on complexity of formulae in such a way that this extends the satisfaction relation for first-order logic.  However, in order to specify the semantics of the measure constructor, one needs to know that the set defined by the formula following the measure constructor is measurable.  Thus, we need to define the semantics in stages while ensuring that the sets defined at each stage are measurable.

\
Towards this end, we define the \emph{rank} of an $\AML$ $\la$-formula $\varphi$, denoted $\rk(\varphi)$, by induction:  
\begin{itemize}
\item If $\varphi$ is a classical $\la$-formula, then $\rk(\varphi)= 0$; 
\item $\rk(\neg \varphi)=\rk(\forall x\varphi)=\rk(\varphi)$;
\item $\rk(\varphi\vee \psi)=\max(\rk(\varphi),\rk(\psi))$;
\item $\rk(m_{\vec{x}}\bowtie r.\varphi(\vec{x},\vec{y}))=\rk(\varphi)+1$.
\end{itemize}

Suppose that the satisfaction relation $\M\models \varphi(\vec x)[s]$ has been defined for formulae of rank $\leq n$.  (Note that this is automatic for $n=0$.)  As is customary, we often write $\M\models \varphi(\vec{a})$ if $\varphi(\vec x)$ is a formula with free variables among $\vec x$, $s$ is a valuation for which $s(x_i)=a_i$, and $\M\models \varphi[s]$.  If $\varphi(\vec x,\vec y)$ is a formula of rank $\leq n$ and $b\in M^{|\vec y|}$, we then set $\varphi(M,\vec b):=\{a\in M^{|\vec x|} \ : \ \M\models \varphi(\vec a, \vec b)\}$.  We also use the notation $v(\varphi,\vec b)$ to denote $v(\varphi(M,\vec b))$.

We say that the $\AML$ $\la$-quasistructure $\M$ satisfies the \emph{rank $n$ measurability condition} if, for all formulae $\varphi(\vec x,\vec y)$ of rank $\leq n$ and all $b\in M^{|\vec y|}$, we have $\varphi(M,b)\in \mathcal B^{\mathfrak{M}}_{|\vec x|}$.  

Assume that $\M$ satisfies the rank $n$ measurability condition.  We now define the satisfaction relation $\models$ for rank $n+1$ formulae.  We treat the connective and quantifier cases as in first-order logic and only specify how to deal with the semantics of the measure constructor.  Suppose that $\varphi(\vec x,\vec y)$ is a formula of rank $n$ and $s$ is a valuation such that $s(\vec y)=\vec b$.  We then declare:

\begin{itemize}
\item $\M\models m_{\vec x}<r.\varphi(\vec x, \vec b)$ if and only if:
\begin{itemize}
\item $\mu(\varphi(M, \vec b))<r$, or
\item $\mu(\varphi(M, \vec b))=r$ and $v(\varphi,\vec b)=\ominus$.
\end{itemize}
\item $\M\models m_{\vec x}\leq r.\varphi(\vec x;\vec b)$ if and only if:
\begin{itemize}
\item $\mu(\varphi(M, \vec b))<r$, or
\item $\mu(\varphi(M, \vec b))=r$ and $v(\varphi,\vec b)\not=\oplus$.
\end{itemize}
\end{itemize}

\begin{df}
We say that the $\AML$ quasistructure $\M$ is a \emph{structure} if it satisfies the rank $n$ measurability condition for all $n$.
\end{df}

If $\varphi$ has no free variables, then $\varphi$ is called a \emph{sentence}.  If $T$ is a set of sentences and $\M$ is a structure, we write $\M\models T$ if and only if $\M\models \varphi$ for all $\varphi\in T$; in this case, we say that $\M$ is a \emph{model} of $T$.

\begin{ex}
Let $\la$ be the signature consisting of a single binary function symbol $\cdot$ and a single constant symbol $e$.  Let $G$ be a group with $|G|=m$.  We view $G$ as a first-order $\la$-structure $\mathfrak{G}$ by interpreting $\cdot$ as multiplication in $G$ and $e$ as the identity of $G$.  In order to view $\mathfrak{G}$ as an $\AML$ $\la$-quasistructure, we set $\B^{\mathfrak{G}}_n:=\mathcal P(G^n)$ and $\mu_n(X):=\frac{|X|}{m^n}$, that is, we equip each cartesian power $G^n$ with the (normalized) counting measure.  It is clear that the $\AML$ quasistructure $\mathfrak G$ is actually a structure.

Now fix $g\in G$ and let $\varphi(x,y)$ be the first-order $\la$-formula $x\cdot y=y\cdot x$.  Then $\mathfrak{G}\models m_x<r.\varphi(g)$ if and only if the cardinality of $C_G(g)$, the centralizer of $g$ in $G$, is $<r\cdot m$.  In particular, if $r<1$, then $\mathfrak{G}\not\models \forall y m_x<r.\varphi$ as the centralizer of the identity has cardinality $m$.
\end{ex}




As in classical logic, we say that $D\subseteq M^n$ is \emph{definable (in $\M$)} if there is a formula $\varphi(\vec x, \vec y)$, with $|\vec x|=n$, and $\vec a\in M^{|\vec y|}$ such that $D=\varphi(M,\vec a)$.  If $A\subseteq M$ and the tuple $\vec a$ as above lies in $A$, we also say that $D$ is \emph{$A$-definable (in $\M$)} or \emph{definable (in $\M$) over $A$}.  We let $\dfbl_n(M)$ denote the Boolean algebra of definable subsets of $M^n$.  A function $h:M^m\rightarrow M^n$ is \emph{definable (in $\M$)} if the graph of $h$ is a definable subsets of $M^{m+n}$.

\begin{conv}
From now on, when we say that $\M$ is an $\la$-(quasi)structure, we mean that $\M$ is an $\la$-(quasi)structure of $\AML$.  We use fraktur letters $\M$ and $\N$ (sometimes decorated with subscripts) to denote quasistructures.  The corresponding roman letter denotes the underlying universe of the quasistructure.  (So, for example, the underlying universe of $\M$ will be $M$ and the underlying universe of $\N_i$ will be $N_i$.)  Likewise, $\la$-formulae will always mean $\AML$ $\la$-formulae.  If we wish to consider a structure from classical first-order logic, then we will speak of a \emph{classical} structure and denote it using calligraphic letters such as $\mathcal{M}$.  Likewise, we will speak of \emph{classical} formulae.
\end{conv}

Suppose $\la'$ is signature extending the signature $\la$.  Suppose that $\M$ is an $\la'$-quasistructure.  Then the $\la$-\emph{reduct} of $\M$, denoted $\M|\la$, is the $\la$-quasistructure obtained from $\M$ by ``forgetting'' to interpret symbols from $\la'\setminus \la$.  We also refer to $\M$ as an \emph{expansion} of $\M|\la$.

\begin{df}
Suppose that $\M$ and $\N$ are $\la$-structures.  We say that $\M$ is a \emph{substructure} of $\N$ if:
\begin{enumerate}
\item the first-order part of $\M$ is a substructure of the first-order part of $\N$, and
\item for each $B\in \B^{\N}_n$, we have $B\cap M^n\in \B^{\M}_n$.
\end{enumerate}
We further say that $\M$ is an \emph{elementary substructure} of $\N$ if, for all formulae $\varphi(\vec x)$ and $a\in M^{|\vec x|}$, we have $\M\models \varphi(\vec a)$ if and only if $\N\models \varphi(\vec a)$.
\end{df}

It may seem a bit strange that the notion of substructure does not mention the measure part of the structures in question.  However, given that the constructor $m_{\vec x}<r$ acts as a quantifier and arbitrary substructures need not respect the truth of quantified formulae, this lack of a requirement need not be so surprising.

We define the notions of \emph{embedding} and \emph{elementary embedding} in the obvious way so as to align with our notion of substructure and elementary substructure.

\

\section{The Canonical Measures on an \texorpdfstring{$\aleph_1$}{}-saturated \texorpdfstring{$\AML$}{} structure}
\label{sec:canonical}

An extremely important source of $\AML$ structures are those that are equipped with an actual measure.

\begin{df}
A \emph{measured $\la$-structure} is an $\AML$ $\la$-structure $\mathfrak{M}$ such that there is a measure $\nu^n$ on a $\sigma$-algebra $\B^n$ of subsets of $M^n$ extending $\dfbl_n(M)$ such that:
\begin{enumerate}
\item For definable $B\subseteq M^n$, we have $\nu^n(B)=\mu^\M_n(B)$.
\item For each $m,n\geq 1$, the measure $\nu^{m+n}$ is an extension of the product measure $\nu^m\otimes \nu^n$ 
\item (Fubini property)  For every $m,n\geq 1$ and $B\in \B^{m+n}$, we have
\begin{enumerate}
\item for almost all $x\in M^m$, $B^x\in \B^n$;
\item for almost all $y\in M^n$, $B_y\in \B^m$;
\item the functions $x\mapsto \nu^n(B^x)$ are $y\mapsto \nu^m(B_y)$ are $\nu^m$- and $\nu^n$-measurable, respectively, and $$\nu^{m+n}(B)=\int \nu^n(B^x)d\nu^m(x)=\int \nu^m(B_y)d\nu^n(y).$$
\end{enumerate} 
\end{enumerate}
\end{df}

For example, any finite $\la$-structure equipped with its normalized counting measure is a measured structure; in fact, these structures (and their ultraproducts) are our primary examples of measured structures.

Even if an $\AML$ structure does not come equipped with a measure, often there is still a ``natural'' measure that can be placed on the structure, a process that we now describe.  First, we say that an $\la$-structure $\M$ is \emph{$\aleph_1$-compact} if whenever $n\geq 1$ and $(D_m \ : \ m\in \n)$ is a family of definable subsets of $M^n$ with the finite intersection property, then $\bigcap_{m\in \n}D_m\not=\emptyset$.  (If the signature $\la$ is countable, then this coincides with the notion of an \emph{$\aleph_1$-saturated structure} more commonly encountered in model theory; when the signature is countable, we will often used the term saturated rather than compact.)  For example, any finite $\la$-structure is $\aleph_1$-compact; many ultraproducts are also $\aleph_1$-compact.  In this section, we will show that certain $\aleph_1$-compact $\AML$ structures can be equipped with a family of canonical measures, the so-called \emph{Loeb measures}.

We write $\sigma(\mathcal{B})$ for the $\sigma$-algebra generated by the algebra $\mathcal{B}$.

Suppose that $\M$ is an $\aleph_1$-compact $\AML$ $\la$-structure and $(A_m \ : \ m\in \n)$ is a family of definable subsets of $M^n$ for which $\bigcup_{m\in \n} A_m$ is also definable.  Then, by $\aleph_1$-compactness, we have that $\bigcup_{m\in \n} A_m=\bigcup_{m=0}^k A_m$ for some $k\in \n$.  Consequently, $\mu^\M_n|\dfbl_n(M)$ is a pre-measure.  Thus, by the Caratheodory extension theorem, there is a measure, which we again call $\mu^\M_n$ or simply $\mu_n$, on $\sigma(\dfbl_n(M))$, extending the original $\mu^\M_n$ given by $$\mu^n(B):=\inf\{\sum_m \mu^\M_n(D_m) \ : \ B\subseteq \bigcup_mD_m, D_m\in \dfbl_n(M) \text{ for all }m\}.$$  Moreover,  if $\mu^\M_n|\sigma(\dfbl_n(M))$ is $\sigma$-finite, then $\mu^n$ is the unique measure on $\sigma(\dfbl_n(M))$ extending the original $\mu^\M_n$. Observe, however, that by $\aleph_1$-compactness again, $\mu^\M_n|\dfbl_n(M)$ is $\sigma$-finite if and only if $\mu^\M_n|\dfbl_n(M)$ is finite (which occurs if and only if $\mu^\M_1(M)$ is finite by the product property).  We refer to the family of measures $(\mu^\M_n)$ as the \emph{canonical measures} on $\M$.  Although they are defined for all $\aleph_1$-compact $\la$-structures, we mainly consider them in the case when $\mu^\M_1(M)$ is finite, for then they are truly ``canonical.''  (Indeed, when a pre-measure on an algebra $\mathcal{A}$ is not $\sigma$-finite, there are many extensions of it to a measure on $\sigma(\mathcal{A})$; see Exercise 1.7.8 in \cite{taomeasure}.)

Until further notice, we assume that $\mu^\M_1|\dfbl_1(M)$ is a finite pre-measure on $\dfbl_1(M)$ (and hence $\mu^\M_n|\dfbl_n(M)$ is a finite pre-measure on $\dfbl_n(M)$ for all $n\geq 1$).  Let $\sigma(\dfbl_m(M))\otimes \sigma(\dfbl_n(M))$ denote the product $\sigma$-algebra and $\mu_m\otimes \mu_n$ denote the product measure.  Observe that $\sigma(\dfbl_m(M))\otimes \sigma(\dfbl_n(M))$ is generated by sets of the form $A\times B$, with $A\in \dfbl_m(M))$ and $B\in \dfbl_n(M))$; since $A\times B\in \dfbl_{m+n}(M)$ for such $A$, $B$, we have $\sigma(\dfbl_m(M))\otimes \sigma(\dfbl_n(M))\subseteq \sigma(\dfbl_{m+n}(M))$.  Now observe that, if $A\in \dfbl_m(M)$ and $B\in \dfbl_n(M)$, then $\mu_{m+n}(A\times B)=\mu_m(A)\cdot \mu_n(B)$ by the Product property.  
Thus $$E:=\{D\in \sigma(\dfbl_m(M))\otimes \sigma(\dfbl_n(M)) \ : \ \mu_{m+n}(D)=(\mu_m\otimes \mu_n)(D)\}$$ contains all sets of the form $A\times B$, where $A\in \dfbl_m(M)$ and $B\in \dfbl_n(M)$. One can easily check that $E$ is a $\lambda$-class.  Since the sets of the form $A\times B$, where $A\in \dfbl_m(M)$ and $B\in \dfbl_n(M)$, form a $\pi$-class, we have, by the $\pi$-$\lambda$ Theorem, that $\sigma(\dfbl_m(M))\otimes \sigma(\dfbl_n(M))\subseteq E$.  (For the definitions of $\lambda$- and $\pi$-classes as well as the statement of the $\pi$-$\lambda$-theorem, see Section 17.A of \cite{kechris}.)  Consequently, we see that $\mu_m\otimes \mu_n=\mu_{m+n}|\sigma(\dfbl_m(M))\otimes \sigma(\dfbl_n(M))$.  This proves:

\begin{lemma}
If $\M$ is an $\aleph_1$-compact $\la$-structure such that $\mu^\M(M)$ is finite, then $\M$, equipped with its canonical measures, satisfies the first two axioms in the definition of a measured structure. 
\end{lemma}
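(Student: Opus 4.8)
The plan is to unwind the definitions and observe that the two required axioms are exactly what the preceding construction of the canonical measures already delivers; the proof is essentially bookkeeping. First I would fix the data witnessing that $\M$ is a measured structure: set $\B^n := \sigma(\dfbl_n(M))$ and let $\nu^n := \mu^\M_n$ be the canonical measure on this $\sigma$-algebra produced by the Caratheodory extension theorem. Here the hypothesis that $\mu^\M_1(M)$ is finite is what guarantees (via the product property) that each $\mu^\M_n|\dfbl_n(M)$ is a finite pre-measure, so that its Caratheodory extension is the \emph{unique} measure on $\sigma(\dfbl_n(M))$ extending it, and the $\nu^n$ are thereby canonically determined.

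Axiom (1) is then immediate. By construction the extended measure $\mu^\M_n$ on $\sigma(\dfbl_n(M))$ restricts on $\dfbl_n(M)$ to the original pre-measure, so $\nu^n(B)=\mu^\M_n(B)$ for every definable $B\subseteq M^n$.

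For axiom (2) I would reproduce the three-step argument already sketched before the lemma. First, since every rectangle $A\times B$ with $A\in\dfbl_m(M)$ and $B\in\dfbl_n(M)$ is itself definable in $M^{m+n}$, the product $\sigma$-algebra satisfies $\sigma(\dfbl_m(M))\otimes\sigma(\dfbl_n(M))\subseteq\sigma(\dfbl_{m+n}(M))=\B^{m+n}$, so the domain of $\nu^m\otimes\nu^n$ sits inside the domain of $\nu^{m+n}$ and it is meaningful to ask whether the latter extends the former. Second, the product property of the underlying quasistructure ($\mu^\M_{m+n}$ extends $\mu^\M_m\times\mu^\M_n$) yields $\nu^{m+n}(A\times B)=\nu^m(A)\cdot\nu^n(B)=(\nu^m\otimes\nu^n)(A\times B)$ on rectangles. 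Third, a $\pi$-$\lambda$ argument upgrades agreement on rectangles to agreement on the whole product $\sigma$-algebra: the collection $E$ of sets in $\sigma(\dfbl_m(M))\otimes\sigma(\dfbl_n(M))$ on which $\nu^{m+n}$ and $\nu^m\otimes\nu^n$ coincide is a $\lambda$-class containing the $\pi$-class of rectangles, hence contains the generated $\sigma$-algebra.

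The main point to be careful about — and the only place where finiteness is genuinely used beyond step one — is the uniqueness of the measures involved: one needs $\mu^\M_1(M)$ finite (hence $\sigma$-finiteness of each $\mu^\M_n$) both to pin down the canonical $\nu^n$ and to ensure that $\nu^m\otimes\nu^n$ is the unique product measure on the product $\sigma$-algebra, so that the $\pi$-$\lambda$ identity genuinely identifies $\nu^{m+n}$ restricted to $\sigma(\dfbl_m(M))\otimes\sigma(\dfbl_n(M))$ with the product. Since all of this has effectively been carried out in the discussion immediately preceding the lemma, I expect the proof to be little more than a collection of those observations, with no substantial new obstacle.
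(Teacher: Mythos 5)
Your proposal is correct and follows essentially the same route as the paper: the paper's ``proof'' of this lemma is exactly the discussion preceding it, in which axiom (1) holds by construction of the Caratheodory extension and axiom (2) is obtained by checking agreement on definable rectangles (via the Product property) and then upgrading to the full product $\sigma$-algebra $\sigma(\dfbl_m(M))\otimes\sigma(\dfbl_n(M))$ by the $\pi$-$\lambda$ theorem, with finiteness of the measures making the $\lambda$-class argument go through. Your bookkeeping, including where finiteness of $\mu^\M_1(M)$ enters, matches the paper's argument.
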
 

The third axiom, requiring that Fubini's Theorem hold, might still fail for sets which do not belong to $\sigma(\dfbl_m(M))\otimes \sigma(\dfbl_n(M))$.  We call an $\aleph_1$-compact $\la$-structure whose canonical measures satisfy the Fubini property a \emph{Fubini structure}.  (The measurability of the functions in (3) always holds for the canonical measures, so really a Fubini structure is one where the displayed equation in the statement of the Fubini property is required to hold.)  Thus, by the preceding lemma, a Fubini structure, equipped with its canonical measures, is a measured $\la$-structure.

The following situation is also important:

\begin{df}
Suppose that $\mathcal{M}$ is a classical $\la$-structure which is equipped with a family of measures $\nu^n$ on a $\sigma$-algebra $\B^n$ of subsets of $\M^n$ extending the algebra of definable sets and such that $\nu^{m+n}$ is an extension of $\nu^m\otimes \nu^n$ for all $m,n\geq 1$.  We call $(\mathcal{M},(\nu^n))$ a \emph{classical measured structure}.  Then we can turn $\mathcal{M}$ into an $\la$-quasistructure $\M$ by declaring, for $B\subseteq M^n$, that $v^\M_n(B)=\odot$.  We call $\M$ the \emph{$\AML$ quasistructure associated to $(\mathcal{M},(\nu^n))$}.  
\end{df}

\begin{rmk}
Usually, the AML quasistructure associated to a classical measured structure is not a structure: due to the added expressivity power, there are new definable sets which may not be measurable.  (However this quasistructure will satisfy the rank $0$ measurability condition, and so satisfaction for formulas of rank $1$ can be defined.)  For finite classical measured structures (which are the structures most important for our combinatorial applications),  the associated AML quasistructure is in fact a structure because all sets are measurable.
\end{rmk}

\begin{rmk}
Consider $\M$, an $\aleph_1$-compact $\la$-structure with $\mu^\M_1(M)$ finite, equipped with its family $(\mu^\M_n)$ of canonical measures.  Let $\mathcal{M}$ be the classical $\la$-structure obtained from $\M$ by considering only the interpretations of the symbols in $\la$.  Then $(\mathcal{M},(\mu^\M_n))$ is a classical measured structure.  Let $\M'$ be the $\AML$ quasistructure associated to $(\mathcal{M},(\mu^\M_n))$.  How do $\M$ and $\M'$ compare?  Suppose that $\varphi(x)$ is a classical $\la$-formula and $r\in \q^{\geq 0}$.  It is straightforward to see that $\M\models m_x\leq r.\varphi(x)$ implies $\M'\models m_x\leq r.\varphi(x)$.  However, the converse need not hold.  For example, suppose that $\M'\models m_x\leq 0.\varphi(x)$, whence $\mu(\varphi(M))=0$.  However, if $v^\M(\varphi)=\oplus$, then $\M\models \neg m_x\leq 0.\varphi(x)$.  (Such phenomena can, and will, happen in ultraproducts, as we will see in later sections.)  Thus, in some sense, $\M'$ is a ``completion'' of $\M$.
\end{rmk}

\noindent We now briefly turn to the question of what our logic can say about these canonical measures.  We present a couple of examples.  Suppose that $(X,\mathcal{S})$ is a measurable space such that $\{x\}\in \mathcal{S}$ for every $x\in X$.  (This happens, for example, when $(X,\mathcal{S})=(M^n,\sigma(\dfbl_n(M)))$ for $\M$ an $\la$-structure.)  Then a measure $\mu$ on $X$ is said to be \emph{continuous} if $\mu(\{x\})=0$ for each $x\in X$.  The following is immediate.

\begin{prop}\label{cont}
Suppose that $\M$ is an $\aleph_1$-compact $\la$-structure.  Then we have $\M\models \{\forall x m_y\leq q(x=y) \ : \ q\in \q^{>0}\}$ if and only if $\mu^1$ is a continuous measure.
\end{prop}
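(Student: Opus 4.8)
The plan is to reduce the displayed satisfaction statement to a statement purely about the value $\mu^1(\{a\})$ for each point $a\in M$, and then exploit the fact that the positive rationals have infimum $0$. The key preliminary observation is that every singleton is definable: for $a\in M$, the set $\{a\}$ is defined by the rank-$0$ formula $x=y$ (with parameter $a$), so $\{a\}\in\dfbl_1(M)$. Consequently $\mu^1(\{a\})$ is defined and coincides with the value of $\mu^\M_1(\{a\})$ used by the semantics of the measure constructor, since the canonical measure $\mu^1$ extends $\mu^\M_1$ on $\dfbl_1(M)$.

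Next I would unwind the semantics. For $a\in M$ and $q\in\q^{>0}$, the formula $m_y\leq q.(a=y)$ holds in $\M$ if and only if either $\mu^1(\{a\})<q$, or $\mu^1(\{a\})=q$ and $v(\{a\})\neq\oplus$. The two consequences I need are: (i) if $\M\models m_y\leq q.(a=y)$, then $\mu^1(\{a\})\leq q$; and (ii) if $\mu^1(\{a\})<q$, then $\M\models m_y\leq q.(a=y)$ (here the first clause of the semantics suffices, regardless of the value of $v$).

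For the forward direction, assume $\M$ models the whole set of sentences, so that for every $a$ and every $q\in\q^{>0}$ we have $\M\models m_y\leq q.(a=y)$, whence $\mu^1(\{a\})\leq q$ by (i). Taking the infimum over $q\in\q^{>0}$ yields $\mu^1(\{a\})\leq 0$, and since a measure is nonnegative, $\mu^1(\{a\})=0$; as $a$ was arbitrary, $\mu^1$ is continuous. For the converse, assume $\mu^1$ is continuous. Then for every $a$ and every $q\in\q^{>0}$ we have $\mu^1(\{a\})=0<q$, so (ii) gives $\M\models m_y\leq q.(a=y)$; since $a$ and $q$ are arbitrary, $\M$ satisfies $\forall x\,m_y\leq q(x=y)$ for each $q\in\q^{>0}$.

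I do not expect a genuine obstacle here—the proposition is essentially a direct translation of the satisfaction clauses combined with the identity $\inf\q^{>0}=0$, which is why the paper labels it immediate. The only point demanding any care is confirming that the real number $\mu$ appearing in the satisfaction clauses is the same as $\mu^1(\{a\})$ from the statement of the proposition; this is exactly what the definability of singletons guarantees. The role of $\aleph_1$-compactness is merely to ensure that the canonical measure $\mu^1$ exists, so that the statement has content; it plays no further part in the equivalence itself.
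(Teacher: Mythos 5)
Your proof is correct and matches the paper's treatment: the paper states this proposition with no argument at all (``The following is immediate''), and your write-up is exactly the direct unwinding of the satisfaction clauses that justifies that label. The two points you single out for care — that singletons are definable so the canonical measure $\mu^1$ agrees with $\mu^\M_1$ on them, and that only the one-sided implications (i) and (ii) are needed so the value $v(\{a\})$ never matters — are precisely the right ones, and $\aleph_1$-compactness indeed enters only to guarantee that $\mu^1$ exists.
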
 


More generally, there is a set $\Gamma(x)$ consisting of $\la$-formulae containing a single free variable $x$ such that, for every $\la$-structure $\M$ and every $a\in M$, $$\mu^1(\{a\})=0\Leftrightarrow \M\models \varphi(a) \text{ for all }\varphi\in \Gamma.$$  Indeed, take $\Gamma(x):=\{m_y<q.(x=y) \ : \ q\in \q^{>0}\}$.  In model-theoretic terms, the set of elements of $\mu^1$-measure $0$ is uniformly type-definable in all $\aleph_1$-compact $\la$-structures.  Below, as a consequence of the ultraproduct construction, we will see that this set is not uniformly definable in all $\la$-structures.

Here is another proposition along the same lines.

\begin{prop}
There is a set $T$ of $\AML$ sentences such that, for any $\aleph_1$-compact $\la$-structure $\M$ with $\mu^\M_1(M)$ finite, $\M\models T$ if and only if $\mu^1$ is a probability measure.
\end{prop}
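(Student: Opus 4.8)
The plan is to build $T$ entirely from the trivially true formula $x=x$, which is a rank-$0$ formula defining all of $M^1=M$, so that $\mu^1\big((x=x)(M)\big)=\mu^1(M)$. Since the canonical $\mu^1$ is already a measure, ``$\mu^1$ is a probability measure'' means precisely $\mu^1(M)=1$, so it suffices to find $T$ with $\M\models T$ iff $\mu^1(M)=1$. I would approximate the value $\mu^1(M)$ from above and below by rational thresholds and take
\[
T:=\{\,m_x\leq 1+\epsilon.(x=x)\ :\ \epsilon\in\q^{>0}\,\}\ \cup\ \{\,m_x\geq 1-\epsilon.(x=x)\ :\ \epsilon\in\q\cap(0,1)\,\}.
\]
Note that $1-\epsilon$ is a non-negative rational for $\epsilon\in(0,1)$, so every sentence is well-formed.

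The first family should pin down $\mu^1(M)\leq 1$. Unwinding the clause for the measure constructor, $\M\models m_x\leq 1+\epsilon.(x=x)$ holds iff $\mu^1(M)<1+\epsilon$, or $\mu^1(M)=1+\epsilon$ and $v(M)\neq\oplus$. If $\mu^1(M)\leq 1$ the strict inequality holds for every $\epsilon>0$, so the whole family is satisfied; conversely, if $\mu^1(M)>1$, then for any rational $\epsilon$ with $0<\epsilon<\mu^1(M)-1$ both disjuncts fail and that sentence is false. Hence the first family holds iff $\mu^1(M)\leq 1$, and this is independent of $v(M)$. Symmetrically, expanding $m_x\geq 1-\epsilon.(x=x)$ as $\neg m_x<1-\epsilon.(x=x)$ and unwinding gives: this holds iff $\mu^1(M)>1-\epsilon$, or $\mu^1(M)=1-\epsilon$ and $v(M)\neq\ominus$. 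The same elementary case analysis shows the second family holds iff $\mu^1(M)\geq 1$, again regardless of $v(M)$. Combining the two families yields $\M\models T$ iff $\mu^1(M)=1$.

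The only genuine subtlety—and the reason the single pair $m_x\leq 1.(x=x)$, $m_x\geq 1.(x=x)$ will \emph{not} work—is the approximate nature of the semantics. At a threshold exactly equal to $\mu^1(M)$, the truth of the measure constructor is governed by the valuation $v(M)$, which may be any of $\oplus,\ominus,\odot$ since $1\in\q$. The implication ``if $\mu^1$ is a probability measure then $\M\models T$'' must hold for \emph{every} admissible $v$, so $T$ cannot contain a sentence whose truth at $\mu^1(M)=1$ is sensitive to $v(M)$. Using thresholds $1\pm\epsilon$ that stay strictly away from $1$ sidesteps this entirely: when $\mu^1(M)=1$ each threshold is met by a \emph{strict} inequality and $v(M)$ never enters, whereas when $\mu^1(M)\neq 1$ some rational threshold strictly separates $\mu^1(M)$ from $1$ and forces a false sentence no matter what $v(M)$ is. Carrying out these case distinctions against the two defining clauses of the measure constructor is the only real work, and it is routine.
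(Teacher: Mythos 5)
Your proof is correct, and it is in fact more careful than the paper's own one-line proof. The paper takes $T:=\{m_x\leq 1.(x=x)\}\cup\{\neg m_x\leq q.(x=x) \ : \ q\in(0,1)\cap\q\}$, i.e.\ it does place a threshold exactly at $1$. As your closing remark anticipates, any sentence whose truth at $\mu^1(M)=1$ is governed by the valuation cannot safely appear in $T$: in a structure with $\mu^\M_1(M)=1$ but $v^\M_1(M)=\oplus$ --- such structures exist and satisfy the hypotheses, e.g.\ a nonprincipal ultraproduct over $\n$ of finite structures carrying scaled counting measures of total mass $1+\frac{1}{i}$ is $\aleph_1$-saturated, has $\mu^\M_1(M)=\lim_\U(1+\frac{1}{i})=1$, and gets $v^\M_1(M)=\oplus$ from the ultraproduct's valuation rule --- the canonical measure $\mu^1$ is a genuine probability measure, yet $\M\not\models m_x\leq 1.(x=x)$, so the paper's $T$ fails the right-to-left implication. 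Your two-sided family of thresholds $1\pm\epsilon$, kept strictly away from $1$, makes every relevant comparison strict and hence insensitive to $v$, which is exactly what is needed; your unwinding of the semantics (including reading $m_x\geq 1-\epsilon$ as $\neg m_x<1-\epsilon$) is accurate, and both directions of the equivalence go through for every admissible valuation. In short, you and the paper share the same basic idea --- pin $\mu^\M_1(M)$ to $1$ by rational approximations --- but your choice of thresholds repairs a genuine defect in the paper's witness set.
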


\begin{proof}
Take $T:=\{m_x\leq 1(x=x)\}\cup \{\neg m_x\leq q(x=x) \ : \ q\in (0,1)\cap \q\}$.
\end{proof}

\begin{prop} 
There is a set $T$ of $\AML$ sentences such that, for any $\aleph_1$-compact $\la$-structure $\M$ with $\mu^\M_1(M)$ finite, $\M\models T$ if and only if $\M$ is a Fubini structure.
\label{ensuring_fubini}
\end{prop}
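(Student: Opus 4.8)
The plan is to build $T$ so that its sentences pin down the iterated-integral equation on \emph{definable} sets, and then to propagate from definable sets to all of $\sigma(\dfbl_{m+n}(M))$ by a monotone class argument. As was noted when Fubini structures were defined, for the canonical measures the measurability clauses (a), (b) and the measurability assertion in (c) hold automatically, so $\M$ is a Fubini structure exactly when, for all $m,n\geq 1$ and all $B\in \B^{m+n}=\sigma(\dfbl_{m+n}(M))$,
$$\mu^{m+n}(B)=\int \mu^n(B^{\vec x})\,d\mu^m(\vec x)=\int \mu^m(B_{\vec y})\,d\mu^n(\vec y).$$
It suffices to force these equalities for definable $B$: fixing $m,n$ and one of the two orders, the family of $B$ for which the relevant equality holds is closed under increasing unions and (using that $\mu^\M_1(M)$, hence every $\mu^k(M^k)$, is finite) decreasing intersections, by the monotone convergence theorem applied both to $\mu^{m+n}(B_k)$ and to $\int \mu^n(B_k^{\vec x})\,d\mu^m(\vec x)$; since it contains the Boolean algebra $\dfbl_{m+n}(M)$ as soon as the equality holds on definable sets, the monotone class theorem gives it on all of $\sigma(\dfbl_{m+n}(M))$.

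For the encoding, fix $\varphi(\vec x,\vec y)$ with $|\vec x|=m$, $|\vec y|=n$, write $D=\varphi(M^{m+n})$ and $f(\vec x)=\mu^n(D^{\vec x})$; then $f$ is bounded by $\mu^n(M^n)$ and, under Fubini, $\mu^{m+n}(D)=\int f\,d\mu^m$. I would exploit the layer-cake identity $\int f\,d\mu^m=\int_0^\infty \mu^m(\{f>t\})\,dt$, approximating the right-hand side by Riemann sums over rational partitions $0=q_0<\cdots<q_N$. The key observation is that the definable set $\theta_q:=\{\vec x : \M\models m_{\vec y}>q.\varphi(\vec x,\vec y)\}$ satisfies $\{f>q\}\subseteq \theta_q\subseteq\{f\geq q\}$, so $h(q):=\mu^m(\theta_q)$ is nonincreasing with $\int_0^\infty h=\int f\,d\mu^m$, and the nested rank-$2$ formulas $m_{\vec x}\bowtie r.(m_{\vec y}>q.\varphi)$ together with the rank-$1$ formula $m_{\vec x\vec y}\bowtie s.\varphi$ let $\AML$ constrain the numbers $h(q_k)$ and $\mu^{m+n}(D)$. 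Accordingly $T$ will contain, for each such $\varphi$, each partition, each nonincreasing tuple of rationals $(r_k)$, and each rational $s<\sum_{k}(q_k-q_{k-1})r_k$, the lower-bound sentence
$$\Big(\bigwedge_{k=1}^N m_{\vec x}\geq r_k.(m_{\vec y}>q_k.\varphi)\Big)\to m_{\vec x\vec y}>s.\varphi,$$
together with the dual upper-bound sentences (whose antecedents also force $h$ to vanish beyond the top partition point, so as to control the tail), and the symmetric families obtained by integrating in the other order.

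Verifying the equivalence then splits into the two directions. A direct computation with the layer-cake inequalities shows that each sentence above is true whenever Fubini holds: if the antecedent forces $h(q_k)\geq r_k$, then $\mu^{m+n}(D)=\int_0^\infty h\geq \sum_k(q_k-q_{k-1})r_k>s$, so the consequent holds; hence Fubini implies $\M\models T$. Conversely, assuming $\M\models T$, given any rational $s<\int f$ one can choose a partition and rationals $r_k$ slightly below $h(q_k)$ whose lower sum still exceeds $s$ and whose antecedent is robustly satisfied (because $r_k<\mu^m(\theta_{q_k})$), forcing $\mu^{m+n}(D)>s$; letting $s\uparrow \int f$ gives $\mu^{m+n}(D)\geq \int f$, and the dual upper-sum sentences give $\mu^{m+n}(D)\leq \int f$, whence equality on definable sets, and likewise in the other order.

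The main obstacle is the approximate nature of the semantics: the measure constructor cannot read off the distribution function $\mu^m(\{f>q\})$ exactly, both because the level sets $\{f=q\}$ may carry positive mass (so $\theta_q$ sits strictly between $\{f>q\}$ and $\{f\geq q\}$) and because the truth of an instance of $m\bowtie r$ at a rational boundary value depends on the auxiliary valuation $v$. The way around this is that $h$, being monotone, is continuous off a countable set, so these discrepancies occur at only countably many thresholds and contribute nothing to the Riemann integral; ranging over \emph{all} rational partitions and inserting strict-inequality buffers between the Riemann sums and the bound $s$ in the consequents makes every sentence of $T$ robust to the $v$-values while still collectively forcing the integral equation to hold exactly. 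A minor point requiring care is the tail of the integral: since $\mu^\M_1(M)$ is finite, $f$ is bounded and $h$ has bounded support, so in the forcing direction it suffices to take the top partition point beyond the support of $f$, the boundary value of the (eventually empty) level set being absorbed by a strict threshold.
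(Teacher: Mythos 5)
Your monotone-class reduction to definable sets is fine (the paper takes that step for granted), and your lower-bound scheme is both sound and usable: the antecedent $\bigwedge_k m_{\vec x}\geq r_k.(m_{\vec y}>q_k.\varphi)$ forces $h(q_k)\geq r_k$, the lower Riemann sum bounds $\int_0^\infty h\,dt=\int \mu^n(D^{\vec x})\,d\mu^m$ from below, and no tail ever appears. The genuine gap is in the dual upper-bound scheme, precisely in how the antecedent is supposed to ``force $h$ to vanish beyond the top partition point.'' If the tail conjunct is a strict threshold $m_{\vec x}<\epsilon.(m_{\vec y}>q_N.\varphi)$, the sentence is \emph{unsound}: knowing $h(q_N)\leq\epsilon$ bounds the tail only by $\epsilon\cdot(\mu^n(M^n)-q_N)$, and $\mu^n(M^n)$ is not bounded across Fubini structures. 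Concretely, with $m=n=1$ take a finite measured structure with $\mu^1(M)=c$ large and $D=E\times M$, $\mu^1(E)=\epsilon/2$; then $h(q)=\epsilon/2<\epsilon$ for all $0<q<c$, so the antecedent of any such sentence is satisfied, while $\mu^{2}(D)=c\epsilon/2$ eventually exceeds whatever fixed rational $s$ sits in the consequent. If instead the tail conjunct is the sound choice $m_{\vec x}\leq 0.(m_{\vec y}>q_N.\varphi)$, then the \emph{forcing} direction breaks: for $q_N>\mu^n(M^n)$ the set $\theta_{q_N}$ is empty, and nothing in the definition of a structure prevents $v^\M_m(\emptyset)=\oplus$ (this is exactly the kind of value ultraproducts produce), in which case $\M\not\models m_{\vec x}\leq 0.\theta_{q_N}$ for \emph{every} $q_N$, no instance of your dual scheme is applicable, and $\M\models T$ no longer yields $\mu^{m+n}(D)\leq\int\mu^n(D^{\vec x})\,d\mu^m$. (A smaller instance of the same phenomenon: when $h(q_k)=0$ you cannot pick a non-negative rational $r_k<h(q_k)$, and $m_{\vec x}\geq 0.\theta_{q_k}$ can itself fail when $v(\theta_{q_k})=\ominus$; here truncating the partition at the last $q_k$ with $h(q_k)>0$ repairs the argument.)

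The gap is fixable in at least two ways. Either add to each upper-bound antecedent a conjunct $m_{\vec y}<Q.(\vec y=\vec y)$ and require $q_N\geq Q$: this is sound, since the antecedent then forces every fiber measure to be at most $Q$, so the tail past $q_N$ genuinely vanishes, and it is usable, since $Q$ may be chosen strictly above $\mu^n(M^n)$ so that the conjunct holds irrespective of $v$-values. Or, more economically, drop the dual scheme altogether and derive upper bounds from your lower-bound scheme applied to $\neg\varphi$: it gives $\mu^{m+n}(\neg D)\geq\int\mu^n((\neg D)^{\vec x})\,d\mu^m=\mu^m(M^m)\mu^n(M^n)-\int\mu^n(D^{\vec x})\,d\mu^m$, and finite additivity together with the product property $\mu^{m+n}(M^{m+n})=\mu^m(M^m)\mu^n(M^n)$ turns this into $\mu^{m+n}(D)\leq\int\mu^n(D^{\vec x})\,d\mu^m$. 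It is worth seeing why the paper's scheme never meets this problem: its sentences quantify over an arbitrary auxiliary definable set $\psi(\vec x)$ and conclude a bound only on $\mu^{m+n}(\varphi\wedge\psi)$, so in the verification one slices $M^m$ into the definable layers $B_{r_i,r_{i+1}}$ on which the fiber measure lies between consecutive partition points and sums the localized bounds; since the conclusion is localized to each layer rather than to all of $\varphi$, there is no tail to control. Your version hard-codes the Riemann sum into the sentence and concludes a bound on all of $\varphi$, which is exactly what creates the tail.
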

\begin{proof} 
We take $T$ to consist of the following two schemes:
\begin{enumerate}
\item For every $\varphi(\vec x,\vec y,\vec x),\psi(\vec x,\vec z)$ and $q,r,t$ with $t<qr$,
\[\forall \vec z\left[\left(\forall \vec x (\psi\rightarrow m_{\vec y}\geq r. \varphi)\wedge m_{\vec x}\geq q.\psi\right)\rightarrow m_{\vec x,\vec y} >t.(\varphi\wedge\psi)\right],\]
\item For every $\varphi(\vec x,\vec y,\vec x),\psi(\vec x,\vec z)$ and $q,r,t$ with $qr<t$,
\[\forall\vec z\left[\left( \forall \vec x (\psi\rightarrow m_{\vec y}\leq r. \varphi)\wedge m_{\vec x}\leq q.\psi\right)\rightarrow m_{\vec x,\vec y}< t.(\varphi\wedge\psi)\right].\]
\end{enumerate}
  Note that $\psi$ may not contain variables from $\vec y$. 

Suppose $\M\vDash T$.  It suffices to show that for any definable set $B\subseteq M^{m+n}$, we have $\mu^{m+n}(B)=\int\mu^n(B^{\vec x})d\mu^m(\vec x)$.  Fix a formula $\varphi(\vec x,\vec y,\vec z)$ and $\vec c\in M^{|\vec z}$ so that $B=\varphi(M,\vec c)$.  Given rationals $r,r'$, we set $B_{r,r'}=\{\vec x\setmid \M\vDash m_{\vec y}\geq r. \varphi(\vec x,\vec y,\vec c)\wedge m_{\vec y}<r'. \varphi(\vec x,\vec y,\vec c)\}$; we will also write $B_{r,r'}$ as an abbreviation for its defining formula.  

Observe that we can approximate the function $\vec x\mapsto\mu^n(B^{\vec x})$ by step functions of the form $\sum_i r_i\chi_{B_{r_i,r_{i+1}}}$ for sequences of rationals $r_1<\cdots<r_k$.  Let $r_1<\cdots<r_k$ be such a sequence of rationals.  For any rational $q_i<\mu^m(B_{r_i,r_{i+1}})$, we must have $\M\vDash m_{\vec x}\geq q_i. B_{r_i,r_{i+1}}$.  Since $\M\vDash T$, we must have 
\[\M\vDash  m_{\vec x,\vec y}> t. (\varphi\wedge B_{r_i,r_{i+1}})\]
for every $t<q_ir_i$, and so $\mu^{m+n}(\{(\vec x,\vec y)\in B\setmid \vec x\in B_{r_i,r_{i+1}}\})\geq q_ir_i$.  Since the $B_{r_i,r_{i+1}}$ are pairwise disjoint, it follows that $\mu^{m+n}(B)\geq\sum_i q_ir_i$.  Since we may choose $q_i,r_i$ as above so that $\sum_iq_ir_i$ is arbitrarily close to $\int \mu^n(B^{\vec x})d\mu^m(\vec x)$, we have $\mu^{m+n}(B)\geq\int \mu^n(B^{\vec x})d\mu^m(\vec x)$.
 
Similarly, for any $q_i>\mu^m(B_{r_i,r_{i+1}})$, we must have $\M\vDash m_{\vec x}\leq q_i. B_{r_i,r_{i+1}}$, and since $\M\vDash T$, we also have 
\[\M\vDash m_{\vec x,\vec y}< t. (\varphi\wedge B_{r_i,r_{i+1}})\]
for any $t>q_ir_{i+1}$.  Therefore $\mu^{m+n}(\{(\vec x,\vec y)\in B\setmid \vec x\in B_{r_i,r_{i+1}}\})\leq q_ir_{i+1}$, and so $\mu^{m+n}(B)\leq\sum_i q_ir_{i+1}$.  Again, since $\sum_iq_ir_{i+1}$ can be made arbitrarily close to $\int \mu^n(B^{\vec x})d\mu^m(\vec x)$, we have $\mu^{m+n}(B)\leq\int \mu^n(B^{\vec x})d\mu^m(\vec x)$.

Now suppose $\M$ is Fubini; we must show that every sentence in $T$ holds.  For the first family of sentences, suppose $\M\vDash\forall \vec x (\psi\rightarrow m_{\vec y}\geq r. \varphi)\wedge m_{\vec x}\geq q.\psi$ (with parameters $\vec c\in M^{|\vec z|}$).  Setting $C=\psi(M,\vec c)$, we see that $\mu(C)\geq q$.  Let $B=\varphi(M,\vec c)$.  For each $\vec a\in C$, $\mu(B^{\vec a})\geq r$.  Since $\mu$ is Fubini, $\mu^{m+n}(B)=\int_C\mu^n(B^{\vec x})d\mu^n(\vec x)\geq qr$, and therefore for each $t<qr$ we have $\M\vDash m_{\vec x}>t. (\varphi\wedge\psi)$.

For the second family of sentences, suppose $\M\vDash\forall \vec x (\psi\rightarrow m_{\vec y}\leq r. \varphi)\wedge m_{\vec x}\leq q.\psi$.  Again setting $C=\psi(M,\vec c)$ and $B=\varphi(M,\vec c)$, for each $\vec a\in C$, $\mu(B^{\vec a})\leq r$.  Since $\mu$ is Fubini, $\mu^{m+n}(B)=\int_C\mu^n(B^{\vec x})d\mu^n(\vec x)\leq qr$, and therefore for each $t>qr$ we have $\M\vDash m_{\vec x}<t. (\varphi\wedge\psi)[s]$.
\end{proof}

\begin{rmk}\label{fubinisound}
Fix $T$ as in the proof of the previous proposition.  The proof we have given above shows that $\M\models T$ for any measured structure $\M$. 
\end{rmk}

\section{Ultraproducts}
\label{sec:ultraproducts}

We suppose that the reader is familiar with the standard ultraproduct construction from first-order logic\iffull; the reader unfamiliar with this construction should consult Appendix B\fi.  We sometimes use the phrase ``$P(i)$ holds a.e.'' to mean that the set of $i$'s for which $P(i)$ holds belongs to the ultrafilter.

Suppose that $(\M_i \ : \ i\in I)$ is a family of $\la$-structures and suppose that $\U$ is an ultrafilter on $I$.  We let $\M$ denote the classical ultraproduct of the family $(\M_i)$ with respect to $\cU$.  We declare $A\in \B^\M_n$ if and only if $A=\prod_\U A_i$, where each $A_i\in \B^{\M_i}_n$; in this case, we declare $\mu^\M_n(A)=\lim_\U \mu^{\M_i}_n(A_i)$ (the \emph{ultralimit} of $\mu^{\M_i}_n(A_i)$).  Observe that $\lim_\U(\mu^{\M_i}_n(A_i))$ always exists and is unique since $[0,\infty]$ is a compact hausdorff space.  

If $\mu^\M_n(A)=r\in \q$, we set
\[
v^\M_n(A)=
\begin{cases}
\oplus \quad \text{ if $\mu^{\M_i}_n(A_i)>r$ a.e.}\\
\ominus \quad \text{ if $\mu^{\M_i}_n(A_i)<r$ a.e.}\\
\oplus \quad \text{ if $\mu^{\M_i}_n(A_i)=r$ and $v^{\M_i}_n(A_i)=\oplus$ a.e.}\\
\ominus \quad \text{ if $\mu^{\M_i}_n(A_i)=r$ and $v^{\M_i}_n(A_i)=\ominus$ a.e.}\\
\odot \quad \text{ if $\mu^{\M_i}_n(A_i)=r$ and $v^{\M_i}_n(A_i)=\odot$ a.e.}
\end{cases}
\]

It is reasonably straightforward to check that each $\B^\M_n$ is an algebra and $\mu^\M_n$ is a finitely additive measure on $\B^\M_n$, whence the ultraproduct $\M$ is a quasistructure.

Note that the assignment of $\nu^{\M}_n(A)$ is consistent with the interpretations of $\oplus$ and $\ominus$ as indicating approximation from above or below: if $\mu^{\M_i}_n(A_i)>\mu^{\M}_n(A)$ a.e., we have obtained the measure $\mu^{\M}_n(A)$ by approximations from above, thus (irrespective of whether the values of $\mu^{\M_i}_n(A_i)$ were obtained exactly or obtained by approximation themselves) $v^{\M}_n(A)=\oplus$; if $\mu^{\M_i}_n(A_i)=r$ and $v^{\M_i}_n(A_i)=\oplus$ a.e. then almost every $\mu^{\M_i}_n(A_i)$ was found by approximations from above, so the measure of $\mu^{\M}_n(A)$ is found the same way.

The following theorem is of fundamental importance for our applications of $\AML$.

\begin{thm}[\L o\'s Theorem for $\AML$]
Suppose that $(\M_i \setmid i\in I)$ is a family of $\la$-structures, $\U$ is a ultrafilter on $I$, and $\M=\prod_{\cU}\M_i$ is the ultraproduct of the family $(\M_i)$.  Let $\varphi$ be an $\la$-formula and $s$ a valuation on $\M$.  For each $i\in I$, let $s_i$ be a valuation on $\M_i$ such that, for each variable $v$, we have $s(v)=[s_i(v)]_\U$.
Then $\M\models \varphi[s]\Leftrightarrow \M_i\models \varphi[s_i] \text{ for }\U\text{-almost all }i.$
\end{thm}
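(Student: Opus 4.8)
The plan is to argue by induction on the complexity of $\varphi$. Since $\rk(\psi)<\rk(m_{\vec x}\bowtie q.\psi)$ while the first-order constructors preserve or take maxima of ranks, this induction refines the stratification by rank, and, carried out carefully, it simultaneously establishes that the quasistructure $\M$ is in fact a structure: the inductive step for the measure constructor produces, for each subformula placed under a measure quantifier, a witness that the relevant defined set lies in $\B^\M_n$, so that the rank $n$ measurability conditions hold for every $n$ and the satisfaction relation $\M\models\varphi[s]$ is everywhere defined. For the atomic case and the cases of $\neg$, $\wedge$, $\exists$, and $\forall$, the argument is verbatim the classical \L o\'s theorem, since the first-order part of $\M$ is by construction the classical ultraproduct of the first-order parts of the $\M_i$, and these constructors do not involve the measure data.

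The new work is the measure constructor. Suppose $\varphi=m_{\vec x}\bowtie q.\psi(\vec x,\vec y)$, set $n:=|\vec x|$, and write $\vec b=s(\vec y)$, $\vec b_i=s_i(\vec y)$, so that $\vec b=[\vec b_i]_\U$. Put $A_i:=\psi(M_i,\vec b_i)\in\B^{\M_i}_n$, which is measurable because each $\M_i$ is a structure. The first step is to show $\psi(M,\vec b)=\prod_\U A_i$: for $\vec a=[\vec a_i]_\U$, unwinding the definition of the ultraproduct set and applying the inductive hypothesis to $\psi$ gives $\vec a\in\psi(M,\vec b)\iff\M\models\psi(\vec a,\vec b)\iff\M_i\models\psi(\vec a_i,\vec b_i)$ a.e. $\iff\vec a_i\in A_i$ a.e. $\iff\vec a\in\prod_\U A_i$. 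In particular $\psi(M,\vec b)\in\B^\M_n$ (this is exactly the measurability needed to interpret $\varphi$), and by the definition of the measure on the ultraproduct, $\mu^\M_n(\psi(M,\vec b))=\lim_\U\mu^{\M_i}_n(A_i)$. Write $r_i:=\mu^{\M_i}_n(A_i)$ and $r:=\lim_\U r_i$.

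It then remains to compare truth values. If $r<q$ then $r_i<q$ for $\U$-almost all $i$, and both $\M\models m_{\vec x}\bowtie q.\psi\,[s]$ and $\M_i\models m_{\vec x}\bowtie q.\psi\,[s_i]$ hold a.e.; if $r>q$ the symmetric argument makes both sides false. The entire content of the theorem is concentrated in the boundary case $r=q$, which forces $q\in\q$ and makes the value $v$ decisive. Here I would use that the three sets $\{i:r_i>q\}$, $\{i:r_i=q\}$, $\{i:r_i<q\}$ partition $I$ and, on the middle piece, the further partition by $v^{\M_i}_n(A_i)\in\{\oplus,\ominus,\odot\}$; since $\U$ is an ultrafilter, exactly one of these (at most five) blocks lies in $\U$, and that block is precisely the one the definition of $v^\M_n(\prod_\U A_i)$ reads off. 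Running through the five possibilities and comparing, in each, the truth of $\M\models m_{\vec x}\bowtie q.\psi\,[s]$ (determined by $v^\M_n$ through the satisfaction clause for the measure constructor) against whether $\M_i\models m_{\vec x}\bowtie q.\psi\,[s_i]$ holds on the selected block (determined by $r_i$ together with $v^{\M_i}_n(A_i)$) shows that the two always agree; this is done once for $\bowtie\,=\,<$ and once for $\bowtie\,=\,\leq$. I expect this five-way bookkeeping at $r=q$ to be the only genuine obstacle: it is exactly the reason the ultraproduct value $v^\M_n$ was defined by the given five clauses, and the verification amounts to checking that those clauses were chosen to make the two satisfaction conditions line up.
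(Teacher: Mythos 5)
Your proposal is correct and follows essentially the same route as the paper's proof: induction on complexity with the classical constructors handled as in the classical \L o\'s theorem, the inductive identification $\psi(M,\vec b)=\prod_\U\psi(M_i,\vec b_i)$ (which also yields measurability, i.e.\ that the ultraproduct is a structure), and a case analysis matching the definitions of $\mu^\M_n$ and $v^\M_n$ against the satisfaction clauses. The only cosmetic difference is the organization of cases: the paper splits three ways on the a.e.\ comparison of $\mu^{\M_i}_n(A_i)$ with $q$, while you split on the ultralimit versus $q$ and then subdivide the boundary case into the five blocks mirroring the definition of $v^\M_n$ --- the same bookkeeping, grouped differently.
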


\begin{proof}
As in the proof of the classical \L os theorem, we proceed by induction on the complexity of formulae.  We only need to explain how to deal with the measure constructor cases, that is, we consider a formula $\varphi(\vec x,\vec y)$ such that the theorem holds true for $\varphi$ and we show that it holds for $m_{\vec x}\bowtie r.\varphi(\vec x,\vec y)$, where $\bowtie \in \{<,\leq\}$.  We fix valuations $s_i$ and $s$ as in the statement of the theorem and set $\vec b_i:=s_i(\vec y)$ and $\vec b:=s(\vec y)$.  We will repeatedly use the inductive hypothesis that $\varphi(M,\vec b)=\prod_\U \varphi(M_i,\vec b_i)$.

If $\mu^{\M_i}(\varphi(M_i,\vec b_i))<r$ a.e. then $\M_i\vDash m_{\vec x}<r. \varphi(\vec x,\vec b_i)$ and $\M_i\vDash m_{\vec x}\leq r. \varphi(\vec x,\vec b_i)$ a.e.; also either $\mu^{\M}(\varphi(M,\vec b))<r$ or both $\mu^{\M}(\varphi(M,\vec b))\leq r$ and $v^{\M}(\varphi,\vec b)=\ominus$, and in either case $\M\vDash m_{\vec x}<r.\varphi(\vec x,\vec b)$ and $\M\vDash m_{\vec x}\leq r.\varphi(\vec x,\vec b)$.

If $\mu^{\M_i}(\varphi(M_i,\vec b_i))>r$ a.e. then, symmetrically, we have $\M_i\vDash m_{\vec x}>r. \varphi(\vec x,\vec b_i)$ and $\M_i\vDash m_{\vec x}\geq r. \varphi(\vec x,\vec b_i)$ a.e., and also $\M\vDash m_{\vec x}>r.\varphi(\vec x,\vec b)$ and $\M\vDash m_{\vec x}\geq r.\varphi(\vec x,\vec b)$.

Finally, if $\mu^{\M}(\varphi(M_i,\vec b_i))=r$ a.e. then $v^{\M}(\varphi(M,\vec b))=v^{\M_i}(\varphi(M_i,\vec b_i))$ a.e., so $\M\vDash m_{\vec x}<r. \varphi(\vec x,\vec b)$ iff $\M_i\vDash m_{\vec x}<r. \varphi(\vec x,\vec b_i)$ a.e. and $\M\vDash m_{\vec x}\leq r. \varphi(\vec x,\vec b)$ iff $\M_i\vDash m_{\vec x}\leq r. \varphi(\vec x,\vec b_i)$.
\end{proof}

\begin{cor}
The ultraproduct of structures is a structure.
\end{cor}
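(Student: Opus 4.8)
The plan is to show that the ultraproduct quasistructure $\M = \prod_\U \M_i$ satisfies the rank $n$ measurability condition for every $n$; since it was already checked that $\M$ is a quasistructure, this is exactly what remains in order to conclude that $\M$ is a structure. I would prove this by induction on $n$, and the whole content of the argument is to combine two ingredients: the set-theoretic identity $\varphi(M,\vec b)=\prod_\U \varphi(M_i,\vec b_i)$ that is isolated in the proof of the \L o\'s theorem, and the standing hypothesis that each factor $\M_i$ is itself a structure (not merely a quasistructure).

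More precisely, I would fix $n$ and assume, as inductive hypothesis, that $\M$ satisfies the rank $k$ measurability condition for every $k<n$; this is precisely what is needed for the satisfaction relation $\M\models\psi[s]$ to be defined for all formulae $\psi$ of rank $\le n$, and hence for the set $\varphi(M,\vec b)$ to be meaningful whenever $\rk(\varphi)\le n$. Given such a $\varphi(\vec x,\vec y)$ and any $\vec b\in M^{|\vec y|}$, choose representatives $\vec b_i\in M_i^{|\vec y|}$ with $\vec b=[\vec b_i]_\U$. Applying the \L o\'s theorem to $\varphi$, for every $\vec a=[\vec a_i]_\U$ one has $\vec a\in\varphi(M,\vec b)$ iff $\M\models\varphi(\vec a,\vec b)$ iff $\M_i\models\varphi(\vec a_i,\vec b_i)$ a.e.\ iff $\vec a\in\prod_\U\varphi(M_i,\vec b_i)$; that is, $\varphi(M,\vec b)=\prod_\U\varphi(M_i,\vec b_i)$. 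Since each $\M_i$ is a structure, it satisfies the rank $n$ measurability condition, so $\varphi(M_i,\vec b_i)\in\B^{\M_i}_{|\vec x|}$ for every $i$. By the very definition of $\B^\M_{|\vec x|}$ in the ultraproduct --- namely, that it consists exactly of the sets of the form $\prod_\U A_i$ with $A_i\in\B^{\M_i}_{|\vec x|}$ --- I conclude $\varphi(M,\vec b)\in\B^\M_{|\vec x|}$. This establishes the rank $n$ measurability condition, completing the induction and showing that $\M$ is a structure.

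The one point requiring care is the apparent circularity between ``being a structure'' and ``interpreting the logic'': the \L o\'s theorem presupposes that satisfaction is defined, which in turn requires measurability. I expect this to be the only real obstacle, and the induction on rank resolves it cleanly, because the rank $n$ measurability condition is derived using only the satisfaction relation for formulae of rank $\le n$, which is already available from the rank $<n$ measurability secured at earlier inductive stages (the base case $n=0$ being just the classical \L o\'s theorem together with rank $0$ measurability of each $\M_i$). In effect, the corollary is not a genuinely new argument but a bookkeeping observation extracted from the proof of the \L o\'s theorem, where the identity $\varphi(M,\vec b)=\prod_\U\varphi(M_i,\vec b_i)$ was already the key inductive invariant.
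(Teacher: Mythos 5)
Your proposal is correct and takes essentially the same approach as the paper, whose entire proof is the one-line observation that definable subsets of the ultraproduct are ultraproducts of definable sets in the factor structures (which are measurable since each factor is a structure, and hence lie in $\B^\M_n$ by the definition of the ultraproduct algebra). Your explicit induction on rank, resolving the apparent circularity between defining satisfaction and verifying measurability, is precisely the bookkeeping that the paper leaves implicit in its appeal to the \L o\'s theorem.
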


\begin{proof}
Definable subsets of the ultraproduct are ultraproducts of definable sets in the factor structures.
\end{proof}

\begin{cor}
With the same hypotheses as in the previous theorem, if $\sigma$ is an $\la$-sentence, then $$\M\models \sigma \Leftrightarrow \M_i\models \sigma \text{ for }\U\text{-almost all }i.$$
\end{cor}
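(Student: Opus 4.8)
The plan is to derive this as an immediate special case of the \L o\'s Theorem for $\AML$, exploiting the fact that the truth value of a sentence does not depend on the valuation chosen to evaluate it. Recall that, for a formula $\sigma$ with no free variables, the notation $\M \models \sigma$ abbreviates $\M \models \sigma[s]$ for some (equivalently, any) valuation $s$. That this is genuinely independent of $s$ follows from a routine induction on the complexity of $\AML$ $\la$-formulae (the analogue of the free-variable lemma of first-order logic); the only new case is the measure constructor $m_{\vec x}\bowtie r.\varphi$, which is handled exactly as the quantifier case since $m_{\vec x}\bowtie r$ binds the variables $\vec x$, so the satisfaction of $m_{\vec x}\bowtie r.\varphi$ depends only on the values $s$ assigns to the remaining (parameter) variables. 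The same remark applies verbatim in each factor $\M_i$.

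First I would fix an arbitrary valuation $s$ on $\M$ and observe that it can always be lifted to a compatible family of valuations on the factors. Since the underlying set of $\M = \prod_\U \M_i$ is the classical ultraproduct, every value $s(v) \in M$ is an equivalence class $[a^v_i]_\U$ for some choice of representatives $(a^v_i)_{i \in I}$ with $a^v_i \in M_i$. Defining $s_i \colon V \to M_i$ by $s_i(v) := a^v_i$ then yields, for every variable $v$, the required relation $s(v) = [s_i(v)]_\U$. Thus the hypotheses of the \L o\'s Theorem for $\AML$ are met by $s$ together with the family $(s_i)$.

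With these valuations in hand, the theorem applied to the formula $\varphi := \sigma$ gives $\M \models \sigma[s] \Leftrightarrow \M_i \models \sigma[s_i]$ for $\U$-almost all $i$. Since $\sigma$ is a sentence, the valuation-independence noted above lets us replace $\M \models \sigma[s]$ by $\M \models \sigma$ and, for each $i$, replace $\M_i \models \sigma[s_i]$ by $\M_i \models \sigma$; substituting these equivalences yields the displayed conclusion. There is essentially no obstacle in this argument: the only points requiring verification are the existence of the lifted family $(s_i)$, which is immediate from the definition of the ultraproduct, and the valuation-independence of sentence satisfaction, which is routine and in any case already implicit in the notation $\M \models \sigma$.
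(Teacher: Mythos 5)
Your proof is correct and is exactly the argument the paper has in mind: the paper states this corollary without proof, treating it as an immediate specialization of the \L o\'s Theorem for $\AML$, and your write-up just makes explicit the two routine ingredients (lifting a valuation to the factors via representatives, and valuation-independence of sentence satisfaction). Nothing is missing.
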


\begin{cor}[Compactness Theorem for $\AML$]
If $\Sigma$ is a set of $\la$-sentences such that each finite subset of $\Sigma$ has a model, then $\Sigma$ has a model.
\end{cor}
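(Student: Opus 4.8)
The plan is to deduce compactness from the \L o\'s Theorem for $\AML$ by the standard ultraproduct argument. The key observation is that once that theorem and its first corollary (that an ultraproduct of structures is again a structure) are in hand, no measure-specific reasoning is required: the argument is word-for-word the classical proof of compactness.

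First I would let $I$ denote the set of all finite subsets of $\Sigma$. By hypothesis, for each $i\in I$ there is a structure $\M_i$ with $\M_i\models i$, that is, $\M_i\models\sigma$ for every $\sigma\in i$; fix one such $\M_i$ for each $i\in I$. Next I would produce a suitable ultrafilter on $I$. For each $\sigma\in\Sigma$, set $X_\sigma:=\{i\in I\setmid \sigma\in i\}$. The family $\{X_\sigma\setmid \sigma\in\Sigma\}$ has the finite intersection property: given $\sigma_1,\dots,\sigma_n\in\Sigma$, the finite set $i_0:=\{\sigma_1,\dots,\sigma_n\}$ is an element of $I$ lying in each $X_{\sigma_k}$, so $i_0\in\bigcap_{k}X_{\sigma_k}$. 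Hence this family extends to an ultrafilter $\U$ on $I$.

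Finally I would form $\M:=\prod_\U\M_i$. By the corollary asserting that the ultraproduct of structures is a structure, $\M$ is an $\la$-structure, so the sentence form of \L o\'s Theorem applies to it. I then claim $\M\models\Sigma$. Fix $\sigma\in\Sigma$. If $i\in X_\sigma$, then $\sigma\in i$ and $\M_i\models i$, so $\M_i\models\sigma$; thus $X_\sigma\subseteq\{i\in I\setmid \M_i\models\sigma\}$. Since $X_\sigma\in\U$, it follows that $\M_i\models\sigma$ for $\U$-almost all $i$, and therefore $\M\models\sigma$ by \L o\'s Theorem. As $\sigma\in\Sigma$ was arbitrary, $\M\models\Sigma$, exhibiting the desired model.

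The point worth flagging is that there is essentially no obstacle beyond what the \L o\'s Theorem already supplies. All of the genuinely $\AML$-specific content, namely the correct behavior of the measure constructor and of the values $v^\M_n$ under the ultraproduct, has been absorbed into that theorem and the corollary that ultraproducts of structures remain structures; consequently the compactness proof itself is purely formal and identical to the first-order case.
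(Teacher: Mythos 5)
Your proof is correct and is exactly the argument the paper has in mind: its proof simply states that ``the usual ultraproduct proof of the Compactness theorem from the \L o\'s theorem applies'' and cites Corollary 4.1.11 of Chang--Keisler, which is precisely the finite-subsets-as-index-set construction you spell out. Your observation that all $\AML$-specific content is already absorbed into the \L o\'s theorem and the corollary that ultraproducts of structures are structures matches the paper's reasoning for deferring to the classical argument.
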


\begin{proof}
The usual ultraproduct proof of the Compactness theorem from the \L o\'s theorem applies; see, for example, Corollary 4.1.11 of \cite{changkeisler}.
\end{proof}

\begin{cor}
For any $\la$-structure $\M$, any set $I$, and any nonprincipal ultrafilter $\U$ on $I$, the diagonal embedding $j:\M\to \M^\U$ given by $j(a)=[(a)]_\U$ is an elementary embedding.
\end{cor}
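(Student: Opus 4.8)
The plan is to derive this directly from the \L o\'s Theorem for $\AML$ proved above, applied to the \emph{constant} family in which every factor equals $\M$. Here $\M^\U$ denotes the ultrapower $\prod_\U \M_i$ with $\M_i := \M$ for all $i \in I$; by the corollary that an ultraproduct of structures is a structure, $\M^\U$ is again an $\AML$ $\la$-structure, so $\M^\U \models \varphi[s]$ is defined at every rank and the statement makes sense. Fix an $\la$-formula $\varphi(\vec x)$ and a tuple $\vec a \in M^{|\vec x|}$; the goal is to show $\M^\U \models \varphi(j(\vec a)) \iff \M \models \varphi(\vec a)$.

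First I would set up valuations tailored to the constant family. Let $s$ be a valuation on $\M^\U$ with $s(x_k) = j(a_k)$, and for each $i \in I$ let $s_i$ be the single, $i$-independent valuation on $\M$ with $s_i(x_k) = a_k$. The compatibility hypothesis of the \L o\'s Theorem, namely $s(v) = [s_i(v)]_\U$ for each variable $v$, holds on the variables $x_k$ precisely because $j(a_k) = [(a_k)]_\U$ is the class of the constant sequence $s_i(x_k) = a_k$; on the remaining variables it can be arranged freely, and these are irrelevant since $\varphi$ has free variables among $\vec x$.

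Next I would apply the \L o\'s Theorem and collapse the ``almost everywhere'' clause. It gives
\[ \M^\U \models \varphi[s] \iff \{\, i \in I : \M \models \varphi[s_i] \,\} \in \U. \]
Because the $s_i$ are literally the same valuation, the set on the right is $I$ when $\M \models \varphi(\vec a)$ and $\emptyset$ otherwise; since $\U$ is a proper ultrafilter, $I \in \U$ while $\emptyset \notin \U$, so the right-hand condition is equivalent to $\M \models \varphi(\vec a)$. This yields $\M^\U \models \varphi(j(\vec a)) \iff \M \models \varphi(\vec a)$, which is exactly elementarity of $j$. Applying this equivalence to the atomic formula $x = y$ shows $j$ is injective, and applying it to atomic formulae built from the function and relation symbols of $\la$ shows $j$ preserves the first-order structure, so $j$ is an embedding; together with the displayed equivalence for all formulae, $j$ is an elementary embedding.

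I expect no serious obstacle: the substantive work is entirely contained in the measure-constructor case of the \L o\'s Theorem, the genuinely $\AML$-specific ingredient, which is already established. The only points needing care are the verification of the valuation-compatibility clause for the constant sequences and the observation that for a constant family the ``for $\U$-almost all $i$'' quantifier degenerates to a two-valued alternative. It is worth remarking that this last step uses only that $\U$ is a proper filter, so the nonprincipality hypothesis in the statement is not actually needed for the conclusion.
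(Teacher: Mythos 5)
Your proof is correct and is exactly the argument the paper intends: the corollary is stated there without proof as an immediate consequence of the \L o\'s Theorem for $\AML$, applied to the constant family $\M_i=\M$ with constant valuations, just as you do. Your closing observation that only properness of the ultrafilter (not nonprincipality) is used is also accurate.
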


The following result is standard in the classical context; this proof carries over immediately to the framework of $\AML$.

\begin{prop}
Suppose that $\cU$ is a countably incomplete ultrafilter on a set $I$ and suppose that $\la$ is countable.  Suppose that $(\M_i \ : \ i\in I)$ is a family of $\la$-structures.  Then $\prod_\cU \M_i$ is $\aleph_1$-saturated.  In particular, if $I=\n$ and $\cU$ is any nonprincipal ultrafilter on $\n$, then $\prod_{\cU}\M_i$ is $\aleph_1$-saturated.
\end{prop}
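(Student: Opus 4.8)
The plan is to run the standard ultraproduct saturation argument, observing that the only two facts it uses—\L o\'s's Theorem and the description of definable sets in an ultraproduct—hold verbatim in $\AML$. Throughout, write $\M=\prod_\cU \M_i$, and recall that since $\la$ is countable, $\aleph_1$-compactness and $\aleph_1$-saturation coincide; so it suffices to show that every countable family $(D_m \ : \ m\in\n)$ of definable subsets of $M^n$ with the finite intersection property has $\bigcap_m D_m\neq\emptyset$.

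First I would unwind countable incompleteness: fix a chain $I=I_0\supseteq I_1\supseteq\cdots$ of members of $\cU$ with $\bigcap_k I_k=\emptyset$, which exists precisely because $\cU$ is countably incomplete. (For the final clause, with $I=\n$ and $\cU$ nonprincipal, take $I_k=\{i \ : \ i\geq k\}$.) Next, writing each $D_m=\varphi_m(M,\vec a_m)$ with $\vec a_m=[\vec a_{m,i}]_\cU$, the key input is that in the $\AML$ setting definable sets in the ultraproduct are still ultraproducts of definable sets in the factors, so $D_m=\prod_\cU \varphi_m(M_i,\vec a_{m,i})$. This is exactly the content already used in the Corollary asserting that an ultraproduct of structures is a structure, and it holds even though each $\varphi_m$ may contain measure quantifiers. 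For each $k$, the finite intersection property together with \L o\'s's Theorem for $\AML$ then gives that $J_k:=\{i \ : \ \M_i\models \exists \vec x\bigwedge_{m\leq k}\varphi_m(\vec x,\vec a_{m,i})\}\in\cU$.

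I then set $X_k:=J_k\cap I_k\in\cU$, a decreasing chain with $\bigcap_k X_k=\emptyset$. For each $i$, let $n(i)$ be the largest $k$ with $i\in X_k$ (finite, since the $X_k$ have empty intersection), and choose $c_i\in M_i^n$ witnessing the existential at level $n(i)$, arbitrarily if $i\notin X_0$. Putting $c:=[c_i]_\cU$, for any fixed $m$ one has $\M_i\models\varphi_m(c_i,\vec a_{m,i})$ for all $i\in X_m$, hence a.e.; so \L o\'s's Theorem yields $c\in D_m$. As $m$ was arbitrary, $c\in\bigcap_m D_m$, as desired.

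I do not expect a genuine obstacle: the argument is the classical one, and the only thing to verify is that the two imported tools are the $\AML$ versions already established—\L o\'s's Theorem for $\AML$ and the fact that $\AML$-definable sets in the ultraproduct are ultraproducts of $\AML$-definable sets. The one point worth flagging is that ``definable'' in the hypothesis of $\aleph_1$-compactness means $\AML$-definability, so the $\varphi_m$ may carry measure quantifiers; but since \L o\'s's Theorem holds for all $\AML$ formulae, nothing about the measure component of the structures enters the saturation argument, and the transfer from the classical case is immediate.
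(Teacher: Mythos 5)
Your proof is correct and follows essentially the same diagonalization argument the paper uses (its classical version appears in the appendix, and the main text simply notes it carries over to $\AML$ via the \L o\'s theorem): define the a.e.\ witness sets $J_k$ by \L o\'s, pick witnesses $c_i$ at the deepest level available at index $i$, and apply \L o\'s again. The only difference is cosmetic: you run the argument for general countably incomplete $\cU$ on arbitrary $I$ using the witnessing chain $(I_k)$, whereas the paper's written proof restricts to $I=\n$ (using the tails $\{i : i\geq k\}$ and nonprincipality) and leaves the general case as an easy modification.
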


Consequently, an ultraproduct $\M$ for which $\mu^\M_1$ is finite can be equipped with its canonical family of measures.  (Observe that, by the \L o\'s theorem, $\mu^\M_1$ is finite if and only if the sequence $(\mu^{\M_i}_1)$ is ``essentially bounded.'')  These measures are instrumental for our applications of $\AML$ to problems in finite combinatorics and ergodic theory.

We can also use the \L o\`s theorem to show that properties of the canonical measures on the factor models are inherited by the canonical measures on the ultraproduct.

\begin{cor} 
Suppose that $(\M_i\setmid i\in I)$ is a family of Fubini structures, $\cU$ is a nonprincipal ultrafilter on $I$, and $\M=\prod_{\cU}\M_i$.  Suppose further that there is a $c\in \r^{>0}$ such that $\mu^\M_1(M_i)\leq c$ for $\U$-almost all $i$.  Then $\M$ is a Fubini structure.
\end{cor}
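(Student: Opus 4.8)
The plan is to reduce the statement to the syntactic characterization of Fubini structures furnished by Proposition~\ref{ensuring_fubini} and to transport that characterization across the ultraproduct via the \L o\'s theorem. Fix once and for all the set $T$ of $\AML$ sentences constructed in the proof of Proposition~\ref{ensuring_fubini}.

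First I would check that each factor satisfies $T$. Each $\M_i$ is a Fubini structure, hence an $\aleph_1$-compact $\la$-structure with $\mu^{\M_i}_1(M_i)$ finite whose canonical measures make it a measured structure; by Remark~\ref{fubinisound} (equivalently, by the forward direction of Proposition~\ref{ensuring_fubini}) this gives $\M_i\models T$ for every $i$. Since each sentence $\sigma\in T$ then holds in $\U$-almost all (indeed all) of the $\M_i$, the Corollary to the \L o\'s theorem for $\AML$ yields $\M\models\sigma$ for every $\sigma\in T$, that is, $\M\models T$. The remaining task is to invoke the reverse direction of Proposition~\ref{ensuring_fubini}, which applies to $\M$ provided $\M$ is $\aleph_1$-compact and $\mu^\M_1(M)$ is finite. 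The first of these holds because an ultraproduct by a nonprincipal (countably incomplete) ultrafilter is $\aleph_1$-saturated, and a fortiori $\aleph_1$-compact, by the saturation proposition. The second is exactly where the uniform bound enters: from $\mu^{\M_i}_1(M_i)\leq c$ for $\U$-almost all $i$ together with the defining formula $\mu^\M_1(M)=\lim_\cU \mu^{\M_i}_1(M_i)\leq c$ for the ultraproduct measure, we conclude that $\mu^\M_1$ is finite. With both conditions verified, Proposition~\ref{ensuring_fubini} turns $\M\models T$ into the assertion that $\M$ is a Fubini structure, finishing the argument.

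The only genuine subtlety, and the step I would be most careful about, is the finiteness of $\mu^\M_1(M)$: without the uniform bound $c$ the ultralimit $\lim_\cU \mu^{\M_i}_1(M_i)$ could be infinite, in which case the canonical measures are no longer uniquely pinned down and the entire Fubini apparatus of Proposition~\ref{ensuring_fubini}, which is developed under the standing finiteness assumption, would be unavailable. Everything else is a routine transfer: no new measure-theoretic estimate is required here, since all of that work was already carried out inside the proof of Proposition~\ref{ensuring_fubini}, and what remains is purely the bookkeeping of applying the \L o\'s theorem to the sentences of $T$ and confirming that $\M$ lies in the class to which Proposition~\ref{ensuring_fubini} applies.
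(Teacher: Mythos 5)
Your proof is correct and is essentially the paper's own argument: the paper disposes of this corollary with the single line ``immediate from Proposition \ref{ensuring_fubini} and the \L o\'s theorem,'' and your write-up is precisely the expansion of that line (factors satisfy $T$ by Remark \ref{fubinisound}, transfer $T$ by \L o\'s, verify $\aleph_1$-compactness and finiteness of $\mu^\M_1$ so the converse direction of Proposition \ref{ensuring_fubini} applies). Nothing further is needed.
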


\begin{proof}
This is immediate from Proposition \ref{ensuring_fubini} and the \L o\'s theorem.
\end{proof}

\begin{cor}
Suppose that $(\M_i \setmid i\in I)$ is a family of $\aleph_1$-compact $\la$-structures such that $\mu^{\M_i}_1$ is a continuous measure for almost all $i$. Suppose that $\cU$ is a nonprincipal ultrafilter on $I$ and $\M=\prod_{\cU}\M_i$.  
Then $\mu^\M_1$ is also a continuous measure.  
\end{cor}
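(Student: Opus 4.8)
The plan is to reduce continuity of $\mu^\M_1$ to an $\AML$-expressible condition using the type-definable characterization of measure-zero singletons recorded just after Proposition~\ref{cont}, and then to transfer that condition across the ultraproduct one formula at a time via the \L o\'s theorem for $\AML$. The reason I would work with that pointwise characterization rather than invoking Proposition~\ref{cont} directly for $\M$ is that the latter requires $\M$ itself to be $\aleph_1$-compact; since $\cU$ is only assumed nonprincipal on an arbitrary index set $I$, we have no guarantee that $\M=\prod_\cU\M_i$ is $\aleph_1$-compact, and so we avoid relying on any saturation of $\M$.

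Recall that $\Gamma(x):=\{m_y<q.(x=y) \ : \ q\in\q^{>0}\}$ has the property that, for \emph{any} $\la$-structure $\N$ and any $a\in N$, one has $\mu^\N_1(\{a\})=0$ if and only if $\N\models\varphi(a)$ for every $\varphi\in\Gamma$. (Note that $\{a\}$ is definable over $a$, hence measurable, so $\mu^\N_1(\{a\})$ is meaningful in any structure.) Set $J:=\{i\in I \ : \ \mu^{\M_i}_1 \text{ is continuous}\}$, so that $J\in\cU$ by hypothesis. First I would fix an arbitrary element $a=[a_i]_\cU\in M$ and an arbitrary $q\in\q^{>0}$. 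For every $i\in J$, continuity of $\mu^{\M_i}_1$ gives $\mu^{\M_i}_1(\{a_i\})=0$, so the $\Gamma$-characterization applied to $\M_i$ yields $\M_i\models m_y<q.(a_i=y)$. Hence $\{i \ : \ \M_i\models m_y<q.(a_i=y)\}\supseteq J\in\cU$, and the \L o\'s theorem for $\AML$ (applied to the formula $m_y<q.(x=y)$ with a valuation sending $x\mapsto a$) gives $\M\models m_y<q.(a=y)$. As $q\in\q^{>0}$ was arbitrary, $\M\models\varphi(a)$ for every $\varphi\in\Gamma$, so the $\Gamma$-characterization applied to $\M$ gives $\mu^\M_1(\{a\})=0$. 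Since $a\in M$ was arbitrary, $\mu^\M_1$ is continuous.

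There is no genuinely hard step here; the only point requiring care is to transfer the members of the type $\Gamma$ \emph{individually}, handling one rational $q$ at a time, so that each appeal to \L o\'s concerns a single $\AML$-formula rather than the whole infinite type at once. This is precisely what lets the argument go through without assuming $\M$ is $\aleph_1$-compact. The $\aleph_1$-compactness of the factors $\M_i$ enters only to ensure that their canonical measures $\mu^{\M_i}_1$ exist and that their continuity is equivalent to the vanishing of $\mu^{\M_i}_1$ on every singleton.
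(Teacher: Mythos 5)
Your proof is correct, and at heart it is the same argument as the paper's: the paper's entire proof reads ``This is immediate from Proposition \ref{cont} and the \L o\'s theorem,'' i.e., transfer an $\AML$-expressible formulation of continuity from the factors to the ultraproduct. The difference lies in what exactly gets transferred. The paper transfers the sentence scheme $\{\forall x\, m_y\leq q.(x=y)\setmid q\in\q^{>0}\}$ of Proposition \ref{cont}; to come back down from ``$\M$ satisfies this scheme'' to ``$\mu^\M_1$ is continuous'' via that proposition, one needs $\M$ itself to be $\aleph_1$-compact, which the corollary's stated hypotheses (a nonprincipal $\cU$ on an arbitrary index set, no cardinality restriction on $\la$) do not literally guarantee --- the paper's $\aleph_1$-saturation result for ultraproducts assumes a countably incomplete ultrafilter and a countable signature. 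You instead transfer, one rational $q$ at a time, the parameterized formulas $m_y<q.(x=y)$ of the type $\Gamma(x)$, whose equivalence with $\mu_1(\{a\})=0$ the paper records for arbitrary structures (singletons being definable, hence measurable). This sidesteps any saturation requirement on $\M$, so your argument is a mild sharpening of the paper's one-line proof rather than a genuinely different one; both appeals to \L o\'s are formula-by-formula in exactly the way you describe. Worth noting: there is an even shorter route that needs no logic at all, since $\{a\}=\prod_\cU\{a_i\}$ gives $\mu^\M_1(\{a\})=\lim_\cU \mu^{\M_i}_1(\{a_i\})=0$ directly from the definition of the ultraproduct measure.
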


\begin{proof}
This is immediate from Proposition \ref{cont} and the \L o\'s theorem.
\end{proof}

\begin{rmk}
Under some set-theoretic assumptions, the converse of the above corollary holds.  A nonprincipal ultrafilter $\cU$ on a (necessarily uncountable) index set $I$ is said to be \emph{countably complete} if whenever $(D_m\setmid m\in \n)$ is a sequence of sets from $\cU$ such that $D_m\supseteq D_{m+1}$ for all $m$, we have $\bigcap_{m\in \n}D_m\in \cU$.  With the same hypotheses as in the previous corollary, if we further assume that $\cU$ is countably complete, then continuity of $\mu^\M_1$ implies continuity of almost all $\mu^{\M_i}_1$.  The existence of a countably complete ultrafilter is tantamount to the existence of a \emph{measurable cardinal}, which is a fairly mild set-theoretic assumption that most set theorists are willing to assume when necessary.  However, it is straightforward to check that if $\cU$ is countably complete and each $\M_i$ is finite, then $\prod_{\cU}\M_i$ will also be finite, making such ultraproducts unusable for many purposes.
\end{rmk}

\begin{cor}
There does not exist an $\la$-formula $\varphi(x)$ such that for every $\aleph_1$-compact $\la$-structure $\M$, we have
$$\mu^\M_1(\{a\})=0 \Leftrightarrow \M\models \varphi(a).$$
\end{cor}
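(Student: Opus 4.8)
The plan is to argue by contradiction. Assuming that such a $\varphi(x)$ exists, I will exhibit a single $\aleph_1$-compact $\la$-structure in which $\varphi$ fails to define the measure-zero singletons, contradicting the hypothesis. The counterexample will be an ultraproduct of finite counting-measure structures whose sizes tend to infinity. The underlying idea is to exploit the gap between \emph{definability} — a finitary notion preserved by the \L o\'s theorem — and the limiting behavior of the counting measure: in each finite factor every singleton has \emph{positive} measure, so $\varphi$ is false everywhere, yet in the ultraproduct every singleton has measure $0$. This also clarifies why the earlier observation that the measure-zero singletons are uniformly \emph{type}-definable (by $\Gamma(x)=\{m_y<q.(x=y) : q\in\q^{>0}\}$) does not upgrade to definability by a single formula: one formula survives ultraproducts, whereas the infinite type $\Gamma$ does not reduce to any single member of itself across this family.

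Concretely, for each $n\geq 1$ let $\M_n$ be an $\la$-structure on an $n$-element universe equipped with its normalized counting measure, so that $\mu^{\M_n}_1(\{a\})=\frac1n$ for every $a\in M_n$. Each $\M_n$ is finite, hence $\aleph_1$-compact, so by the assumed defining property of $\varphi$ we have $\M_n\models\varphi(a)$ iff $\mu^{\M_n}_1(\{a\})=0$; since $\frac1n\neq 0$, this gives $\M_n\models\forall x\,\neg\varphi(x)$. Now fix a nonprincipal ultrafilter $\U$ on $\n$ and form $\N=\prod_\U\M_n$. By the \L o\'s theorem for $\AML$, $\N\models\forall x\,\neg\varphi(x)$, so $\varphi$ holds at no element of $\N$. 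On the other hand, for $a=[a_n]_\U\in N$ the singleton $\{a\}$ equals $\prod_\U\{a_n\}$, which lies in $\B^\N_1$ and satisfies $\mu^\N_1(\{a\})=\lim_\U\mu^{\M_n}_1(\{a_n\})=\lim_\U\tfrac1n=0$. Thus every singleton in $\N$ has measure $0$ while $\varphi$ is satisfied nowhere, so $\varphi$ cannot define the measure-zero singletons in $\N$.

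The one step requiring care — and the crux of the argument — is ensuring that $\N$ is itself $\aleph_1$-compact, since only then does the assumed defining property of $\varphi$ apply to $\N$ and produce the contradiction. Taking $\la$ countable (the case of interest, in which $\aleph_1$-compact coincides with $\aleph_1$-saturated) and $\U$ a nonprincipal, hence countably incomplete, ultrafilter on $\n$, the earlier Proposition on saturation of ultraproducts guarantees that $\N$ is $\aleph_1$-saturated, and in particular $\aleph_1$-compact; for uncountable $\la$ one instead takes a sufficiently saturated ultrafilter. The remaining verifications are routine: that $\{a\}=\prod_\U\{a_n\}$, since $[c_n]_\U$ lies in the right-hand side iff $c_n=a_n$ for $\U$-almost all $n$, i.e. iff $[c_n]_\U=a$; and that $\lim_\U\tfrac1n=0$ because $\U$ is nonprincipal on $\n$. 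Together these yield the contradiction and hence the corollary.
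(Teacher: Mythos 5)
Your proof is correct and essentially the same as the paper's: both take a nonprincipal ultraproduct of finite structures of growing size equipped with normalized counting measure, and combine the \L o\'s theorem with $\aleph_1$-compactness of both the finite factors and the ultraproduct. The only difference is cosmetic direction — the paper notes the singletons of the ultraproduct have measure $0$, concludes $\M\models\forall x\,\varphi(x)$ there, and transfers this \emph{down} to a finite factor for the contradiction, whereas you conclude $\forall x\,\neg\varphi(x)$ in the finite factors and transfer it \emph{up} to the ultraproduct; the two contradictions are mirror images of one another.
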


\begin{proof}
Suppose, towards a contradiction, that such an $\la$-formula $\varphi(x)$ exists.  For $i\geq 1$, let $\mathfrak{M}_i$ be the classical measured $\la$-structure whose universe is $\{1,\ldots,i\}$, whose measures are given by the normalized counting measures, and which interprets the $\la$-structure in an arbitrary fashion.  Let $\M_i$ be the $\AML$ $\la$-structure associated with $\mathfrak{M}_i$.  Let $\U$ be a nonprincipal ultrafilter on $\n^{>0}$ and let $\M=\prod_\U \M_i$.  
It is easy to see that $\mu^\M_1(M)\leq 1$ and $\mu^\M_1(\{a\})=0$ for any $a\in M$.  It follows that $\M\models \forall x\varphi(x)$.  By \L o\'s, we have that $\M_i\models \forall x\varphi(x)$ for some $i\in\n^{>0}$.  Since $\M_i$ is $\aleph_1$-compact, it follows that each $m\in M_i$ has measure $0$, which is a contradiction.
\end{proof}



\begin{cor}
There does not exist an $\la$-sentence $\varphi$ such that, whenever $\M$ is an $\aleph_1$-compact structure with $\mu^\M_1(M)$ finite, then $\M\models \varphi$ if and only if $\mu^\M_1$ is a probability measure.
\end{cor}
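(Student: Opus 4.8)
The plan is to reuse the ultraproduct machinery exactly as in the proof of the earlier corollary concerning $\mu^\M_1(\{a\})=0$, exploiting the fact that a single sentence is preserved by the $\AML$ \L o\'s theorem whereas the property ``total measure equals $1$'' is a limit condition that the approximate semantics can only capture with an infinite scheme such as the $T$ appearing in the proposition characterizing probability measures. Concretely, suppose for contradiction that such a $\varphi$ exists. First I would build, for each $i\geq 2$, a finite $\la$-structure $\M_i$ carrying a genuine measure of total mass $1-\frac1i$: on a universe of size $i$ with arbitrary interpretation of the symbols of $\la$, set $\B^{\M_i}_n=\mathcal P(M_i^n)$ and $\mu^{\M_i}_n(X)=(1-\tfrac1i)^n\,|X|/i^n$, and put $v^{\M_i}_n(X)=\odot$ for every $X$. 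A direct computation shows $\mu^{\M_i}_n$ is finitely additive and that $\mu^{\M_i}_{m+n}$ extends $\mu^{\M_i}_m\times\mu^{\M_i}_n$, so $\M_i$ is an $\AML$ $\la$-quasistructure; since $M_i$ is finite every subset is measurable, so $\M_i$ is in fact a structure, and being finite it is $\aleph_1$-compact.

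Next, because $\mu^{\M_i}_1(M_i)=1-\frac1i\neq 1$, the measure $\mu^{\M_i}_1$ is not a probability measure, so the characterizing property of $\varphi$ gives $\M_i\models\neg\varphi$ for every such $i$. Now fix a nonprincipal ultrafilter $\U$ on $\n^{>0}$ and form $\M=\prod_\U\M_i$. By the proposition on saturation of countably incomplete ultraproducts, $\M$ is $\aleph_1$-saturated, hence $\aleph_1$-compact; and by the definition of the ultraproduct measure, $\mu^\M_1(M)=\lim_\U\mu^{\M_i}_1(M_i)=\lim_\U(1-\frac1i)=1$, so $\mu^\M_1(M)$ is finite and $\mu^\M_1$ is a probability measure. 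The characterizing property of $\varphi$ therefore forces $\M\models\varphi$. On the other hand, the $\AML$ \L o\'s theorem applied to $\neg\varphi$, together with the fact that $\M_i\models\neg\varphi$ for all (hence $\U$-almost all) $i$, yields $\M\models\neg\varphi$, a contradiction.

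The substance of the argument is entirely in the choice of the total masses $1-\frac1i$: the exact value $1$ arises only in the ultralimit, so the finite factors genuinely fail to be probability measures while their ultraproduct genuinely is one. This is the same discontinuity that explains why the earlier proposition needed an infinite scheme rather than a single sentence, and it is precisely what a single \L o\'s-preserved sentence cannot detect. (Approaching mass $1$ from below rather than from above is convenient, since it also makes $v^\M_1(M)=\ominus$, so that $\M$ satisfies the scheme $T$ and no tension arises with the earlier proposition; but for the present contradiction only the genuine equality $\mu^\M_1(M)=1$ is used.) The only mild technical obstacle is checking that the scaled counting measures fit together so as to satisfy the product axiom across all arities, which is the short computation indicated above; everything else is a direct appeal to \L o\'s and to the saturation of the ultraproduct.
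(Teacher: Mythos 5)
Your proof is correct and follows essentially the same route as the paper's: finite structures of total mass $1-\frac{1}{i}$, a nonprincipal ultraproduct whose limit measure has total mass exactly $1$, and the \L o\'s theorem to derive the contradiction. The only difference is that you spell out details the paper leaves implicit (the explicit scaled counting measure, the product-measure check, and the $\aleph_1$-compactness of the factors), which is fine but not a different argument.
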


\begin{proof}
Suppose, towards a contradiction, that such a sentence $\varphi$ exists.  For $n\geq 1$, let $\M_n$ be a finite $\la$-structure such that $\mu^\M_1(M_n)=1-\frac{1}{n}$.  Let $\M:=\prod_{\cU}\M_n$, where $\cU$ is a nonprincipal ultrafilter on $\n^{>0}$.  Observe that $\mu^\M_1(M)=1$, whence $\M\models \varphi$.  Thus, $\M_n\models \varphi$ for some $n$, contradicting the fact that $\mu^{\M_n}1(M_n)<1$.
\end{proof}

\begin{cor}
There does not exist a set $T$ of $\la$-sentences such that, for any $\la$-structure $\M$, $\M\models T$ if and only if $\mu^\M_1$ is $\sigma$-finite.
\end{cor}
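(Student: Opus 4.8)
The plan is to re-use the ultraproduct-and-\L o\'s template of the preceding corollaries, but arranged so that $\sigma$-finiteness is \emph{destroyed} in the limit. Assume toward a contradiction that such a $T$ exists. For each $i\geq 1$ I would let $\M_i$ be a finite $\la$-structure with universe $\{1,\dots,i\}$, interpreting the symbols of $\la$ arbitrarily, equipped with the \emph{un-normalized} counting measure $\mu^{\M_i}_n(X)=|X|$; here $\B^{\M_i}_n=\mathcal P(M_i^n)$. This is a genuine measured (hence $\AML$) structure, since every subset is measurable, the identity $|A\times B|=|A|\cdot|B|$ gives the product property, and Fubini is automatic for finite structures. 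The point is that each $\M_i$ has \emph{finite} total measure $\mu^{\M_i}_1(M_i)=i$, so it is trivially $\sigma$-finite (cover $M_i$ by itself); by the assumed equivalence, $\M_i\models T$.

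Next I would set $\M=\prod_\U\M_i$ for a nonprincipal ultrafilter $\U$ on $\n^{>0}$. Because every factor $\M_i$ satisfies the \emph{whole} set $T$, the sentence form of the \L o\'s theorem transfers each member of $T$ to $\M$, so $\M\models T$; since $\M$ is again a structure, the equivalence defining $T$ forces $\mu^\M_1$ to be $\sigma$-finite. However $\mu^\M_1(M)=\lim_\U\mu^{\M_i}_1(M_i)=\lim_\U i=\infty$, as $\U$ is nonprincipal and $i\to\infty$. The entire argument therefore reduces to showing that an ultraproduct of total measure $\infty$ cannot be $\sigma$-finite.

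This last implication is the crux and the main obstacle. Suppose $M=\bigcup_n A_n$ with each $A_n\in\B^\M_1$ of finite measure; replacing $A_n$ by $A_0\cup\dots\cup A_n$ I may assume the $A_n$ increase (finite unions stay in the algebra). Then $C_n:=M\setminus A_n\in\B^\M_1$ are decreasing with $\bigcap_n C_n=\emptyset$. Now the members of $\B^\M_1$ are exactly the \emph{internal} sets of the ultraproduct, and over a nonprincipal --- hence countably incomplete --- ultrafilter on $\n$ these are countably saturated: a countable family of internal sets with the finite intersection property has nonempty intersection (this is the internal saturation underlying Loeb measure theory, cf. \cite{goldblatt}). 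A decreasing family of nonempty internal sets has the finite intersection property, so some $C_n$ must be empty, i.e. $A_n=M$ and $\mu^\M_1(M)=\mu^\M_1(A_n)<\infty$, contradicting $\mu^\M_1(M)=\infty$. The one delicate point is precisely this appeal to \emph{internal} rather than merely definable saturation: the Proposition quoted above only supplies saturation for definable families, whereas a witness to $\sigma$-finiteness may use arbitrary members of $\B^\M_1$. If one prefers to avoid citing the internal version, it is proved directly from countable incompleteness by fixing $\U$-sets $I_0\supseteq I_1\supseteq\cdots$ with empty intersection and assembling a coordinatewise witness to $\bigcap_n C_n$.
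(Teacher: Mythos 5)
Your proof is correct, and it takes a genuinely different route from the paper's. The paper forms the ultrapower $\N=\M^{\cU}$ of a \emph{single} countably infinite structure $\M$ carrying the unnormalized counting measure (so $\mu^\M_1$ is $\sigma$-finite with infinite total mass), transfers $T$ upward by \L o\'s, argues that $\sigma$-finiteness together with $\aleph_1$-compactness forces $\mu^\N_1$ to be \emph{finite}, and then transfers a sentence back \emph{down}: for some $n$, $\N$ satisfies ``there do not exist $n$ points each of singleton measure $\geq 1$,'' contradicting the infinitely many unit-mass singletons of the base structure $\M$. You instead take an ultraproduct of \emph{finite} structures with unboundedly growing total mass, so that the ultraproduct has total mass $\infty$, and you reach the contradiction entirely upstairs: infinite total mass is incompatible with $\sigma$-finiteness in the presence of countable saturation for internal sets. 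Both arguments run on the same engine---\L o\'s plus the fact that, over a countably incomplete ultrafilter, a countable decreasing family of nonempty internal sets has nonempty intersection---but your write-up makes explicit a point the paper elides: the saturation needed here must apply to arbitrary members of $\B^\M_1$ (the internal sets), not merely to definable families, since a witness to $\sigma$-finiteness ranges over the whole algebra, and the paper's own appeal to $\aleph_1$-compactness (defined there only for definable families) tacitly uses this internal version. What the paper's arrangement buys is uniformity with the neighboring corollaries and no separate saturation discussion, at the price of having to encode the final contradiction as a sentence of the logic (which, as printed, should also carry distinctness clauses $x_i\neq x_j$, else the sentence only asserts the existence of a single point of measure $\geq 1$); what yours buys is that once the measures of the ultraproduct are computed, nothing further needs to be expressed in the logic at all.
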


\begin{proof}
Suppose, towards a contradiction, that there is a set $T$ of $\la$-sentences such that $\M\models T$ if and only if $\mu^\M_1$ is $\sigma$-finite.  Let $M$ be a countably infinite set equipped with the unnormalized counting measure $\nu(B)=|B|$ and turn $M$ into an $\AML$ $\la$-structure $\M$ by interpreting the symbols in $\la$ in an arbitrary fashion.  Note that $\M\models T$ because $\M$ is the union of its singletons, which are definable and have measure 1.  Let $\cU$ be a nonprincipal ultrafilter on $\n$ and let $\N:=\M^{\cU}$.  Then, by the \L o\'s theorem, we have that $\N\models T$, so $\mu^\N_1$ is $\sigma$-finite.   Since $\N$ is $\aleph_1$-compact, we have that $\mu^\N_1$ is a finite measure.  Thus, for some $n\in \n$, $\N\models \neg \exists x_1\cdots \exists x_n (\bigwedge_{i=1}^n\neg m_x<1.(x=x_i))$.  Consequently, $\M\models \neg \exists x_1\cdots \exists x_n (\bigwedge_{i=1}^n\neg m_x<1.(x=x_i))$, contradicting the fact that $M$ contains infinitely many singletons of measure $1$.  
\end{proof}


We now turn to the connection between integration and ultraproducts.  
Suppose that $(\M_i \setmid i\in I)$ is a family of $\la$-structures and $h_i:M_i^m\to M_i^n$ are functions.  We will say that the family $(h_i\setmid i\in I)$ is \emph{uniformly definable} if there exists a formula $\varphi(\vec x,\vec y, \vec z)$, where $|\vec x|=m$ and $|\vec x|=n$, and tuples $\vec c_i\in M_i^{|\vec z|}$ such that, for all $i\in I$, $\vec a\in M_i^m$ and $\vec b\in M_i^n$, we have $h_i(\vec a)=\vec b$ if and only if $\M_i\models \varphi(\vec a,\vec b, \vec c_i)$.  If $\M=\prod_{\cU} \M_i$ and $\vec c:=[\vec c_i]_{\cU}$, then by \L o\'s, we have that $\varphi(\vec x,\vec y,\vec c)$ defines a function $h:M^m\to M^n$, which we call the \emph{ultraproduct} of the uniformly definable family $(h_i)$.

We will also need to consider families of functions.  Suppose $(\M_i \setmid \ i\in I)$ is a family of measured $\la$-structures, and, for each $i$, $g_i:M^m_i\rightarrow \r$ is a function.  We then say the family $(g_i\setmid\ i\in I)$ is \emph{uniformly layerwise definable}\footnote{This notion of definability of a function into $\mathbb{R}$ differs from the usual notion in the literature, which requires the inverse image of a closed set be type-definable.} if, for each $q\in \mathbb{Q}$, there is a formula $\varphi_q(\vec x,\vec z)$ and tuples $\vec c_i\in M_i^{|\vec z|}$ such that, for $\vec a\in M_i^m$, $g_i(\vec a)<q$ if and only if $\M_i\models \varphi_q(\vec a,\vec c_i)$. 

In the following theorem, the bound of $1$ can easily be replaced by any positive real number.
\begin{thm}\label{intultra2}
Suppose that $(\M_i \setmid \ i\in I)$ is a family of measured $\la$-structures with $\mu^1_i(M_i)\leq 1$ for each $i\in I$.  Suppose that $\U$ is a nonprincipal ultrafilter on $I$ and $\M:=\prod_\U \M_i$ is equipped with its canonical measures $(\mu^m)$.  For each $i\in I$, let $g_i:M^n_i\rightarrow[-1,1]$ be a function and suppose that the family $(g_i \ : \ i\in I)$ is uniformly layerwise definable.  Then there is an expansion $\M'$ of $\M$ to a structure and a measurable (with respect to $\mathcal{B}^{\M',n}$) function $g:M^n\rightarrow[-1,1]$ such that, for every uniformly definable family $(h_i\setmid i\in I)$ of functions of $m$ variables with ultraproduct $h$, we have $\int (g\circ h)\ d\mu^m=\lim_\cU \int (g_i\circ h_i)\ d\mu_i^m$.
\label{ultraproduct_of_integrals}
\end{thm}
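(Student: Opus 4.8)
The plan is to realize $g$ as the pointwise ultralimit of the $g_i$ and then to reduce the integral identity to the fact, already built into the ultraproduct construction, that the canonical measure of a definable set is the ultralimit of the measures of its factors. First I would define $g\colon M^n\to[-1,1]$ by $g([\vec a_i]_\U):=\lim_\U g_i(\vec a_i)$; the limit exists and lies in $[-1,1]$ by compactness of $[-1,1]$, and it is well defined since $\U$-equivalent representatives agree $\U$-a.e. Writing $\vec c:=[\vec c_i]_\U$ for the ultraproduct of the parameters witnessing uniform layerwise definability, \L o\'s applied to $\varphi_q$ gives, for each $q\in\q$,
\[
\{g<q\}\subseteq \varphi_q(M,\vec c)\subseteq\{g\le q\},
\]
because $[\vec a_i]_\U\in\varphi_q(M,\vec c)$ iff $g_i(\vec a_i)<q$ $\U$-a.e. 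Hence $\{g<q\}=\bigcup_{q'\in\q,\,q'<q}\varphi_{q'}(M,\vec c)\in\sigma(\dfbl_n(M))$, so $g$ is measurable for the canonical measure. For the expansion, I would adjoin to $\la$ an $n$-ary predicate $R_q$ for each $q\in\q$, interpret $R_q^{\M_i}:=\varphi_q(M_i,\vec c_i)=\{g_i<q\}$, and set $\M':=\prod_\U\M_i'$; each $\M_i'$ is still a (measured) structure, so by the corollary $\M'$ is a structure, its $\la$-reduct is $\M$, and the level sets of $g$ are countable unions of definable sets, hence lie in $\B^{\M'}_n$ once $\M'$ carries its canonical measures.

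For the integral identity I would use a simple-function approximation that is uniform in $i$. Fix $\epsilon>0$ and a rational partition $-1=q_0<q_1<\cdots<q_K=1$ of mesh $<\epsilon$. Since $\mu^m$ is finite, the pushforward $(g\circ h)_*\mu^m$ has at most countably many atoms, so I may additionally require $\mu^m\big((g\circ h)^{-1}(q_k)\big)=0$ for every $k$. Put $E_{k,i}:=\{x\in M_i^m : q_{k-1}\le g_i(h_i(x))<q_k\}$; this is a uniformly definable family, being the $h_i$-preimage of the Boolean combination $\varphi_{q_k}\setminus\varphi_{q_{k-1}}$ of the layer sets, so its ultraproduct $E_k:=\prod_\U E_{k,i}$ is the definable set $h^{-1}(\varphi_{q_k}(M,\vec c))\setminus h^{-1}(\varphi_{q_{k-1}}(M,\vec c))$.

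Because $g_i\circ h_i\in[-1,1]$ and $\mu_i^m(M_i^m)=(\mu_i^1(M_i))^m\le 1$, the mesh bound yields $\big|\int(g_i\circ h_i)\,d\mu_i^m-\sum_k q_{k-1}\mu_i^m(E_{k,i})\big|\le\epsilon$ for every $i$, hence the same bound after applying $\lim_\U$. Since the sum is finite and ultralimits are linear,
\[
\lim_\U\sum_k q_{k-1}\mu_i^m(E_{k,i})=\sum_k q_{k-1}\lim_\U\mu_i^m(E_{k,i})=\sum_k q_{k-1}\mu^m(E_k),
\]
where the last equality is the defining property of the canonical (ultraproduct) measure on the definable sets $E_k$.

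It remains to replace $E_k$ by the true level set $F_k:=(g\circ h)^{-1}([q_{k-1},q_k))$. By the sandwich above, $\varphi_{q_k}(M,\vec c)$ and $\{g<q_k\}$ differ by a subset of $\{g=q_k\}$, so $h^{-1}(\varphi_{q_k}(M,\vec c))$ and $\{g\circ h<q_k\}$ differ by a subset of $(g\circ h)^{-1}(q_k)$, which is $\mu^m$-null by our choice of partition; thus $E_k=F_k$ up to a null set and $\sum_k q_{k-1}\mu^m(E_k)=\sum_k q_{k-1}\mu^m(F_k)$. The latter sum is within $\epsilon$ of $\int(g\circ h)\,d\mu^m$ by the same mesh bound (using $\mu^m(M^m)\le1$). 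Combining the estimates gives $\big|\int(g\circ h)\,d\mu^m-\lim_\U\int(g_i\circ h_i)\,d\mu_i^m\big|\le 2\epsilon$, and letting $\epsilon\to0$ finishes the proof. The main obstacle is exactly this matching step: the \emph{approximate} nature of $\AML$ means the definable layer sets $\varphi_q(M,\vec c)$ recover $g$ only up to the boundary sets $\{g=q\}$, so the argument hinges on choosing the partition points off the (countable) set of atoms of $(g\circ h)_*\mu^m$ so that those boundaries contribute nothing.
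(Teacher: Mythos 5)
Your construction of $g$, your expansion $\M'$, and your step-function scheme coincide with the paper's own proof: the paper likewise adjoins predicates $G_q$ interpreted as $\{g_i<q\}$, takes $g$ to be (what amounts to) the pointwise ultralimit, and compares $\int(g_i\circ h_i)\,d\mu_i^m$ with sums $\sum_j q_j\,\mu_i^m(S_i^{q_j,q_{j+1}})$ over the sets $S_i^{q_j,q_{j+1}}=\{q_j\le g_i\circ h_i<q_{j+1}\}$. The divergence is in your final matching step, and there the argument has a genuine gap. You need \emph{rational} partition points $q_k$ --- rationality is forced on you, since uniform layerwise definability is hypothesized only at rational thresholds, so only for rational $q_k$ are the $E_{k,i}$ uniformly definable --- satisfying $\mu^m\bigl((g\circ h)^{-1}(q_k)\bigr)=0$. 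But countability of the atom set of $(g\circ h)_*\mu^m$ does not give you such rationals: the atoms can include \emph{every} rational in $[-1,1]$. For instance, enumerate $\q\cap[-1,1]=\{r_1,r_2,\dots\}$, let $M_i$ be finite with normalized counting measure, let $g_i$ take the value $r_j$ on roughly a $2^{-j}$-fraction of $M_i$ for each $j\le i$ (this family is uniformly layerwise definable once the layer sets are named by predicates, exactly as in the paper's Gowers-norm section), and let $h_i$ be the identity. Then $\prod_\U\{g_i=r_j\}\subseteq\{g=r_j\}$ has canonical measure $2^{-j}>0$ for every $j$, so every rational in $[-1,1]$ is an atom and no admissible partition exists; your "choose the partition points off the atoms" step cannot be carried out.

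The repair is exactly what the paper does: never replace $E_k$ by the true level set $F_k$. Since $(g\circ h)(\vec x)=\lim_\U g_i(h_i(\vec x_i))$, membership $\vec x\in E_k$ already gives the two-sided pointwise bound $q_{k-1}\le (g\circ h)(\vec x)\le q_k$ (closed interval: ultralimits of values in $[q_{k-1},q_k)$ may hit the right endpoint), and the $E_k$ partition $M^m$ once you add one extra piece $[q_K,q_{K+1})$ for a rational $q_{K+1}>1$ to catch the points where $g_i\circ h_i$ equals $1$ exactly (as written, your pieces $[q_{k-1},q_k)$ with $q_K=1$ miss that set --- the paper's auxiliary $q_{k+1}>1$ exists precisely for this reason). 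Hence $\sum_k q_{k-1}\mu^m(E_k)\le\int(g\circ h)\,d\mu^m\le\sum_k q_k\mu^m(E_k)$ holds directly, which combined with your ultralimit computation gives $\bigl|\int(g\circ h)\,d\mu^m-\lim_\U\int(g_i\circ h_i)\,d\mu_i^m\bigr|\le 2\epsilon$. The identification of $E_k$ with $F_k$ up to null sets --- and with it the entire atom-avoidance issue that you flagged as the main obstacle --- is unnecessary.
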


\begin{proof} 
Consider the extension $\la'$ of $\la$ obtained by adding, for each $q\in\mathbb{Q}\cap[-1,1]$, a new unary predicate $G_q$.  We define expansions $\M'_i$ of $\M_i$ by interpreting $G_q^{\M'_i}=\{\vec x\in M_i^n\setmid g_i(\vec x)<q\}$; since this is a definitional expansion, $\M_i'$ is indeed an $\la'$-structure.  Let $\M':=\prod_\U \M'_i$, and define $g(\vec x)=\inf\{q\in \q^{>0}\setmid \M'\vDash G_q(\vec x)\}$.  It is clear that $g$ is $\B^{\M',n}$-measurable.

Now suppose that $(h_i\setmid i\in I)$ is a uniformly definable family of functions, say defined by $\varphi(\vec x,\vec y)$ (and some parameters, which we suppress for sake of exposition), with ultraproduct $h$.  Set $m:=|\vec x|$ and $n:=|\vec y|$.  
Fix $c\in \q^{>0}$ and choose a partition $-1=q_0<q_1<\cdots<q_k=1$ of $[-1,1]$ by rational numbers such that that $|q_j-q_{j+1}|<1/c$ for each $j<k$.  Fix also some rational $q_{k+1}>1$.  For $0\leq j\leq k$, set 
\[S_i^{q_j,q_{j+1}}:=\{\vec x\in M_i^m\setmid q_j\leq (g_i\circ h_i)(\vec x)< q_{j+1}\}\]
and take $S^{q_j,q_{j+1}}=\prod_\U S_i^{q_j,q_{j+1}}$, whence $\mu^{m}(S^{q_j,q_{j+1}})=\lim_\U \mu^{m}_i(S_i^{q_j,q_{j+1}})$.

We have by definition that
\[\sum_j q_{j+1}\mu_i^m(S_i^{q_j,q_{j+1}})-\frac{1}{c}\leq\int (g_i\circ h_i) d\mu_i^m\leq\sum_j q_{j+1}\mu_i^m(S_i^{q_j,q_{j+1}}).\]
On the other hand, the sets $S^{q_j,q_{j+1}}$ partition $M^m$ and if $\vec x\in S^{q_j,q_{j+1}}$ then, since $g\circ h(\vec x)=\lim_{\mathcal{U}}g_i\circ h_i(\vec x_i)$, $q_j\leq g\circ h(\vec x)\leq q_{j+1}$, so also
\[\sum_j q_{j+1}\mu^m(S^{q_j,q_{j+1}})-\frac{1}{c}\leq\int (g\circ h) d\mu^m\leq\sum_j q_{j+1}\mu^m(S^{q_j,q_{j+1}}).\]

Therefore we have $|\lim_{\cU}\int (g_i\circ h_i)d\mu^m_i-\int (g\circ h)d\mu^m|\leq \frac{1}{c}$.  Letting $c$ go to $\infty$, we have the desired result.
\end{proof}

We call the function $g$ as in the conclusion of the previous theorem the \emph{ultraproduct} of the family $(g_i)$.  Observe that $g(\vec x)=\lim_{\cU}g_i(\vec x_i)$ for $\vec x=[\vec x_i]_{\cU}$.  We will particularly be interested in the case that $I=\n$, in which case, using the same notation as in the statement of the previous theorem, if $\lim_{i\to \infty} \int (g\circ h_i)\ d\mu_i^m=r$, then $\int (g\circ h)\ d\mu^m=r$.

\begin{rmk}
In the previous theorem, one could drop the requirement that the family $(g_i)$ be uniformly layerwise definable and instead only require that each $g_i$ be measurable.  However, the $\M_i'$ and, consequently, $\M'$, would only be quasistructures rather than structures.
\end{rmk}

\section{Correspondence Theorems}
\label{sec:correspondence}

\subsection{The Furstenberg Correspondence Principle}
One application of AML is to give a general framework for \emph{correspondence principles} between finite structures and dynamical systems.  We will only present a proof of the original correspondence argument given by Furstenberg \cite{furstenberg77}; since then, many variations and generalizations have been produced (for instance, \cite{bergelson:MR1776759}).  The method given extends immediately to these generalizations.

\begin{df}
Let $E\subseteq\mathbb{Z}$.  The \emph{upper Banach density} of $E$, $\overline{d}(E)$, is
\[\limsup_{m-n\rightarrow\infty}\frac{|[n,m]\cap E|}{m-n}.\]
\end{df}

\begin{df}
A \emph{dynamical system} is a tuple $(Y,\B,\mu,T)$, where $(Y,\B,\mu)$ is a probability space and $T:Y\to Y$ is a bimeasurable bijection for which $\mu(A)=\mu(TA)$ for all $A\in \B$.
\end{df}

Furstenberg's original correspondence can be stated as follows:
\begin{thm}[Furstenberg]
  Let $E\subseteq \mathbb{Z}$ with positive upper Banach density be given.  Then there is a dynamical system $(Y,\mathcal{B},\mu,T)$ and a set $A\in\mathcal{B}$ with $\mu(A)=\overline{d}(E)$ such that for any finite set of integers $U$,
  \[\overline{d}(\bigcap_{i\in U} (E-i))\geq\mu(\bigcap_{i\in U}T^{-i} A).\]
\end{thm}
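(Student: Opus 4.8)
The plan is to realize the dynamical system as an ultraproduct of finite cyclic systems and then transfer the combinatorial density statement across the \L o\'s theorem for $\AML$, using the canonical (Loeb) measure as the measure $\mu$ of the dynamical system. The key point is that upper Banach density is a $\limsup$ over intervals $[n,m]$ with $m-n\to\infty$, so I first choose a sequence of intervals $I_j=[n_j,m_j]$ with $m_j-n_j\to\infty$ witnessing $\overline{d}(E)$, i.e. with $\frac{|I_j\cap E|}{|I_j|}\to \overline{d}(E)$. For each $j$ I would build a finite $\la$-structure $\M_j$ whose universe is (a shift of) $[0,m_j-n_j]$, equipped with the normalized counting measure, with a distinguished set $A_j$ interpreting $I_j\cap E$ (shifted to the origin) and with the partially-defined successor/translation function; these are finite classical measured structures, hence genuine $\AML$ structures by the remark following the definition of classical measured structures.

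\emph{Setting up the ultraproduct.} Fix a nonprincipal ultrafilter $\cU$ on $\n$ and let $\M=\prod_{\cU}\M_j$. By the proposition on countably incomplete ultrafilters $\M$ is $\aleph_1$-saturated, and since $\mu^{\M_j}_1(M_j)=1$ for all $j$, the ultraproduct $\mu^\M_1$ is a probability measure, so $\M$ carries its canonical (Loeb) measures by the discussion in Section \ref{sec:canonical}. I take $(Y,\B,\mu)$ to be $(M,\sigma(\dfbl_1(M)),\mu^\M_1)$ and $A:=A^\M$ the ultraproduct of the $A_j$, which is definable; by the \L o\'s theorem for $\AML$ together with the choice of intervals, $\mu(A)=\lim_\cU \frac{|I_j\cap E|}{|I_j|}=\overline{d}(E)$ (here I would arrange, by passing to a subsequence if necessary, that the ultralimit along $\cU$ equals the $\limsup$).

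\emph{Producing the transformation and the inequality.} The shift $x\mapsto x+1$ on each $\M_j$ is only a partial map (it fails at the right endpoint), but the endpoint is a single point of measure $\frac{1}{|I_j|}\to 0$, so its ultraproduct agrees $\mu$-a.e.\ with a genuine measure-preserving bijection $T$; measure preservation is inherited because each finite translation preserves counting measure, which transfers by \L o\'s. The crux is the final inequality. For a finite set $U$ of integers and any point $a\in M$, membership of $a$ in $\bigcap_{i\in U}T^{-i}A$ is, away from a null ``boundary'' set, equivalent to $a+i\in A$ for all $i\in U$, which in $\M_j$ means the corresponding index lies in every $I_j\cap(E-i)$. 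Applying the \L o\'s theorem to the definable set $\bigcap_{i\in U}\{x : x+i\in A_j\}$ gives $\mu(\bigcap_{i\in U}T^{-i}A)=\lim_\cU \frac{|\{x\in I_j : x+i\in E \text{ for all } i\in U\}|}{|I_j|}$, and each term on the right is a density of $\bigcap_{i\in U}(E-i)$ over the interval $I_j$, hence is $\leq \overline{d}(\bigcap_{i\in U}(E-i))$ by the definition of upper Banach density as a $\limsup$.

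\emph{The main obstacle.} I expect the delicate step to be the boundary-effect bookkeeping: translation is not a total bijection on a finite interval, and $T^{-i}A$ computed in $\M$ differs from the ``honest'' translate near the edges of each $I_j$. The fix is that for fixed $U$ the offending set has size $O(\max_i|i|)$, independent of $j$, hence measure tending to $0$, so it is $\mu$-null in the Loeb measure and does not affect either $\mu(A)$ or the intersection inequality; making this precise — choosing the finite structures so that the translation is interpreted coherently and checking the null-set estimates transfer through \L o\'s — is where the real care is needed. Everything else is a routine application of the $\AML$ ultraproduct machinery established above.
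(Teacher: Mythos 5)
Your proposal follows essentially the same route as the paper's proof: finite measured structures on intervals witnessing the Banach density, a nonprincipal ultraproduct equipped with its canonical (Loeb) measure, and transfer of densities through the \L o\'s theorem. Two steps, however, are not correct as written.

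First, the justification of the final inequality contains a genuine error: you claim that each term $\frac{|I_j\cap\bigcap_{i\in U}(E-i)|}{|I_j|}$ is $\leq\overline{d}\bigl(\bigcap_{i\in U}(E-i)\bigr)$ ``by the definition of upper Banach density as a $\limsup$.'' A $\limsup$ does not dominate the individual terms of a sequence; for instance, if $F$ is a large finite set, an interval covering $F$ has density close to $1$ in $F$ while $\overline{d}(F)=0$, so a particular $I_j$ can certainly have density in $\bigcap_{i\in U}(E-i)$ exceeding the Banach density of that set. What is true, and what your argument needs, is the asymptotic statement: since $|I_j|\to\infty$, one has $\limsup_j\frac{|I_j\cap F|}{|I_j|}\leq\overline{d}(F)$, and a nonprincipal ultralimit of a bounded sequence is at most its $\limsup$; chaining these salvages your inequality. (The paper instead argues in the reverse direction: for every rational $\delta<\mu(\bigcap_{i\in U}T^{-i}A)$, \L o\'s produces intervals of arbitrarily large length with density $>\delta-\gamma$, which directly forces the $\limsup$ to be $\geq\delta$.) Either repair is short, but the step as you wrote it is invalid.

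Second, a ``partially-defined successor function'' is not a legal interpretation in this framework: $\AML$ signatures are first-order signatures, so function symbols must be interpreted by total functions. You must either encode the shift by its graph (a binary relation) or totalize it, and the paper's totalization---the cyclic shift $f_{n,m}(x)=x+1$ for $x<m$, $f_{n,m}(m)=n$---is exactly the clean fix: the ultraproduct of these is already a measure-preserving bijection, no a.e.\ patching of $T$ is needed, and all boundary error is concentrated in the final inequality, where it is absorbed by the $O(\max_i|i|)/|I_j|\to 0$ estimate that you correctly identify. With these two changes your argument coincides with the paper's proof.
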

\begin{proof}
Let $(\epsilon_N \ : \ N\in \n)$ be an increasing sequence of positive rational numbers such that $\sup_N\epsilon_N=\overline{d}(E)$.  Then for each $N$, we may find $n,m$ such that $m-n>N$ and $\frac{|[n,m]\cap E|}{m-n}\geq\epsilon_N$.  Let $f_{n,m}:[n,m]\to [n,m]$ be the function such that $f_{n,m}(x)=x+1$ when $x<m$ and $f_{n,m}(m)=n$.  We consider the classical measured structure $([n,m],E\cap [n,m],f_{n,m})$ equipped with the normalized counting measure and consider the corresponding $\AML$ structure $\M_{n,m}$.

Let $(Y,A,T)$ be the ultraproduct of the $\M_{n,m}$'s with respect to some nonprincipal ultrafilter on $\n$.  Let $\mathcal{B}:=\sigma(\dfbl_1(Y))$ and let $\mu$ be the canonical measure on $\B$.  It is clear from \L o\'s' theorem that $(Y,\B,\mu,T)$ is a dynamical system.  Next observe that, by our construction, $\mu(A)=\overline{d}(E)$.  Now notice that, for any finite set of integers $U$, any rational $\delta<\mu(\bigcap_{i\in U}T^{-i}A)$, and any $N$, we may find $m,n$ such that $m-n>N$ and
\[\frac{|[n,m]\cap\bigcap_{i\in U}f^{-i}_{n,m}(E\cap [n,m])|}{m-n}>\delta.\]
Fix $\gamma>0$.  Then for sufficiently large $N$, we also have that
\[\frac{|[n,m]\cap\bigcap_{i\in U}(E-i)|}{m-n}>\delta-\gamma.\]
Consequently, $\overline{d}(\bigcap_{i\in U}(E-i))\geq \mu(\bigcap_{i\in U}T^iA)$.
\end{proof}

\

\subsection{The Regularity Lemma}

\begin{df}
\label{def:bnk_algebra}
Let $\M$ be an $\AML$ structure, let $A\subseteq M$ be a set, let $n$ be a positive integer, and let $I\subseteq [1,n]$ be given.  We define $\mathcal{B}^0_{n,I}(A)$ to be the Boolean algebra of subsets of $M^n$ generated by sets of the form
\[\{(x_1,\ldots,x_n)\in M^n \setmid\M\models \varphi(x_1,\ldots,x_n)\},\]
where $\varphi$ is a formula with parameters from $A$ whose free variables belong to $I$.

When $k\leq n$, we define $\mathcal{B}^0_{n,k}(A)$ to be the Boolean algebra generated by the algebras $\mathcal{B}_{n,I}^0(A)$ as $I$ ranges over subsets of $[1,n]$ of cardinality $k$.

In all cases, we drop $^0$ to indicate the $\sigma$-algebra generated by the algebra.  When $A=\emptyset$, we omit it and write $\mathcal{B}_{n,k}$.
\end{df}
These algebras were introduced, in a somewhat different context, by Tao in \cite{tao07,tao08Norm}.

\begin{df}
Suppose that $(G,E)$ is a finite (undirected) graph, $U,U'\subseteq G$ are nonempty, and $\epsilon$ is a positive real number.
\begin{enumerate}
\item We set $d(U,U'):=\frac{|E\cap (U\times U')|}{U\times U'}$.
\item We say that $U$ and $U'$ are \emph{$\epsilon$-regular} if whenever $V\subseteq U, V'\subseteq V'$ are such that $|V|\geq\epsilon|U|$ and $|V'|\geq\epsilon|U'|$, then we have
\[|d(U,U')-d(V,V')|<\epsilon.\]
\end{enumerate}
\end{df}

\begin{thm}[Szemer\'edi's Regularity Lemma, \cite{szemeredi:MR540024}]
For any $k$ and any $\epsilon>0$, there is a $K\geq k$ such that whenever $(G,E)$ is a finite graph, there is an $n\in[k,K]$ and a partition $G=U_1\cup\cdots\cup U_n$ such that, setting $B=\{(i,j)\setmid U_i\text{ and }U_j\text{ are not }\epsilon\text{-regular}\}$, we have
\[\left|\bigcup_{(i,j)\in B}U_i\times U_j\right|\leq \epsilon|G|^2.\]
\end{thm}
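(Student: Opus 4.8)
The plan is to derive the lemma by contradiction, passing to an ultraproduct of putative counterexamples and there running the energy-increment argument in a single step via a near-maximal internal partition of the Loeb measure space. Suppose the lemma fails for some $k$ and $\epsilon>0$; then for every $K\geq k$ there is a finite graph $(G_K,E_K)$ that admits no partition into $n\in[k,K]$ parts whose irregular pairs cover at most $\epsilon|G_K|^2$. View each $G_K$ as a classical measured $\la$-structure, with a binary predicate interpreted as $E_K$ and the normalized counting measure, and let $\M_K$ be the associated $\AML$ structure; each $\M_K$ is finite, hence a Fubini structure with $\mu^{\M_K}_1(G_K)=1$. Fix a nonprincipal ultrafilter $\U$ on $\{K:K\geq k\}$ and set $\M=\prod_\U\M_K$. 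By the results of the previous sections $\M$ is $\aleph_1$-saturated with $\mu^\M_1(M)=1$ and is again a Fubini structure; moreover the Caratheodory argument producing the canonical measures applies equally well to the algebra of internal sets $\B^\M_n$, so I may regard $\mu^n$ as the Loeb measure defined on the $\sigma$-algebra generated by all internal subsets of $M^n$. The edge relation interprets as a symmetric definable, hence measurable, set $E\subseteq M^2$, and I write $d(V,V')=\mu^2(E\cap(V\times V'))/(\mu^1(V)\mu^1(V'))$ for internal $V,V'$ of positive measure.

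Next I would select the partition by a maximization. For a finite partition $\mathcal{P}=\{P_1,\dots,P_n\}$ of $M$ into internal sets, define its energy to be $\sum_{i,j}d(P_i,P_j)^2\,\mu^1(P_i)\mu^1(P_j)$, which equals $\|\mathbb{E}[\chi_E\mid\mathcal{B}_\mathcal{P}]\|_2^2$ for the step function associated with the rectangles $P_i\times P_j$ and therefore lies in $[0,1]$. Let $s$ be the supremum of the energy over all finite internal partitions of $M$ and fix a $\delta>0$ to be chosen small in terms of $\epsilon$. Choose an internal partition $\mathcal{P}$ whose energy exceeds $s-\delta$, refining it harmlessly so that $n\geq k$. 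The point of working over the internal partitions, rather than merely the definable ones as in Definition~\ref{def:bnk_algebra}, is that every refinement of $\mathcal{P}$ is again a finite internal partition and so has energy at most $s$; hence no internal refinement of $\mathcal{P}$ can raise the energy by more than $\delta$.

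The heart of the matter is to turn this near-maximality into $\epsilon$-regularity. If a pair $(P_i,P_j)$ fails to be $\epsilon$-regular, there are internal sets $V\subseteq P_i$ and $V'\subseteq P_j$ with $\mu^1(V)\geq\epsilon\,\mu^1(P_i)$ and $\mu^1(V')\geq\epsilon\,\mu^1(P_j)$ for which $|d(P_i,P_j)-d(V,V')|\geq\epsilon$. Refining $\mathcal{P}$ by cutting $P_i$ along $V$ and $P_j$ along $V'$ is an internal refinement, and the standard Pythagorean energy-increment estimate shows that it raises the energy by at least a fixed multiple of $\epsilon^4\,\mu^1(P_i)\mu^1(P_j)$. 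Summing this over the irregular pairs and comparing with the bound $\delta$ from the previous paragraph forces $\sum_{(i,j)\ \mathrm{irregular}}\mu^1(P_i)\mu^1(P_j)\leq \delta/(c\epsilon^4)$, so for $\delta$ chosen small enough the irregular pairs have total $\mu^2$-measure at most $\epsilon$. I expect the quantitative bookkeeping in this step—both the energy-increment estimate and the translation of an $L^2$ energy defect into the combinatorial $\epsilon$-regularity condition of the hypothesis—to be the main technical obstacle.

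Finally I would transfer the conclusion back to the finite graphs. Each $P_i$ is internal, say $P_i=\prod_\U P_i^K$, so for $\U$-almost all $K$ the sets $P_1^K,\dots,P_n^K$ partition $G_K$ into $n$ parts and the finite densities converge along $\U$ to the limiting densities. Regularity transfers as well: were the pullback pair $(P_i^K,P_j^K)$ irregular for $\U$-many $K$, the finite witnesses would assemble into internal sets $\prod_\U V_K\subseteq P_i$ and $\prod_\U V_K'\subseteq P_j$ contradicting the $\epsilon$-regularity established in $\M$; hence for $\U$-almost all $K$ the irregular pairs of the induced partition of $G_K$ cover at most $\epsilon|G_K|^2$. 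Since $n$ is fixed, $k\leq n\leq K$ holds for all sufficiently large $K\in\U$, and any such $K$ supplies precisely the partition that $G_K$ was assumed to lack, the desired contradiction. The one point requiring care here is that the test sets witnessing regularity must be routed through the ultraproduct as internal (Loeb-measurable) sets rather than definable ones, which is exactly why the maximization in the second step was taken over internal partitions.
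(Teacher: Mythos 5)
Your proof is correct, but it takes a genuinely different route from the paper's. The paper never leaves the definable category: to make regularity transferable it first expands the language, recursively adding unary predicates $R_{\varphi,\psi}$, $S_{\varphi,\psi}$ interpreted in each finite graph as witnesses to the failure of $\epsilon$-regularity of the pair of definable sets named by $\varphi,\psi$ (and as $\emptyset$ when that pair is regular). After taking the ultraproduct, its partition is produced not by an energy maximization but by a single $L^2$-projection: $h=\mathbb{E}(\chi_E\mid\mathcal{B}_{2,1})$ is approximated within $\epsilon^4/4$ by a step function over a finite definable partition $\{U_i\}$, and if the irregular pairs had measure at least $\epsilon/2$, the definable witnesses named by $R_{U_i,U_j},S_{U_i,U_j}$ would assemble into a set $Z\in\mathcal{B}_{2,1}$ with $\left|\int h'\chi_Z\,d\mu\right|\geq\epsilon^4/4$, contradicting $\|h'\|_{L^2}<\epsilon^4/4$; the transfer back to the finite graphs is then pure \L o\'s, because ``$R_{U_i,U_j}=\emptyset$'' is expressed by a sentence. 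Your proposal replaces both devices: the partition comes from a near-supremal internal partition plus the classical Pythagorean energy-increment step, and the witness transfer is done by assembling arbitrary finite witness sets into internal sets --- essentially the classical Loeb-measure proof in the style of Elek--Szegedy and Tao. You correctly identify the crux (witnesses to irregularity of a definable pair need not be definable) and resolve it by enlarging the algebra where the paper resolves it by enlarging the language. What your route buys is the elimination of the recursive language construction; what it costs is that you step slightly outside the paper's framework: Section \ref{sec:canonical} builds the canonical measures on $\sigma(\dfbl_n(M))$, so you are implicitly re-running the Caratheodory argument on the full internal algebra $\B^{\M}_n$. This is legitimate here --- countable saturation holds for internal families in ultraproducts over countably incomplete ultrafilters, and the finite factors carry full power-set algebras --- but it is an extension of what the paper proves and should be stated, not just asserted. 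Two further pieces of bookkeeping to make explicit: cells $P_i$ of Loeb measure zero, whose pairs may be irregular in the finite graphs but contribute $o(1)$ along $\mathcal{U}$; and the preservation of the non-strict inequalities $\mu(V)\geq\epsilon\mu(P_i)$ and $|d(P_i,P_j)-d(V,V')|\geq\epsilon$ under ultralimits, which is what makes finite witnesses yield internal ones. The paper absorbs the analogous slack with its factor-of-$\sqrt{2}$ estimate when pulling the partition down, and your $\delta$ must be chosen to leave room for the same effect.
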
 
\begin{proof} 
For a contradiction, suppose the conclusion fails.  Then we may find $k,\epsilon$ such that for every $K\geq k$ there is a finite graph $(G_K,E_K)$ with no partition into at least $k$ but at most $K$ components satisfying the theorem.  The partition of a graph into singleton elements consists entirely of $\epsilon$-regular pairs, so in particular $|G_K|>K$ for all $K$.  

Let $\la_0:=\{E\}$, where $E$ is a binary predicate symbol. Inductively assume that $\AML$ signatures $\la_0\subseteq \la_1\subseteq \la_2\subseteq \cdots$ have been constructed.  Then for any two $\AML$ $\la_n$-formulae $\varphi(x)$ and $\psi(x)$, at least one of which does not belong to $\bigcup_{i=1}^{n-1} \la_i$, with only the displayed free variable, we add new unary predicates $R_{\varphi,\psi}(x)$ and $S_{\varphi,\psi}(x)$ to $\la_{n+1}$.  We let $\la:=\bigcup_{n=1}^\infty \la_n$.  We define finite measured $\la$-structures $\mathfrak{G}_K$ as follows:
\begin{itemize} 
  \item The universe of $\G_K$ is $G_K$;
  \item The measures on $\G_K$ are given by the normalized counting measure;
  \item $E$ is interpreted in $\G_K$ by $E_K$; and
  \item $R_{\varphi,\psi},S_{\varphi,\psi}$ are interpreted by induction on formulas.  Having interpreted $\varphi,\psi$, we may set $U=\varphi(G_K)$ and $U'=\psi(G_K)$.  If $U,U'$ are not $\epsilon$-regular then set $R_{\varphi,\psi}^{\G_K}=S_{\varphi,\psi}^{\G_K}=\emptyset$.  Otherwise, choose $V\subseteq U,V'\subseteq U'$ witnessing the failure of $\epsilon$-regularity and set $R_{\varphi,\psi}^{\G_K}=V$ and $S_{\varphi,\psi}^{\G_K}=V'$.
\end{itemize}

Let $\mathfrak{G}$ be the ultraproduct of the $\mathfrak{G}_K$.  We work in the canonical measure space on $G^2$.  From here on, we will identify formulae $\varphi$ with their interpretation $\varphi(G)$.  Let $h:=\mathbb{E}(\chi_E\ | \ \mathcal{B}_{2,1})$, the conditional expectation of $\chi_E$ with respect to the $\sigma$-algebra $\B_{2,1}$.  Since $h$ can be approximated by simple functions, we may write
\[h=\sum_{i\leq K_0}\alpha'_i \chi_{C_i\times D_i}+h'\]
where $||h'||_{L^2}<\epsilon^4/4$ and $C_i,D_i$ are definable.  Let $U_1,U_2,\ldots,U_{n}$ be the atoms of the finite algebra generated by $\{C_i\}\cup\{D_i\}$.  Then we have
\[h=\sum_{i,j\leq n}\alpha_{i,j} \chi_{U_i\times U_j}+h''\]
with $||h''||_{L^2}\leq ||h'||_{L^2}$ and $\{U_i\}_{i\leq n}$ is a finite partition of $G$ into definable sets.  

Let $B$ be the collection of $(i,j)$ such that $R_{U_i,U_j}$ (and therefore $S_{U_i,U_j}$) are non-empty, and for each $(i,j)\in B$, define $\beta_{i,j}=\frac{\mu(E\cap (R_{U_i,U_j}\times S_{U_i,U_j}))}{\mu(R_{U_i,U_j})\mu(S_{U_i,U_j})}$.  By \L o\'s' theorem, for each $(i,j)\in B$, we have $|\beta_{i,j}-\alpha_{i,j}|\geq\epsilon$.  Let $B^+\subseteq B$ be those $(i,j)$ such that $\alpha_{i,j}-\beta_{i,j}\geq\epsilon$.  Suppose $\mu(\bigcup_{(i,j)\in B}U_i\times U_j)\geq \epsilon/2$.  We may assume $\mu(\bigcup_{(i,j)\in B^+}U_i\times U_j)\geq\epsilon/4$ (for otherwise we carry out a similar argument with $B\setminus B^+$).

Again by \L o\'s' theorem, for each $(i,j)\in B^+$, $\mu(R_{U_i,U_j})\geq \epsilon\mu(U_i)$ and $\mu(S_{U_i,U_j})\geq\epsilon\mu(U_j)$.  Set $Z=\bigcup_{(i,j)\in B^+}(R_{U_i,U_j}\times S_{U_i,U_j})$.  Note that $Z\in\mathcal{B}_{2,1}$ and
\[0=\int\chi_E\chi_Z-h\chi_Zd\mu=\int\chi_E\chi_Z-\sum_{i,j\leq n}\alpha_{i,j}\chi_{U_i\times U_j}\chi_Zd\mu+\int h'\chi_Zd\mu\]
so we have
\begin{align*} 
\left|\int h'\chi_Zd\mu\right|
&=\left|\int \sum_{i,j\leq n}\alpha_{i,j}\chi_{U_i\times U_j}\chi_Z-\chi_E\chi_Zd\mu\right|\\
&=\left|\sum_{(i,j)\in B^+}\int\alpha_{i,j}\mu(R_{U_i,U_j})\mu(S_{U_i,U_j})-\chi_E\chi_{R_{U_i,U_j}\times S_{U_i,U_j}}d\mu\right|\\
&\geq\sum_{(i,j)\in B^+}\alpha_{i,j}\mu(R_{U_i,U_j})\mu(S_{U_i,U_j})-(\alpha_{i,j}-\epsilon)\mu(R_{U_i,U_j})\mu(S_{U_i,U_j})\\
&=\sum_{(i,j)\in B^+}\epsilon\mu(R_{U_i,U_j})\mu(S_{U_i,U_j})\\
&\geq\sum_{(i,j)\in B^+}\epsilon^3\mu(U_i)\mu(U_j)\\
&\geq\epsilon^4/4.
\end{align*}
But this is a contradiction, since $\int h'\chi_Zd\mu\leq ||h'||_{L^2}||\chi_Z||_{L^2}<\epsilon^4/4\cdot 1$.  It follows that $\mu(\bigcup_{(i,j)\in B}U_i\times U_j)\geq \epsilon/2$.

Note that what we have constructed is a partition of $G$ which roughly satisfies the regularity conditions, but in the infinite model.  Our final step is to pull this partition down to give a finite model, giving a contradiction.  By choosing $K\geq n$ large enough, we may ensure that the following hold:
\begin{itemize} 
  \item Whenever $(i,j)\not\in B$, $R_{U_i,U_j}^{\G_{K}}=\emptyset$, and therefore $U_i(G_K)$ and $U_j(G_K)$ are $\epsilon$-regular,
  \item $\mu(U_i(G_K))\leq \sqrt{2}\mu(U_i(G))$ for each $i$.
\end{itemize} 
This implies that $$\mu\left(\bigcup_{(i,j)\in B}U_i(G_K)\times U_j(G_K)\right)\leq 2\mu\left(\bigcup_{(i,j)\in B}U_i(G)\times U_j(G)\right)\leq \epsilon$$ as desired.
\end{proof}

\begin{rmk}
The regularity lemma is usually stated with an additional requirement that the cardinalities of the partition pieces be nearly equal.  This could be obtained with the following changes: for each integer $n$, include new unary predicates $P_{n,1},\ldots,P_{n,n}$ in the language, and in the finite models interpret these predicates as a partition into nearly equal pieces.  Using these predicates, refine the partition $\{U_i\}$ into one where the components have nearly the same measure.  When we pull this down to the finite model, the sets will differ in size by a very small fraction (choosing parameters correctly, this fraction can be arbitrarily small, say $\epsilon^2$), so we may redistribute a small number of points to make these pieces have exactly the same size without making much change to the edge density of large subsets.
\end{rmk}

\begin{rmk}
The method in this subsection extends very naturally to hypergraphs, using the algebras $\mathcal{B}_{k,k-1}$ in place of $\mathcal{B}_{2,1}$.  An example of a related proof for hypergraphs is given below in Section \ref{sec:hypergraph}.
\end{rmk}

\section{The Downward L\"owenheim-Skolem Theorem}
\label{sec:first_order_translation}

In this section, we prove the Downward L\"owenheim-Skolem Theorem for $\AML$ using an appropriate version of the Tarski-Vaught test.

\begin{prop}[Tarski-Vaught Test]\label{TV}
Suppose that $\M$ is a substructure of $\N$.  Then $\M$ is an elementary substructure of $\N$ if and only if:
\begin{enumerate}
\item for all formulae $\varphi(x;\vec y)$ and $\vec a$ from $M$, if there is $b\in N$ with $\N\models \varphi(b,\vec a)$, then there is $c\in M$ with $\N\models \varphi(c;\vec a)$;
\item for all formulae $\varphi(x;\vec y)$ and $\vec a$ from $M$, we have $\mu(\varphi(N,\vec a)\cap M)=\mu(\varphi(N,\vec a))$ and $v(\varphi(N,\vec a)\cap M)=v(\varphi(N,\vec a))$
\end{enumerate}
\end{prop}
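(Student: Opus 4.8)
The plan is to reduce everything to one computational observation, which I would isolate as a preliminary \emph{recovery lemma}: in any $\AML$ structure $\M$, the quantities $\mu(\varphi(M,\vec b))$ and $v(\varphi,\vec b)$ attached to a definable set are completely determined by the truth values of the measure sentences $m_{\vec x}\bowtie q.\varphi(\vec x,\vec b)$ as $q$ ranges over $\q^{\geq 0}$ and $\bowtie\in\{<,\leq\}$. Unwinding the two defining clauses of the measure constructor gives $\mu(\varphi(M,\vec b))=\inf\{q\in\q^{\geq 0} : \M\vDash m_{\vec x}<q.\varphi(\vec x,\vec b)\}$; and once $r:=\mu(\varphi(M,\vec b))$ is known, if $r\notin\q$ then $v=\odot$ automatically, while if $r\in\q$ then $v=\ominus$ iff $\M\vDash m_{\vec x}<r.\varphi$, $v=\oplus$ iff $\M\not\vDash m_{\vec x}\leq r.\varphi$, and $v=\odot$ otherwise. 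I would verify this first, since it is just a matter of reading off the definition of $\vDash$ for the measure constructor.

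For the forward direction, assume $\M$ is an elementary substructure of $\N$. Condition (1) is the classical Tarski--Vaught argument: from $\N\vDash\varphi(b,\vec a)$ we get $\N\vDash\exists x\,\varphi(x,\vec a)$, hence $\M\vDash\exists x\,\varphi(x,\vec a)$ by elementarity (as $\vec a\in M$), producing $c\in M$ with $\N\vDash\varphi(c,\vec a)$. For condition (2), elementarity first gives $\varphi(M,\vec a)=\varphi(N,\vec a)\cap M$ (the single-variable instances of $\varphi$ agree on arguments from $M$); elementarity applied to every sentence $m_x\bowtie q.\varphi(x,\vec a)$ then forces these to have the same truth value in $\M$ and in $\N$, so the recovery lemma yields $\mu(\varphi(M,\vec a))=\mu(\varphi(N,\vec a))$ and $v(\varphi(M,\vec a))=v(\varphi(N,\vec a))$, which, rewritten via $\varphi(M,\vec a)=\varphi(N,\vec a)\cap M$, is exactly (2).

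For the reverse direction, assume (1) and (2) and prove by induction on the complexity of $\varphi$ that $\M\vDash\varphi(\vec a)\Leftrightarrow\N\vDash\varphi(\vec a)$ for all $\vec a$ from $M$. The atomic case is the substructure hypothesis on the first-order part, and the $\neg,\wedge,\forall,\exists$ cases are routine, the $\exists$ case being precisely where (1) supplies a witness inside $M$. The decisive case is the measure constructor $m_{\vec x}\bowtie q.\psi(\vec x,\vec a)$: by the inductive hypothesis $\psi(M,\vec a)=\psi(N,\vec a)\cap M^{|\vec x|}$, so condition (2) gives $\mu(\psi(M,\vec a))=\mu(\psi(N,\vec a))$ together with equality of the two $v$-values; since the semantics of $m_{\vec x}\bowtie q$ depend only on the measure and value of the set being measured, the recovery lemma (equivalently, a direct appeal to the defining clauses) gives that $m_{\vec x}\bowtie q.\psi$ has the same truth value in $\M$ and in $\N$.

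The main obstacle is the arity bookkeeping in this last case: the constructor $m_{\vec x}$ binds an entire tuple $\vec x$ at once and does \emph{not} decompose into iterated single-variable measures, since the structures are not assumed Fubini. Consequently the induction needs condition (2) applied to the full tuple $\vec x$ of measured variables, so I would read (2) as the assertion, for every formula $\varphi(\vec x,\vec y)$ of \emph{any} arity $|\vec x|$, that $\mu(\varphi(N,\vec a)\cap M^{|\vec x|})=\mu(\varphi(N,\vec a))$ and the corresponding equality of $v$-values holds; with this reading the measure-constructor step goes through verbatim and nothing else in the argument is sensitive to arity. One should also keep in mind the measurability bookkeeping, namely that each $\psi(M,\vec a)$ lies in $\B^\M_{|\vec x|}$ (guaranteed since $\M$ is a genuine structure), so that all the measures and values invoked are in fact defined.
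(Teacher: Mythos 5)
Your proposal is correct and takes essentially the same route as the paper: the forward direction recovers $\mu$ and $v$ from the truth values of the measure sentences via elementarity (your ``recovery lemma'' is just the contrapositive packaging of the paper's case-by-case contradiction argument), and the reverse direction is the same induction on complexity, with (1) handling the quantifier case and (2) handling the measure constructor. Your remark that (2) must be read for tuples $\vec x$ of arbitrary arity --- since $m_{\vec x}$ binds a whole tuple and does not decompose into iterated one-variable measures without a Fubini hypothesis --- is indeed the intended reading, which the paper's own proof uses silently when it writes $m_{\vec x}<r.\varphi(\vec x;\vec a)$.
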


\begin{proof}
First suppose that $\M$ is an elementary substructure of $\N$.  Then (1) is immediate.  To prove (2), first notice that $\varphi(N,\vec a)\cap M=\varphi(M)$ by elementarity.  Suppose, towards a contradiction, that $\mu(\varphi(M,\vec a))=r$ while $\mu(\varphi(N,\vec a))=s$ with $r\not=s$.  Without loss of generality, suppose that $r<s$.  Fix $q\in \q$ with $r<q<s$.  Then $\M\models m_{x}< q.\varphi(x,\vec y)[\vec a]$ while $\N\not\models m_{x}< q.\varphi(x,\vec y)[\vec a]$, a contradiction.

We now must show that $v(\varphi(N,\vec a)\cap M)=v(\varphi(N,\vec a))$.  Set $X=\varphi(N,\vec a)$, $Y=\varphi(M,\vec a)$, and $r:=\mu(\varphi(N,\vec a))=\mu(\varphi(M,\vec a))$.  Without loss of generality, we may assume that $r\in \q$.  First suppose that $v(X)=\oplus$.  If $v(Y)=\ominus$, then $\M\models m_{\vec x}<r. \varphi(\vec x;\vec a)$ while $\N\not \models m_{\vec x}<r. \varphi(\vec x;\vec a)$; if $v(Y)=\odot$, then $\M\models m_{\vec x}\leq r. \varphi(\vec x;\vec a)$ while $\N\not \models m_{\vec x}\leq r. \varphi(\vec x;\vec a)$.  Now suppose that $v(X)=\ominus$.  If $v(Y)\not=\ominus$, then $\M\not\models m_{\vec x}<r .\varphi(\vec x;\vec a)$ while $\N\models m_{\vec x}<r. \varphi(\vec x;\vec a)$.  Finally suppose that $v(X)=\odot$.  If $v(Y)=\ominus$, then $\M\models m_{\vec x}<r. \varphi(\vec x;\vec a)$ while $\N\not\models m_{\vec x}<r. \varphi(\vec x;\vec a)$; if $v(Y)=\oplus$, then $\M\not\models m_{\vec x}\leq r. \varphi(\vec x;\vec a)$ while $\N\models m_{\vec x}\leq r. \varphi(\vec x;\vec a)$.

Conversely, suppose that (1) and (2) hold for every formula; we prove that $\M$ is an elementary substructure of $\N$ by induction on complexity of formulae.  The quantifier case is handled as usual using (1) while the measure constructor case is handled using (2).  
\end{proof}

\begin{thm}[Downward L\"owenheim-Skolem]
Suppose that $\N$ is a structure and $X$ is a subset of $N$.  Then there is an elementary substructure $\M$ of $\N$ containing $X$ with $|M|\leq \max(|X|,|\la|,\aleph_0)$.
\end{thm}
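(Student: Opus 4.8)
The plan is to invoke the Tarski--Vaught test (Proposition~\ref{TV}): it suffices to produce a set $M\supseteq X$ of the required cardinality, equipped with measure data making $\M$ a substructure of $\N$, for which the two conditions of that proposition hold. Condition (1) is handled exactly as in the classical Downward L\"owenheim--Skolem theorem. I would build $M$ as the union of an increasing chain $X=M_0\subseteq M_1\subseteq\cdots$, where $M_{k+1}$ is obtained from $M_k$ by closing off under the interpretations of the function and constant symbols of $\la$ and by adjoining, for each $\la$-formula $\varphi(x,\vec y)$ and each tuple $\vec a$ from $M_k$ with $\N\models\exists x\,\varphi(x,\vec a)$, a single witness $c\in N$. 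Since there are at most $\max(|\la|,\aleph_0)$ formulae and at most $\max(|M_k|,\aleph_0)$ parameter tuples, each step adjoins at most $\max(|M_k|,|\la|,\aleph_0)$ elements, so iterating $\omega$ times yields $|M|\leq\max(|X|,|\la|,\aleph_0)$. Declaring $\B^\M_n:=\{B\cap M^n:B\in\B^\N_n\}$ makes the first-order reduct a substructure and secures the algebra-containment clause in the definition of substructure.

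The real content is condition (2), which I would arrange to hold essentially by definition: on definable sets I would set $\mu^\M_n(\varphi(M,\vec a)):=\mu^\N_n(\varphi(N,\vec a))$ and $v^\M_n(\varphi(M,\vec a)):=v^\N_n(\varphi(N,\vec a))$. The difficulty is that this is well-defined only once one knows that two formulae defining the same subset of $M$ define the same subset of $N$ — which is precisely elementarity. The main obstacle is thus the circular dependence between defining $(\mu^\M,v^\M)$ and proving elementarity, and I would break it by induction on the rank $\rk$ of the measure constructor. At stage $r$, assuming $(\mu^\M,v^\M)$ have been defined on all definable sets of rank $<r$ in agreement with $\N$, I would first prove elementarity for all formulae of rank $\leq r$ by structural induction: the quantifier and connective cases go through using condition (1) and the inductive hypothesis, while for a measure-constructor formula $m_{\vec x}\bowtie s.\varphi$ of rank $\leq r$ the inner formula $\varphi$ has rank $\leq r-1$, so by induction $\varphi(M,\vec a)=\varphi(N,\vec a)\cap M^{|\vec x|}$, whence $\mu^\M(\varphi(M,\vec a))=\mu^\N(\varphi(N,\vec a))$ and $v^\M(\varphi(M,\vec a))=v^\N(\varphi(N,\vec a))$ by the stage-$(r-1)$ definition, and the semantic clauses for $<$ and $\leq$ give the equivalence. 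I would then define $(\mu^\M,v^\M)$ on rank-$\leq r$ definable sets; this is well-defined because if $\varphi_1(M,\vec a_1)=\varphi_2(M,\vec a_2)$ then $\M\models\forall\vec x(\varphi_1\leftrightarrow\varphi_2)$, a formula of rank $\leq r$, so by the elementarity just established $\varphi_1(N,\vec a_1)=\varphi_2(N,\vec a_2)$ as subsets of $N$, and equal sets trivially share their $\mu^\N$- and $v^\N$-values.

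The crucial point in this routing is that elementarity upgrades mere agreement on $M$ to genuine equality of the defined sets inside $N$, which preserves the valuation $v$ as well as the measure for free. I would stress this because a more naive approach — closing $M$ off to be "measure-dense," so that every positive-measure definable set meets $M$, and then applying the Tarski--Vaught test directly — controls $\mu$ but fails for $v$: two distinct definable sets of the same rational measure may carry different values of $v$, so agreement on $M$ alone cannot force their valuations to coincide. The rank-stratified induction avoids this entirely.

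The remaining steps I expect to be routine. One must extend the finitely additive, product-compatible measure from the subalgebra $\bigcup_n\dfbl_n(M)$ of definable sets to all of $\B^\M_n$ (finitely additive extensions from a subalgebra always exist, and one can carry out the extension coherently across arities so as to respect the product structure), assigning $v^\M=\odot$ wherever the resulting measure is irrational and otherwise as convenient; none of this affects elementarity, since the satisfaction relation only ever queries $\mu^\M$ and $v^\M$ on definable sets. Finally, $\M$ is genuinely a structure: each rank-$n$ measurability condition holds because definable subsets of $M^n$ lie in $\B^\M_n$ by construction. Assembling these pieces, Proposition~\ref{TV} yields that $\M$ is an elementary substructure of $\N$ containing $X$ of the stated cardinality.
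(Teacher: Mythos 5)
Your proposal is correct and follows essentially the same route as the paper: the same witness-closure chain giving the cardinality bound, the same trace algebra $\B^\M_n=\B^\N_n\cap M^n$ with measure and valuation pulled back from $\N$ on definable sets, and the same final appeal to the $\AML$ Tarski--Vaught test, with your rank-stratified induction being exactly the ``inductive argument'' the paper leaves implicit. The only difference is one of detail: you spell out the well-definedness-via-elementarity point and the finitely additive extension to non-definable trace sets, which the paper compresses into two sentences.
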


\begin{proof}
We define a sequence $(X_i)_{i<\omega}$ of subsets of $N$ as follows.  Set $X_0:=X$.  Assume now that $X_i$ has already been defined.  For each formula $\varphi(x)$ with parameters from $X_i$ that is satisfiable in $\N$, we fix a witness $b_\varphi$.  We then let $X_{i+1}$ be the closure of $X_i$ and all the $b_\varphi$'s under the function symbols in $\la$.  Observe that, since we only added measure constructors $m_{\vec x}\bowtie r$ for rational $r$, we have $|X_i|\leq \max(|X|,|\la|,\aleph_0)$ for each $i<\omega$.

We now define an elementary substructure $\M$ as follows.  We take $M:=\bigcup_{i<\omega} X_i$ and $\B_\M:=\B_\N\cap M$.  By the usual Tarski-Vaught test, the first-order part of $\M$ is an elementary substructure of the first-order part of $\N$.  This is the beginning of an inductive argument that allows one to define, for $a\in M$, $\mu(\varphi(N,a)\cap M):=\mu(\varphi(N,a))$ and $v(\varphi(N,a)\cap M):=v(\varphi(M,a))$.  Another inductive argument allows one to show that the quasistructure $\M$ is actually a structure.  Now the $\AML$ Tarski-Vaught Test (Proposition \ref{TV}) yields that $\M$ is an elementary substructure of $\N$.   
\end{proof}

\section{Model Theory and Applications}
\label{sec:applications}
\subsection{More Model Theory}
We introduce some standard (and less standard) notions from model theory.  We often discuss the case where the language is countable, $\mathfrak{M}$ is $\aleph_1$-saturated, and we consider a countable substructure, since this is the most interesting case for our purposes.  Most of these results would still hold as long as the saturation of $\mathfrak{M}$ is greater than the cardinality of either the signature or the submodels we consider.

From now on, we fix a countable signature $\la$ and a measured $\la$-structure $\M$ (although Lemmas \ref{extend}, \ref{inv}, and \ref{ind} below make no mention of the measure and thus these Lemmas go through for an arbitrary $\la$-structure).  For example, $\M$ could be a Fubini structure equipped with its canonical measures.  Also, for sake of readability, we will write $\mu$ for the measure, regardless of the cartesian power of $M^n$ under discussion.  Finally, we will often identify a formula $\varphi(x)$ with parameters from $M$ with the subset of $M^n$ that it defines; in this way, we may write $\mu(\varphi)$ to denote the measure of the definable subset of $M^n$ defined by $\varphi$.  Likewise, we may write $\neg Y$ to indicate the complement of the definable set $Y$.

\begin{df} 
Suppose $A\subseteq M$.
\begin{enumerate}
\item A \emph{partial type} of $n$-tuples (or an \emph{$n$-type}) over $A$ is a collection $p(\vec{x})$ of formulae with parameters from $A$, where $\vec{x}=(x_1,\ldots,x_n)$, such that for every $\varphi_1,\ldots,\varphi_n\in p$, there is $\vec{a}\in M^n$ such that $\M\models \varphi_i(\vec{a})$ for every $i\leq n$.
\item A partial type $p$ is said to be a \emph{type} over $A$ if for every formula $\varphi$ with suitable free variables and parameters from $A$, either $\varphi\in p$ or $\neg\varphi\in p$.
\item A \emph{global type} is a type over $M$.
\item If $\vec a$ is a sequence of elements, we write $\tp(\vec a/A)$ for the set of formulas with parameters from $A$ satisfied by $\vec a$.
\item If $\M$ is measured, then the partial type $p$ is \emph{wide} if for every $\varphi\in p$, $\mu(\varphi)>0$.
\end{enumerate}
\end{df}

\begin{rmk}
An $\la$-structure $\M$ is $\aleph_1$-saturated if and only if, whenever $A\subseteq M$ is countable and $p(\vec x)$ is a partial type over $A$, there is $a\in M^{|\vec x|}$ such that $\M\models \varphi(\vec a)$ for all $\varphi \in p$.  In this case, we write $\vec a \models p$.  
\end{rmk}

\begin{lemma}
Suppose that $A\subseteq M$ is countable.  Then for almost every $\vec a$, $\tp(\vec a/A)$ is wide.
\end{lemma}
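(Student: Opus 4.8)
The plan is to show that the collection of ``bad'' tuples---those $\vec a$ for which $\tp(\vec a/A)$ fails to be wide---forms a null set, after which the conclusion is immediate. Throughout I fix the length $n$ of the tuples under consideration and work in the measure space $(M^n,\B^n,\mu)$ supplied by the measured structure $\M$.

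First I would reformulate what it means for $\tp(\vec a/A)$ to fail to be wide. By definition this happens exactly when there is a formula $\varphi(\vec x)$ with parameters from $A$ such that $\M\models\varphi(\vec a)$ while $\mu(\varphi)=0$; equivalently, $\vec a$ lies in some $A$-definable subset of $M^n$ of measure zero. Writing $B$ for the union of all $A$-definable subsets of $M^n$ of measure zero, the set of bad tuples is therefore exactly $B$, and its complement $M^n\setminus B$ is precisely the set of $\vec a$ for which every $A$-definable set containing $\vec a$ has positive measure---that is, the set of $\vec a$ for which $\tp(\vec a/A)$ is wide.

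The crux is then a counting observation together with countable additivity. Since $\la$ is countable and $A$ is countable, there are only countably many $\la$-formulas with parameters from $A$, hence only countably many $A$-definable subsets of $M^n$. Thus $B$ is a \emph{countable} union of measure-zero sets; each such set lies in $\dfbl_n(M)\subseteq\B^n$, so $B\in\B^n$, and by countable subadditivity of $\mu$ we get $\mu(B)=0$. Consequently $\tp(\vec a/A)$ is wide for almost every $\vec a$, as claimed. There is no serious obstacle here: the only step requiring care is the appeal to countable subadditivity, which is exactly where the hypothesis that $\M$ is a \emph{measured} structure does the work, since its $\mu$ is a genuine $\sigma$-additive measure rather than the merely finitely additive $\mu^\M_n$ of an arbitrary quasistructure. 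The countability of both $\la$ and $A$ is what reduces the \emph{a priori} uncountable union defining $B$ to a countable one.
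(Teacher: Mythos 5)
Your proof is correct and is essentially the paper's own argument: the bad tuples are exactly those lying in some $A$-definable null set, and since $\la$ and $A$ are countable there are only countably many such sets, so their union is null by countable subadditivity of the genuine measure on the measured structure. You have simply spelled out the details that the paper's two-line proof leaves implicit.
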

\begin{proof} 
Any $\vec a$ such that $\tp(\vec a/A)$ is not wide must belong to some set of measure $0$ definable with parameters from $A$.  There are countably many such sets, so their union has measure $0$.
\end{proof}

\begin{lemma}
Suppose that $A\subseteq M$ is countable.  Then for almost every $(\vec a,\vec b)$, $\tp(\vec a/A\cup\{\vec b\})$ and $\tp(\vec b/A\cup\{\vec a\})$ are both wide.
\end{lemma}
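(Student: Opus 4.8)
The plan is to reduce this two-parameter wideness statement to the one-parameter Lemma that immediately precedes it, namely that for a countable set $A$, almost every $\vec a$ has $\tp(\vec a/A)$ wide. The essential tool is the Fubini property of the canonical measures on the measured structure $\M$, which lets me pass between statements about the product $M^{|\vec a|+|\vec b|}$ and statements about fibers.

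First I would observe that the ``bad'' set is a countable union of definable sets, so it suffices to control its measure via Fubini rather than definability. Fix the countable set $A$. Let $N$ be the set of pairs $(\vec a,\vec b)$ for which $\tp(\vec a/A\cup\{\vec b\})$ fails to be wide, i.e. for which there is some formula $\varphi(\vec x,\vec b)$ with parameters in $A\cup\{\vec b\}$ such that $\M\models \varphi(\vec a,\vec b)$ and $\mu(\varphi(M,\vec b))=0$. For each fixed $\vec b$, the fiber $N^{\vec b}=\{\vec a : (\vec a,\vec b)\in N\}$ is exactly the set of $\vec a$ whose type over the countable set $A\cup\{\vec b\}$ is not wide; by the previous Lemma (applied with $A\cup\{\vec b\}$ in place of $A$), this fiber has $\mu$-measure $0$. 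By the Fubini property, $\mu(N)=\int \mu(N^{\vec b})\,d\mu(\vec b)=0$, provided the relevant measurability holds. The symmetric set $N'$, where $\tp(\vec b/A\cup\{\vec a\})$ is not wide, is handled by the same argument with the roles of $\vec a$ and $\vec b$ exchanged, using the other Fubini slicing. Then $N\cup N'$ has measure $0$, and any $(\vec a,\vec b)$ outside it has both types wide, giving the conclusion.

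The main obstacle is measurability of the relevant sets so that Fubini can legitimately be invoked. The slicewise measure-zero conclusion from the preceding Lemma is clean, but to integrate I must know that $N$ itself (or a measurable set containing it with the same integral behavior) lies in the relevant $\sigma$-algebra and that $\vec b\mapsto \mu(N^{\vec b})$ is measurable. Here I would lean on the structure of $N$: the condition ``$\tp(\vec a/A\cup\{\vec b\})$ not wide'' is a countable union over formulas $\varphi(\vec x,\vec y)$ with parameters from the countable set $A$ of sets of the form $\{(\vec a,\vec b) : \M\models\varphi(\vec a,\vec b) \text{ and } \mu(\varphi(M,\vec b))=0\}$. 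Each such set is the intersection of a definable set $\varphi(M,M)$ with the cylinder $\{(\vec a,\vec b) : \mu(\varphi(M,\vec b))=0\}$ over the $\vec b$-coordinates; the latter cylinder is built from the (measurable, by the Fubini axiom) function $\vec b\mapsto \mu(\varphi(M,\vec b))$, so it is measurable and its fibers are either all of $M^{|\vec a|}$ or empty. Thus each piece is measurable with measure-zero $\vec b$-almost-every fiber, hence has measure $0$ by Fubini, and $N$, being a countable union, also has measure $0$. This handles the measurability subtlety cleanly and avoids having to directly argue that the full set $N$ is definable, which it need not be.
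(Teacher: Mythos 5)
Your proof is correct and follows essentially the same route as the paper's: apply the preceding one-variable lemma fiberwise over $\vec b$ (using that $A\cup\{\vec b\}$ is still countable), integrate via the Fubini property of the measured structure to get that the bad set is null, and finish by symmetry. The only difference is that you explicitly justify the measurability of the bad set---writing it as a countable union, over formulas with parameters from $A$, of intersections of definable sets with measurable cylinders---a point the paper's proof passes over silently; this is a worthwhile addition but not a different argument.
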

\begin{proof} 
By the previous lemma, for each $\vec b$ the set $$T_{\vec b}=\{\vec a\setmid \tp(\vec a/A\cup\{\vec b\})\text{ is not wide}\}$$ has measure $0$.  Therefore $\mu(\{(\vec a,\vec b)\setmid \vec a\in T_{\vec b}\})=\int \mu(T_{\vec b})d\mu^{|\vec b|}(\vec b)=0$, so the set of $(\vec a,\vec b)$ such that $\tp(\vec a/A\cup\{\vec b\})$ is not wide has measure $0$.  By a symmetric argument, also the set of $(\vec a,\vec b)$ such that $\tp(\vec b/A\cup\{\vec a\})$ is not wide has measure $0$.  Therefore almost every $(\vec a,\vec b)$ is in neither of these sets; for such $(\vec{a},\vec{b})$, $\tp(\vec a/A\cup\{\vec b\})$ and $\tp(\vec b/A\cup\{\vec a\})$ are both wide. 
\end{proof}

\begin{df}
Suppose that $(\vec b_i \ : \ i\in I)$ is a sequence of $r$-tuples of elements of $M$, where $I$ is an initial segment of $\n$, and $A\subseteq M$.  Then $(\vec b_i)$ is a \emph{sequence of indiscernibles over $A$} if whenever $\varphi(\vec x_0,\ldots,\vec x_n)$ is a formula with parameters from $A$ and the displayed variables and $m_0<\cdots<m_n$, we have $$\M\models \varphi(\vec b_0,\ldots,\vec b_n)\Leftrightarrow \varphi(\vec b_{m_0},\ldots,\vec b_{m_n}).$$
\end{df}
That is, for every $n$ there is a type $p_n$ of $r\cdot(n+1)$-tuples such that $\tp(\vec b_{m_0},\ldots,\vec b_{m_n}/A)=p_n$ whenever $m_0<\cdots<m_n$ is an increasing sequence.

\subsection{Amalgamation}

The arguments in this section are essentially derived from \cite{hrushovski}.  



\begin{lemma}
Suppose that $\mu(M)$ is finite, $(\vec b_i)$ is a sequence of indiscernibles over $A\subseteq M$, $X$ is a definable set over $A$, and $\mu(X_{\vec b_0})>0$ holds.  Then for any $n$, $\mu(\bigcap_{i\leq n}X_{\vec b_i})>0$.
\end{lemma}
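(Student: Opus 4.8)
The plan is to run a moment (``energy'') argument on the indicator functions of the fibers $X_{\vec b_i}$, playing a convexity lower bound against a counting upper bound.

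Fix a formula $\theta(\vec x,\vec y)$ with parameters from $A$ and with $|\vec y|$ equal to the length $r$ of the $\vec b_i$ such that $X_{\vec b_i}=\theta(M,\vec b_i)=\{\vec a:\M\models\theta(\vec a,\vec b_i)\}$ is a definable (hence measurable) subset of $M^{d}$, where $d:=|\vec x|$. The first step is to observe that for a finite index set $S$ the measure $\mu\bigl(\bigcap_{i\in S}X_{\vec b_i}\bigr)$ depends only on $|S|$. Indeed, $\bigcap_{i\in S}X_{\vec b_i}$ is defined by $\bigwedge_{i\in S}\theta(\vec x,\vec b_i)$, and for each rational $q$ the truth value of $m_{\vec x}\bowtie q.\bigwedge_{i\in S}\theta(\vec x,\vec b_i)$ is fixed by indiscernibility over $A$ once $|S|$ is fixed; since $\mu(\psi(M))=\inf\{q\in\q:\M\models m_{\vec x}<q.\psi\}$, these truth values recover the measure. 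Write $c_k$ for this common value of an $|S|=k$ intersection; then $c_1=\mu(X_{\vec b_0})>0$, and $c_{k+1}\le c_k$ as the intersections are nested. I will assume the sequence is infinite ($I=\n$, the case used in the applications), so that arbitrarily many $\vec b_i$ are available; the goal is then exactly that $c_k>0$ for all $k$.

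Suppose toward a contradiction that some $c_k=0$, and take $k\ge 2$ least with this property, so $c_j=0$ for all $j\ge k$. Put $V:=\mu(M^{d})=\mu(M)^{d}<\infty$ (finite by the product property and finiteness of $\mu(M)$), and for $N\ge k$ set $F_N:=\sum_{i<N}\chi_{X_{\vec b_i}}$, a nonnegative bounded measurable function on $M^d$. Linearity gives $\int F_N\,d\mu=N c_1$, and the power-mean (Jensen) inequality on the finite measure space $(M^d,\mu)$ yields the lower bound $\int F_N^{\,k}\,d\mu\ge V^{\,1-k}\bigl(\int F_N\,d\mu\bigr)^{k}=V^{\,1-k}c_1^{\,k}N^{k}$. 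For the matching upper bound, expand $F_N^{\,k}=\sum_{(i_1,\dots,i_k)\in\{0,\dots,N-1\}^k}\chi_{\,X_{\vec b_{i_1}}\cap\cdots\cap X_{\vec b_{i_k}}}$ and integrate termwise, giving $\int F_N^{\,k}\,d\mu=\sum_{(i_1,\dots,i_k)}c_{\#\{i_1,\dots,i_k\}}$. Every $k$-tuple of pairwise distinct indices contributes $c_k=0$, so only tuples taking at most $k-1$ distinct values survive; there are at most $(k-1)^{k}\binom{N}{k-1}=O(N^{k-1})$ of these, and each contributes at most $c_1$. Hence $\int F_N^{\,k}\,d\mu\le C\,N^{k-1}$ for a constant $C=C(k,c_1)$. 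Comparing the two bounds forces $N\le C\,V^{\,k-1}c_1^{-k}$ for every $N$, which is absurd; therefore $c_k>0$ for all $k$, and in particular $\mu\bigl(\bigcap_{i\le n}X_{\vec b_i}\bigr)=c_{n+1}>0$.

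The engine of the proof is this incompatibility between the $\Theta(N^{k})$ convexity lower bound and the $O(N^{k-1})$ counting upper bound that appears the instant $c_k$ vanishes. The only points requiring care are the reduction showing that $\mu\bigl(\bigcap_{i\in S}X_{\vec b_i}\bigr)$ is an invariant of $|S|$ (for which one uses that the measure constructors recover $\mu$, together with indiscernibility over $A$) and the mild assumption that the indiscernible sequence is infinite, so that $N\to\infty$ is genuinely available. Notably, no appeal to the Fubini property is needed, since only the single-power measure $\mu$ on $M^d$ and elementary convexity enter.
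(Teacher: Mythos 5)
Your argument is correct, but it reaches the contradiction by a different mechanism than the paper's proof. The paper also (implicitly) uses the key fact that indiscernibility makes $\mu\bigl(\bigcap_{i\in S}X_{\vec b_i}\bigr)$ depend only on $|S|$, and also takes $k$ minimal with $\mu\bigl(\bigcap_{i\leq k}X_{\vec b_i}\bigr)=0$; but it then forms the auxiliary sets $C_n=\bigcap_{i<k-1}X_{\vec b_i}\cap X_{\vec b_n}$ for $n\geq k-1$. By indiscernibility and minimality each $C_n$ has the same positive measure $\mu\bigl(\bigcap_{i\leq k-1}X_{\vec b_i}\bigr)$, while for $n<m$ the intersection $C_n\cap C_m$ is (again by indiscernibility) a copy of a $(k{+}1)$-fold intersection and hence null; an infinite family of pairwise almost-disjoint sets of measure bounded below contradicts finiteness of $\mu$. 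You instead run a $k$-th moment argument directly on the original fibers: Jensen gives $\int F_N^{\,k}\,d\mu\geq V^{1-k}c_1^{\,k}N^k$, while the vanishing of all $k$-fold intersections with distinct indices leaves only $O(N^{k-1})$ tuples contributing to the expansion. Both proofs are sound, and both silently require the indiscernible sequence to be infinite (you flag this; the paper does not, though its applications all use sequences indexed by $\n$). What the paper's route buys is brevity: minimality of $k$ reduces everything to a pairwise (second-moment style) disjointness statement, with no counting or convexity needed. What your route buys is that the contradiction is extracted from the fibers themselves without auxiliary sets, minimality of $k$ is never actually used (only that some $c_k$ vanishes while $c_1>0$), and you make explicit the step the paper leaves implicit, namely that the measure of a definable set is recoverable from the truth values of the formulas $m_{\vec x}<q.\varphi$, so that indiscernibility really does transfer to measures.
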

\begin{proof}
Suppose the conclusion of the lemma is false and take $k>0$ minimal so that $\mu(\bigcap_{i\leq k}X_{\vec b_i})=0$.  For $n\geq k-1$, let $C_n=\bigcap_{i<k-1}X_{\vec b_i}\cap X_{\vec b_n}$.  Then $\mu(C_n)=\mu(\bigcap_{i\leq k-1}X_{\vec b_i})>0$ by indiscernibility.  However, when $k-1\leq n<m$, by indiscernibility again, $\mu(C_n\cap C_m)=\mu(\bigcap_{i\leq k}X_{\vec b_i})=0$.  This is a contradiction since $\mu$ is a finite measure.
\end{proof}

\begin{lemma}
  Suppose that $\M$ is $\aleph_1$-saturated, $\mu(M)$ is finite, and $A\subseteq M$ is countable.  If  $(\vec b_i)$ is a sequence of indiscernibles over $A$ and $\tp(\vec a/A\cup\{\vec b_0\})$ is wide, then for any $n$, there is an $\vec a'$ with $\tp(\vec a',\vec b_i/A)=\tp(\vec a,\vec b_0/A)$ for all $i\leq n$.
  \label{amalgamation:repeating}
\end{lemma}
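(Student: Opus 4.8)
The plan is to realize a suitable partial type and then read off the conclusion from completeness. Write $p(\vec x,\vec y):=\tp(\vec a,\vec b_0/A)$, a complete type over $A$, and consider the set of formulae
\[\Phi(\vec x):=\{\psi(\vec x,\vec b_i)\setmid \psi(\vec x,\vec y)\in p \text{ and } i\le n\},\]
each having parameters from the countable set $A\cup\{\vec b_0,\ldots,\vec b_n\}$. Any $\vec a'$ realizing $\Phi$ satisfies $p\subseteq\tp(\vec a',\vec b_i/A)$ for every $i\le n$; since both are complete types over $A$ in the variables $(\vec x,\vec y)$, this forces $\tp(\vec a',\vec b_i/A)=p$, which is exactly what we want. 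By $\aleph_1$-saturation (applicable since the parameter set is countable) it therefore suffices to check that $\Phi$ is a partial type, i.e.\ that every finite subset is realized in $\M$.

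Next I would reduce a finite fragment of $\Phi$ to a single fibre intersection. Since $p$ is a complete type it is closed under conjunction, so any finite subset of $\Phi$ is contained in $\{\psi(\vec x,\vec b_i)\setmid i\le n\}$ for a single $\psi(\vec x,\vec y)\in p$; thus it is enough to produce $\vec a'$ with $\M\models\psi(\vec a',\vec b_i)$ for all $i\le n$. Setting $X:=\{(\vec x,\vec y)\setmid \M\models\psi(\vec x,\vec y)\}$, a set definable over $A$, we have $X_{\vec b_i}=\psi(\M,\vec b_i)$, and the goal becomes $\bigcap_{i\le n}X_{\vec b_i}\ne\emptyset$.

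The measure hypothesis enters precisely here. Because $\M\models\psi(\vec a,\vec b_0)$, the formula $\psi(\vec x,\vec b_0)$ has parameters from $A\cup\{\vec b_0\}$ and lies in $\tp(\vec a/A\cup\{\vec b_0\})$; wideness of that type then gives $\mu(X_{\vec b_0})=\mu(\psi(\M,\vec b_0))>0$. Applying the previous lemma to the $A$-definable set $X$ and the indiscernible sequence $(\vec b_i)$ yields $\mu(\bigcap_{i\le n}X_{\vec b_i})>0$, so in particular this intersection is nonempty. This establishes finite satisfiability of $\Phi$ and completes the argument.

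I do not expect a genuine obstacle: essentially all the content is carried by the previous lemma, and the remaining points are bookkeeping — checking that $p$ is closed under conjunction so that finite fragments collapse to one formula, that the parameters of $\psi(\vec x,\vec b_0)$ really lie in $A\cup\{\vec b_0\}$ so that wideness applies to it, and that containment of complete types over $A$ upgrades to equality.
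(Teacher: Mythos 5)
Your proof is correct and takes essentially the same route as the paper's: wideness gives $\mu(X_{\vec b_0})>0$ for each $A$-definable $X$ containing $(\vec a,\vec b_0)$, the preceding indiscernibility lemma upgrades this to $\mu(\bigcap_{i\le n}X_{\vec b_i})>0$, and $\aleph_1$-saturation then realizes the resulting finitely satisfiable collection of formulae. The paper phrases the saturation step as choosing $\vec a'$ lying in $\bigcap_{i\le n}X_{\vec b_i}$ simultaneously for all such $X$, which is exactly your partial type $\Phi$; your conjunction and completeness bookkeeping is what makes that terse phrasing legitimate.
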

\begin{proof}
  Let $X$ be a set definable over $A$ and suppose $(\vec a,\vec b_0)\in X$.  Then $\vec a\in X_{\vec b_0}$ and, since $\tp(\vec a/A\cup \{\vec b_0\})$ is wide, $\mu(X_{\vec b_0})>0$.  By the previous lemma, $\mu(\bigcap_{i\leq n}X_{\vec b_i})>0$, and is therefore non-empty.  By $\aleph_1$-saturation, there is an $a'$ such that $a'\in \bigcap_{i\leq n}X_{\vec b_i}$ simultaneously for all $X\ni (\vec a,\vec b_0)$ that are definable over $A$.  Therefore $\tp(\vec a',\vec b_i/A)=\tp(\vec a,\vec b_0/A)$ for all $i\leq n$.
\end{proof}

\begin{rmk}
The previous lemma suggests a connection between wideness and the model-theoretic notion of  \emph{nonforking}, as we now explain.  First, a formula $\varphi(x,b)$ \emph{$k$-divides} over a set $A\subseteq M$ if there is a sequence $(b_i\ : \ i\in \n)$ which is indiscernible over $A$ for which $b_0=b$ and such that $\bigcap_{i\in I} \varphi(M,b_i)=\emptyset$ for any $I\subseteq \n$ with $|I|=k$.  We say that $\varphi(x,b)$ \emph{divides} over $A$ if it $k$-divides over $A$ for some $k\geq 2$.  We say that $\varphi(x,b)$ \emph{forks} over $A$ if there are formulae $\psi_1(x,b_1),\ldots,\psi_n(x,b_n)$ such that $\varphi(M,b)\subseteq \bigcup_{i=1}^n \psi_i(M,b_i)$ and such that each $\psi_i(x,b_i)$ divides over $A$.  Finally, we say that a type $p(x)$ forks over $A$ if it contains a formula which forks over $A$.  In model theory, when $\tp(a/A\cup\{b\})$ does not fork over $A$, this implies that $a$ is in some sense independent from $b$ over $A$.  For example, when working in an algebraically closed field $K$ and $k$ is a subfield of $K$, then $\tp(a/k\cup\{b\})$ does not fork over $k$ if and only if, whenever $a$ is a generic point for a Zariski closed set $V$ defined over $k$, then $a$ does not lie in any Zariski closed set $V'$ defined over $k(b)$ with $\dim(V')<\dim (V)$.  Lemma \ref{amalgamation:repeating} shows that if $A\subseteq M$ is countable and $\tp(a/A\cup \{b\})$ is wide, then $\tp(a/A\cup\{b\})$ does not fork over $A$.
\end{rmk}

\begin{df}
 Suppose that $A\subseteq M$ and $p$ is a global type.
 \begin{enumerate}
 \item $p$ is \emph{$A$-invariant} if whenever $\tp(\vec a_1,\ldots,\vec a_n/A)=\tp(\vec a'_1,\ldots,\vec a'_n/A)$, we have $X_{\vec a_1,\ldots,\vec a_n}\in p\Leftrightarrow X_{\vec a'_1,\ldots,\vec a'_n}\in p$.  
 \item $p$ is \emph{$A$-finitely satisfiable} if whenever $X\in p$, $X\cap A\neq\emptyset$.
 \end{enumerate}
\end{df}

\begin{lemma}\label{inv}
 Suppose that $p$ is a global type and $A\subseteq M$.  If $p$ is $A$-finitely satisfiable then $p$ is $A$-invariant.
\end{lemma}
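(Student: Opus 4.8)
The plan is to argue by contradiction, using the finite-satisfiability hypothesis to extract, out of a hypothetical failure of invariance, a single formula over $A$ that distinguishes two tuples realizing the same type over $A$.

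So suppose $p$ is not $A$-invariant. Then there are tuples $\vec a_1,\ldots,\vec a_n$ and $\vec a'_1,\ldots,\vec a'_n$ with $\tp(\vec a_1,\ldots,\vec a_n/A)=\tp(\vec a'_1,\ldots,\vec a'_n/A)$, together with a definable relation $X$, such that $X_{\vec a_1,\ldots,\vec a_n}\in p$ but $X_{\vec a'_1,\ldots,\vec a'_n}\notin p$. Here I write $X=X(\vec z,\vec y_1,\ldots,\vec y_n)$ for a defining formula, whose parameters (if any) lie in $A$, so that $X_{\vec a_1,\ldots,\vec a_n}=\{\vec z : \M\models X(\vec z,\vec a_1,\ldots,\vec a_n)\}$ and similarly for the primed tuples. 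Because $p$ is a \emph{complete} type over $M$, the failure $X_{\vec a'_1,\ldots,\vec a'_n}\notin p$ forces $\neg X_{\vec a'_1,\ldots,\vec a'_n}\in p$; and since a complete type is closed under finite intersection, I conclude that
$$Y:=X_{\vec a_1,\ldots,\vec a_n}\cap \neg X_{\vec a'_1,\ldots,\vec a'_n}\in p.$$

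Now I invoke $A$-finite satisfiability applied to $Y\in p$: the set $Y$ must contain a tuple $\vec c$ from $A$ (of the same length as $\vec z$). Unwinding the definition of $Y$, this says exactly that $\M\models X(\vec c,\vec a_1,\ldots,\vec a_n)$ while $\M\models \neg X(\vec c,\vec a'_1,\ldots,\vec a'_n)$. Since $\vec c$ and the parameters of $X$ all lie in $A$, the formula $X(\vec c,\vec y_1,\ldots,\vec y_n)$ is a formula over $A$, and it belongs to $\tp(\vec a_1,\ldots,\vec a_n/A)$ but not to $\tp(\vec a'_1,\ldots,\vec a'_n/A)$. This contradicts the hypothesis that these two types coincide, completing the argument.

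The proof is essentially routine, and I do not expect a genuine obstacle; the only points requiring care are notational. One must read the clause ``$X\cap A\neq\emptyset$'' in the definition of $A$-finite satisfiability as ``$X$ contains a tuple from $A$'' of the appropriate length, and one must keep straight which variables of $X$ are the fiber variables $\vec z$ (into which the witness $\vec c$ is substituted) and which are the parameter variables $\vec y_i$ (which remain free and turn the witness into an $A$-definable distinguishing formula). The single conceptual idea is that completeness of $p$ lets one replace the two-sided discrepancy ``$X_{\vec a}\in p$, $X_{\vec a'}\notin p$'' by a single member $Y$ of $p$, whereupon finite satisfiability produces the separating $A$-formula for free.
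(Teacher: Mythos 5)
Your proof is correct and follows essentially the same route as the paper's: argue by contradiction, use completeness of $p$ to get $\neg X_{\vec a'_1,\ldots,\vec a'_n}\in p$, intersect to put $X_{\vec a_1,\ldots,\vec a_n}\setminus X_{\vec a'_1,\ldots,\vec a'_n}$ in $p$, and then use $A$-finite satisfiability to produce a witness in $A$ contradicting $\tp(\vec a_1,\ldots,\vec a_n/A)=\tp(\vec a'_1,\ldots,\vec a'_n/A)$. Your added care about which variables are fiber variables and the requirement that the parameters of $X$ lie in $A$ simply makes explicit what the paper leaves implicit.
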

\begin{proof}
  Suppose not; then there are definable $X$ and $\vec a_1,\ldots,\vec a_n,\vec a'_1,\ldots,\vec a'_n$ so that $X_{\vec a_1,\ldots,\vec a_n}\in p$ and $X_{\vec a'_1,\ldots,\vec a'_n}\not\in p$.  Since $p$ is a global type, this means $\neg X_{\vec a'_1,\ldots,\vec a'_n}\in p$, and since $p$ is closed under intersections, $X_{\vec a_1,\ldots,\vec a_n}\setminus X_{\vec a'_1,\ldots,\vec a'_n}\in p$.  But then there is an element $m\in A$ such that $m\in X_{\vec a_1,\ldots,\vec a_n}\setminus X_{\vec a'_1,\ldots,\vec a'_n}$, contradicting the fact that $\tp(\vec a_1,\ldots,\vec a_n/A)=\tp(\vec a'_1,\ldots,\vec a'_n/A)$.
\end{proof}

\begin{lemma}\label{extend}
  Let $\N$ be an elementary substructure of $\M$.  If $p$ is a type over $N$, then there is an extension of $p$ to a global $N$-finitely satisfiable (and therefore $N$-invariant) type.
\end{lemma}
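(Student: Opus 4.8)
The plan is to build the desired global type as a \emph{coheir} of $p$, realized concretely as the preimage of an ultrafilter on $N^{n}$ (where $n$ is the length of the variable tuple of $p$) under the ``trace on $N$'' map. The only property of $\N$ this uses is elementarity, which is why the measure plays no role (as the surrounding discussion already notes).

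First I would record that $p$ is finitely satisfiable in $N$. Given $\varphi_1,\dots,\varphi_k\in p$, the definition of a partial type supplies $\vec a\in M^{n}$ with $\M\models\bigwedge_i\varphi_i(\vec a)$, so $\M\models\exists\vec x\,\bigwedge_i\varphi_i(\vec x)$. Since this sentence has parameters from $N$ and $\N\prec\M$, it holds in $\N$ as well, producing $\vec b\in N^{n}$ with $\N\models\varphi_i(\vec b)$ and hence $\M\models\varphi_i(\vec b)$ for all $i$. Thus the family $\{\varphi(N):\varphi\in p\}$ of subsets of $N^{n}$ has the finite intersection property.

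Next I would extend this family to an ultrafilter $\mathcal U$ on $\mathcal P(N^{n})$ (by the ultrafilter lemma) and set
\[q:=\{X\in\dfbl_{n}(M):X\cap N^{n}\in\mathcal U\}.\]
The key observation is that the trace map $X\mapsto X\cap N^{n}$ is a Boolean homomorphism from $\dfbl_{n}(M)$ into $\mathcal P(N^{n})$, so $q$, being the preimage of an ultrafilter under a Boolean homomorphism, is again an ultrafilter on $\dfbl_{n}(M)$, i.e.\ a complete global type. It extends $p$ because for $\varphi\in p$ we have $\varphi(M)\cap N^{n}=\varphi(N)\in\mathcal U$. Moreover every $X\in q$ has $X\cap N^{n}\in\mathcal U$, hence $X\cap N^{n}\neq\emptyset$; this is precisely $N$-finite satisfiability, and in particular it shows $q$ is consistent (each member, and each finite intersection of members, is nonempty). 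Finally, $N$-invariance is immediate from Lemma \ref{inv}.

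This is the classical coheir construction, so I expect no serious obstacle. The single point needing care is the verification that $q$ \emph{decides} every definable set: for $X\in\dfbl_{n}(M)$, either $X\cap N^{n}\in\mathcal U$ or its complement $(M^{n}\setminus X)\cap N^{n}=N^{n}\setminus(X\cap N^{n})\in\mathcal U$, because $\mathcal U$ is an ultrafilter on $N^{n}$. This relies on $\dfbl_{n}(M)$ being genuinely closed under complement, which is exactly why it was introduced as a Boolean algebra, so the AML setting causes no difficulty here.
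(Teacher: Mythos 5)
Your proof is correct, and it reaches the coheir by a genuinely different (dual) construction from the paper's. The paper works on the $\M$-side: after the same initial observation that elementarity forces every $X\in p$ to meet $N^n$, it forms the auxiliary partial type $p'=p\cup\{\neg Y \setmid Y\in \dfbl_n(M),\ Y\cap N^n=\emptyset\}$, checks that $p'$ is finitely satisfiable in $N$, and then takes an \emph{arbitrary} extension of $p'$ to a global type $q$; any such $q$ is automatically $N$-finitely satisfiable, since a $Y\in q$ disjoint from $N^n$ would yield the contradiction $\neg Y\in p'\subseteq q$. You instead work on the $N$-side: extend the traces $\{\varphi(N)\setmid \varphi\in p\}$ to an ultrafilter $\mathcal{U}$ on $\mathcal{P}(N^n)$ and pull back along the trace homomorphism $X\mapsto X\cap N^n$. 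The trade-off is this: the paper's route shows that \emph{every} global completion of $p'$ works, which gives flexibility to impose further constraints before completing, but it takes for granted that a finitely satisfiable collection extends to a complete global type --- itself a Zorn/ultrafilter argument on the Boolean algebra $\dfbl_n(M)$; your pullback makes that completion step explicit and canonical, since completeness of $q$ is exactly the ultra property of $\mathcal{U}$ transported along a Boolean homomorphism, at the cost of producing only one specific coheir. Both arguments share the same two endpoints: the elementarity argument giving finite satisfiability of $p$ in $N$ at the start (your version, with finite conjunctions, is the slightly more careful one), and the appeal to Lemma \ref{inv} for invariance at the end.
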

\begin{proof}
  Note that, since $\N$ is an elementary submodel of $\M$, each $X\in p$ has the property that $X\cap N^n\neq\emptyset$.  Let $$p'=p\cup\{\neg Y\setmid Y\in \dfbl_n(M) \text{ and }Y\cap N^n=\emptyset\}.$$  We will show that every finite subset of $p'$ is satisfied by an element of $N$.  If not, there is an $X\in p$ and $Y_1,\ldots,Y_n$ with $Y_i\cap N=\emptyset$ such that $X\cap\bigcap_i\neg Y_i=\emptyset$.  So $X\cap N\cap \bigcap_i(Y_i\cup\neg Y_i)\neq\emptyset$, and since each $Y_i\cap N=\emptyset$, $X\cap M\cap\bigcap_iY_i\neq\emptyset$.

  Let $q$ be an arbitrary extension of $p'$ to a (global) type.  Suppose $q$ is not finitely satisfiable in $M$; then there is a $Y\in q$ with $Y\cap N=\emptyset$, which is impossible since then $\neg Y\in p'\subseteq q$.
\end{proof}

\begin{df}
  If $S$ is a set and $p$ a global type, write $p\mid S$ for the restriction of $p$ to sets definable over $S$.
\end{df}

\begin{lemma}\label{ind}
  Suppose $p$ is a global $A$-invariant type, and recursively choose $\vec b_n\vDash p\mid A\cup\{\vec b_i\}_{i<n}$.  Then $(\vec b_i)$ (if it exists) is a sequence of indiscernibles over $A$.  (This sequence always exists if $\mathfrak{M}$ is $\aleph_1$-saturated.)
\end{lemma}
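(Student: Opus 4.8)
The plan is to reduce indiscernibility to a single statement about types and prove that statement by induction on length. Unwinding the definition, $(\vec b_i)$ is indiscernible over $A$ precisely when $\tp(\vec b_{m_0}, \ldots, \vec b_{m_n}/A)$ does not depend on the increasing tuple $m_0 < \cdots < m_n$, so I would prove by induction on $n$ that $\tp(\vec b_{m_0}, \ldots, \vec b_{m_n}/A) = \tp(\vec b_0, \ldots, \vec b_n/A)$ for every such tuple. Throughout I identify an $A$-definable formula with the set $X \subseteq M^{(n+1)r}$ it defines and, for a tuple $(\vec c_0, \ldots, \vec c_{n-1})$, write $X_{\vec c_0, \ldots, \vec c_{n-1}}$ for its fiber in the last block of variables, which is definable over $A \cup \{\vec c_0, \ldots, \vec c_{n-1}\}$.

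The base case $n = 0$ is immediate: every $\vec b_m$ realizes $p \mid A$ (since $A$ is contained in each parameter set used in the recursion), and $p \mid A$ is a single complete type over $A$, so all the $\vec b_m$ share the type $p \mid A$.

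For the inductive step I would peel off the coordinate of largest index. Fix $m_0 < \cdots < m_n$ and an $A$-definable set $X$. Because $m_n$ exceeds all earlier indices, $\vec b_{m_n}$ was chosen to realize $p$ restricted to sets definable over $A \cup \{\vec b_j : j < m_n\}$, a parameter set that includes $\vec b_{m_0}, \ldots, \vec b_{m_{n-1}}$; applying this to the fiber of $X$ gives
\[(\vec b_{m_0}, \ldots, \vec b_{m_n}) \in X \Leftrightarrow \vec b_{m_n} \in X_{\vec b_{m_0}, \ldots, \vec b_{m_{n-1}}} \Leftrightarrow X_{\vec b_{m_0}, \ldots, \vec b_{m_{n-1}}} \in p,\]
and likewise $(\vec b_0, \ldots, \vec b_n) \in X \Leftrightarrow X_{\vec b_0, \ldots, \vec b_{n-1}} \in p$. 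The induction hypothesis gives $\tp(\vec b_{m_0}, \ldots, \vec b_{m_{n-1}}/A) = \tp(\vec b_0, \ldots, \vec b_{n-1}/A)$, and then $A$-invariance of $p$ identifies the two conditions $X_{\vec b_{m_0}, \ldots, \vec b_{m_{n-1}}} \in p$ and $X_{\vec b_0, \ldots, \vec b_{n-1}} \in p$. Chaining the equivalences shows $X$ contains one tuple iff it contains the other, and since $X$ was arbitrary the two $(n+1)$-tuples have equal type over $A$.

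The existence clause is routine: assuming (as in the running hypotheses of this section) that $A$ is countable, at stage $n$ the parameter set $A \cup \{\vec b_i : i < n\}$ is still countable, and $p \mid A \cup \{\vec b_i : i < n\}$ is a consistent type over it as a restriction of the global type $p$, so $\aleph_1$-saturation of $\M$ supplies a realization and the recursion proceeds through all of $\n$. I expect the one genuine subtlety to be in the inductive step: it is essential to strip off the variable block of \emph{largest} index, since only then is the corresponding $\vec b$ guaranteed to realize $p$ over all the remaining parameters, which is what licenses the passage to fibers of $p$ before invariance is applied.
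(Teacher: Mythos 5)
Your proof is correct and follows essentially the same route as the paper's: induction on the tuple length, peeling off the largest-index element (which realizes $p$ over all the other parameters), and then applying $A$-invariance via the induction hypothesis; your direct chain of equivalences merely streamlines the paper's two-step replacement (first swapping $\vec b_{n+1}$ for $\vec b_{m_{n+1}}$, then invoking invariance). Your treatment of the existence clause, including the observation that countability of $A$ is what makes $\aleph_1$-saturation applicable, is also correct and slightly more explicit than the paper's parenthetical remark.
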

\begin{proof}
 By induction on $n$, we show $\tp(\vec b_{m_0},\ldots,\vec b_{m_n}/A)$ is constant whenever $m_0<\cdots<m_n$.  When $n=1$, this follows since each $\tp(\vec b_i/A)=p\mid A$.  Suppose now that $(\vec b_{n+1},\vec b_n,\ldots,\vec b_0)\in X$.  Then also $(\vec b_{m_{n+1}},\vec b_n,\ldots,\vec b_0)\in X$ holds because $\tp(\vec b_{m_{n+1}}/A\cup\{\vec b_0,\ldots,\vec b_n\})=\tp(\vec b_{n+1}/A\cup\{\vec b_0,\ldots,\vec b_n\})=p\mid A\cup\{\vec b_0,\ldots,\vec b_n\}$.  By the induction hypothesis, we have that $\tp(\vec b_0,\ldots,\vec b_n/A)=\tp(\vec b_{m_0},\ldots,\vec b_{m_n}/A)$.  Since $X_{\vec b_n,\ldots,\vec b_0}\in p$, also $X_{\vec b_{m_n},\ldots,\vec b_{m_0}}\in p$, whence we have $(\vec b_{m_{n+1}},\vec b_{m_n},\ldots,\vec b_{m_0})\in X$.
\end{proof}

\begin{lemma}
Suppose that $\mathfrak{M}$ is $\aleph_1$-saturated and $\mu(M)$ is finite.  Suppose that $A\subseteq M$ is countable, $q$ is an $A$-invariant type, $\vec b\vDash q\mid A\cup\{\vec a\}$, $\tp(\vec b'/A)=\tp(\vec b/A)$, $\tp(\vec a'/A)=\tp(\vec a/A)$, $\tp(\vec a'/A\cup\{\vec b'\})$ is wide, $X,Y$ are definable with parameters from $A$, and $\mu(X_{\vec a}\cap Y_{\vec b})>0$.  Then $\mu(X_{\vec a'}\cap Y_{\vec b'})>0$.
\label{amalgamation:bothbig}
\end{lemma}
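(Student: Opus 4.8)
The plan is to trade the measure statement for a statement about types, using the fact that \emph{positivity of measure is type-determined}. For $A$-definable $X,Y$, one has $\mu(X_{\vec z}\cap Y_{\vec w})>0$ iff $\M\models m_{\vec x}>r.(X_{\vec z}\cap Y_{\vec w})$ for some rational $r>0$; since each $m_{\vec x}>r.(X_{\vec z}\cap Y_{\vec w})$ is an $\AML$ formula with parameters from $A$, whether this disjunction holds at a pair depends only on $\tp(\vec z,\vec w/A)$. So to prove $\mu(X_{\vec a'}\cap Y_{\vec b'})>0$ it suffices to produce a pair with the same type over $A$ as $(\vec a',\vec b')$ for which positivity is already known.

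First I would remove the mismatch on the $\vec a$-side. Pick $\vec b''\vDash q\mid A\cup\{\vec a'\}$. Since $q$ is $A$-invariant and $\tp(\vec a'/A)=\tp(\vec a/A)$, invariance gives $\tp(\vec a',\vec b''/A)=\tp(\vec a,\vec b/A)$, and hence, by the displayed observation together with the hypothesis $\mu(X_{\vec a}\cap Y_{\vec b})>0$, we get $\mu(X_{\vec a'}\cap Y_{\vec b''})>0$. Note $\tp(\vec b''/A)=\tp(\vec b/A)=\tp(\vec b'/A)$. This reduces matters to a situation with the \emph{same} parameter $\vec a'$ on the $\vec a$-side: we know positivity for the $q$-generic partner $\vec b''$ and must transport it to the wide partner $\vec b'$.

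Next I would generate an indiscernible sequence on the generic side and feed in finiteness. Using Lemma~\ref{ind}, build a $q$-Morley sequence $(\vec c_i)_{i<\omega}$ over $A\cup\{\vec a'\}$ with $\vec c_0=\vec b''$; it is indiscernible over $A\cup\{\vec a'\}$, each $\vec c_i$ realizes $q\mid A\cup\{\vec a'\}$ (so $\mu(X_{\vec a'}\cap Y_{\vec c_i})>0$ for every $i$), and $\tp(\vec c_i/A)=\tp(\vec b'/A)$. Applying the lemma immediately preceding Lemma~\ref{amalgamation:repeating}, over the base $A\cup\{\vec a'\}$ and to the $A\cup\{\vec a'\}$-definable set $X_{\vec a'}\cap Y$, yields $\mu\bigl(X_{\vec a'}\cap\bigcap_{i\le n}Y_{\vec c_i}\bigr)>0$ for all $n$; this is exactly where finiteness of $\mu(M)$ is used. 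Separately, since the measure of an $A$-definable family $\theta(M,\vec w)$ depends only on $\tp(\vec w/A)$ and $\tp(\vec c_0/A)=\tp(\vec b'/A)$, the transport of the wide type $\tp(\vec a'/A\cup\{\vec b'\})$ along $\vec b'\mapsto\vec c_0$ is again wide; realizing it by $\aleph_1$-saturation gives $\vec a^\circ$ wide over $A\cup\{\vec c_0\}$ with $\tp(\vec a^\circ,\vec c_0/A)=\tp(\vec a',\vec b'/A)$. Then Lemma~\ref{amalgamation:repeating}, applied to $(\vec c_i)$ (indiscernible over $A$) and this wide type over $A\cup\{\vec c_0\}$, produces for each $n$ a tuple $\vec a^\star$ with $\tp(\vec a^\star,\vec c_i/A)=\tp(\vec a',\vec b'/A)$ for all $i\le n$. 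By type-determinacy of positivity, the goal is equivalent to $\mu(X_{\vec a^\star}\cap Y_{\vec c_0})>0$.

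The hard part will be this last identification, because it is genuinely circular at the level of types alone: the seed positivity lives on the $q$-\emph{generic} partner type (that of $(\vec a',\vec c_i)$), while $\vec a^\star$ meets each $\vec c_i$ in the \emph{target} type, and these two pair-types over $A$ are different (they coincide only if $\vec b'\vDash q\mid A\cup\{\vec a'\}$, which is false in general); moreover wideness controls measure in the $\vec a$-slot, whereas the conclusion concerns the $\vec x$-fiber, so wideness cannot be applied to the fibers directly. The resolution must be a finiteness argument in the spirit of the intersection lemma: were $\mu(X_{\vec a^\star}\cap Y_{\vec c_0})=0$, constancy of the type would make it vanish on every $\vec c_i$, and one must show this is incompatible with the large common intersection $\mu\bigl(X_{\vec a'}\cap\bigcap_{i\le n}Y_{\vec c_i}\bigr)>0$ by manufacturing, for $n$ large, infinitely many pairwise almost-disjoint sets of a fixed positive measure inside $M^{|\vec x|}$, contradicting $\mu(M)<\infty$. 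Carrying out the realization of $\vec a^\star$ so that the seed positivity is inherited along the Morley sequence — i.e. running the pigeonhole \emph{inside} the amalgamation step rather than after it — is the single delicate point; the reductions above and the two cited lemmas handle everything else.
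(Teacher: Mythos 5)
Your preliminary reductions are sound and run parallel to the paper's own opening moves: the observation that the measure of an $A$-definable family is determined by the type of the parameters over $A$ is correct; the reduction to a common tuple on one side (you fix $\vec a'$ and replace $\vec b$ by $\vec b''\vDash q\mid A\cup\{\vec a'\}$, where the paper symmetrically fixes $\vec b$ and moves $\vec a'$) is legitimate by $A$-invariance plus $\aleph_1$-saturation; and transporting the wide type along tuples of equal type over $A$ is fine. But the proof stops exactly where the lemma's content begins. Under the contradiction hypothesis, your amalgamation produces, for each $n$, a single tuple $\vec a^\star$ with $\mu(X_{\vec a^\star}\cap Y_{\vec c_i})=0$ for all $i\le n$, alongside $\mu(X_{\vec a'}\cap\bigcap_{i\le n}Y_{\vec c_i})>0$. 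These two facts are mutually consistent --- they concern different fibers, $X_{\vec a^\star}$ versus $X_{\vec a'}$ --- so no appeal to finiteness of the measure can refute them; in particular your intersection-lemma step is a detour that the eventual contradiction never uses. The structural obstruction is that your Morley sequence $(\vec c_i)$ is fixed \emph{before} $\vec a^\star$ is found: positive measure only ever arises from the generic configuration ($\vec x\vDash\tp(\vec a/A)$ and $\vec y\vDash q\mid A\cup\{\vec x\}$, which by invariance forces $\mu(X_{\vec x}\cap Y_{\vec y})=\mu(X_{\vec a}\cap Y_{\vec b})>0$), and nothing guarantees that any member of a pre-chosen sequence is generic over $\vec a^\star$; so no positive-measure set is ever attached to $\vec a^\star$, and there is nothing for finiteness to contradict.

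The missing piece is the interleaved recursion, and it is the entire proof in the paper, not a residual detail. Working in your notation: given $(\vec c_i)_{i\le n}$, the partial type over $A\cup\{\vec b'\}$ consisting of $\tp(\vec a/A)$ together with all formulas $m_{\vec z}<\delta.\,(X(\vec z,\vec x)\wedge Y(\vec z,\vec b'))$ is wide, because it is realized by $\vec a'$ (here one uses both the contradiction hypothesis $\mu(X_{\vec a'}\cap Y_{\vec b'})=0$ and the wideness of $\tp(\vec a'/A\cup\{\vec b'\})$); transporting it to $\vec c_0$ and applying Lemma~\ref{amalgamation:repeating} with $\aleph_1$-saturation yields $\vec a_{n+1}\vDash\tp(\vec a/A)$ with $\mu(X_{\vec a_{n+1}}\cap Y_{\vec c_i})=0$ for all $i\le n$; \emph{only then} does one extend the sequence by $\vec c_{n+1}\vDash q\mid A\cup\{\vec a'\}\cup\{\vec a_j,\vec c_j\}_{j\le n}\cup\{\vec a_{n+1}\}$, which keeps $(\vec c_i)$ indiscernible over $A$ by Lemma~\ref{ind} and, by invariance, gives $\mu(X_{\vec a_{n+1}}\cap Y_{\vec c_{n+1}})=\mu(X_{\vec a}\cap Y_{\vec b})=\epsilon>0$. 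The sets $C_j=X_{\vec a_j}\cap Y_{\vec c_j}$ then each have measure $\epsilon$ and are pairwise almost disjoint, since $C_j\cap C_k\subseteq X_{\vec a_k}\cap Y_{\vec c_j}$ is null for $j<k$; arbitrarily long such families contradict finiteness of the measure. You correctly diagnose that the pigeonhole must be run ``inside'' the amalgamation, but diagnosing it is not doing it: as submitted, the proposal assembles the right tools while omitting the argument they are tools for, so it has a genuine gap at the heart of the lemma.
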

\begin{proof}
Suppose the claim fails, so $\mu(X_{\vec a'}\cap Y_{\vec b'})=0$.  By $\aleph_1$-saturation and the fact that $\tp(\vec b'/A)=\tp(\vec b/A)$, there an $\vec a''$ with $\tp(\vec a'',\vec b/A)=\tp(\vec a',\vec b'/A)$; in this way, we may assume $\vec b=\vec b'$.  

Since $q$ is $A$-invariant, whenever $\vec a^*\vDash \tp(\vec a/A)$ and $\vec b^*\vDash q\mid A\cup\{\vec a^*\}$ we have $tp(\vec a^*,\vec b^*/A)=tp(\vec a,\vec b/A)$, and so in particular $\mu(X_{\vec a^*}\cap Y_{\vec b^*})=\mu(X_{\vec a}\cap Y_{\vec b})>0$.



Choose $\vec a_0\vDash p$ and $\vec b_0\vDash q\mid A\cup\{\vec a_0\}$.  Let $\{\vec a_i,\vec b_i\}_{i\leq n}$ be given so that $(\vec b_i)$ is indiscernible over $A$.  For each $\delta>0$ and each $C\in \tp(\vec a/A)$, by the wideness of $\tp(\vec a'/A\cup\{\vec b\})$, we have
\[\mu(C\cap\{\vec x\setmid \M\vDash \left(m_{\vec z}<\delta. X(\vec z,\vec x)\wedge Y(\vec z,\vec b)\right)\})>0,\]
and therefore by Lemma \ref{amalgamation:repeating} and $\aleph_1$-saturation we may find an $\vec a_{n+1}$ with $\tp(\vec a_{n+1}/A)=\tp(\vec a/A)$ such that $\mu(X_{\vec a_{n+1}}\cap Y_{\vec b_i})=0$ for $i\leq n$.  Let $\vec b_{n+1}\vDash q\upharpoonright (A\cup\{\vec a_i,\vec b_i\}_{i\leq n}\cup\{\vec a_{n+1}\})$; by Lemma \ref{ind}, we have ensured that $(\vec b_i)$ remains indiscernible over $A$.  For each $i$, let $C_i=X_{\vec a_i}\cap Y_{\vec b_i}$.  Since each $\vec b_n\vDash q\upharpoonright \vec a_n$, $\mu(C_i)\geq\epsilon$; while for $i\neq j$, $\mu(C_i\cap C_j)=0$.  This is a contradiction to the finiteness of $\mu(M)$.
\end{proof}

\begin{lemma}
Suppose that $\mathfrak{M}$ is $\aleph_1$-saturated and $\mu(M)$ is finite.  Suppose that $A\subseteq M$ is countable, $q$ is an $A$-invariant type, $\vec b\vDash q\mid A\cup\{\vec a\}$, $\tp(\vec b'/A)=\tp(\vec b/A)$, $\tp(\vec a'/A)=\tp(\vec a/A)$, $\tp(\vec a'/A\cup\{\vec b'\})$ is wide, $X,Y$ are definable with parameters from $A$, and  and $\mu(X_{\vec a}\cap Y_{\vec b})=0$.  Then $\mu(X_{\vec a'}\cap Y_{\vec b'})=0$.
\label{amalgamation:bothsmall}
\end{lemma}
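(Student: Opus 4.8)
The plan is to argue by contradiction, in a manner dual to the proof of Lemma~\ref{amalgamation:bothbig}. Suppose $\mu(X_{\vec a'}\cap Y_{\vec b'})=\epsilon>0$. I will build, by recursion on $i<\omega$, tuples $\vec b_i$ and $\vec d_i$ so that the definable sets $C_i:=X_{\vec d_i}\cap Y_{\vec b_i}$ each have measure $\epsilon$ while $\mu(C_i\cap C_j)=0$ for $i\neq j$; since all the $C_i$ lie in $M^{|\vec z|}$ and $\mu(M^{|\vec z|})=\mu(M)^{|\vec z|}<\infty$, this contradicts the finiteness of $\mu(M)$. Throughout I use the fact, implicit in the semantics of the measure constructor, that the measure of an $A$-definable family is determined by the type of its parameters: if $W$ is defined over $A$ then $\mu(W_{\vec c})=\inf\{q\in\q^{>0}\setmid \M\vDash m_{\vec z}<q.W(\vec z,\vec c)\}$, and each formula $m_{\vec z}<q.W(\vec z,\vec c)$ is over $A$, hence determined by $\tp(\vec c/A)$.

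The recursion runs as follows. Having chosen $\vec b_j,\vec d_j$ for $j<i$, first pick, using $\aleph_1$-saturation, a tuple $\vec b_i\vDash q\mid A\cup\{\vec b_j,\vec d_j\setmid j<i\}$; in particular $\tp(\vec b_i/A)=q\mid A=\tp(\vec b'/A)$. Transporting the complete type $\tp(\vec a',\vec b'/A)$ along the equality $\tp(\vec b_i/A)=\tp(\vec b'/A)$ and again invoking saturation, choose $\vec d_i$ with $\tp(\vec d_i,\vec b_i/A)=\tp(\vec a',\vec b'/A)$; this is legitimate because every finite subtype of $\{\theta(\vec x,\vec b_i)\setmid \M\vDash\theta(\vec a',\vec b')\}$ is realized in $\M$, its analogue over $\vec b'$ being witnessed by $\vec a'$. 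In particular $\tp(\vec d_i/A)=\tp(\vec a'/A)=\tp(\vec a/A)$.

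Two measure computations complete the proof. First, since $\tp(\vec d_i,\vec b_i/A)=\tp(\vec a',\vec b'/A)$ and measure is type-determined, $\mu(C_i)=\mu(X_{\vec a'}\cap Y_{\vec b'})=\epsilon$ for every $i$. Second, fix $i<j$. Then $\vec b_j\vDash q\mid A\cup\{\vec d_i\}$, and since $\tp(\vec d_i/A)=\tp(\vec a/A)$ while $\vec b\vDash q\mid A\cup\{\vec a\}$, the $A$-invariance of $q$ yields $\tp(\vec d_i,\vec b_j/A)=\tp(\vec a,\vec b/A)$ (this is exactly the invariance computation used in the proof of Lemma~\ref{amalgamation:bothbig}). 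Hence $\mu(X_{\vec d_i}\cap Y_{\vec b_j})=\mu(X_{\vec a}\cap Y_{\vec b})=0$, and since $C_i\cap C_j\subseteq X_{\vec d_i}\cap Y_{\vec b_j}$ we get $\mu(C_i\cap C_j)=0$. As the $C_i$ are pairwise null-intersecting and each has measure $\epsilon$, finite additivity gives $\mu(\bigcup_{i\leq N}C_i)=(N+1)\epsilon$ for all $N$, which exceeds $\mu(M^{|\vec z|})$ once $N$ is large, the promised contradiction.

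The one step requiring care — and the genuine content of the lemma — is the second computation, where one must see that the hypothesis $\mu(X_{\vec a}\cap Y_{\vec b})=0$ on the ``matched'' pair propagates to $X_{\vec d_i}\cap Y_{\vec b_j}$ via the future-genericity of $\vec b_j$ over the past tuple $\vec d_i$. This is the precise dual of the role played by Lemma~\ref{amalgamation:repeating} in Lemma~\ref{amalgamation:bothbig}. It is worth remarking that, unlike in that lemma, the disjointness here is produced directly by invariance rather than by a repetition argument: because the matched configuration already has measure $0$, no appeal to the wideness of $\tp(\vec a'/A\cup\{\vec b'\})$ is actually needed once the $\vec b_j$ are chosen generically over the earlier $\vec d_i$.
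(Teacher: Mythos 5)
Your proof is correct, and its key step is genuinely different from the paper's. Both arguments share the same skeleton: assume $\mu(X_{\vec a'}\cap Y_{\vec b'})=\epsilon>0$, build countably many definable sets of measure at least $\epsilon$ with pairwise null intersections, and contradict the finiteness of $\mu$; in both, the null cross-intersections come from the same $A$-invariance computation. The difference is how the positive-measure sets are produced. The paper first reduces to $\vec b=\vec b'$, maintains an $A$-indiscernible sequence $(\vec b_i)$ via Lemma \ref{ind}, and at each stage invokes the wideness of $\tp(\vec a'/A\cup\{\vec b\})$ together with Lemma \ref{amalgamation:repeating} to find $\vec a_n$ with $\mu(X_{\vec a_n}\cap Y_{\vec b_i})\geq\epsilon$ for all $i\leq n$. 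You instead match each freshly chosen generic $\vec b_i$ with a tuple $\vec d_i$ realizing the transfer of the complete type $\tp(\vec a',\vec b'/A)$ along $\tp(\vec b_i/A)=\tp(\vec b'/A)$, a step needing only finite satisfiability plus $\aleph_1$-saturation (it is the same move the paper itself uses to reduce to $\vec b=\vec b'$). This dispenses with indiscernibility and Lemma \ref{amalgamation:repeating} entirely, and, as you correctly observe, never uses the wideness hypothesis, so your argument proves a slightly stronger statement with wideness deleted. Your closing remark also identifies the genuine asymmetry with Lemma \ref{amalgamation:bothbig}: there the configuration being replicated is the \emph{null} one, and it must be made null against all previously chosen $\vec b_i$ simultaneously, which a one-pair type transfer cannot achieve, hence the appeal to wideness; here the replicated configuration is the positive one and is needed only for the matched pair. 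One cosmetic point: $\mu(\bigcup_{i\leq N}C_i)=(N+1)\epsilon$ is not literally finite additivity, since the $C_i$ are not disjoint; but replacing $C_i$ by $C_i\setminus\bigcup_{j<i}C_j$ (which has the same measure, as the discarded parts are null) makes the count exact, so the contradiction stands.
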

\begin{proof}
Suppose the claim fails, so $\mu(X_{\vec a'}\cap Y_{\vec b'})>0$.  As in the proof of the previous lemma, we may assume that $\vec b=\vec b'$.  Suppose that $\{\vec a_i,\vec b_i\}_{i<n}$ has been constructed (including the empty case where $n=0$) so that $\vec a_i\models \tp(\vec a/A)$ for each $i$ and so that $(\vec b_i)$ is indiscernible over $A$.  Choose $\vec b_n\vDash q\mid A\cup\{\vec a_i,\vec b_i\}_{i<n}$.  There is some $\epsilon\in \q^{>0}$ such that for each $C\in \tp(\vec a/A)$,
\[\mu(C\cap\{\vec x\setmid \M\vDash \left(m_{\vec z}\geq\epsilon. X(\vec z,\vec x)\wedge Y(\vec z,\vec b)\right)\})>0,\]
 and therefore by Lemma \ref{amalgamation:repeating} and $\aleph_1$-saturation we may find an $\vec a_{n}$ with $\tp(\vec a_{n}/A)=\tp(\vec a/A)$ such that $\mu(X_{\vec a_{n}}\cap Y_{\vec b_i})\geq\epsilon$ for $i\leq n$.  
 For each $i$, let $C_i=X_{\vec a_i}\cap Y_{\vec b_i}$.  Then $\mu(C_i)\geq\epsilon$ for each $i$ and, when $i<j$, $\mu(C_i\cap C_j)=0$ since $\vec b_j\vDash q\mid A\cup\{\vec a_i\}$.  Thus $\mu(X_{\vec a_i}\cap Y_{\vec b_j})=0$ by $A$-invariance.  This contradicts the fact that $\mu(M)$ is finite.
\end{proof}

\begin{thm}
  Suppose $\M$ is $\aleph_1$-saturated and $\mu(M)$ is finite.  Let $\N$ be a countable elementary substructure of $\M$.  Let $p,q$ be types over $N$ and suppose $\tp(\vec a/N)=\tp(\vec a'/N)=p$, $\tp(\vec b/N)=\tp(\vec b'/N)=q$ with $\tp(\vec a/N\cup\{\vec b\}), \tp(\vec a'/N\cup\{\vec b'\})$ wide.  Then for any definable sets $X$ and $Y$ over $N$, we have $\mu(X_{\vec a}\cap Y_{\vec b})>0$ iff $\mu(X_{\vec a'}\cap Y_{\vec b'})>0$.
\label{amalgamation:2.16}
\end{thm}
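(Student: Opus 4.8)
The plan is to route the comparison of $(\vec a,\vec b)$ and $(\vec a',\vec b')$ through a single \emph{pivot} configuration $(\vec c,\vec d)$, to which both of the preceding amalgamation lemmas apply. Since the hypotheses of the theorem are completely symmetric in the primed and unprimed data, it suffices to produce one such pivot and show that each of $(\vec a,\vec b)$ and $(\vec a',\vec b')$ agrees with it on the positivity of the relevant measure; chaining the two resulting equivalences then gives the theorem.

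First I would fix a global $N$-invariant type extending $q$. By Lemma \ref{extend}, since $\N$ is a countable elementary substructure of $\M$ and $q$ is a type over $N$, there is an extension $\tilde q$ of $q$ to a global $N$-finitely satisfiable, and hence $N$-invariant, type. Using $\aleph_1$-saturation together with the countability of $N$, I would then build the pivot: choose $\vec c\vDash p$, and then realize $\vec d\vDash \tilde q\mid N\cup\{\vec c\}$. Because $\tilde q\mid N=q$, this gives $\tp(\vec c/N)=p$ and $\tp(\vec d/N)=q$, while simultaneously arranging the crucial asymmetric condition $\vec d\vDash \tilde q\mid N\cup\{\vec c\}$, namely that the second coordinate of the pivot realizes the invariant type relative to the first.

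Next I claim $\mu(X_{\vec a}\cap Y_{\vec b})>0\Leftrightarrow \mu(X_{\vec c}\cap Y_{\vec d})>0$, and likewise with the primed data. For the forward direction I apply Lemma \ref{amalgamation:bothsmall} in contrapositive form, with invariant type $\tilde q$, the pivot $(\vec c,\vec d)$ playing the role of that lemma's $(\vec a,\vec b)$ and our $(\vec a,\vec b)$ playing the role of its $(\vec a',\vec b')$; the required inputs $\tp(\vec b/N)=\tp(\vec d/N)=q$, $\tp(\vec a/N)=\tp(\vec c/N)=p$, and wideness of $\tp(\vec a/N\cup\{\vec b\})$ are all in hand. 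For the reverse direction I apply Lemma \ref{amalgamation:bothbig} under the same role assignment. This establishes the unprimed equivalence, and repeating both applications verbatim with $(\vec a',\vec b')$ in place of $(\vec a,\vec b)$ (now using $\tp(\vec a'/N)=p$, $\tp(\vec b'/N)=q$, and wideness of $\tp(\vec a'/N\cup\{\vec b'\})$) establishes the primed equivalence. Chaining the two equivalences through the common pivot yields the desired biconditional.

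The substantive work has already been carried out inside Lemmas \ref{amalgamation:bothbig} and \ref{amalgamation:bothsmall}, where the finiteness of $\mu(M)$ is played against indiscernible sequences of pairwise almost-disjoint sets; the theorem itself is largely bookkeeping. Accordingly, the one point demanding care—and the step I expect to be the main obstacle—is configuring the pivot so that the two asymmetric hypotheses of the lemmas land on the correct coordinates: the invariant-type realization $\vec d\vDash\tilde q\mid N\cup\{\vec c\}$ must sit on the pivot, while the wideness requirement must always fall on the \emph{target} configuration $(\vec a,\vec b)$ or $(\vec a',\vec b')$, where it is supplied by hypothesis, and never on the pivot, for which no wideness is available.
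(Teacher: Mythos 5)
Your proposal is correct and takes essentially the same approach as the paper: both fix an $N$-invariant global extension of $q$ via Lemma \ref{extend}, realize a pivot $\vec a^*\vDash p$, $\vec b^*\vDash q'\mid N\cup\{\vec a^*\}$ by $\aleph_1$-saturation, and then transfer positivity or nullity from the pivot to each of $(\vec a,\vec b)$ and $(\vec a',\vec b')$ using Lemmas \ref{amalgamation:bothbig} and \ref{amalgamation:bothsmall}, with wideness landing on the targets and the invariant-type realization on the pivot. The only cosmetic difference is that you phrase this as two chained biconditionals, whereas the paper splits into the cases $\mu(X_{\vec a^*}\cap Y_{\vec b^*})>0$ and $\mu(X_{\vec a^*}\cap Y_{\vec b^*})=0$.
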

\begin{proof}
Fix an extension $q'$ of $q$ to an $N$-invariant global type; this is possible by Lemma \ref{extend}.  Let $\vec  a^*\vDash p$ and $\vec  b^*\vDash q'\mid N\cup\{\vec  a^*\}$.  If $\mu(X_{\vec  a^*}\cap Y_{\vec  b^*})>0$ then by Lemma \ref{amalgamation:bothbig}, both $\mu(X_{\vec a}\cap Y_{\vec b})>0$ and $\mu(X_{\vec a'}\cap Y_{\vec b'})>0$.  Otherwise, $\mu(X_{\vec  a^*}\cap Y_{\vec  b^*})=0$, and by Lemma \ref{amalgamation:bothsmall}, both $\mu(X_{\vec a}\cap Y_{\vec b})=0$ and $\mu(X_{a'}\cap Y_{b'})=0$.
\end{proof}

\begin{df}
Let $\N$ be an elementary substructure of $\M$.  A set $R\subseteq M^{2m}$ is \emph{wide-stable} over $\N$ if $R$ is $N$-invariant and whenever $\tp(\vec a/N)=\tp(\vec a'/N)$, $\tp(\vec b/N)=\tp(\vec b'/N)$, both $\tp(\vec a/N\cup\{\vec b\})$ and $\tp(\vec a'/N\cup\{\vec b'\})$ are wide, and $(\vec a,\vec b)\in R$, then also $(\vec a',\vec b')\in R$.

$R$ is \emph{stable} over $\N$ if whenever $(\vec a_i,\vec b_i)$ is a sequence of indiscernibles over $N$ and $R(\vec a_i,\vec b_j)$ holds for every $i<j$, also $R(\vec a_j,\vec b_i)$ for some $i<j$.

We say $R\in \B_2(N)$ is \emph{approximated by definable stable sets (ADS)} (respectively, \emph{approximated by definable wide-stable sets (ADWS)}) over $N$ if for every $\epsilon>0$, there is a stable (respectively, wide-stable) set $S$ definable over $N$ such that $\mu(R\triangle S)<\epsilon$.
\end{df}

Observe that the wide-stable subsets of $M^{2m}$ form a $\sigma$-algebra.  It can be shown that stable subsets of $M^{2m}$ form a Boolean algebra.  Note that if we replace wideness in the definition of wide-stability with non-forking, we obtain a standard consequence of stability (see Lemma 3.3 of \cite{kimpillay}).  In particular, since wide types are non-forking, stable implies wide-stable.

\begin{thm}
Let $\mathfrak{M}$ be $\aleph_1$-saturated, let $\N$ be a countable elementary substructure of $\M$, and suppose $\mu$ is finite.  If $R\subseteq M^2$ is in $\B_2(N)$, then the following are equivalent:
 \begin{enumerate}
  \item There is $U\in\mathcal{B}_{2,1}(N)$ such that $\mu(R\triangle U)=0$;
  \item $R$ is ADS;
  \item $R$ is ADWS.
\end{enumerate}
\end{thm}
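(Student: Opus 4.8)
The plan is to prove the cycle of implications $(1)\Rightarrow(3)\Rightarrow(2)\Rightarrow(1)$, where the third implication is the substantive one that carries all the combinatorial and model-theoretic weight; the first two are comparatively formal.

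First I would dispose of $(1)\Rightarrow(3)$. If $U\in\mathcal B_{2,1}(N)$ satisfies $\mu(R\triangle U)=0$, then it suffices to approximate $U$ by definable wide-stable sets. Since $U$ lies in the $\sigma$-algebra $\mathcal B_{2,1}(N)$ generated by sets defined by formulas in a single variable (with parameters from $N$), a standard measure-theoretic approximation argument lets me find, for each $\epsilon>0$, a set $S$ in the generating algebra $\mathcal B^0_{2,1}(N)$ with $\mu(U\triangle S)<\epsilon$, and hence $\mu(R\triangle S)<\epsilon$. The point is then to observe that any set built as a Boolean combination of rectangles $C\times M$ and $M\times D$ (i.e. of sets defined by formulas in only the first or only the second variable) is automatically wide-stable over $N$: such a set has a ``product-like'' structure that makes the wide-stability condition hold, using $N$-invariance of the defining formulas together with Theorem \ref{amalgamation:2.16}, which guarantees that whether $(\vec a,\vec b)\in R$ depends only on the types $\tp(\vec a/N)$, $\tp(\vec b/N)$ once wideness is assumed. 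The implication $(3)\Rightarrow(2)$ is then immediate from the remark preceding the theorem: stable implies wide-stable, so any wide-stable approximant is in particular\ldots{}---wait, the containment runs the other way, so here I would instead use that stable sets $\emph{are}$ wide-stable, meaning ADS is formally stronger than ADWS, giving $(2)\Rightarrow(3)$ for free; thus I would reorganize to prove $(1)\Rightarrow(2)\Rightarrow(3)\Rightarrow(1)$, proving stable approximants in step one and reading off wide-stable ones automatically.

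The heart of the matter is $(3)\Rightarrow(1)$ (equivalently, reconstructing a $\mathcal B_{2,1}(N)$-representative from wide-stable approximation). Here I would pass to the conditional expectation $h:=\mathbb E(\chi_R\mid \mathcal B_{2,1}(N))$ and set $U:=\{h>1/2\}$ (or work directly with $h$), aiming to show $\mu(R\triangle U)=0$. The wide-stability of the approximants is exactly the hypothesis that should force $\chi_R$ to be $\mathcal B_{2,1}(N)$-measurable up to a null set. The key device is the amalgamation machinery of Theorem \ref{amalgamation:2.16} together with Lemma \ref{ind}: given a global $N$-invariant extension of a type and the indiscernible sequences it generates, wide-stability combined with the stability defect should produce, via a standard ``if $R$ is not essentially in $\mathcal B_{2,1}$ then one can build a long indiscernible sequence witnessing instability'' argument, a contradiction with the finiteness of $\mu(M)$. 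Concretely, if $\mu(R\triangle U)>0$ one locates a positive-measure set of pairs $(\vec a,\vec b)$ with wide $\tp(\vec a/N\cup\{\vec b\})$ on which $\chi_R$ disagrees with its conditional expectation, and then uses Lemmas \ref{amalgamation:bothbig} and \ref{amalgamation:bothsmall} to amalgamate this into infinitely many pairwise-almost-disjoint positive-measure sets, violating finiteness.

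The main obstacle I anticipate is precisely this last extraction step: translating ``$R$ is approximated in measure by wide-stable definable sets'' into ``$\chi_R$ equals its own $\mathcal B_{2,1}(N)$-conditional expectation a.e.'' The difficulty is that wide-stability is a condition phrased on \emph{wide} types---a measure-one but not co-null-robust notion---so I must be careful that the set of ``bad'' pairs where conditional expectation fails is itself seen by the amalgamation lemmas, which require both $\tp(\vec a/N\cup\{\vec b\})$ and its conjugate to be wide. Controlling the interaction between the Fubini property of the canonical measure (needed to integrate the fibrewise behavior of $R$) and the type-definable, measure-one locus of wideness is where the real work lies; the finiteness of $\mu(M)$ is the pigeonhole resource that ultimately converts any genuine two-dimensional (non-$\mathcal B_{2,1}$) content of $R$ into the desired contradiction.
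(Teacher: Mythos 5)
Your $(1)\Rightarrow(2)\Rightarrow(3)$ (after your mid-course reorganization) is essentially the paper's: rectangles $A\times B$ with $A,B$ definable over $N$ are stable by a direct indiscernibility check (your appeal to Theorem \ref{amalgamation:2.16} here is a misfire --- that theorem concerns positivity of fiber measures $\mu(X_{\vec a}\cap Y_{\vec b})$, not membership --- but the direct argument is trivial), the algebra they generate is dense in $\B_{2,1}(N)$, and stable implies wide-stable. The genuine gap is in $(3)\Rightarrow(1)$, where your text is a plan whose central step is absent. First, you never reduce from ``$R$ is approximated by definable wide-stable sets'' to a single definable wide-stable set: wide-stability is a hypothesis on the approximants $S$, not on $R$, so your concrete move --- ``locate a positive-measure set of wide pairs on which $\chi_R$ disagrees with $\mathbb{E}(\chi_R\mid\B_{2,1}(N))$ and apply wide-stability'' --- has nothing to apply wide-stability \emph{to}, and you never control how the $\epsilon$-error of $S$ interacts with that disagreement set. (The paper handles this by observing $\B_{2,1}(N)$ is complete under the pseudometric $\mu(\cdot\triangle\cdot)$, so it suffices to prove (1) when $R$ itself is definable over $N$ and wide-stable; definability of $R$ is then used essentially, see below.) Second, Lemmas \ref{amalgamation:bothbig} and \ref{amalgamation:bothsmall} cannot be invoked where you want them: their hypotheses require definable $X,Y$ over $N$, a global $N$-invariant type $q$, and a realization $\vec b\vDash q\mid N\cup\{\vec a\}$, and their conclusions transfer positivity of $\mu(X_{\vec a}\cap Y_{\vec b})$ between realizations of the same types. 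Your disagreement set is an abstract element of $\B_2(N)$ --- the conditional expectation is an $L^1$-limit, not definable data --- and nothing in the sketch manufactures the invariant-type, fibered-definable configuration those lemmas demand. You flag exactly this translation as ``the main obstacle,'' but flagging it is not supplying it; it is the whole content of the direction.

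For the record, the paper's proof of $(3)\Rightarrow(1)$ uses neither the amalgamation lemmas nor a finiteness-of-$\mu(M)$ pigeonhole (finiteness is consumed only inside those lemmas, i.e.\ inside Theorem \ref{amalgamation:2.16}). For definable wide-stable $R$ it constructs $U\in\B_{2,1}(N)$ explicitly: the countably many rectangles $A\times B$ (over the countable $N$) with $\mu((A\times B)\setminus R)=0$, together with the sets $\bigcap_n(A^B_{1/n}\times B)$, where $A^B_\epsilon=\{a\in A : \M\vDash m_y<\epsilon.\, B\setminus R_a\}$ is itself definable precisely because $R$ is. Then, if $\mu(R\setminus U)>0$, it picks a wide pair $(a,b)\in R\setminus U$, forms the wide partial type $p=(\tp(a/N)\times\tp(b/N))\cup\{\neg R(x,y)\}$, extends it to $p'$ by excluding all sets with uniformly thin fibers, proves $p'$ is still wide by an integration (Fubini) estimate --- this is where the measure theory actually does work --- realizes $p'$ as a wide pair $(a',b')\notin R$, and uses $\aleph_1$-saturation to get $b''$ with $\tp(b''/N\cup\{a'\})=\tp(b/N\cup\{a\})$, hence $(a',b'')\in R$. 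The contradiction is with wide-stability itself, not with finiteness of the measure. Any repair of your conditional-expectation framing would, as far as I can see, have to reproduce essentially this type-realization construction; the conditional expectation alone gives no purchase on types.
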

\begin{proof}
$(1)\Rightarrow (2)$:  First observe that every set of the form $A\times B$ with $A,B$ definable over $N$, is stable:  if $(a_i,b_i\setmid i\in I)$ is a sequence of indiscernibles over $N$ and $(a_i,b_j)\in A\times B$ for $i<j$ then $(a_j,b_i)\in A\times B$ for all $i<j$.  Consequently, the algebra generated by the sets of the form $A\times B$, with $A,B$ definable over $N$, which is dense in $\B_{2,1}(N)$, is contained in the algebra of stable subsets of $M^2$.  
Thus, if there is $U\in \B_{2,1}(N)$ such that $\mu(R\triangle U)=0$, then for every $\epsilon>0$, there is $S \in \B_{2,1}^0(N)$ such that $\mu(R\triangle S)<\epsilon$; such an $S$ is stable.

$(2)\Rightarrow (3)$: Immediate since stable definable sets are wide-stable and definable.

$(3)\Rightarrow (1)$: Let $R\subseteq M^2$ be definable over $N$ and wide-stable over $N$.  Since $\B_{2,1}(N)$ is complete with respect to the pseudometric $\mu(\cdot\triangle\cdot)$, it suffices to prove (1) for such an $R$.  For any $N$-definable $A$ and $B$ and any $\epsilon>0$, define $A^B_\epsilon=\{a\in A\setmid \mathfrak{M}\vDash m_y<\epsilon. B\setminus R_a\}$.  Set $$\mathcal{U}=\{A\times B\setmid \mu((A\times B)\setminus R)=0,\ A,B\in \B_1(N)\}\cup\{\bigcap_n (A^B_{1/n}\times B)\ : \ A,B\in \B_1(N)\}.$$ 
Clearly $U=\bigcup\mathcal{U}$ belongs to $\mathcal{B}_{2,1}(N)$ and satisfies $\mu(U\setminus R)=0$.

Suppose, towards a contradiction, that $\mu(R\setminus U)>0$.  Then we may find an $(a,b)\in R\setminus U$ such that $\tp(a/N)$ and $\tp(b/N\cup\{a\})$ are wide.  For each $A\in \tp(a/N)$ and $B\in \tp(b/N)$, we have $A\times B\not\in\mathcal{U}$, and therefore $\mu((A\times B)\setminus R)>0$.  Therefore $p=(\tp(a/N)\times \tp(b/N))\cup\{\neg R(x,y)\}$ is a wide partial type.  Set
\[p'=p\cup\{\neg S(x,y)\setmid (\forall\epsilon\in \q^{>0}) (m_y<\epsilon. S(x,y)\in p)\}.\]
We claim this is also a wide partial type.  Towards this end, fix $A$ and $B$ definable over $N$ such that $a\in A$ and $b\in B$.  Also fix $S(x,y)$ definable over $N$ so that, for each $\epsilon\in\q^{>0}$, $m_y<\epsilon.S(x,y)\in p$.  Since $(a,b)\notin U$, we have some $\epsilon\in \q^{>0}$ such that $A^B_\epsilon\not\in \tp(a/N)$.  Set $$A'=\{a'\in A \ : \ \M\models (m_y\geq \epsilon.(B\setminus R_{a'}))\wedge (m_y<\epsilon.S(a',y))\}.$$  Observe that $A'\in \tp(a/N)$, whence $\mu(A')>0$.  It is enough to show that $\mu((A'\times B)\cap (\neg R\cap \neg S))>0$.  Suppose this is not the case.  Then 
\[\mu((A'\times B)\setminus R)=\mu((A'\times B)\cap (S\setminus R)).\]
  However,
$$\mu((A'\times B)\setminus R)=\int_{a'\in A'}\mu(B\setminus R_{a'})d\mu\geq \epsilon\mu(A')$$ while
$$\mu((A'\times B)\cap (S\setminus R))=\int_{a'\in A'}\mu(B\cap (S_{a'}\setminus R_{a'}))d\mu<\epsilon \mu(A'),$$ yielding a contradiction.  

Now take any extension of $p'$ to a wide type $q$ and suppose that $(a',b')$ realizes $q$.  It follows immediately that $tp(b'/N\cup\{a'\})$ is wide, $tp(a'/N)=tp(a/N)$, and $tp(b'/N)=tp(b/N)$.  Since $tp(b/N\cup\{a\})$ is wide and $tp(a'/N)=tp(a/N)$, whenever $B_a\in tp(b/N\cup\{a\})$, $\mu(B_{a'})>0$.  By $\aleph_1$-saturation, there is a $b''$ with $tp(b''/N\cup\{a'\})=tp(b/N\cup\{a\})$.  But now we have $(a',b')\not\in R$ and $(a',b'')\in R$, contradicting wide-stability.
\end{proof}
We could generalize these notions to stable relations on pairs of $n$-tuples, by using the two algebras $\mathcal{B}_{2n,[1,n]}$ and $\mathcal{B}_{2n,[n+1,2n]}$ (the algebras of sets of $2n$-tuples depending only on the first $n$ or only on the second $n$ coordinates, respectively); the arguments are exactly analogous, replacing the singletons with $n$-tuples.

Combining Theorem \ref{amalgamation:2.16} and the proof of the previous theorem, we see:
\begin{cor} 
  Suppose $\mathfrak{M}$ is $\aleph_1$-saturated and $\mu(M)$ is finite.  Suppose that $\N$ is a countable elementary substructure of $\N$ and that $X$ and $Y$ are definable over $N$.  Then $\{( a, b)\setmid \mu(X_{ a}\cap Y_{ b})>0\}$ differs from an element of $\mathcal{B}_{2,1}$ by a null set.
\end{cor}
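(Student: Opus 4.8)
The plan is to identify the set $R:=\{(a,b)\setmid \mu(X_a\cap Y_b)>0\}$ as a wide-stable member of $\B_2(N)$ and then reuse the construction from the $(3)\Rightarrow(1)$ direction of the preceding theorem. First I would pin $R$ down concretely. Let $\theta(z,x,y)$ be the $N$-formula expressing ``$(z,x)\in X$ and $(z,y)\in Y$'', so that $\theta(M,a,b)=X_a\cap Y_b$, and for each $q\in\q^{>0}$ put $\psi_q(x,y):=\neg\, m_z<q.\theta(z,x,y)$. Unwinding the semantics of the measure constructor gives the inclusions $\{\mu(X_a\cap Y_b)>q\}\subseteq\psi_q(M)\subseteq\{\mu(X_a\cap Y_b)\ge q\}$, and letting $q\to 0$ yields $R=\bigcup_{q\in\q^{>0}}\psi_q(M)$. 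Hence $R$ is a countable union of $N$-definable sets; in particular $R\in\B_2(N)$, each fiber $R_a=\bigcup_q\psi_q(a,M)$ lies in $\B_1(N\cup\{a\})$, and $R$ is $N$-invariant, since membership is decided by the $N$-formulas $\psi_q$.

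Next I would verify that $R$ is wide-stable over $N$. The $N$-invariance has just been noted, and the remaining clause --- that $(a,b)\in R$ forces $(a',b')\in R$ whenever $\tp(a/N)=\tp(a'/N)$, $\tp(b/N)=\tp(b'/N)$, and both $\tp(a/N\cup\{b\})$ and $\tp(a'/N\cup\{b'\})$ are wide --- is precisely the conclusion of Theorem \ref{amalgamation:2.16}.

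Finally I would run the argument in the proof of the preceding theorem. There one builds $U=\bigcup\mathcal U\in\B_{2,1}(N)$ out of the sets $A^B_\epsilon=\{a\in A\setmid \mu(B\setminus R_a)<\epsilon\}$, checks $\mu(U\setminus R)=0$ via Fubini, and then rules out $\mu(R\setminus U)>0$ by producing, through $\aleph_1$-saturation and wideness, a pair violating wide-stability. The key observation is that this argument never uses that $R$ is cut out by a single formula: it needs only that each fiber $R_a$ be measurable --- which holds here, since $R_a$ is a countable union of definable sets, so that $a\mapsto\mu(B\setminus R_a)=\inf_q\mu(B\setminus\psi_q(a,M))$ is $\B_1(N)$-measurable by Fubini and hence $A^B_\epsilon\in\B_1(N)$ --- together with the wide-stability of $R$ for the final contradiction. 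This produces $U\in\B_{2,1}(N)\supseteq\B_{2,1}$ with $\mu(R\triangle U)=0$.

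The step I expect to be the main obstacle is this last one: certifying that the proof of $(3)\Rightarrow(1)$, written for an $R$ defined by one $N$-formula, goes through for an $R$ that is only a countable union of $N$-definable sets. The delicate points are the membership $A^B_\epsilon\in\B_1(N)$ and the identity $\mu(U\setminus R)=0$, both of which I would justify using the Fubini property of the measured structure and continuity of measure applied to the exhaustion $R=\bigcup_q\psi_q(M)$; no new idea beyond this bookkeeping should be required.
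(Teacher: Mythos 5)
Your overall route is the same as the paper's: the paper's proof is exactly ``apply Theorem \ref{amalgamation:2.16} to see that $R=\{(a,b)\setmid \mu(X_a\cap Y_b)>0\}$ is wide-stable, then rerun the $(3)\Rightarrow(1)$ argument of the preceding theorem,'' and your preparatory steps (the exhaustion $R=\bigcup_{q\in\q^{>0}}\psi_q(M)$, membership in $\B_2(N)$, $N$-invariance, the measure-theoretic redefinition of $A^B_\epsilon$ and its membership in $\B_1(N)$, and $\mu(U\setminus R)=0$ via Fubini) are all correct. The gap is your central reduction claim: that the $(3)\Rightarrow(1)$ proof ``never uses that $R$ is cut out by a single formula'' and needs only measurability of the fibers $R_a$ together with wide-stability. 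That is false for precisely the part you summarize in one clause as ``producing, through $\aleph_1$-saturation and wideness, a pair violating wide-stability.'' After locating $(a,b)\in R\setminus U$ with wide types, the paper forms the partial type $p=(\tp(a/N)\times\tp(b/N))\cup\{\neg R(x,y)\}$, then the set $A'=\{a'\in A\setmid \M\vDash (m_y\geq\epsilon.(B\setminus R_{a'}))\wedge(m_y<\epsilon.S(a',y))\}$, and deduces $\mu(A')>0$ from ``$A'\in\tp(a/N)$.'' All three moves treat $R$ as a formula. For your $R$ none of them typechecks: $\neg R$ is not a formula, so $p$ is not a partial type; $A'$ is not definable; and wideness of $\tp(a/N)$ is a per-formula notion, so it gives no lower bound on the measure of a non-definable set --- indeed the natural substitute for $A'$ is a countable \emph{intersection} of definable sets in $\tp(a/N)$, whose measures may shrink to $0$. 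So fiber-measurability plus wide-stability genuinely do not suffice, and the step you flag as mere bookkeeping is where the proof breaks.

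The repair uses the exhaustion you already have, but pushed into the type-theoretic core rather than only into the measurability bookkeeping. Replace $\neg R$ in $p$ by the countably many formulas $\{\neg\psi_q(x,y)\setmid q\in\q^{>0}\}$; any realization of the resulting type still lies outside $R$, and since wideness is checked on finite conjunctions, only a single definable set $\neg\psi_{q_0}$ ever appears. Then define $A'$ with $\psi_{q_0}$ in place of $R$ and a strictly smaller threshold on $S$, e.g.\ $A'=\{a'\in A\setmid \M\vDash (m_y\geq\tfrac{\epsilon}{2}.(B(y)\wedge\neg\psi_{q_0}(a',y)))\wedge(m_y<\tfrac{\epsilon}{4}.S(a',y))\}$: this is definable, contains $a$ (because $\mu(B\setminus\psi_{q_0}(a,M))\geq\mu(B\setminus R_a)\geq\epsilon$), hence lies in $\tp(a/N)$ and has positive measure, and the key estimate becomes $\mu((A'\times B)\cap\neg\psi_{q_0}\cap\neg S)\geq(\tfrac{\epsilon}{2}-\tfrac{\epsilon}{4})\mu(A')>0$, as needed for $p'$ to be wide. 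One further point you inherit from the paper and should make explicit: the final contradiction produces wideness of $\tp(b'/N\cup\{a'\})$ and $\tp(b''/N\cup\{a'\})$, i.e.\ of the \emph{second} coordinate over the first, whereas wide-stability and Theorem \ref{amalgamation:2.16} are stated with the first coordinate wide over the second; this is harmless for your $R$ because Theorem \ref{amalgamation:2.16} may be applied with the roles of $X$ and $Y$ interchanged, but it is an application of the theorem in the reversed coordinate order, not of the wide-stability clause as literally defined.
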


\

\subsection{Hypergraph Removal}\label{sec:hypergraph}

We now give a proof of the hypergraph removal lemma \cite{frankl:MR1884430,gowers:MR2373376,nagle:MR2198495}; this proof is closely related to the infinitary proof given by Tao \cite{tao07} in a different framework.

Suppose $\mathcal{B}$ is the $\sigma$-algebra generated by some collection of formulas with parameters from an elementary substructure $\M$ of $\N$.  Two natural ways to extend $\mathcal{B}$ would be by extending the collection of formulas and by extending the set $M$.  The following lemma states that these two methods are orthogonal in a certain sense.  This lemma will be applied in the particular case of the algebras $\mathcal{B}_{n,{<}I}(M)$, $\mathcal{B}_{n,{<}I}(M\cup\{a\})$ (to be defined below), and $\mathcal{B}_{n,I}(M)$ (as defined in Definition \ref{def:bnk_algebra}), and the reader will not be mislead assuming these are the algebras used.  (Recall our notation that $\mathcal{B}^0$ is the Boolean algebra of definable sets generating the algebra $\mathcal{B}$.)

\begin{lemma}
Let $\Phi$ be a set of formulas, let $\M$ be an elementary substructure of $\N$, and let $n$ be an integer.  Suppose $\mathcal{A}^0$ is the collection of sets of the form $\varphi(N^n,\vec a)$ where $\varphi(\vec x,\vec y)\in\Phi$ and $\vec a\in M^{|\vec y|}$, $\mathcal{B}^0$ is the set of $M$-definable sets of $n$-tuples in $\N$, and $\mathcal{C}^0$ is the collection of sets of the form $\varphi(N^n,\vec a)$ where $\varphi(\vec x,\vec y)\in\Phi$ and $\vec a\in N^{|\vec y|}$.

Then for any $f\in L^2(\mathcal{B})$, $||\mathbb{E}(f\mid\mathcal{A})-\mathbb{E}(f\mid\mathcal{C})||_{L^2}=0$.
\end{lemma}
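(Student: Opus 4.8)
The plan is to deduce the statement from Hilbert-space generalities about conditional expectation, reduce it to an orthogonality relation on generators, and then transfer that relation from $M$ to $N$ by elementarity. Throughout I use that $\mu$ is finite on each $N^k$ (so $L^2(N^n,\mu)$ and the orthogonal projections $\mathbb{E}(\cdot\mid\mathcal{A})$, $\mathbb{E}(\cdot\mid\mathcal{C})$ onto $L^2(\mathcal{A})$, $L^2(\mathcal{C})$ are available) and that $\mathcal{A}\subseteq\mathcal{C}$. First I would reduce: since $\mathbb{E}(\chi_B\mid\mathcal{A})$ is $\mathcal{A}$-measurable, hence $\mathcal{C}$-measurable, uniqueness of conditional expectation gives $\mathbb{E}(\chi_B\mid\mathcal{C})=\mathbb{E}(\chi_B\mid\mathcal{A})$ as soon as $\chi_B-\mathbb{E}(\chi_B\mid\mathcal{A})\perp L^2(\mathcal{C})$. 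As both projections are continuous and $\{\chi_B : B\in\mathcal{B}^0\}$ spans a dense subspace of $L^2(\mathcal{B})$, it suffices to treat $f=\chi_B$ with $B$ $M$-definable; and since $\{\chi_C : C\in\mathcal{C}^0\}$ spans a dense subspace of $L^2(\mathcal{C})$, it suffices to verify $\langle \chi_B-\mathbb{E}(\chi_B\mid\mathcal{A}),\chi_C\rangle=0$ for every $C=\psi(N^n,\vec c)$ with $\psi\in\Phi$ and $\vec c\in N^{|\vec z|}$.

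Writing $g:=\mathbb{E}(\chi_B\mid\mathcal{A})$ and, for $\vec z\in N^{|\vec z|}$,
\[F(\vec z):=\int (\chi_B-g)\,\chi_\psi(\cdot,\vec z)\,d\mu,\]
the goal becomes $F(\vec c)=0$. The departure point is that $F$ already vanishes on $M$: for $\vec a\in M^{|\vec z|}$ the set $\psi(N^n,\vec a)$ lies in $\mathcal{A}^0$, so $\chi_\psi(\cdot,\vec a)\in L^2(\mathcal{A})$ is orthogonal to $\chi_B-g$, giving $F(\vec a)=0$.

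The main obstacle is that $g$, being a conditional expectation, is not a definable function, so $F$ is not captured by an AML formula and elementarity cannot be applied to it directly. I would circumvent this by approximating $g$ in $L^2$ by an $\mathcal{A}^0$-simple function $g_\epsilon=\sum_i \lambda_i\chi_{A_i}$ with $A_i=\alpha_i(N^n,\vec a_i)\in\mathcal{A}^0$ and $\lambda_i\in\mathbb{Q}^{\geq 0}$ (nonnegative coefficients may be arranged since $0\le g\le 1$), with $\|g-g_\epsilon\|_{L^2}<\epsilon$. Writing $B=\theta(N^n,\vec a_0)$, the \emph{definable approximant}
\[F_\epsilon(\vec z):=\mu\big((\theta\wedge\psi)(N^n,\vec a_0,\vec z)\big)-\sum_i\lambda_i\,\mu\big((\alpha_i\wedge\psi)(N^n,\vec a_i,\vec z)\big)\]
satisfies $|F(\vec z)-F_\epsilon(\vec z)|\le\|g-g_\epsilon\|_{L^2}\,\mu(N^n)^{1/2}\le\epsilon\sqrt{\mu(N^n)}$ uniformly in $\vec z$ by Cauchy--Schwarz; in particular $|F_\epsilon(\vec a)|\le\epsilon\sqrt{\mu(N^n)}$ for all $\vec a\in M$.

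Finally I would transfer the approximate vanishing to $\vec c$ by elementarity. Although $F_\epsilon$ is a combination of measures of definable families and so is not given by a single measure predicate, the overshoot region $\{\vec z : F_\epsilon(\vec z)>\delta\}$ is a countable union of definable sets: for rationals $s,t_i$ with $s-\sum_i\lambda_i t_i>\delta$, put
\[D_{s,\vec t}(\vec z):=\big(m_{\vec x}\ge s.(\theta\wedge\psi)(\vec x,\vec a_0,\vec z)\big)\wedge\bigwedge_i\big(m_{\vec x}\le t_i.(\alpha_i\wedge\psi)(\vec x,\vec a_i,\vec z)\big).\]
Any $\vec z$ with $F_\epsilon(\vec z)>\delta$ satisfies some $D_{s,\vec t}$, while conversely $D_{s,\vec t}(\vec z)$ forces $F_\epsilon(\vec z)\ge s-\sum_i\lambda_i t_i>\delta$ (here $\lambda_i\ge 0$ is used). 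Taking $\delta:=2\epsilon\sqrt{\mu(N^n)}$, no $\vec a\in M$ satisfies any $D_{s,\vec t}$, so each AML sentence $\exists\vec z\,D_{s,\vec t}(\vec z)$, with parameters $\vec a_0,\vec a_i\in M$, fails in $\M$ and hence fails in $\N$ by elementarity; thus $F_\epsilon(\vec c)\le\delta$, and the symmetric argument bounding $-F_\epsilon$ gives $|F_\epsilon(\vec c)|\le\delta$. Then $|F(\vec c)|\le|F(\vec c)-F_\epsilon(\vec c)|+|F_\epsilon(\vec c)|\le 3\epsilon\sqrt{\mu(N^n)}$, and letting $\epsilon\to0$ yields $F(\vec c)=0$, as required. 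I expect the only genuinely delicate point to be the bookkeeping in this last step, namely the choice of rational witnesses and the exact interaction of the measure predicates with the values $v$ when measures meet the thresholds; the rest is routine.
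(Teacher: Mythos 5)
Your proposal is correct, and it reaches the conclusion by a genuinely different route than the paper, although both run on the same engine: $L^2$-approximation by simple functions over definable sets, encoding of measure inequalities by the measure constructor at rational thresholds, and elementarity between $\M$ and $\N$. The paper argues by contradiction from $\delta:=\|f-\mathbb{E}(f\mid\mathcal{C})\|_{L^2}<\epsilon:=\|f-\mathbb{E}(f\mid\mathcal{A})\|_{L^2}$: it approximates $f$ by a $\mathcal{B}^0$-simple function with parameters in $M$ and by a $\mathcal{C}^0$-simple function with parameters in $N$ whose distance to $f$ beats $\epsilon$, writes a single existential AML formula $\rho$ bounding the pairwise-intersection measures that control the $L^2$-distance between the two simple functions, and uses elementarity to pull the $N$-parameters down to witnesses $\vec d'$ in $M$; the resulting $\mathcal{A}$-measurable simple function is then too close to $f$, a contradiction. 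You instead reduce to the orthogonality test $\langle\chi_B-\mathbb{E}(\chi_B\mid\mathcal{A}),\chi_C\rangle=0$ against generators $C$ of $\mathcal{C}$, approximate the projection $g$ (rather than $f$) to build the definable surrogate $F_\epsilon$ of the correlation function $F$, and transfer the universal statement that no overshoot point exists from $\M$ to $\N$ via the countably many sentences $\exists\vec z\,D_{s,\vec t}(\vec z)$ — a dual use of elementarity (pushing non-existence up rather than pulling witnesses down). The paper's route buys economy: one formula and one transfer, with no need for your uniform Cauchy--Schwarz bound. Yours buys a direct, quantitative argument ($|F(\vec c)|\le 3\epsilon\sqrt{\mu(N^n)}$ for every $\epsilon$) and makes explicit exactly which first-order content crosses between the models. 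Two remarks, neither a gap in your argument any more than in the paper's: both proofs need the simple functions over $\mathcal{A}^0$, $\mathcal{B}^0$, $\mathcal{C}^0$ to be dense in the corresponding $L^2$ spaces, which requires (as holds in the intended application to the algebras $\mathcal{B}_{n,I}$) that $\Phi$ be in effect closed under Boolean combinations; and your device of choosing the rationals $s,t_i$ strictly on the correct side of the true measures, so that $m_{\vec x}\ge s$ and $m_{\vec x}\le t_i$ hold irrespective of the values $v$, correctly disposes of the threshold subtlety you flagged at the end.
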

\begin{proof} \footnote{Hrushovski has pointed out that this proof is reminiscent of early proofs in the stability theory of algebraically closed fields.  The full connection between this argument and the amalgamation methods from the previous section is not completely understood.}
Suppose not.  Then setting $\epsilon:=||f-\mathbb{E}(f\mid\mathcal{A})||_{L^2}$ and $\delta:=||f-\mathbb{E}(f\mid\mathcal{C})||_{L^2}$, we must have $\delta<\epsilon$.  Since $f$ is $\mathcal{B}$-measurable, for some $\beta_1,\ldots,\beta_m$, some $\psi_1,\ldots,\psi_m$, and some $\vec b_1,\ldots,\vec b_m$ in $M$, we have $||f-\sum_{i\leq m}\beta_i\chi_{\psi_i}(\vec x,\vec b_i)||_{L^2}<(\epsilon-\delta)/4$.

Since $||f-\mathbb{E}(f\mid\mathcal{C})||_{L^2}=\delta$, there are $\alpha_1,\ldots,\alpha_k$, formulas $\varphi_1,\ldots,\varphi_k\in\Phi$, and $\vec d_1,\ldots,\vec d_k$ in $N$ such that $||f-\sum_{i\leq k}\alpha_i\chi_{\varphi_i}(\vec x,\vec d_i)||_{L^2}<\epsilon-3(\epsilon-\delta)/4$, and therefore
\[||\sum_{i\leq m}\beta_i\chi_{\psi_i}(\vec x,\vec b_i)-\sum_{i\leq k}\alpha_i\chi_{\varphi_i}(\vec x,\vec d_i)||_{L^2}<\epsilon-(\epsilon-\delta)/2.\]
This is not itself a formula, but we can construct a formula which corresponds to it.

Squaring both sides and expanding the product, we have
\begin{eqnarray*}
\left(\epsilon-(\epsilon-\delta)/2\right)^2
>&\int \sum_{i,j\leq m}\beta_i\beta_j\chi_{\psi_i}(\vec x,\vec b_i)\chi_{\psi_j}(\vec x,\vec b_j)\\
&-2\int\sum_{i\leq m,j\leq k}\beta_i\alpha_j\chi_{\psi_i}(\vec x,\vec b_i)\chi_{\varphi_j}(\vec x,\vec d_i)\\
&+\int \sum_{i,j\leq k}\alpha_i\alpha_j\chi_{\varphi_i}(\vec x,\vec d_i)\chi_{\varphi_j}(\vec x,\vec d_j).\\
\end{eqnarray*}
For each $i\leq m, j\leq k$, we may choose an $r_{ij}$ such that $\int\chi_{\psi_i}(\vec x,\vec b_i)\chi_{\varphi_j}(\vec x,\vec d_i)d\mu<r_{ij}$ and for each $i,j\leq k$ we may choose an $s_{ij}$ such that $\int\chi_{\varphi_i}(\vec x,\vec d_i)\chi_{\varphi_j}(\vec x,\vec d_i)d\mu<s_{ij}$, and we may choose these so that
\[\left(\epsilon-(\epsilon-\delta)/2\right)^2>\int \sum_{i,j\leq m}\beta_i\beta_j\chi_{\psi_i}(\vec x,\vec b_i)\chi_{\psi_j}(\vec x,\vec b_j)-2\sum_{i\leq m,j\leq k}r_{ij}\beta_i\alpha_j+\sum_{i,j\leq k}s_{ij}\alpha_i\alpha_j.\]

Now let $\rho(\vec b_1,\ldots,\vec b_m,\vec d_1,\ldots,\vec d_k)$ be the formula
\[\bigwedge_{i\leq m,j\leq k}m_{\vec x}<r_{ij}. \varphi_i(\vec x,\vec b_i)\wedge \psi_j(\vec x,\vec d_j)\wedge\bigwedge_{i,j\leq k}m_{\vec x}<r_{ij}.\psi_i(\vec x,\vec d_i)\wedge\psi_j(\vec x,\vec d_j).\]
Since $\N\models \rho(\vec b_1,\ldots,\vec b_m,\vec d_1,\ldots,\vec d_k)$, we have $\N\models \exists \vec y_1,\ldots,\vec y_k\rho(\vec b_1,\ldots,\vec b_m,\vec y_1,\ldots,\vec y_k).$  By the elementarity of $\M$, there exist $\vec d'_1,\ldots,\vec d'_k$ in $M$ such that $\M\models \rho(\vec b_1,\ldots,\vec b_m,\vec d'_1,\ldots,\vec d'_k)$.  Consequently, for each $i\leq m, j\leq k$, we have $\int\chi_{\psi_i}(\vec x,\vec b_i)\chi_{\varphi_j}(\vec x,\vec d'_i)d\mu\leq r_{ij}$ and for each $i,j\leq k$, we have $\int \chi_{\varphi_i}(\vec x,\vec d'_i)\chi_{\varphi_j}(\vec x,\vec d'_i)d\mu\leq s_{ij}$.  Therefore
\[||\sum_{i\leq m}\beta_i\chi_{\psi_i}(\vec x,\vec b_i)-\sum_{i\leq k}\alpha_i\chi_{\varphi_i}(\vec x,\vec d'_i)||_{L^2}<\epsilon-(\epsilon-\delta)/2\]
and
\[||f-\sum_{i\leq k}\alpha_i\chi_{\varphi_i}(\vec x,\vec d'_i)||_{L^2}<\epsilon-(\epsilon-\delta)/4.\]
Since $\sum_{i\leq k}\alpha_i\chi_{\varphi_i}(\vec x,\vec d'_i)$ is measurable with respect to $\mathcal{A}$, this contradicts the assumption that $||f-E(f\mid\mathcal{A})||_{L^2}=\epsilon$.
\end{proof}

The following theorem is essentially the infinitary version of hypergraph removal:
\begin{thm}\label{counting}
Let $\mathfrak{M}$ be an elementary substructure of $\mathfrak{N}$.  Let $n\geq k$, $\mathcal{I}\subseteq{[1,n]\choose k}$, and suppose that for each each $I\in\mathcal{I}$ with $|I|=k$, we have a set $A_I\in\mathcal{B}_{n,I}(M)$, and suppose there is a $\delta>0$ such that whenever $B_I\in\mathcal{B}^0_{n,I}(M)$ and $\mu^n(A_I\setminus B_I)<\delta$ for all $I\in\mathcal{I}$, $\bigcap_{I\in\mathcal{I}} B_I$ is non-empty.  Then $\mu^n(\bigcap_{I\in\mathcal{I}} A_I)>0$.
\end{thm}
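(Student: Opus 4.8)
The plan is to argue by contraposition. Assuming $\mu^n\big(\bigcap_{I\in\mathcal{I}} A_I\big)=0$, I will manufacture, for the given $\delta>0$, definable sets $B_I\in\mathcal{B}^0_{n,I}(M)$ with $\mu^n(A_I\setminus B_I)<\delta$ for every $I\in\mathcal{I}$ yet $\bigcap_{I\in\mathcal{I}} B_I=\emptyset$, which directly contradicts the standing hypothesis. The base case $n=k$ is immediate: then $\mathcal{I}$ is (at most) the single edge $[1,n]$ and $A_{[1,n]}\in\mathcal{B}_{n,[1,n]}(M)$ is an arbitrary element of the $M$-definable $\sigma$-algebra, so if it is null we simply take $B_{[1,n]}=\emptyset\in\mathcal{B}^0_{n,[1,n]}(M)$, which covers all of $A_{[1,n]}$ while having empty intersection.

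For the inductive step I would reveal the coordinate $n$ last and split $\mathcal{I}=\mathcal{I}_n\sqcup\mathcal{I}'$, where $\mathcal{I}_n=\{I\in\mathcal{I}\setmid n\in I\}$ and $\mathcal{I}'$ collects the edges avoiding $n$. The indicator of each $A_I$ with $I\in\mathcal{I}'$ is a function of the first $n-1$ coordinates $\vec u$, while for $I\in\mathcal{I}_n$ the fibre $(A_I)^{\vec u}\subseteq N$ depends only on $\vec u$ restricted to $I\setminus\{n\}$. By the Fubini property of the canonical measures,
\[\mu^n\Big(\bigcap_{I\in\mathcal{I}} A_I\Big)=\int_{N^{n-1}}\chi_{\bigcap_{I\in\mathcal{I}'}A_I}(\vec u)\;\mu^1\Big(\bigcap_{I\in\mathcal{I}_n}(A_I)^{\vec u}\Big)\,d\mu^{n-1}(\vec u).\]
Since the left side vanishes, the integrand is $0$ for $\mu^{n-1}$-almost every $\vec u$; hence for almost every $\vec u\in\bigcap_{I\in\mathcal{I}'}A_I$ the fibrewise intersection $\bigcap_{I\in\mathcal{I}_n}(A_I)^{\vec u}$ is $\mu^1$-null. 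The idea is to spend part of the $\delta$-budget emptying these null fibres by removing, for each $I\in\mathcal{I}_n$, a definable set $R_I\in\mathcal{B}^0_{n,I}(M)$ of small measure from $A_I$, and then to feed the residual configuration on the first $n-1$ coordinates (the edges of $\mathcal{I}'$, together with the shadow of the $\vec u$ whose fibres were not yet emptied) into the inductive hypothesis.

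The main obstacle is precisely the step that converts the \emph{measure-theoretic} nullity of the fibres into a genuine set-theoretic emptying effected by removals \emph{confined to the individual coordinate algebras} $\mathcal{B}^0_{n,I}(M)$ — this is the combinatorial heart of hypergraph removal, and it is where the orthogonality lemma proved immediately above does the real work. A fibre-by-fibre null set is naturally witnessed by definable sets carrying parameters from $N$ (arising from generic choices of $\vec u$), so two things must be arranged simultaneously: the witnessing removals must be pulled down to parameters from $M$, which is supplied by the elementarity $\M\preceq\N$ exactly as in the descent argument of the previous lemma; and their measures must be preserved under this descent and must not conspire across distinct edges, which is guaranteed by the relative independence encoded in $\mathbb{E}(f\mid\mathcal{A})=\mathbb{E}(f\mid\mathcal{C})$ for $M$-measurable $f$. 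Controlling the accumulated error across the $|\mathcal{I}|$ coordinate algebras so that each $\mu^n(A_I\setminus B_I)$ stays below $\delta$ is the delicate bookkeeping I expect to be the crux of the argument.
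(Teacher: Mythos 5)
Your setup (contraposition, the trivial base case, the Fubini slicing over the last coordinate) is fine, but the proposal has a genuine gap at exactly the point you defer as ``delicate bookkeeping'': the step that converts ``$\bigcap_{I\in\mathcal{I}_n}(A_I)^{\vec u}$ is $\mu^1$-null for almost every $\vec u$'' into genuine emptiness after removing small sets $R_I\in\mathcal{B}^0_{n,I}(M)$ is not bookkeeping --- it \emph{is} the removal lemma, applied to the star of edges through the vertex $n$, and no argument is given for it. Note that slicing off the vertex $n$ has not reduced the difficulty at all: $\mathcal{I}_n$ is still a family of $k$-element edges, and a removal $R_I$ is a cylinder set over the coordinates $I$, so emptying the fibre over one base point $\vec u$ forces removals over every base point with the same projection to $I\setminus\{n\}$; moreover you must empty \emph{every} surviving fibre, not just almost every one, while keeping each $\mu^n(A_I\setminus B_I)$ below $\delta$. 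Passing from an a.e.\ statement to an everywhere statement witnessed by coordinate-confined definable sets is precisely the difficulty the theorem exists to overcome, so invoking it inside the inductive step makes the argument circular. Gesturing at elementarity and the orthogonality lemma does not fill this: those tools let one replace $N$-parameters by $M$-parameters in conditional expectations, but they do not by themselves produce the removals.

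There is also a structural problem with your induction scheme: after revealing the coordinate $n$, the edges in $\mathcal{I}_n$ become $(k-1)$-sets while those in $\mathcal{I}'$ remain $k$-sets, so the ``residual configuration'' is a non-uniform hypergraph, which the statement being proved (for $\mathcal{I}\subseteq{[1,n]\choose k}$) does not cover; the inductive hypothesis cannot be invoked without first formulating and proving a mixed-arity generalization. The paper avoids both problems by inducting on the uniformity $k$ rather than on $n$. It first uses the orthogonality lemma to establish the counting identity $\int\bigl(\chi_{A_{I_0}}-\mathbb{E}(\chi_{A_{I_0}}\mid\mathcal{B}_{n,<I_0}(M))\bigr)\prod_{I\neq I_0}\chi_{A_I}\,d\mu^n=0$, which justifies replacing each $A_I$ by a set in $\mathcal{B}_{n,<I}(M)$ (this is the one place elementarity enters). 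It then approximates each $A_I$ in $L^2$ by a finite algebra $\mathcal{B}\subseteq\mathcal{B}^0_{n,k-1}(M)$, passes to the sets $A^*_I$ where the conditional expectation exceeds $\tfrac{|\mathcal{I}|}{|\mathcal{I}|+1}$ (Chebyshev controls $\mu^n(A_I\setminus A^*_I)$), decomposes the $A^*_I$ into intersections of $(k-1)$-ary definable sets, and applies the inductive hypothesis --- now a genuinely $(k-1)$-uniform instance --- to the null atom-intersections $D_{\vec i}$; a final explicit construction assembles the removals $B^*_I$ and derives the contradiction. To salvage your route you would need, at minimum, the mixed-arity statement plus an actual mechanism for emptying all fibres with definable coordinate-confined removals, and that mechanism is what the paper's induction on $k$ supplies.
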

\begin{proof}
We proceed by main induction on $k$.  When $k=1$, the claim is trivial: we must have $\mu(A_I)>0$ for all $I$, since otherwise we could take $B_I=\emptyset$; then $\mu^n(\bigcap A_I)=\prod\mu(A_I)>0$.  So we assume that $k>1$ and that whenever $B_I\in\mathcal{B}_{n,I}(M)$ and $\mu^n(A_I\setminus B_I)<\delta$ for all $I$, $\bigcap_{I\in\mathcal{I}} B_I$ is non-empty.  Throughout this proof, the variables $I$ and $I_0$ range over elements of $\mathcal{I}$.

We write $\mathcal{B}_{n,{<}I}(M)$ for $\bigcup_{J\subsetneq I}\mathcal{B}_{n,J}(M)$.
\begin{claim}
For any $I_0$,
\[\int (\chi_{A_{I_0}}-E(\chi_{A_{I_0}}\mid\mathcal{B}_{n,{<}I_0}(M)))\prod_{I\neq I_0}\chi_{A_I}d\mu^{n}=0.\]
\end{claim}
\begin{claimproof}
When $k=n$, this is trivial since $\prod_{I\neq I_0}\chi_{A_I}$ is an empty product, and therefore equal to $1$.

If $k<n$, we have
\begin{align*}
 &\int (\chi_{A_{I_0}}-E(\chi_{A_{I_0}}\mid\mathcal{B}_{n,{<}I_0}^\sigma(M)))\prod_{I\neq I_0}\chi_{A_I}d\mu^{n}\\
 =&\int (\chi_{A_{I_0}}-E(\chi_{A_{I_0}}\mid\mathcal{B}_{n,{<}I_0}^\sigma(M)))\prod_{I\neq I_0}\chi_{A_I}d\mu^{k}(\{x_i\}_{i\in I_0})d\mu^{n-k}(\{x_i\}_{i\not\in I_0}).\\
\end{align*}
Observe that for any choice of $\{a_i\}_{i\not\in I_0}$, $\mathcal{B}_{n,{<}I_0}(M),\mathcal{B}_{n,{<}I_0}(M\cup\{a_i\}_{i\not\in I_0}),\mathcal{B}_{n,I_0}(M)$ satisfy the preceding lemma, so
\[||E(\chi_{A_{I_0}}\mid\mathcal{B}_{n,{<}I_0}^\sigma(M))-E(\chi_{A_{I_0}}\mid\mathcal{B}_{n,{<}I_0}^\sigma(M\cup\{a_i\}_{i\not\in I_0}))||_{L^2}=0.\]
The function $\prod_{I\neq I_0}\chi_{A_i}(\{x_i\}_{i\in I_0},\{a_i\}_{i\not\in I_0})$ is measurable with respect to $\mathcal{B}_{n,{<}I_0}^\sigma(M\cup\{\vec a_i\})$.  Combining these two facts, we have
\begin{align*}
& \int (\chi_{A_{I_0}}-E(\chi_{A_{I_0}}\mid\mathcal{B}_{n,{<}I_0}^\sigma(M)))\prod_{I\neq I_0}\chi_{A_I}d\mu^{k}(\{x_i\}_{i\in I_0})\\
=&\int (\chi_{A_{I_0}}-E(\chi_{A_{I_0}}\mid\mathcal{B}_{n,{<}I_0}^\sigma(M\cup\{a_i\}_{i\not\in I_0})))\prod_{I\neq I_0}\chi_{A_i}d\mu^k(\{x_i\})\\
=&0
\end{align*}
Since this holds for any $\{a_i\}_{i\not\in I_0}$, the claim follows by integrating over all choices of $\{a_i\}$.
\end{claimproof}

We next show that, without loss of generality, we may assume each $A_{I_0}$ belongs to $\mathcal{B}_{n,{<}I_0}(M)$ by showing that for each $I_0\in\mathcal{I}$, there is some set $A'_{I_0}\in\mathcal{B}_{n,{<}I}(M)$ with the property that, if we replace $A_{I_0}$ by $A'_{I_0}$, the assumptions of the theorem all hold, and such that if we show the conclusion for the modified family of sets, the conclusion also holds for the original family.

\begin{claim}
For any $I_0$, there is an $A'_{I_0}\in\mathcal{B}_{n,{<}I_0}(M)$ such that:
\begin{itemize}
  \item Whenever $B_I\in\mathcal{B}^0_{n,I}(M)$ for each $I$, $\mu^n(A_I\setminus B_I)<\delta$ for each $I\neq I_0$, and $\mu^n(A'_{I_0}\setminus B_{I_0})<\delta$, $\bigcap_{I\in\mathcal{I}}B_I$ is non-empty, and
  \item If $\mu^n(A'_{I_0}\cap \bigcap_{I\neq I_0}A_I)>0$, $\mu^n(\bigcap_{I\in\mathcal{I}} A_I)>0$.
\end{itemize}
\end{claim}
\begin{claimproof}
 Define $A'_{I_0}:=\{x_{I_0}\mid \mathbb{E}(\chi_{A_{I_0}}\mid\mathcal{B}_{n,<I_0}(M))(x_{I_0})>0\}$.  If $\mu^n(A'_{I_0}\cap\bigcap_{I\neq I_0}A_I)>0$ then we have
 \[\int \mathbb{E}(\chi_{A_{I_0}}\mid\mathcal{B}_{n,{<}I_0}(M))\prod_{I\neq I_0}\chi_{A_I}\,d\mu^n>0,\]
and by the previous claim, this implies that that $\mu^n(\bigcap A_I)>0$.
 
 Suppose that for each $I$, $B_I\in\mathcal{B}^0_{n,I}(M)$ with $\mu^n(A_I\setminus B_I)<\delta$ for $I\neq I_0$ and $\mu^n(A'_{I_0}\setminus B_{I_0})<\delta$.  Since
\[\mu^n(A_{I_0}\setminus A'_{I_0})=\int\chi_{A_{I_0}}(1-\chi_{A'_{I_0}})\,d\mu^n=\int \mathbb{E}(\chi_{A_{I_0}}\mid\mathcal{B}_{n,<I_0}(M))(1-\chi_{A'_{I_0}})\,d\mu^n=0,\]
we have $\mu^n(A_{I_0}\setminus B_{I_0})<\delta$ as well, and therefore $\bigcap_{I\in\mathcal{I}} B_I$ is non-empty.
\end{claimproof}

By applying the previous claim to each $I\in \mathcal{I}$, we may assume for the rest of the proof that for each $I$, $A_I\in\mathcal{B}_{n,<I}(M)$.

Fix some finite algebra $\mathcal{B}\subseteq\mathcal{B}^0_{n,k-1}(M)$ so that for every $I$, $\|\chi_{A_I}-\mathbb{E}(\chi_{A_I}\mid\mathcal{B})\|_{L^2(\mu^n)}<\frac{\sqrt{\delta}}{\sqrt{2}(|\mathcal{I}|+1)}$ (such a $\mathcal{B}$ exists because there are finitely many $I$ and each $A_I$ is $\mathcal{B}_{n,k-1}(M)$-measurable).  For each $I$, set $A^*_I=\{a_I\mid \mathbb{E}(\chi_{A_I}\mid\mathcal{B})(a_I)>\frac{|\mathcal{I}|}{|\mathcal{I}|+1}\}$.

\begin{claim}
For each $I$, $\mu^n(A_I\setminus A^*_I)\leq \delta/2$
\end{claim}
\begin{claimproof}
$A_I\setminus A^*_I$ is the set of points such that $\left(\chi_{A_I}-\mathbb{E}(\chi_{A_I}\mid\mathcal{B})\right)(\vec a)\geq\frac{1}{|\mathcal{I}|+1}$.  By Chebyshev's inequality, the measure of this set is at most
\[(|\mathcal{I}|+1)^2\int (\chi_{A_I}-\mathbb{E}(\chi_{A_I}\mid\mathcal{B}))^2\,d\mu^n=(|\mathcal{I}|+1)^2\|\chi_{A_I}-\mathbb{E}(\chi_{A_I}\mid\mathcal{B})\|_{L^2(\mu^n)}^2\leq\frac{\delta}{2}.\]
\end{claimproof}

\begin{claim}
$\mu^n(\bigcap_I A_I)\geq\mu^n(\bigcap_I A^*_I)/\left(|\mathcal{I}|+1\right)$. 
\end{claim}
\begin{claimproof}
 For each $I_0$, 
 \begin{align*}
 \mu^n((A^*_{I_0}\setminus A_{I_0})\cap\bigcap_{I\neq I_0} A^*_I)
 &=\int \chi_{A^*_{I_0}}(1-\chi_{A_{I_0}})\prod_{I\neq I_0}\chi_{A^*_I}\,d\mu^n\\
 &=\int\chi_{A^*_{I_0}}(1-\mathbb{E}(\chi_{A_{I_0}}\mid\mathcal{B}))\prod_{I\neq I_0}\chi_{A^*_I}\,d\mu^n\\
 &\leq \frac{1}{|\mathcal{I}|+1}\int\prod_{I\in\mathcal{I}}\chi_{A^*_I}\,d\mu^n\\
 &=\frac{1}{|\mathcal{I}|+1}\mu^n(\bigcap_{I\in\mathcal{I}} A^*_I)
\end{align*}

But then
\[\mu^n(\bigcap_{I\in\mathcal{I}} A^*_I\setminus \bigcap_{I\in\mathcal{I}} A_I)
\leq \sum_{I_0}\mu^n((A^*_{I_0}\setminus A_{I_0})\cap\bigcap_{I\neq I_0} A^*_I)
\leq \frac{|\mathcal{I}|}{|\mathcal{I}|+1}\mu^n(\bigcap_{I\in\mathcal{I}} A^*_I).\]
\end{claimproof}

Each $A^*_I$ may be written in the form $\bigcup_{i\leq r_I}A^*_{I,i}$ where $A^*_{I,i}=\bigcap_{J\in {I\choose k-1}}A^*_{I,i,J}$ and $A^*_{I,i,J}$ is an element of $\mathcal{B}^0_{n,J}(M)$.  We may assume that if $i\neq i'$ then $A^*_{I,i}\cap A^*_{I,i'}=\emptyset$.

We have
\[\mu^n(\bigcap_I A^*_I)=\mu^n(\bigcup_{\vec i\in\prod_I [1,r_I]}\bigcap_I\bigcap_{J\in{I\choose k-1}}A^*_{I,i_I,J}).\]
For each $\vec i\in\prod_I [1,r_I]$, let $D_{\vec i}=\bigcap_I\bigcap_{J\in{I\choose k-1}}A^*_{i_I,J,I}$.  Each $A^*_{I,i_I,J}$ is an element of $\mathcal{B}^0_{n,J}(M)$, so we may group the components and write $D_{\vec i}=\bigcap_{J\in{[1,n]\choose k-1}}D_{\vec i,J}$ where $D_{\vec i,J}=\bigcap_{I\supset J}A^*_{I,i_I,J}$.

Suppose, for a contradiction, that $\mu^n(\bigcap_I A^*_I)=0$.  Then for every $\vec i\in\prod_I [1,r_I]$, $\mu^n(D_{\vec i})=\mu^n(\bigcap_J D_{\vec i,J})=0$.  By the inductive hypothesis, for each $\gamma>0$, there is a collection $B_{\vec i,J}\in\mathcal{B}^0_{n,J}(M)$ such that $\mu^n(D_{\vec i,J}\setminus B_{\vec i,J})<\gamma$ and $\bigcap_J B_{\vec i,J}=\emptyset$.  In particular, this holds with $\gamma=\frac{\delta}{2{k\choose k-1}(\prod_I r_I)(\max_I r_I)}$.

For each $I,i\leq r_I, J\subset I$, define
 \[B^*_{I,i,J}=A^*_{I,i,J}\cap\bigcap_{\vec i, i_I=i}\left[B_{\vec i,J}\cup\bigcup_{I'\supseteq J, I'\neq I}\overline{A^*_{I',i_{I'},J}}\right].\]
 
 \begin{claim}
 $\mu^n(A^*_{I,i,J}\setminus B^*_{I,i,J})\leq \frac{\delta}{2{k\choose k-1}(\max_I r_I)}.$ 
\end{claim}
\begin{claimproof} 
Observe that if $x\in A^*_{I,i,J}\setminus B^*_{I,i,J}$ then for some $\vec i$ with $i_I=i$, $x\not\in B_{\vec i,J}\cup\bigcup_{I'\supseteq J, I'\neq I}\overline{A^*_{I',i_{I'},J}}$.  This means $x\not\in B_{\vec i,J}$ and $x\in\bigcap_{I'\supseteq J}A^*_{I',i_{I'},J}=D_{\vec i,J}$.  So
\[\mu^n(A^*_{I,i,J}\setminus B^*_{I,i,J})\leq \sum_{\vec i\in\prod_I [1,r_I]}\mu^n(D_{\vec i,J}\setminus B_{\vec i,J})\leq \frac{\delta}{2{k\choose k-1}(\max_I r_I)}.\]
\end{claimproof}

 Define $B^*_{I}=\bigcup_{i\leq r_I}\bigcap_J B^*_{I,i,J}$.
 
 \begin{claim}
 $\mu^n(A_I\setminus B^*_{I})\leq\delta$. 
\end{claim}
\begin{claimproof}
Since $\mu^n(A_I\setminus A^*_I)\leq\delta/2$, it suffices to show that $\mu^n(A^*_I\setminus B^*_{I})\leq\delta/2$.

 \begin{align*} 
 \mu^n(A^*_I\setminus \bigcup_i\bigcap_J B^*_{I,i,J})
&=\mu^n\left(\bigcup_{i}\bigcap_{J}A^*_{I,i,J}\setminus\bigcup_i\bigcap_J B^*_{I,i,J}\right)\\
&\leq\mu^n\left(\bigcup_{i}\left(\bigcap_{J}A^*_{I,i,J}\setminus\bigcap_J B^*_{I,i,J}\right)\right)\\
&\leq\sum_{i\leq r_I}\mu^n\left(\bigcap_J A^*_{I,i,J}\setminus\bigcap_J B^*_{I,i,J}\right)\\
&\leq\sum_{i\leq r_I}\sum_{J}\mu^n(A^*_{I,i,J}\setminus B^*_{I,i,J})\\
&\leq r_I\cdot {k\choose k-1}\cdot \frac{\delta}{2{k\choose k-1}(\max_I r_I)}\\
&\leq\delta/2.
\end{align*}
\end{claimproof}

Note that the sets $B_I^*$ satisfy the assumption, and therefore $\bigcap_{I}B^*_{I}\neq\emptyset$.

\begin{claim} 
\[\bigcap_{I}B^*_{I}\subseteq\bigcup_{\vec i}\bigcap_J B_{\vec i,J}.\]
\end{claim}
\begin{claimproof} 
Suppose $x\in\bigcap_{I}B^*_{I}=\bigcap_I\bigcup_{i\leq r_I}\bigcap_J B^*_{I,i,J}$.  Then for each $I$, there is an $i_I\leq r_I$ such that $x\in\bigcap_J B^*_{I,i_I,J}$.  Since $B^*_{I,i_I,J}\subseteq A^*_{I,i_I,J}$, for each $I$ and $J\subset I$, $x\in A^*_{I,i_I,J}$.

For any $J$, let $I\supset J$.  Then
\[x\in B^*_{I,i_I,J}=A^*_{I,i_I,J}\cap\bigcap_{\vec i',i'_I=i_I}\left[B_{\vec i,J}\cup\bigcup_{I'\supseteq J, I'\neq I}\overline{A^*_{I',i_{I'},J}}\right].\]
In particular, $x\in \left[B_{\vec i,J}\cup\bigcup_{I'\supseteq J, I'\neq I}(\overline{A^*_{I',i_{I'},J}})\right]$ for the particular $\vec i$ we have chosen.  Since $x\in A^*_{I,i_{I'},J}$ for each $I'\supset J$, it must be that $x\in B_{\vec i,J}$.  This holds for any $J$, so $x\in \bigcap_J B_{\vec i,J}$.
\end{claimproof}

Since $\bigcap_{I}B^*_{I}$ is non-empty, there is some $\vec i$ such that $\bigcap_J B_{\vec i,J}\neq\emptyset$.  But this leads to a contradiction, so it must be that $\mu^n(\bigcap_I A^*_I)>0$, and therefore, as we have shown, $\mu^n(\bigcap_{I\in\mathcal{I}} A_I)\geq\frac{1}{|\mathcal{I}|+1}\mu^n(\bigcap_{I\in\mathcal{I}} A^*_I)>0$.
\end{proof}

\begin{cor}
  For each $\epsilon>0,k$ and each $k$-regular hypergraph $(W,F)$ there is a $\delta>0$ such that whenever $(V,E)$ is a $k$-regular hypergraph such that there are at most $\delta|V|^{|W|}$ copies of $(W,F)$, it is possible to remove at most $\epsilon|V|^k$ edges to obtain a hypergraph with no copies $(W,F)$.
\end{cor}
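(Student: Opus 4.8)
The plan is to deduce this finitary statement from its infinitary form, Theorem \ref{counting}, via the ultraproduct correspondence, exactly as the Regularity Lemma was deduced above. Write $w:=|W|\geq k$ and identify $W$ with $[1,w]$, so that the template's edge set becomes a family $\mathcal I := F\subseteq \binom{[1,w]}{k}$. Suppose for contradiction that the corollary fails for some $\epsilon>0$, some $k$, and some $(W,F)$. Then for each $j\geq 1$ there is a $k$-uniform hypergraph $(V_j,E_j)$ having at most $\tfrac1j|V_j|^{w}$ copies of $(W,F)$ from which one cannot delete $\epsilon|V_j|^k$ edges so as to destroy all copies. Regard each $(V_j,E_j)$ as a finite measured $\la$-structure $\N_j$ over $\la=\{E\}$, with $E$ a $k$-ary relation symbol, equipped with its normalized counting measure, fix a nonprincipal ultrafilter $\mathcal U$ on $\n^{>0}$, and set $\N:=\prod_{\mathcal U}\N_j$. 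Since each $\N_j$ is finite, hence Fubini, with $\mu^1(V_j)=1$, the structure $\N$ is an $\aleph_1$-saturated Fubini structure; equip it with its canonical measures and let $\M$ be a countable elementary substructure of $\N$ (Downward L\"owenheim--Skolem).

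I would next encode copies of the template as a single intersection. For each edge $I\in\mathcal I$ put
\[A_I:=\{(x_1,\dots,x_w)\in N^w \setmid (x_i)_{i\in I}\in E\};\]
since membership depends only on the coordinates indexed by $I$, we have $A_I\in\mathcal B_{w,I}(M)$ (indeed $A_I$ is $\emptyset$-definable). Then $\bigcap_{I\in\mathcal I}A_I$ is exactly the set of $w$-tuples that are copies of $(W,F)$, so by \L o\'s{} and the choice of the $\N_j$,
\[\mu^w\Bigl(\bigcap_{I\in\mathcal I}A_I\Bigr)=\lim_{\mathcal U}\frac{\#\{\text{copies in }(V_j,E_j)\}}{|V_j|^{w}}\leq\lim_{\mathcal U}\tfrac1j=0,\]
and this value is unchanged on passing to $\M$ because the set is $\emptyset$-definable.

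Now I would invoke the contrapositive of Theorem \ref{counting}: because $\mu^w(\bigcap_I A_I)=0$, no $\delta>0$ can satisfy its hypothesis, so for every $\delta>0$ there exist $B_I\in\mathcal B^0_{w,I}(M)$ with $\mu^w(A_I\setminus B_I)<\delta$ for all $I$ and $\bigcap_{I\in\mathcal I}B_I=\emptyset$. Choose $\delta:=\epsilon/(2|F|)$ and replace each $B_I$ by $B_I\cap A_I$ (this leaves $A_I\setminus B_I$ unchanged and keeps the intersection empty), so that each $B_I$ is an $M$-definable cylinder over the coordinates in $I$ cut out by an edge-set $\beta_I\subseteq E$. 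By the definition of the ultraproduct measure (for the density bound) and by \L o\'s{} together with elementarity of $\M\preceq\N$ (for the emptiness, a first-order condition $\neg\exists\vec x\bigwedge_I\beta_I$), for $\mathcal U$-almost every $j$ there are definable edge-sets $\beta_{I,j}\subseteq E_j$ with $|E_j\setminus\beta_{I,j}|<\delta|V_j|^{k}$ for each $I$ and with the corresponding cylinders satisfying $\bigcap_I B_{I,j}=\emptyset$. In such an $\N_j$, delete from $E_j$ the edges $R_j:=\bigcup_{I\in\mathcal I}(E_j\setminus\beta_{I,j})$; then $|R_j|\leq|F|\,\delta\,|V_j|^{k}\leq\epsilon|V_j|^{k}$, and the surviving edge set is $\bigcap_I\beta_{I,j}$. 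Any copy of $(W,F)$ in the reduced hypergraph would give a $w$-tuple all of whose edge-subtuples lie in the respective $\beta_{I,j}$, i.e.\ a point of $\bigcap_I B_{I,j}=\emptyset$; hence no copies survive, contradicting the choice of $(V_j,E_j)$ and proving the corollary.

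The entire conceptual burden lies in Theorem \ref{counting}; the only genuine care needed here is the bookkeeping of the last paragraph, namely turning the family $(B_I)_{I\in\mathcal I}$ of \emph{separate} permissible deletions per edge-type into a single deletion of at most $\epsilon|V_j|^k$ edges of $E_j$, and checking that emptiness of the infinitary intersection transfers (through \L o\'s{} and elementarity) into the actual absence of copies in the finite models. One must also track the distinction between ordered $k$-tuples and unordered $k$-sets together with the counting-measure normalization, but these affect only the constant relating $\delta$ to $\epsilon$.
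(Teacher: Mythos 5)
Your proposal is correct and follows essentially the same route as the paper's own proof: negate the statement, form the ultraproduct of finite counterexamples with a countable elementary substructure, identify copies of $(W,F)$ with the intersection $\bigcap_{I\in F}A_I$ of cylinder sets of measure $0$, apply the contrapositive of Theorem \ref{counting} to get definable $B_I$ with small defect and empty intersection, and transfer back by \L o\'s. Your version merely spells out more of the bookkeeping (replacing $B_I$ by $B_I\cap A_I$, the ordered/unordered count, and the constant $\epsilon/(2|F|)$ versus the paper's $\epsilon/|\mathcal{I}|$) that the paper leaves implicit.
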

\begin{proof}
  Suppose not.  Fix $\epsilon>0, k, (W,F)$ so that for each $K$, there is an $k$-regular hypergraph $(V_K,E_K)$ with at most $\delta|V|^{|W|}$ copies of $(W,F)$ and such that every sub-hypergraph with at least $|E_K|-\epsilon|V_K|^n$ edges contains a copy.

We consider a signature of $\AML$ consisting of an $k$-ary predicate $E$.  We consider finite measured structures $\N_K$ defined as follows:
\begin{itemize}
\item The universe of $\N_K$ is $V_K$,
\item The measures on $\N_K$ are given by the normalized counting measure,
\item $E$ is interpreted by $E_K$.
\end{itemize}

Let $\N$ be a (nonprincipal) ultraproduct of these structures and fix a countable elementary submodel $\M$.  We may assume $W=[1,n]$.  For each $I\in{[1,n]\choose k}$, define $E^I\in\mathcal{B}_{n,I}(M)$ to be the copy of $E$ on the coordinates $I$.  Observe that if $\vec x\in N^{n}$, $\vec x$ is a copy of $W$ exactly if $W\in\bigcap_{I\in F}E^I$.  In particular, $\mu^n(\bigcap_{I\in F} E^I)=0$.

By the preceding theorem, we can find definable sets $B_I$ such that $\mu^n(E^I\setminus B_I)<\epsilon/|\mathcal{I}|$ for each $I\in{[1,n]\choose k}$ so that $\bigcap_I B_I=\emptyset$.  Note that saying this intersection of definable sets is empty is expressed by a single formula, and is therefore true for almost every structure $(V_K,E_K)$.  But then for some $K$, $(V_K,E_K\cap\bigcap B_I(G_K))$ is a sub-hypergraph obtained by removing fewer than $\epsilon|V_K|^k$ edges and containing no copies of $(W,F)$, yielding a contradiction.
\end{proof}

The second author has extended this method \cite{henry12:_analy_approac_spars_hyper} to the setting where $(V,E)$ is a dense sub-hypergraph of a sparse random hypergraph.  In this setting one replaces the normalized counting measure with a new counting measure normalized by the ambient random hypergraph, causing substantial additional complications.

\begin{rmk}
  Szemer\'edi's Theorem follows from hypergraph removal using the following, now standard, encoding.  Given $A\subseteq [1,n]$, to find an arithmetic progression of length $k+1$, we define a $k+1$-partite $k$-regular hypergraph.  For each $i\in[1,k]$, we take the part $X_i$ to be a copy of $[1,n]$, and we take $X_{k+1}$ to be a copy of $[1,k^2n]$.  Given $x_i\in X_i$ for all $i$, we say $(x_1,\ldots,x_{k})$ is an edge iff $\sum_{i\leq k}i\cdot x_i\in A$, and when $1\leq i\leq k$, $(x_1,\ldots,x_{i-1},x_{i+1},\ldots,x_{k+1})$ is an edge iff $\sum_{j\leq k,j\neq i}j\cdot x_j+i(x_{k+1}-\sum_{j\leq k,j\neq i}x_j)\in A$.

Suppose $(x_1,\ldots,x_{k+1})$ is a copy of the complete $k$-regular hypergraph on $k+1$ vertices (that is, every $k$-tuple is an edge) with $x_{k+1}\neq \sum_{i\leq k}x_i$.  Then set $a=\sum_{i\leq k}i\cdot x_i$ and $d=x_{k+1}-\sum_{i\leq k}x_i\neq 0$.  Then we have $a\in A$, and for each $i\leq k$, we have
\[a+id=\sum_{j\leq k}j\cdot x_j+i(x_{k+1}-\sum_{j\leq k}x_j)=\sum_{j\leq k,j\neq i}j\cdot x_j+i(x_{k+1}-\sum_{j\leq k,j\neq i}x_j)\in A.\]

On the other hand, for any $a\in A$ and any sequence $(x_1,\ldots,x_k)$ with $a=\sum_{i\leq k}i\cdot x_i$, the sequence $(x_1,\ldots,x_k,\sum_{i\leq k}x_i)$ is also a copy of the complete $k$-regular hypergraph on $k+1$ vertices.  It is not possible to remove all such sequences by removing a small number of edges, so hypergraph removal implies that there must be many copies of the complete $k$-regular hypergraph on $k+1$ vertices; but there are only a small number of sequences $(x_1,\ldots,x_k,\sum_{i\leq k}x_i)$, so the remaining copies must correspond to genuine arithmetic progressions.
\end{rmk}

\

\subsection{Gowers Norms}
\begin{df} 
Let $G$ be a finite abelian group and consider $g:G\rightarrow\mathbb{R}$.  We define the \emph{$k$-th Gowers uniformity norm}, $||\cdot||_{U^k}$, by
\[||g||_{U^k}^{2^k}=\frac{1}{|G|^{k+1}}\sum_{x\in G}\sum_{\vec h \in G^k}\prod_{\omega\in\{0,1\}^k}g(x+\omega\cdot \vec h).\]
\end{df}

\noindent It is not immediate that the right-hand side of the above display is nonnegative; however, this can be derived using an argument similar to that in the proof of Lemma \ref{CS} below.

The Gowers norms were introduced by Gowers in his proof of Szemer\'edi's Theorem \cite{gowers01}.  Since then, the norms have proven to be powerful tools in combinatorics; for instance, these norms have been used to give a proof of the hypergraph regularity lemma \cite{gowers:MR2373376} and in the proof of the Green-Tao theorem on arithmetic progressions in the primes \cite{green:MR2415379}.  Related norms for dynamical systems, the Gowers-Host-Kra norms, we introduced by Host and Kra \cite{host05}, and have similarly been used to prove recurrence theorems in dynamical systems (for a few examples, see \cite{bergelson05,MR2465660}; \cite{2011arXiv1103.3808F} describes the general method and cites more than twenty examples).

Roughly speaking, the $U^2$ norm is large when a function has a large correlation with some group character.  The $U^k$ norms for larger $k$ are large when a function correlates with a canonical function of a more general type.  The exact characterization of these canonical functions was a substantial project \cite{green:MR2391635,green:MR2747135}.  Here we show that, in the setting of AML, these canonical functions can be taken to be the simple functions generated by algebras $\mathcal{B}_{k,k-1}$.

\

\noindent In the infinite setting, we give a slightly more general definition:
\begin{df} 
Let $\M$ be a measured $\AML$ structure and let $f:M^k\rightarrow\mathbb{R}$ be bounded and $\mathcal{B}_k$-measurable.  Define $||\cdot||_{U^k_\infty}$ by:
\[||f||_{U^k_\infty}^{2^k}=\int \prod_{\omega\in\{0,1\}^k}f(h^{\omega(1)}_1,\ldots,h^{\omega(k)}_k) d\mu^{2k}(\vec h^0,\vec h^1).\]

\noindent Suppose further that $+$ is a definable group operation on $M$ and $g:M\rightarrow\mathbb{R}$ is bounded.  Then define $f:M^k\to \r$ by $f(h_1,\ldots,h_k)=g(\sum_{i=1}^k h_i)$ and define $||g||_{U^k_\infty}:=||f||_{U^k_\infty}$. 
\end{df}
\noindent Note that in the infinite setting, these are seminorms (and not even a seminorm for $k=1$).

\begin{lemma} 
Let $(G_i\setmid i\in \n)$ be a sequence of finite abelian groups and, for each $i$, let $g_i:G_i\rightarrow[-1,1]$ be a function such that $\lim_{i\rightarrow\infty}||g_i||_{U^k}$ exists.  Let $\mathcal{L}$ be the signature obtained by adding to the signature for abelian groups unary predicates $P_q$ for $q\in \mathbb{Q}\cap [-1,1]$.  Let $\mathfrak G_i$ be the $\AML$ $\la$-structure associated to $G_i$, equipped with its normalized counting measure, by interpreting $P_q^{\mathfrak{G_i}}:=\{a\in G_i \ : \ g_i(a)<q\}$.  Let $\cU$ be a nonprincipal ultrafilter on $\n$, let $\mathfrak G:=\prod_{\cU}\mathfrak G_i$, and let $g:G\to [-1,1]$ be the ultraproduct of $(g_i)$.  Then $||g||_{U^k_\infty}=\lim_{i\rightarrow\infty}||g_i||_{U^k}$.
\end{lemma}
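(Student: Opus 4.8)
The plan is to express $||g||_{U^k_\infty}^{2^k}$ as a single integral over $G^{2k}$ of a function assembled from $g$ and the group operation, to recognize that integral as an instance of the ultraproduct--of--integrals theorem (Theorem \ref{intultra2}), and then to pass from the ultralimit back to the ordinary limit.

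First I would unwind the definition. With $f(h_1,\dots,h_k)=g(h_1+\cdots+h_k)$ as prescribed,
\[
||g||_{U^k_\infty}^{2^k}=\int \prod_{\omega\in\{0,1\}^k} g\Bigl(\textstyle\sum_{j=1}^k h_j^{\omega(j)}\Bigr)\, d\mu^{2k}(\vec h^{\,0},\vec h^{\,1}).
\]
Let $H\colon G^{2k}\to G^{2^k}$ be the ``cube--sum'' map sending $(\vec h^{\,0},\vec h^{\,1})$ to the tuple $\bigl(\sum_{j}h_j^{\omega(j)}\bigr)_{\omega\in\{0,1\}^k}$; since $+$ is a symbol of $\la$, the corresponding maps $H_i$ on the finite groups form a uniformly definable family with ultraproduct $H$. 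Let $F\colon G^{2^k}\to[-1,1]$ be $F(\vec y)=\prod_{\omega}g(y_\omega)$ and $F_i(\vec y)=\prod_{\omega}g_i(y_\omega)$ on $G_i^{2^k}$; by the observation following Theorem \ref{intultra2} that $g(\vec x)=\lim_\cU g_i(\vec x_i)$, together with continuity of the finite product, $F$ is the ultraproduct of the $F_i$. The integrand above is exactly $F\circ H$.

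The central step is to apply Theorem \ref{intultra2} with $F$ in the role of the bounded function and $H$ in the role of the definable family $(h_i)$ of that theorem (the hypothesis $\mu^1_i(G_i)\le 1$ holds since the measures are normalized), which yields
\[
||g||_{U^k_\infty}^{2^k}=\int (F\circ H)\,d\mu^{2k}=\lim_{\cU}\int (F_i\circ H_i)\,d\mu_i^{2k}.
\]
Here is the one subtlety, and the main obstacle: the outer function $F_i(\vec y)=\prod_\omega g_i(y_\omega)$ is \emph{not} uniformly layerwise definable, because the region $\{\vec t\in[-1,1]^{2^k}:\prod_\omega t_\omega<q\}$ has a curved boundary that no finite Boolean combination of the cut conditions $g_i(y_\omega)<q'$ can cut out. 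I would therefore invoke the measurability--only version recorded in the remark immediately after Theorem \ref{intultra2}: every $F_i$ is measurable because each $G_i$ is finite, and that is all the integral identity requires (only the resulting expansion weakens from a structure to a quasistructure, which is immaterial here).

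It remains to identify both sides. For a fixed $i$, applying the measure--preserving substitution $a_j=h_j^0,\ b_j=h_j^1-h_j^0$ on $G_i^{2k}$ turns $\sum_j h_j^{\omega(j)}$ into $(\sum_j a_j)+\omega\cdot\vec b$; as $\vec a$ ranges over $G_i^k$ the sum $\sum_j a_j$ is equidistributed over $G_i$ (each value hit $|G_i|^{k-1}$ times), so integrating out the redundant coordinates gives
\[
\int (F_i\circ H_i)\,d\mu_i^{2k}=\frac{1}{|G_i|^{k+1}}\sum_{x\in G_i}\sum_{\vec b\in G_i^k}\prod_{\omega}g_i(x+\omega\cdot\vec b)=||g_i||_{U^k}^{2^k}.
\]
Thus $||g||_{U^k_\infty}^{2^k}=\lim_\cU||g_i||_{U^k}^{2^k}$. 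Since $\lim_{i\to\infty}||g_i||_{U^k}$ exists by hypothesis, the ultralimit coincides with it; continuity of $t\mapsto t^{2^k}$ and the nonnegativity of both sides then permit extracting the $2^k$-th root to conclude $||g||_{U^k_\infty}=\lim_{i\to\infty}||g_i||_{U^k}$.
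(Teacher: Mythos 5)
Your proposal is correct and follows essentially the same route as the paper: the paper likewise rewrites the finite norm via the substitution $x=\sum_j h^0_j$, $h_j=h^1_j-h^0_j$, takes the cube-sum maps $(j_i)$ as the uniformly definable family and the product functions $(g_i')$ as the outer functions, and concludes by applying Theorem \ref{intultra2} and identifying the ultralimit with the ordinary limit. The only divergence is a point in your favor: the paper invokes Theorem \ref{intultra2} for $(g_i')$ without comment, whereas you correctly observe that the product family need not be uniformly layerwise definable and instead route the application through the measurability-only variant in the remark following that theorem---a harmless detour here, since every subset of the finite groups $G_i$ is measurable (so the expanded factors are in fact still structures, not merely quasistructures).
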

\begin{proof}
First observe that 
\[||g_i||_{U^k}^{2^k}=\frac{1}{|G_i|^{2k}}\sum_{\vec h^0,\vec h^1\in G^k}\prod_{\omega\in\{0,1\}^k}g_i(\sum_i h^{\omega(i)}_i),\]
which is easily seen by making the substitutions $x=\sum_i h^0_i$ and $h_i=h^1_i-h^0_i$.  Define $j_i:G_i^k\to G_i^{2^k}$ by $j_i(\vec h)=(\sum h_p^{\omega(p)} \setmid \omega\in \{0,1\}^k)$; observe that $(j_i)$ is uniformly definable in $\mathfrak G_i$.  Now define $g_i':G^{2^k}_i\to [-1,1]$ by $g_i'((x_\omega))=\prod_{\omega \in \{0,1\}^k}g_i(x_{\omega})$.  Then the lemma follows from Theorem \ref{intultra2} (applied to $(g_i')$ and $(j_i)$).
\end{proof}

For the rest of this subsection, we fix $f:M\to \r$ which is bounded and $\B_k$-measurable.  We further assume that each $\mu^k$ is a probability measure on $\B_k$.
\begin{lemma}\label{CS}
\[\left|\int f d\mu^k\right|\leq ||f||_{U^k_\infty}.\]
\end{lemma}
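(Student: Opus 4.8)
The plan is to recognize $||\cdot||_{U^k_\infty}$ as a Gowers box norm on the probability space $(M^k,\B_k,\mu^k)$ and to deduce the inequality from the Gowers--Cauchy--Schwarz inequality. For a family $(f_\omega)_{\omega\in\{0,1\}^k}$ of bounded $\B_k$-measurable functions, set
\[\langle (f_\omega)\rangle := \int \prod_{\omega\in\{0,1\}^k} f_\omega(h_1^{\omega(1)},\ldots,h_k^{\omega(k)})\, d\mu^{2k}(\vec h^0,\vec h^1),\]
so that $||f||_{U^k_\infty}^{2^k}=\langle (f_\omega)\rangle$ when every $f_\omega=f$. Each factor $(\vec h^0,\vec h^1)\mapsto f_\omega(h_1^{\omega(1)},\ldots,h_k^{\omega(k)})$ is the pullback of $f$ along a coordinate-selection map $M^{2k}\to M^k$; since such a map sends definable sets to definable sets, the pullback is $\B_{2k}$-measurable, and the displayed integral is well-defined because $f$ is bounded and $\mu^{2k}$ is finite.

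First I would establish the Gowers--Cauchy--Schwarz inequality $|\langle (f_\omega)\rangle|\leq \prod_\omega ||f_\omega||_{U^k_\infty}$. This is proved by applying the ordinary Cauchy--Schwarz inequality once in each of the $k$ coordinate pairs $(h_i^0,h_i^1)$. Concretely, to treat the last coordinate one factors the integrand according to the value of $\omega(k)$, uses the Fubini property of the measured structure to integrate out $h_k^0$ and $h_k^1$ and write $\langle (f_\omega)\rangle=\int_{M^{2(k-1)}} \tilde G\,\tilde H\, d\mu^{2(k-1)}$ for suitable bounded $\B_{2(k-1)}$-measurable functions $\tilde G,\tilde H$, and then applies Cauchy--Schwarz; each resulting $L^2$ factor is again a box-norm expression in which the $k$-th coordinate has been ``doubled.'' Iterating over all $k$ coordinates separates the functions and yields the product bound. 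The same splitting, applied with all $f_\omega=f$, shows $||f||_{U^k_\infty}^{2^k}=\int_{M^{2(k-1)}}\tilde G^2\,d\mu^{2(k-1)}\geq 0$, which is the nonnegativity asserted in the remark preceding the definition and guarantees that the right-hand side of the lemma is a well-defined nonnegative real.

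The lemma then follows by a careful choice of the $f_\omega$. Take $f_{\vec 0}=f$ and $f_\omega=\mathbf{1}$ (the constant function with value $1$) for all $\omega\neq\vec 0$. The integrand of $\langle (f_\omega)\rangle$ then depends only on $h_1^0,\ldots,h_k^0$, so by the Fubini property and the fact that $\mu^k(M^k)=\mu^1(M)^k=1$ (here the probability-measure hypothesis is used) one gets $\langle (f_\omega)\rangle=\int f\,d\mu^k$. On the other hand $||\mathbf{1}||_{U^k_\infty}^{2^k}=\mu^{2k}(M^{2k})=1$, so $||\mathbf{1}||_{U^k_\infty}=1$. Applying Gowers--Cauchy--Schwarz gives
\[\left|\int f\,d\mu^k\right|=|\langle (f_\omega)\rangle|\leq \prod_\omega ||f_\omega||_{U^k_\infty}=||f||_{U^k_\infty},\]
as desired.

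The main obstacle is purely the measure-theoretic bookkeeping inside the Gowers--Cauchy--Schwarz step: the combinatorics of the iterated Cauchy--Schwarz is entirely standard, but in the abstract setting of a measured structure one must justify at each stage that the intermediate functions $\tilde G,\tilde H$ are measurable with respect to the appropriate $\B_{2(k-1)}$ and that the integrals may be evaluated as iterated integrals. Both points rest on the Fubini property and on the compatibility of the algebras $\B_n$ with products built into the definition of a measured structure; boundedness of $f$ together with finiteness of the measures ensures every quantity in sight lies in $L^2$, so that Cauchy--Schwarz applies.
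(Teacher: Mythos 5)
Your proof is correct, and it reaches the inequality by a genuinely different decomposition than the paper, even though the computational engine (iterated Cauchy--Schwarz plus the Fubini property) is the same. The paper never introduces the Gowers inner product for families: it applies Cauchy--Schwarz directly to $\int f\,d\mu^k$ itself, writing it as an iterated integral, doubling the coordinate $h_k$ to get $\left(\int f(h_1,\ldots,h_{k-1},h_k^0)f(h_1,\ldots,h_{k-1},h_k^1)\,d\mu^{k+1}\right)^{2^{k-1}}$ as an upper bound for $\left|\int f\,d\mu^k\right|^{2^k}$, and repeating coordinate by coordinate until the full $2^k$-fold product appears, which is exactly $\|f\|_{U^k_\infty}^{2^k}$; the probability-measure hypothesis is invoked at every Cauchy--Schwarz step (as $\mu^{j}(M^{j})=1$ when comparing against the constant function), and at stage $j$ only $k+j$ variables are in play. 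Your route instead establishes the general Gowers--Cauchy--Schwarz inequality $|\langle (f_\omega)\rangle|\leq\prod_\omega\|f_\omega\|_{U^k_\infty}$ and then specializes to $f_{\vec 0}=f$, $f_\omega=\mathbf{1}$ otherwise. This costs more bookkeeping (all integrals live over $M^{2k}$ throughout, and you need the extra observations $\langle f,\mathbf{1},\ldots,\mathbf{1}\rangle=\int f\,d\mu^k$ and $\|\mathbf{1}\|_{U^k_\infty}=1$), but it buys a reusable lemma that is standard in this area, gives the nonnegativity of $\|\cdot\|_{U^k_\infty}^{2^k}$ as a byproduct (which the paper only asserts in the remark after the finite definition), and isolates the use of the probability-measure hypothesis entirely in the specialization step, since Gowers--Cauchy--Schwarz itself needs no normalization. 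One small imprecision: coordinate-selection maps do not ``send definable sets to definable sets'' in the relevant direction; what you need is that \emph{preimages} of definable sets under such maps are definable (substitute the selected variables into the defining formula), which is what makes each factor $\mathcal{B}_{2k}$-measurable.
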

\begin{proof}
Using Fubini's theorem, we have 
$$\left|\int f(\vec h)d\mu^k(\vec h)\right|^{2^k}=\left|\int \left(\int f(\vec h)d\mu(h_k)\right)d\mu^{k-1}(h_1,\ldots,h_{k-1})\right|^{2^k}.$$
By Cauchy-Schwarz and Fubini again, we have 
$$\left|\int f(\vec h)d\mu^k(\vec h)\right|^{2^k}\leq \left(\int f(h_1,\ldots,h_{k-1},h_k^0)f(h_1,\ldots,h_{k-1},h_k^1)d\mu^{k+1}\right)^{2^{k-1}}.$$ 
Repeating this process, we arrive at
\begin{align*}
\left|\int f(\vec h)d\mu^k(\vec h)\right|^{2^k}&\leq \left(\int\prod_{\omega\in\{0,1\}^k}f(h^{\omega(1)}_1,\ldots,h^{\omega(k)}_k) d\mu^{2k}(\vec h^0,\vec h^1)\right)\\
&=||f||_{U^k_\infty}^{2^k}.
\end{align*}
\end{proof}

\begin{lemma}
For each $I\subseteq[1,k]$ with $|I|=k-1$, let $B_I$ be a definable set in $\mathcal{B}_{k,I}$.  Then $0\leq ||f\prod_I \chi_{B_I}||_{U^k_\infty}\leq ||f||_{U^k_\infty}$.
\end{lemma}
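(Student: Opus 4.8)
The plan is to recognize $\|\cdot\|_{U^k_\infty}$ as a box (Gowers) seminorm on the $k$-fold product $(M^k,\mu^k)$ with the $2k$ doubling variables $\vec h^0,\vec h^1$, and then to exploit that each factor $\chi_{B_I}$ depends on fewer than $k$ coordinates. There are exactly $k$ subsets $I\subseteq[1,k]$ with $|I|=k-1$, namely $I_j:=[1,k]\setminus\{j\}$ for $1\le j\le k$; since $B_{I_j}\in\mathcal B_{k,I_j}$, the function $u_j:=\chi_{B_{I_j}}$ is $\mathcal B_{k,I_j}$-measurable, hence independent of the $j$-th coordinate, and satisfies $0\le u_j\le 1$. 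I would peel these factors off one at a time, reducing everything to a single monotonicity lemma.

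That lemma would read: if $u:M^k\to\r$ is $\mathcal B_k$-measurable with $|u|\le 1$ and independent of coordinate $j$, then $0\le\|fu\|_{U^k_\infty}\le\|f\|_{U^k_\infty}$. Since the defining integral is symmetric under simultaneously permuting the coordinates of $f$ and the variables $\vec h^0,\vec h^1$ (all coordinates carry the same measure $\mu$), I may assume $j=k$. Expanding $\|fu\|_{U^k_\infty}^{2^k}$ and grouping the cube $\{0,1\}^k$ into the $2^{k-1}$ pairs $\{(\omega',0),(\omega',1)\}$ that differ only in the last bit, the two arguments in each pair agree on coordinates $1,\dots,k-1$ and differ only in the last, so $u$ takes the same value on both. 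Thus the $u$-factors combine into $\prod_{\omega'}u(\cdot)^2\le 1$, a quantity not involving $h_k^0,h_k^1$. Using the Fubini property for the measured structure $\M$, I would then integrate out $h_k^0,h_k^1$ in the surviving $f$-factors; because the factors with last bit $0$ use $h_k^0$ and those with last bit $1$ use $h_k^1$, the inner integral factors as $G\cdot G=G^2\ge 0$, where $G:=\int\prod_{\omega'}f(\dots,h_k)\,d\mu(h_k)$ is a function of the remaining $2(k-1)$ variables.

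This yields $\|fu\|_{U^k_\infty}^{2^k}=\int\big(\prod_{\omega'}u^2\big)\,G^2\,d\mu\le\int G^2\,d\mu=\|f\|_{U^k_\infty}^{2^k}$, the last equality being the same computation with $u\equiv 1$. Since this $2^k$-th power is a nonnegative integral, its nonnegative $2^k$-th root is well-defined, giving simultaneously $0\le\|fu\|_{U^k_\infty}$ and $\|fu\|_{U^k_\infty}\le\|f\|_{U^k_\infty}$. I would finish by applying the lemma iteratively, taking $u=u_j$ and absorbing $f\prod_{i<j}u_i$ into the role of $f$ for $j=k,k-1,\dots,1$ — note the lemma needs only $|u_j|\le 1$, not boundedness of the partial products — to conclude $0\le\big\|f\prod_I\chi_{B_I}\big\|_{U^k_\infty}\le\|f\|_{U^k_\infty}$. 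The main obstacle is the middle step: correctly bookkeeping the pairing over the Boolean cube and justifying the Fubini interchange (licensed because $\M$ is a measured structure) so that integrating out the distinguished coordinate pair produces the clean perfect square $G^2$; everything else is routine.
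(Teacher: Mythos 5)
Your proof is correct, and it reorganizes the key step in a genuinely different (and somewhat more general) way than the paper. The paper makes the same two reductions you do (peel off one $I$ at a time, WLOG $I=[1,k-1]$), but then argues by decomposition: it writes $f=f\chi_{B_I}+f\chi_{\overline{B_I}}$, expands $\|f\|_{U^k_\infty}^{2^k}$ multilinearly into $2^{2^k}$ terms indexed by choices $S_\omega\in\{B_I,\overline{B_I}\}$, kills the cross terms with $\omega\upharpoonright I=\omega'\upharpoonright I$ but $S_\omega\neq S_{\omega'}$ using $\chi_{B_I}\chi_{\overline{B_I}}=0$, and shows each surviving term is nonnegative by exactly the Fubini-and-perfect-square computation you perform; the inequality then follows because $\|f\chi_{B_I}\|_{U^k_\infty}^{2^k}$ is a single nonnegative term in a sum of nonnegative terms totalling $\|f\|_{U^k_\infty}^{2^k}$. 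You avoid the $2^{2^k}$-term expansion entirely: since the indicator factors do not depend on the distinguished coordinate, you pull them out of the inner integration, write $\|fu\|_{U^k_\infty}^{2^k}=\int\bigl(\prod_{\omega'}u_{\omega'}^2\bigr)G^2\,d\mu^{2(k-1)}$, and finish with the pointwise bound $\prod_{\omega'}u_{\omega'}^2\le 1$. What your route buys is generality and economy: the monotonicity lemma holds for any $\mathcal{B}_k$-measurable $u$ with $|u|\le 1$ that omits one coordinate, not just indicators, and both inequalities of the statement drop out of one identity. What the paper's route buys is the standard Gowers/box-norm mechanism — nonnegativity of every term in the expansion of a decomposition — which is the form of the argument that recurs elsewhere (e.g., in showing the $2^k$-th power defining the norm is nonnegative in the first place). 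Both proofs lean on the same two ingredients in the same way: the permutation-of-coordinates WLOG (which the paper also invokes without comment) and the Fubini property of the measured structure, so your appeal to these is no stronger than the paper's own.
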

\begin{proof} 
It suffices to show that $0\leq ||f\chi_{B_I}||_{U^k_\infty}\leq ||f||_{U^k_\infty}$ for a single $I$.  Without loss of generality, we assume $I=[1,k-1]$.  Since $f=f\chi_{B_I}+f\chi_{\overline{B_I}}$, we have
\[||f||^{2^k}_{U^k_\infty}=||f\chi_{B_I}+f\chi_{\overline{B_I}}||^{2^k}_{U^k_\infty}\]
which in turn expands into a sum of $2^{2^k}$ terms of the form
\[\int \prod_{\omega\in\{0,1\}^k}(f\chi_{S_\omega})(h^{\omega(1)}_1,\ldots,h^{\omega(k)}_k) d\mu^{2k}(\vec h^0,\vec h^1),\]  
\noindent where each $S_\omega$ is either $\chi_{B_I}$ or $\chi_{\overline{B_I}}$.  Observe that $\|f\chi_{B_I}\|_{U^k_\infty}^{2^k}$ corresponds to the term when each $S_\omega=\chi_{B_I}$.  Thus, it suffices to show that all of the $2^{2^k}$ terms are non-negative.

Observe that if $\omega(i)=\omega'(i)$ for each $i\in I$ but $S_\omega\neq S_{\omega'}$ then for any $\chi_{S_\omega}\chi_{S_{\omega'}}=0$, whence the corresponding integral is $0$.  We thus restrict ourselves to the case where, whenever $\omega\upharpoonright I=\omega'\upharpoonright I$, $S_\omega=S_{\omega'}$.  In this case, we have

\begin{align*}
 \int \prod_{\omega\in\{0,1\}^k}(f\chi_{S_\omega})(h^{\omega(1)}_1,\ldots,h^{\omega(k)}_k) d\mu^{2k}(\vec h^0,\vec h^1)\\
 =\int \int \prod_{\omega\in\{0,1\}^k}(f\chi_{S_\omega})(h^{\omega(1)}_1,\ldots,h^{\omega(k)}_k) d\mu^2(h^0_k,h^1_k) d\mu^{2(k-1)}\\
 =\int \left[\int \prod_{\omega\in\{0,1\}^k}(f\chi_{S_\omega})(h^{\omega(1)}_1,\ldots,h_k^{\omega(k)})d\mu(h_k)\right]^2 d\mu^{2(k-1)}.\\
 \end{align*}
Since the inside of the integral is always non-negative, this term is non-negative.
\end{proof}

We set $D(f)(\vec h^0):=\int \prod_{\omega\in\{0,1\}^k,\omega\neq\vec 0}f(h^{\omega(1)}_1,\ldots,h^{\omega(k)}_k) d\mu^k(\vec h^1)$.  Observe that $\|f\|_{U^k_\infty}^{2^k}=\int f\cdot D(f)d\mu^k(\vec h^0)$.
\begin{lemma}
$D(f)$ is measurable with respect to $\mathcal{B}_{k,k-1}$.
\end{lemma}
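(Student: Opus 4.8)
The plan is to show that $D(f)$ agrees in $L^2(\mu^k)$ with a convergent series of products of functions, each factor measurable with respect to one of the algebras $\mathcal{B}_{k,I}$ with $|I|=k-1$; such a series is manifestly $\mathcal{B}_{k,k-1}$-measurable. All the difficulty is packaged into a spectral decomposition, and the role of the logic is to guarantee that the functions obtained by integrating out variables stay inside the relevant $\sigma$-algebras.

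I would first treat $k=2$, which already contains the idea. Writing $\vec h^0=(s,t)$ and integrating the defining expression for $D(f)$ in the variable $h_1^1$ first yields
\[D(f)(s,t)=\int_y f(s,y)\,K(t,y)\,d\mu(y),\qquad K(t,y):=\int_x f(x,t)f(x,y)\,d\mu(x).\]
Let $A$ be the integral operator with kernel $f$ acting on $L^2(M,\mathcal{B}_1)$, where $\mathcal{B}_1=\sigma(\dfbl_1(M))$. The first point to verify is that $A$ really sends $\mathcal{B}_1$-measurable functions to $\mathcal{B}_1$-measurable functions: for $g=\chi_C$ with $C$ definable and $f=\chi_D$ with $D\subseteq M^2$ definable, $(Ag)(s)=\mu_y(D^s\cap C)$, and the sublevel sets $\{s:\mu_y(D^s\cap C)<q\}$ are \emph{definable} precisely because of the measure constructor $m_y<q.(\cdots)$; the general case follows by approximating $f$ and $g$ by simple functions. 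Hence $A^*A$ is a positive, self-adjoint, compact operator on $L^2(\mathcal{B}_1)$ acting through the kernel $K$, and the spectral theorem supplies an orthonormal eigenbasis $(\phi_n)$ of $L^2(\mathcal{B}_1)$ with eigenvalues $\lambda_n\ge 0$. Since $D(f)(s,\cdot)=A^*A\,f(s,\cdot)$, expanding gives
\[D(f)(s,t)=\sum_n \lambda_n\,(A\phi_n)(s)\,\phi_n(t),\]
a series converging in $L^2(\mu^2)$ (the summands are orthogonal and $\sum_n\lambda_n^{3}<\infty$). Each term is a $\mathcal{B}_1$-function of $s$ times a $\mathcal{B}_1$-function of $t$, so $D(f)$ is $\mathcal{B}_{2,1}$-measurable.

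For general $k$ I would argue by induction on $k$. Integrating out $h_k^1$ factors the cube integrand as a term involving $h_k^0$ times a term $\Psi$ not involving $h_k$, exhibiting $D(f)$ as the image, under an $A^*A$-type positive operator built from $f$, of a function of the remaining coordinates; applying the spectral theorem to that operator on the appropriate $L^2(\mathcal{B}_{k,I})$ again expresses $D(f)$ as a convergent sum of products of lower-complexity factors, with the measure constructor keeping every fiber-integral that arises measurable. The main obstacle is exactly this general-$k$ bookkeeping: the integrand is multilinear rather than bilinear, so one spectral decomposition does not suffice, and one must track which $(k-1)$-element index sets the successive eigenfunctions live on while controlling convergence of the resulting multiple series.

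An alternative route that avoids the explicit decomposition is duality: it suffices to prove $\int D(f)\,H\,d\mu^k=0$ for every bounded $H$ with $\mathbb{E}(H\mid\mathcal{B}_{k,k-1})=0$. This integral is a Gowers inner product with $H$ placed at the vertex $\vec 0$, so the Cauchy--Schwarz argument of Lemma~\ref{CS} bounds it by $\|H\|_{U^k_\infty}\prod_{\omega\neq\vec 0}\|f\|_{U^k_\infty}$, and the crux becomes the implication $\mathbb{E}(H\mid\mathcal{B}_{k,k-1})=0\Rightarrow\|H\|_{U^k_\infty}=0$. For $k=2$ this is immediate from the identity $\|H\|_{U^2_\infty}^{4}=\|A^*A\|_{HS}^{2}$ (with $A$ the operator with kernel $H$) together with $\|A\|_{HS}=\|\mathbb{E}(H\mid\mathcal{B}_{2,1})\|_{L^2}$; the general $k$ presents the same multilinear obstacle as above.
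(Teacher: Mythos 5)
Both of your routes stop short of the lemma for general $k$, by your own admission: the spectral route is only carried out for $k=2$ (and you explicitly flag the multilinear bookkeeping for $k\geq 3$ as an unresolved obstacle), while the duality route reduces the lemma to the implication $\mathbb{E}(H\mid\mathcal{B}_{k,k-1})=0\Rightarrow\|H\|_{U^k_\infty}=0$, which you again only prove for $k=2$. So as written this is not a proof for any $k\geq 3$. Note also that this last implication is precisely the contrapositive of one direction of the theorem that the present lemma exists to prove, so reducing the lemma to it, without an independent argument, goes in a circle relative to how the lemma is used.

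The gap is real, but the missing step is a one-line observation, and it is exactly how the paper argues: no spectral theorem, no Gowers--Cauchy--Schwarz, and no induction on $k$ are needed. Your duality reduction is valid, so take any bounded $H$ with $\mathbb{E}(H\mid\mathcal{B}_{k,k-1})=0$. By Fubini,
\begin{align*}
\int H\, D(f)\, d\mu^k
&=\int\Bigl[\int H(\vec h^0)\prod_{\omega\in\{0,1\}^k,\,\omega\neq\vec 0} f(h_1^{\omega(1)},\ldots,h_k^{\omega(k)})\, d\mu^k(\vec h^0)\Bigr]d\mu^k(\vec h^1).
\end{align*}
Now freeze $\vec h^1$. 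For each $\omega\neq\vec 0$, the factor $f(h_1^{\omega(1)},\ldots,h_k^{\omega(k)})$ depends only on those coordinates $h_i^0$ with $\omega(i)=0$ --- a set of size at most $k-1$, since the arguments with $\omega(i)=1$ are frozen at the constants $h_i^1$ --- and, $f$ being $\mathcal{B}_k$-measurable, this section is measurable with respect to the algebra generated by sets definable (with the $h_i^1$ as parameters) in those at most $k-1$ variables. Hence the entire product over $\omega\neq\vec 0$ is $\mathcal{B}_{k,k-1}$-measurable as a function of $\vec h^0$, so the inner integral vanishes for every fixed $\vec h^1$ because $H$ is orthogonal to $L^2(\mathcal{B}_{k,k-1})$. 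This gives $\int H\,D(f)\,d\mu^k=0$ for all such $H$, completing your second route (with $D(f)$ equal a.e.\ to its conditional expectation, which is the correct reading of the lemma); and taking $H=f$ it also proves your ``crux'' implication directly. In short, the multilinear obstacle you describe does not exist: the product structure over $\omega\neq\vec 0$ is itself already the $\mathcal{B}_{k,k-1}$-measurable object, so there is nothing to decompose.
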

\begin{proof}
It suffices to show that if $||\mathbb{E}(f\ | \ \mathcal{B}_{k,k-1})||=0$ then $$\int f(\vec h^0) D(f)(\vec h^0) d\mu^k(\vec h^0)=0.$$   We have 
\begin{align*}
\int f(\vec h^0) D(f)(\vec h^0) d\mu^k(\vec h^0)
=&\int f(\vec h^0)\prod_{\omega\in\{0,1\}^k,\omega\neq\vec 0}f(h^{\omega(1)}_1,\ldots,h^{\omega(k)}_k)d\mu^{2k}(\vec h^0,\vec h^1)\\
=&\iint f(\vec h^0)\prod_{\omega\in\{0,1\}^k,\omega\neq\vec 0}f(h^{\omega(1)}_1,\ldots,h^{\omega(k)}_k)d\mu^k(\vec h^0)d\mu^k(\vec h^1).\\
\end{align*}
Observe that, for a given $\vec h^1$, we have
\[\int f(\vec h^0)\prod_{\omega\in\{0,1\}^k,\omega\neq\vec 0}f(h^{\omega(1)}_1,\ldots,h^{\omega(k)}_k)d\mu^k(\vec h^0)=0\]
since $\prod_{\omega\in\{0,1\}^k,\omega\neq\vec 0}f(h^{\omega(1)}_1,\ldots,h^{\omega(k)}_k)$ is $\mathcal{B}_{k,k-1}$-measurable.  The lemma now follows.
\end{proof}

\begin{thm} 
$||f||_{U^k_\infty}>0$ if and only if $||\mathbb{E}(f\ | \ \mathcal{B}_{k,k-1})||>0$.
\end{thm}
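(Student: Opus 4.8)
The plan is to prove the two implications in contrapositive form, so that the theorem becomes the equivalence $\mathbb{E}(f\mid\mathcal{B}_{k,k-1})=0$ in $L^2$ if and only if $\|f\|_{U^k_\infty}=0$. Throughout I write $g:=\mathbb{E}(f\mid\mathcal{B}_{k,k-1})$ and use three tools already in hand: the dual-function identity $\|f\|_{U^k_\infty}^{2^k}=\int f\cdot D(f)\,d\mu^k$, the fact that $D(f)$ is $\mathcal{B}_{k,k-1}$-measurable, Lemma \ref{CS}, and the monotonicity lemma $\|f\prod_I\chi_{B_I}\|_{U^k_\infty}\le\|f\|_{U^k_\infty}$ (the product running over all $k$ subsets $I\subseteq[1,k]$ with $|I|=k-1$). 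Note that $D(f)$ is bounded by $\|f\|_\infty^{2^k-1}$ since $\mu^k$ is a probability measure.

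For the direction $\|f\|_{U^k_\infty}>0\Rightarrow\|g\|>0$ I would simply read off the contrapositive from the dual-function identity. As $D(f)$ is bounded and $\mathcal{B}_{k,k-1}$-measurable, the defining property of conditional expectation gives
\[\|f\|_{U^k_\infty}^{2^k}=\int f\cdot D(f)\,d\mu^k=\int \mathbb{E}(f\mid\mathcal{B}_{k,k-1})\cdot D(f)\,d\mu^k=\int g\cdot D(f)\,d\mu^k.\]
Hence $g=0$ in $L^2$ forces $\|f\|_{U^k_\infty}=0$, which is exactly what is required. This direction is essentially already contained in the proof that $D(f)$ is $\mathcal{B}_{k,k-1}$-measurable.

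The substantive direction is $\|g\|>0\Rightarrow\|f\|_{U^k_\infty}>0$, whose contrapositive says that $\|f\|_{U^k_\infty}=0$ forces $g=0$. Here I would test $f$ against the generators of $\mathcal{B}_{k,k-1}$. Fixing, for each $(k-1)$-element $I\subseteq[1,k]$, a definable set $B_I\in\mathcal{B}^0_{k,I}$, Lemma \ref{CS} applied to $f\prod_I\chi_{B_I}$ and then the monotonicity lemma yield
\[\Big|\int f\prod_I\chi_{B_I}\,d\mu^k\Big|\le\Big\|f\prod_I\chi_{B_I}\Big\|_{U^k_\infty}\le\|f\|_{U^k_\infty}=0,\]
so $\int f\,\chi_{\bigcap_I B_I}\,d\mu^k=0$ for all such choices. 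The next step is a density argument. Since each $\mathcal{B}^0_{k,I}$ is a Boolean algebra (hence closed under complementation), and since one may set any factor equal to $M^k$ (i.e.\ $\chi_{M^k}=1$), every element of the generating Boolean algebra $\mathcal{B}^0_{k,k-1}$ is a finite disjoint union of sets of the form $\bigcap_{|I|=k-1}C_I$ with $C_I\in\mathcal{B}^0_{k,I}$. Consequently the functions $\chi_{\bigcap_I B_I}$ have linear span equal to the $\mathcal{B}^0_{k,k-1}$-simple functions, which are dense in $L^2(\mathcal{B}_{k,k-1})$. As $f$ is bounded, $\psi\mapsto\int f\psi\,d\mu^k$ is $L^2$-continuous, so $\int f\psi\,d\mu^k=0$ for every $\psi\in L^2(\mathcal{B}_{k,k-1})$; taking $\psi=g$ gives $\int g^2\,d\mu^k=0$, i.e.\ $\mathbb{E}(f\mid\mathcal{B}_{k,k-1})=0$, completing the contrapositive.

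I expect the main obstacle to be exactly the bookkeeping in that final density step: one must check that letting each $B_I$ range over the full Boolean algebra $\mathcal{B}^0_{k,I}$ — and suppressing factors by taking them to be $M^k$ — genuinely generates all finite intersections of generators, so that the resulting indicators span a dense subspace of $L^2(\mathcal{B}_{k,k-1})$ rather than some thinner family. Once this is verified, both implications follow formally from Lemma \ref{CS}, the monotonicity lemma, and the dual-function identity, with no further analytic input required.
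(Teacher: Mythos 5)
Your proof is correct and follows essentially the same route as the paper: one direction via the identity $\|f\|_{U^k_\infty}^{2^k}=\int f\cdot D(f)\,d\mu^k$ together with the $\mathcal{B}_{k,k-1}$-measurability of $D(f)$, the other via Lemma \ref{CS} and the monotonicity lemma applied to $f\prod_I\chi_{B_I}$. The only cosmetic differences are that you argue both implications contrapositively and spell out the density step (indicators of the sets $\bigcap_I B_I$ spanning the $\mathcal{B}^0_{k,k-1}$-simple functions, hence a dense subspace of $L^2(\mathcal{B}_{k,k-1})$) that the paper compresses into the phrase ``we may find $\mathcal{B}_I$-measurable $B_I$ so that $\int f\prod\chi_{B_I}\,d\mu\neq 0$.''
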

\begin{proof} 
If $||f||_{U^k_\infty}>0$, then $\int f D(f)d\mu^k(\vec h^0)>0$; since $D(f)$ is $\mathcal{B}_{k,k-1}$-measurable, $||\mathbb{E}(f\ | \ \mathcal{B}_{k,k-1})||>0$.

For the other direction, if $||\mathbb{E}(f\ | \ \mathcal{B}_{k-1})||>0$, we may find $\B_I$-measurable $B_I$ so that $\int f\prod\chi_{B_I}d\mu\neq 0$, whence
\[0<|\int f\prod_I \chi_{B_I}d\mu|\leq ||f\prod_I\chi_{B_I}||_{U^k_\infty}\leq ||f||_{U^k_\infty}.\]
\end{proof}

\iffull
\section{Soundness and Completeness}
\label{sec:completeness}

Since AML is a variant of first-order logic, it is no surprise that AML satisfies a completeness theorem with an appropriate set of axioms.  Throughout this section, we fix a signature $\la$.


The following collection of axioms will be called $\mathbf{AML}$.
\begin{enumerate} 
  \item FOL: A standard collection of axioms for first-order logic,
  \item $Q^{\geq 0}$: All true quantifier-free sentences of $(\mathbb{Q}^{\geq 0},+,\cdot,<,0,1,(c_q)_{q\in\mathbb{Q}^{\geq 0}})$,
  \item Emptyset: 
  \begin{enumerate} 
    \item $m_{\vec x}\leq 0. x_1\neq x_1$
    \item $m_{\vec x}\geq 0. x_1\neq x_1$
\end{enumerate}
  \item Comparability:
\begin{enumerate}
  \item $\forall\vec z\left[\forall \vec x(\varphi(\vec x)\rightarrow \psi(\vec x))\rightarrow (m_{\vec x}<t. \psi\rightarrow m_{\vec x}<t. \varphi)\right]$ 
  \item $\forall\vec z\left[\forall \vec x(\varphi(\vec x)\rightarrow \psi(\vec x))\rightarrow (m_{\vec x}\leq t. \psi\rightarrow m_{\vec x}\leq t. \varphi)\right]$ 
\end{enumerate}
\item Coherence:
\begin{enumerate} 
  \item $\forall\vec z\left[m_{\vec x}< t.\varphi\rightarrow m_{\vec x}\leq t.\varphi\right]$
  \item $\forall\vec z\left[t<t'\wedge m_{\vec x}\leq t.\varphi\rightarrow m_{\vec x}<t'.\varphi\right]$
\end{enumerate}
  \item Additivity:
\begin{enumerate} 
  \item $\forall\vec z\left[(m_{\vec x}\leq t.\varphi\wedge m_{\vec x}\leq t'.\psi)\rightarrow m_{\vec x}\leq t+t'. \varphi\vee\psi\right]$
  \item $\forall\vec z\left[(m_{\vec x}\leq t.\varphi\wedge m_{\vec x}< t'.\psi)\rightarrow m_{\vec x}< t+t'. \varphi\vee\psi\right]$
  \item $\forall\vec z\left[(m_{\vec x}\geq t. \varphi\wedge m_{\vec x}\geq t'.\psi\wedge m_{\vec x}\leq 0. (\varphi\wedge\psi))\rightarrow m_{\vec x}\geq t+t'.(\varphi\vee\psi)\right]$
  \item $\forall\vec z\left[(m_{\vec x}\geq t. \varphi\wedge m_{\vec x}> t'.\psi\wedge m_{\vec x}\leq 0.( \varphi\wedge\psi))\rightarrow m_{\vec x}> t+t'.(\varphi\vee\psi)\right]$
\end{enumerate}
\item Product: 
\begin{enumerate} 
  \item $\forall\vec z\left[(m_{\vec x}\leq t.\varphi\wedge m_{\vec y}\leq t'.\psi)\rightarrow m_{\vec x,\vec y}\leq tt'.(\varphi\wedge\psi)\right]$
  \item $\forall\vec z\left[(m_{\vec x}\leq t.\varphi\wedge m_{\vec y}< t'.\psi)\rightarrow m_{\vec x,\vec y}< tt'.(\varphi\wedge\psi)\right]$
  \item $\forall\vec z\left[(m_{\vec x}\geq t.\varphi\wedge m_{\vec y}\geq t'.\psi)\rightarrow m_{\vec x,\vec y}\geq tt'.(\varphi\wedge\psi)\right]$
  \item $\forall\vec z\left[(m_{\vec x}\geq t.\varphi\wedge m_{\vec y}> t'.\psi)\rightarrow m_{\vec x,\vec y}> tt'.(\varphi\wedge\psi)\right]$
\end{enumerate}
\end{enumerate}

We describe two additional families of axioms which we do not take to be part of the standard axiomization, but which axiomatize those structures with appropriate additional properties:
\begin{enumerate} 
  \item $\mathbf{I}$ is the following scheme of axioms: For any $n$ and any permutation $\sigma$ of $[1,n]$, writing $\sigma(x_1,\ldots,x_n)$ for $(x_{\sigma(1)},\ldots,x_{\sigma(n)})$, we have
\begin{enumerate} 
  \item $\forall\vec z\left[m_{\vec x}\leq t.\varphi\rightarrow m_{\sigma(\vec x)}\leq t. \varphi\right]$
  \item $\forall\vec z\left[m_{\vec x}< t.\varphi\rightarrow m_{\sigma(\vec x)}< t. \varphi\right]$
\end{enumerate}
\item $\mathbf{F}$ is the following scheme of axioms: 
\begin{enumerate}
   	\item $\forall\vec z\left[\left(t<qr\wedge \forall \vec x (\psi\rightarrow m_{\vec y}\geq r. \varphi)\wedge m_{\vec x}\geq q.\psi\right)\rightarrow m_{\vec x,\vec y} >t.(\varphi\wedge\psi)\right]$
  	\item $\forall\vec z\left[\left(qr<t\wedge \forall \vec x (\psi\rightarrow m_{\vec y}\leq r. \varphi)\wedge m_{\vec x}\leq q.\psi\right)\rightarrow m_{\vec x,\vec y}< t.(\varphi\wedge\psi)\right]$
\end{enumerate}
\item $\mathbf{F}^+$ is the scheme $\mathbf{F}$ plus the following scheme of additional axioms:
\begin{enumerate} 
   	\item $\forall\vec z\left[\left(\forall \vec x (\psi\rightarrow m_{\vec y}\geq r. \varphi)\wedge m_{\vec x}\geq q.\psi\right)\rightarrow m_{\vec x,\vec y} \geq qr.(\varphi\wedge\psi)\right]$
   	\item $\forall\vec z\left[\left(\forall \vec x (\psi\rightarrow m_{\vec y}> r. \varphi)\wedge m_{\vec x}\geq q.\psi\right)\rightarrow m_{\vec x,\vec y} > qr.(\varphi\wedge\psi)\right]$
   	\item $\forall\vec z\left[\left(\forall \vec x (\psi\rightarrow m_{\vec y}\geq r. \varphi)\wedge m_{\vec x}> q.\psi\right)\rightarrow m_{\vec x,\vec y} > qr.(\varphi\wedge\psi)\right]$
  	\item $\forall\vec z\left[\left(\forall \vec x (\psi\rightarrow m_{\vec y}\leq r. \varphi)\wedge m_{\vec x}\leq q.\psi\right)\rightarrow m_{\vec x,\vec y}\leq qr.(\varphi\wedge\psi)\right]$
  	\item $\forall\vec z\left[\left(\forall \vec x (\psi\rightarrow m_{\vec y}< r. \varphi)\wedge m_{\vec x}\leq q.\psi\right)\rightarrow m_{\vec x,\vec y}< qr.(\varphi\wedge\psi)\right]$
  	\item $\forall\vec z\left[\left(\forall \vec x (\psi\rightarrow m_{\vec y}\leq r. \varphi)\wedge m_{\vec x}< q.\psi\right)\rightarrow m_{\vec x,\vec y}< qr.(\varphi\wedge\psi)\right]$
\end{enumerate}
\end{enumerate}
The scheme $\mathbf{F}^+$ can be viewed as the ``completion'' of the scheme $\mathbf{F}$; not all Fubini structures satisfy these axioms, but they may be convenient in some circumstances, and the most important structures---finite models under the normalized counting measure and their ultraproducts---do satisfy these axioms. 

We first prove the following:

\begin{thm} [Soundness Theorem]

\

\begin{enumerate}
\item Every axiom of $\mathbf{AML}$ is true in every $\la$-structure.
\item Every axiom of $\mathbf{I}$ is true in every $\la$-structure satisfying invariance under permutation of coordinates.
\item Every axiom of $\mathbf{F}$ is true in every measured $\la$-structure.
\end{enumerate}
\end{thm}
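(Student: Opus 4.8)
The plan is to verify soundness one axiom scheme at a time, reading off in each case what the scheme asserts once the two-clause semantics of the measure constructor has been unwound. The three first-order-flavoured families are immediate. The FOL axioms hold because the satisfaction relation of $\AML$ restricts to the ordinary first-order one on the connective and quantifier clauses, so the classical soundness proof applies verbatim. The $Q^{\geq 0}$ axioms are true sentences of rational arithmetic, and they remain valid in any $\la$-structure since the measure values live in $\r^{\geq 0}\supseteq \q^{\geq 0}$ with the standard interpretation of $+,\cdot,<$. Both Emptyset axioms hold because $x_1\neq x_1$ defines $\emptyset$, whose measure is $0$.

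For the genuinely measure-theoretic schemes I would fix a valuation with $s(\vec z)=\vec c$, abbreviate the sets defined by the subformulas (say $A:=\varphi(M,\vec c)$ and $B:=\psi(M,\vec c)$), and translate both hypotheses and conclusion into the form given by the semantics of $m_{\vec x}\bowtie r$. The measure-theoretic backbone is supplied directly by the definition of a quasistructure: monotonicity $\mu(A)\leq\mu(B)$ for $A\subseteq B$ handles Comparability; the elementary order facts $\mu(A)<t\Rightarrow\mu(A)\leq t$ and, when $t<t'$, $\mu(A)\leq t<t'$ handle Coherence outright (no boundary subtlety arises there); finite subadditivity $\mu(A\cup B)\leq\mu(A)+\mu(B)$ handles Additivity; and the product property $\mu_{m+n}(A\times B)=\mu_m(A)\,\mu_n(B)$ handles Product, available because $\varphi$ involves only $\vec x$ and $\psi$ only $\vec y$ (modulo the parameters $\vec z$), so that $(\varphi\wedge\psi)(M,\vec c)=A\times B$. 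In each scheme the non-boundary case, where the relevant combined measure falls strictly below the threshold, gives the conclusion for free via the first clause of the semantics.

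The main obstacle is the boundary analysis: the cases in which the combined measure equals the rational threshold exactly, where the conclusion demands a specific value of $v$ on the Boolean (resp.\ product) combination of $A$ and $B$, and one must check this is forced by the $v$-values of $A$ and $B$ together with the reading of $\oplus,\ominus,\odot$ as approximation from above, from below, and exactly. For example, in Additivity(a) the equality $\mu(A\cup B)=t+t'$ can occur only when $\mu(A)=t$, $\mu(B)=t'$ and $\mu(A\cap B)=0$, which pins down $v(A),v(B)\neq\oplus$ and hence the required $v(A\cup B)\neq\oplus$; likewise Comparability at $\mu(A)=\mu(B)=t$ requires passing the value $\ominus$ from $B$ down to the subset $A$. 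This sign-bookkeeping for Comparability, the four Additivity clauses, and the four Product clauses is the delicate heart of the argument, and is precisely where the coherence of the $v$-function across subsets, disjoint unions, and products (as guaranteed by the definition of an $\la$-structure) must be invoked.

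Part (2) is then routine: unwinding the semantics, the truth of $\M\models m_{\vec x}\bowtie t.\varphi[\vec c]$ depends only on $\mu_n$ and $v_n$ of $\varphi(M,\vec c)$, and invariance under permutation of coordinates means exactly that $\mu_n$ and $v_n$ are unchanged when a set is replaced by its image under $\sigma$; hence $m_{\vec x}\bowtie t.\varphi$ and $m_{\sigma(\vec x)}\bowtie t.\varphi$ receive the same truth value, yielding both axioms of $\mathbf{I}$. Part (3) requires no new computation: the scheme $\mathbf{F}$ is the scheme $T$ of Proposition \ref{ensuring_fubini}, with the side conditions $t<qr$ and $qr<t$ moved into the antecedent as conjuncts, so each implication is vacuously true when its arithmetic conjunct fails and reduces to the Proposition's axiom when it holds. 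Since every measured structure satisfies the Fubini property by definition, Remark \ref{fubinisound} already records that $\M\models T$ for every measured $\la$-structure, which is exactly the assertion of (3).
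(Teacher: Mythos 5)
Your handling of FOL, $Q^{\geq 0}$, Coherence, part (2), and part (3) is fine, and part (3) is exactly the paper's own argument (reduce to Remark~\ref{fubinisound}, i.e.\ to the computation in Proposition~\ref{ensuring_fubini}). The genuine gap is at the step you yourself call the delicate heart. You discharge the boundary cases of Comparability, Additivity, and Product by invoking ``the coherence of the $v$-function across subsets, disjoint unions, and products (as guaranteed by the definition of an $\la$-structure).'' No such guarantee exists: in the paper's definition, a quasistructure consists of the algebras $\B^\M_n$, the finitely additive measures $\mu^\M_n$ (with product compatibility), and an assignment $v^\M_n(X)\in\{\oplus,\ominus,\odot\}$ whose \emph{only} constraint is $v^\M_n(X)=\odot$ when $\mu^\M_n(X)\notin\q$; passing from quasistructure to structure adds only measurability of definable sets, nothing about $v$. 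So your inference in Additivity (a) --- ``$v(A),v(B)\neq\oplus$ and hence the required $v(A\cup B)\neq\oplus$'' --- has no justification: the definition permits disjoint definable $A,B$ with $\mu(A)=t$, $\mu(B)=t'$, $v(A)=v(B)=\odot$, yet $v(A\cup B)=\oplus$, and in such a structure the hypotheses of Additivity (a) hold while its conclusion fails under the semantics of Section~\ref{sec:syntax_semantics}. The same problem hits Comparability (a) (nothing passes $\ominus$ from $B$ down to a subset $A$ of equal measure) and the Product scheme; indeed even Emptyset already needs an unstated constraint, since the definition allows $v(\emptyset)=\oplus$, which falsifies $m_{\vec x}\leq 0.\,x_1\neq x_1$.

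It is worth seeing how the paper's proof avoids this. It never argues from the pair $(\mu,v)$ at all: it reads $\M\models m_{\vec x}<t.\varphi[s]$ as ``$\varphi(M,s)$ is contained in some $B\in\mathcal{B}^{\M,n}_{<t}$,'' where the witnessing families $\mathcal{B}^{\M,n}_{\bowtie t}$ are part of the structure and come equipped with closure conditions (this is what the phrases ``by the coherence condition on $\M$'' and ``follow directly from the properties of AML structures'' refer to). Relative to that formalism, Comparability is transitivity of inclusion and the Additivity/Product axioms are the assumed closure properties themselves, so no boundary analysis is needed. Your argument, carried out against the $(\mu,v)$-semantics that the paper actually defines, cannot be completed as written: one would first have to add to the definition of a structure the $v$-coherence conditions (for subsets of equal measure, null-overlap unions, and products) that your bookkeeping silently assumes.
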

\begin{proof} 

\

\begin{itemize} 
  \item FOL: The FOL group is true as in standard first-order logic.
  \item $Q^{\geq 0}$: The $Q^{\geq 0}$ group is true because it is, by definition, the set of statements true of $\mathbb{Q}^{\geq 0}$.
  \item Emptyset: The emptyset axioms are true because $\{\vec c\in M^n\setmid\mathfrak{M}\vDash c_1\neq c_1\}=\emptyset$ and $\emptyset\in\mathcal{B}^{\mathfrak{M},n}_{\leq 0}\setminus\mathcal{B}^{\mathfrak{M},n}_{>0}$.
  \item Comparability: Suppose $\mathfrak{M}\vDash\forall\vec x(\varphi(\vec x)\rightarrow\psi(\vec x))$ and $\mathfrak{M}\vDash m_{\vec x}<t. \psi[s]$.  Then $\{\vec c\in M^n\setmid\mathfrak{M}\vDash \varphi[s(\vec c/\vec x)]\}\subseteq\{\vec c\in M^n\setmid\mathfrak{M}\vDash \psi[s(\vec c/\vec x)]\}\subseteq B$ for some $B\in\mathcal{B}^{\mathfrak{M},n}_{<t}$, and therefore $\mathfrak{M}\vDash m_{\vec x}<t. \varphi[s]$.  The other comparability axiom is similar.
  \item Coherence, Additivity, and Product: If $\mathfrak{M}\vDash m_{\vec x}<t. \varphi[s]$ then there is a $B\in\mathcal{B}^{\mathfrak{M},n}_{<t}$ such that $\{\vec c\in M^n\setmid \mathfrak{M}\vDash\varphi[s(\vec c/\vec x)]\}\subseteq B$.  By the coherence condition on $\mathfrak{M}$, we also have $B\in\mathcal{B}^{\mathfrak{M},n}_{\leq t}$, and therefore $\mathfrak{M}\vDash m_{\vec x}\leq t. \varphi[s]$.  The other coherence axiom, the additivity axioms, and the product axioms similarly follow directly from the properties of AML structures.
  \item I: Suppose $\mathfrak{M}\vDash m_{\vec x}<t. \varphi[s]$, so there is a $B\in\mathcal{B}^{\mathfrak{M},n}_{<t}$ such that $\{\vec c\in M^n\setmid \mathfrak{M}\vDash\varphi[s(\vec c/\vec x)]\}\subseteq B$.  Since $\mathfrak{M}$ satisfies invariance of coordinates, $\sigma(B)\in\mathcal{B}^{\mathfrak{M},n}_{<t}$, and therefore $\{\vec c\in M^n\setmid\mathfrak{M}\vDash\varphi[s(\vec c/\sigma(\vec x))]\}\subseteq\sigma(B)$, and therefore $\mathfrak{M}\vDash m_{\sigma(\vec x)}<t. \varphi[s]$.  The other axiom is similar.  
  \item F:  This is the content of Remark \ref{fubinisound}.

\end{itemize}
\end{proof}

We now turn towards the topic of completeness.

\begin{df} 
Let $\mathcal{F}$ be a set of $\mathcal{L}$-sentences.
\begin{enumerate}
\item $\mathcal{F}$ is \emph{consistent} if there is no $\la$-sentence $\varphi$ such that both $\varphi$ and $\neg \varphi$ are derivable from $\mathcal{F}$.
\item $\mathcal{F}$ is \emph{complete} if for every sentence $\varphi$ in the language of $\mathcal{L}$, either $\varphi\in\mathcal{F}$ or $\neg\varphi\in\mathcal{F}$.
\end{enumerate}
\end{df}

The following lemma is proven in the same way as for first-order logic; see \cite{changkeisler}.
\begin{lemma} 
If $\mathcal{F}$ is a consistent set of sentences in the language $\mathcal{L}$, then there is a complete consistent set of sentences $\mathcal{F}'$ such that $\mathcal{F}\subseteq\mathcal{F}'$.
\end{lemma}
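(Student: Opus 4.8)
The plan is to run the standard Lindenbaum construction, invoking Zorn's Lemma on the collection of consistent extensions of $\mathcal{F}$. Let $\mathcal{P}$ be the set of all consistent sets of $\mathcal{L}$-sentences containing $\mathcal{F}$, partially ordered by inclusion. This is nonempty since $\mathcal{F}\in\mathcal{P}$. To apply Zorn's Lemma, I would first verify that every chain $(\mathcal{C}_i)_{i\in J}$ in $\mathcal{P}$ has an upper bound, namely $\mathcal{C}:=\bigcup_{i\in J}\mathcal{C}_i$. The crucial point here is that $\mathcal{C}$ is again consistent: if it were not, then there would be an $\mathcal{L}$-sentence $\varphi$ with both $\varphi$ and $\neg\varphi$ derivable from $\mathcal{C}$, and since each derivation uses only finitely many premises, all the needed premises would already lie in some single $\mathcal{C}_i$ (using that the $\mathcal{C}_i$ form a chain), contradicting the consistency of that $\mathcal{C}_i$. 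Thus $\mathcal{C}\in\mathcal{P}$ is an upper bound, and Zorn's Lemma yields a maximal element $\mathcal{F}'\supseteq\mathcal{F}$ of $\mathcal{P}$.

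It remains to check that $\mathcal{F}'$ is complete. The key auxiliary fact is the following dichotomy: for any $\mathcal{L}$-sentence $\varphi$, at least one of $\mathcal{F}'\cup\{\varphi\}$ and $\mathcal{F}'\cup\{\neg\varphi\}$ is consistent. Indeed, if both were inconsistent, then from the inconsistency of $\mathcal{F}'\cup\{\varphi\}$ one derives $\neg\varphi$ from $\mathcal{F}'$, and from the inconsistency of $\mathcal{F}'\cup\{\neg\varphi\}$ one derives $\varphi$ from $\mathcal{F}'$; together these make $\mathcal{F}'$ itself inconsistent, a contradiction. Given this dichotomy, fix any sentence $\varphi$ and suppose $\varphi\notin\mathcal{F}'$. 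By maximality of $\mathcal{F}'$, the set $\mathcal{F}'\cup\{\varphi\}$ is not in $\mathcal{P}$, i.e.\ it is inconsistent; the dichotomy then forces $\mathcal{F}'\cup\{\neg\varphi\}$ to be consistent, and maximality again forces $\neg\varphi\in\mathcal{F}'$. Hence for every sentence $\varphi$, either $\varphi\in\mathcal{F}'$ or $\neg\varphi\in\mathcal{F}'$, which is exactly completeness.

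The main thing to nail down is the \emph{finitary character} of the proof system, since both the chain argument and the dichotomy argument hinge on the fact that any derivation of a contradiction invokes only finitely many premises. Because the axiom system $\mathbf{AML}$ is built on top of a standard collection of first-order axioms (the group FOL) together with the usual inference rules, and the additional schemes merely add more axioms rather than new infinitary rules, derivability in $\AML$ is finitary in exactly the same sense as in ordinary first-order logic. Consequently no genuinely new obstacle arises, and the argument transfers verbatim from the classical setting (as in \cite{changkeisler}). One minor bookkeeping point, which I would mention but not belabor, is that when $\mathcal{L}$ is uncountable the same construction can alternatively be phrased as a transfinite recursion along a well-ordering of all $\mathcal{L}$-sentences, deciding one sentence at each stage and preserving consistency by the dichotomy; the Zorn's Lemma phrasing above sidesteps the need to fix such an enumeration explicitly.
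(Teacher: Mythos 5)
Your proof is correct and takes essentially the same approach as the paper, which gives no details at all: it simply remarks that the lemma ``is proven in the same way as for first-order logic'' and cites Chang--Keisler, i.e.\ exactly the Zorn's Lemma/Lindenbaum argument you spell out. Your explicit check that derivability in $\mathbf{AML}$ remains finitary (the measure constructor only adds axiom schemes, not infinitary rules) is precisely the point that justifies the paper's ``same way as for first-order logic'' claim.
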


\begin{thm} [Completeness Theorem]
Let $\mathcal{F}$ be a set of sentences consistent with $\mathbf{AML}$.  Then there is a model $\M$ of $\mathcal{F}$.  If $\mathcal{F}$ is also consistent with $\mathbf{AML+I}$, then we may assume that $\M$ satisfies invariance under permutation of coordinates.  If $\mathcal{F}$ is consistent with $\mathbf{AML+F}$, then we may further assume that $\M$ is measured.  If $\mathcal{F}$ is consistent with $\mathbf{AML+I+F}$, then we may assume $\M$ is measured and satisfies invariance under permutation of coordinates.
\end{thm}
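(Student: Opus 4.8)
The plan is to prove completeness by a Henkin-style term-model construction adapted to the measure constructor. First I would extend $\mathcal{F}$ to a complete consistent set of $\la'$-sentences $\mathcal{F}'$, where $\la'$ adjoins countably many fresh Henkin constants, containing $\mathbf{AML}$ together with witnessing axioms $\exists x\,\varphi \to \varphi(c_\varphi)$; this uses only the Lindenbaum lemma stated above and the standard fact that adding a witness for a fresh constant preserves consistency (the measure axioms are unaffected, since the new constant is fresh). From $\mathcal{F}'$ I build the usual first-order term model $\M$ whose universe is the set of closed $\la'$-terms modulo provable equality. Every element of $M$ is then named by a constant, so every definable subset of $M^n$ is definable without parameters.

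Next I equip $\M$ with its measure data. I take $\B^\M_n := \dfbl_n(M)$, and for a definable set $A=\varphi(M)$ I set $\mu^\M_n(A):=\inf\{q\in\q^{\ge 0} : m_{\vec x}<q.\varphi \in \mathcal{F}'\}$. Coherence guarantees this set of rationals is upward closed, so the infimum behaves correctly, and Comparability shows $\mu^\M_n(A)$ depends only on $A$ and not on the chosen defining formula. When $r:=\mu^\M_n(A)$ is rational I define $v^\M_n(A)$ by reading off $\mathcal{F}'$: it is $\ominus$ if $m_{\vec x}<r.\varphi\in\mathcal{F}'$, it is $\odot$ if $m_{\vec x}<r.\varphi\notin\mathcal{F}'$ but $m_{\vec x}\le r.\varphi\in\mathcal{F}'$, and it is $\oplus$ otherwise; Coherence (a) rules out the remaining combination, so $v^\M_n$ is well defined, and the irrational case forces $\odot$ vacuously. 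The Emptyset, Additivity and Product axioms then yield, respectively, that $\mu^\M_n$ vanishes on $\emptyset$, is finitely additive, and satisfies $\mu^\M_{m+n}(A\times B)=\mu^\M_m(A)\cdot\mu^\M_n(B)$, so $\M$ is a quasistructure. Because every set named by a formula lies in $\dfbl_n(M)=\B^\M_n$, the rank-$n$ measurability condition holds at every rank, so $\M$ is in fact a structure. I would then prove the Truth Lemma $\M\models\varphi \Leftrightarrow \varphi\in\mathcal{F}'$ by induction on complexity; the first-order cases are the usual Henkin argument, and the measure-constructor case holds essentially by construction, the only work being to check, separately for $q<r$, $q=r$, and $q>r$, that the semantic clauses for $m_{\vec x}<q$ and $m_{\vec x}\le q$ match membership in $\mathcal{F}'$. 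Since $\mathcal{F}\subseteq\mathcal{F}'$, $\M\models\mathcal{F}$.

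For the refinements I would arrange $\mathcal{F}'$ to contain whichever of $\mathbf{I}$, $\mathbf{F}$ is assumed consistent. If $\mathbf{I}\subseteq\mathcal{F}'$, the axioms $\mathbf{I}$ together with the Truth Lemma give $\mu^\M_n(\sigma A)=\mu^\M_n(A)$ and $v^\M_n(\sigma A)=v^\M_n(A)$ for every coordinate permutation $\sigma$ (note $\sigma A$ is again definable), i.e.\ $\M$ is invariant under permutation of coordinates. To make $\M$ measured when $\mathbf{F}\subseteq\mathcal{F}'$, the term model alone is not enough, since its measure is only finitely additive on $\dfbl_n(M)$. I would therefore replace $\M$ by an $\aleph_1$-saturated elementary extension, obtained from the already-established Compactness Theorem for $\AML$ (equivalently, an ultrapower); this preserves $\mathcal{F}'$, hence $\mathbf{F}$ and, if present, $\mathbf{I}$. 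In the resulting $\aleph_1$-compact structure the restriction of $\mu^\M_n$ to $\dfbl_n(M)$ is a pre-measure, so the canonical (Loeb) measures of Section \ref{sec:canonical} exist, and since $\mathbf{F}$ holds, Proposition \ref{ensuring_fubini} shows the structure is a Fubini structure, hence measured; the $\mathbf{I}+\mathbf{F}$ case is the two refinements combined.

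The main obstacle is the measure-constructor step of the Truth Lemma: one must verify that the single finitely additive measure $\mu^\M_n$ and the value function $v^\M_n$ extracted from the purely syntactic data of $\mathcal{F}'$ reproduce the $\AML$ semantics exactly, keeping careful track of the distinction between $<$ and $\le$ and of the three values $\oplus,\ominus,\odot$ at rational measures; this is where Coherence, Additivity, Product, and Comparability are each used, and it is the delicate bookkeeping rather than any single hard idea that makes it work. A secondary difficulty is the passage to a measured structure for the $\mathbf{F}$ clause, where the finitely additive measure on definable sets must be upgraded to a genuine countably additive measure with the Fubini property; this is precisely why the proof must route through a saturated model and the canonical-measure results, and one should note that the cleanest statement is for finite (or $\sigma$-finite) measure, with the general case handled by the same Carath\'eodory extension argument applied layerwise.
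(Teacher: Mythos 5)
Your proposal is correct in outline, but it takes a genuinely different route from the paper's. The paper does \emph{not} run the standard one-witness-per-existential Henkin construction: it runs a transfinite, $\omega_1$-stage construction in which, at each successor stage, fresh constants are added witnessing every countable set $\mathcal{S}$ of formulas all of whose finite conjunctions have their existential closures in the current (complete) theory. This makes the resulting term model $\aleph_1$-compact by fiat, so the finitely additive measure $\rho$ read off from $\mathcal{F}_{\omega_1}$ is automatically a pre-measure (a definable countable disjoint union $A=\bigcup_i A_i$ must be a finite union, since otherwise the type $\{A(x)\wedge\bigwedge_{i\leq n}\neg A_i(x) : n\in\n\}$ would have a Henkin witness), and then Carath\'eodory plus the argument of Proposition \ref{ensuring_fubini} yield the measured structure directly on the term model itself. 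You instead build the ordinary Henkin term model --- which does suffice for the bare completeness claim, since an $\AML$ structure only requires finitely additive measures on an algebra containing the definable sets, and your verification of the measure-constructor case of the Truth Lemma via Coherence/Comparability/Additivity/Product is the same bookkeeping the paper does for $\rho$ --- and you recover $\aleph_1$-compactness \emph{semantically}, by passing to a countably incomplete ultrapower, and then quote the canonical-measure machinery of Section \ref{sec:canonical} and Proposition \ref{ensuring_fubini}. Both routes work. Yours is more modular and the basic case is shorter (it reuses the \L o\'s theorem, the saturation of ultrapowers, and Proposition \ref{ensuring_fubini} as black boxes, none of which depend on completeness, so there is no circularity); the paper's is purely syntactic, produces a single model serving all four clauses at once, and works verbatim for signatures of arbitrary cardinality.

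Two caveats, neither fatal. First, your ultrapower step leans on the proposition that countably incomplete ultraproducts are $\aleph_1$-saturated, which the paper states only for countable $\la$; for an uncountable signature you need $|\la|$-many Henkin constants, and to keep the ultrapower $\aleph_1$-compact you need the standard reduct argument (a countable family of definable sets involves only countably many symbols and parameters). Second, Proposition \ref{ensuring_fubini} and the lemma producing the product property of the canonical measures are stated under the hypothesis that $\mu^\M_1(M)$ is finite, which nothing in $\mathcal{F}$ guarantees; you flag this, but note that the paper's own proof (``the same proof as in Proposition \ref{ensuring_fubini}'') elides exactly the same point, so this is a shared gloss rather than a defect introduced by your argument.
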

\begin{proof}
For simplicity, we will assume that $\mathcal{L}$ does not include equality; the modifications for when it does are standard; see \cite{changkeisler}.

 We define by recursion an increasing sequence $(\la_\alpha \setmid \alpha\leq \omega_1)$ of signatures and an increasing sequence $(\mathcal{F}_\alpha\setmid \alpha\leq \omega_1)$ of consistent, complete sets of $\la_\alpha$-sentences.  Let $\mathcal{L}_0$ be the signature $\mathcal{L}$.  Let $\mathcal{F}_0$ be a complete, consistent extension of $\mathcal{F}\cup\mathbf{AML}$.   Suppose that $\la_\alpha$ and $\mathcal{F}_\alpha$ have been constructed.  Suppose $n>0$ and $\mathcal{S}$ is a countable set of sentences all of whose free variables are from $\{x_1,\ldots,x_n\}$.  Further, suppose that for every $\varphi_1,\ldots,\varphi_m\in\mathcal{S}$, $\exists x_1\exists x_2\cdots\exists x_n (\varphi_1\wedge\cdots\wedge\varphi_m)$ is a sentence in $\mathcal{F}_\alpha$.  Then fix fresh constants $c^i_{\mathcal{S}}$ for $i\leq n$ and let $\mathcal{L}_{\alpha+1}$ be the signature consisting of $\mathcal{L}_\alpha$ together with all these new constant symbols for every such $\mathcal{S}$.  Define $\mathcal{F}_{\alpha+1}^0$ to be $\mathcal{F}_\alpha$ together with the new sentences $\varphi[c^i_{\mathcal{S}}/x_i]$ for each $\varphi\in\mathcal{S}$.  It is immediate that $\mathcal{F}_{\alpha+1}^0$ is consistent.  Let $\mathcal{F}_{\alpha+1}$ be a complete, consistent extension of $\mathcal{F}_{\alpha+1}^0$.  Note that, by instantiating universal quantifiers, $\mathcal{F}_{\alpha+1}$ contains the axioms for all formulas in the extended language.
 
 For a limit ordinal $\lambda\leq \omega_1$, set $\mathcal{L}_\lambda=\bigcup_{\beta<\lambda}\mathcal{L}_\beta$ and $\mathcal{F}_\lambda=\bigcup_{\beta<\lambda}\mathcal{F}_\beta$.  Note that any $\varphi$ in $\mathcal{L}_\lambda$ is already in some $\mathcal{L}_\beta$, so $\mathcal{F}_\lambda$ is complete, and any contradiction in $\mathcal{F}_\lambda$ was already present at some $\mathcal{F}_\beta$, so $\mathcal{F}_\lambda$ is consistent.
 
 We now construct a model $\mathfrak{M}$ of $\mathcal{F}_{\omega_1}$.  Let $M$ be the set of terms of $\mathcal{L}_{\omega_1}$.  For each function symbol $f$, set $f^{\mathfrak{M}}(t_1,\ldots,t_n):=f(t_1,\ldots,t_n)$, and for each predicate symbol $R$, set  $R^{\mathfrak{M}}:=\{(t_1,\ldots,t_n)\setmid R(t_1,\ldots,t_n)\in\mathcal{F}_{\omega_1}\}$.  
 
 For each $n$, let $\mathcal{B}^n_0\subseteq\mathcal{P}(M^n)$ consist of sets of the form $\{\vec t\setmid \varphi(\vec t)\in\mathcal{F}_{\omega_1}\}$ for $\mathcal{L}_{\omega_1}$-formulas $\varphi$.  Let $\mathcal{B}^n$ be the $\sigma$-algebra generated by $\mathcal{B}^n_0$.  For any $B\in\mathcal{B}^n_0$, define $\rho(B)=\inf\{t\in \q^{>0}\setmid m_{\vec x}< t. \varphi\in \mathcal{F}_{\omega_1}\}$.  Note that, using the comparability axioms, $\rho(B)$ is well-defined regardless of what representative $\varphi$ we choose.
 
 Suppose $B,C\in\mathcal{B}^n_0$ with $B\cap C=\emptyset$, $\rho(B)=b$, $\rho(C)=c$ and let $\varphi,\psi$ be formulas defining $B$ and $C$ respectively.  Then for any rationals $b'<b, c'<c$, $\{m_{\vec x}\geq b'. \varphi, m_{\vec x}\geq c'.\varphi, m_{\vec x}\leq 0. \varphi\wedge\psi\}\subseteq\mathcal{F}_{\omega_1}$, so by additivity, $m_{\vec x}\geq b'+c'. \varphi\vee\psi\in\mathcal{F}_{\omega_1}$.  For any $r<b+c$, we may find rationals $b'<b$ and $c'<c$ so that $r\leq b'+c'$, so it follows that $\rho(B\cup C)\geq b+c$.  Conversely, for any rationals $b'>b, c'>c$, $\{m_{\vec x}< b'. \varphi, m_{\vec x}< c'.\varphi\}\subseteq\mathcal{F}_{\omega_1}$, and so $m_{\vec x}<b'+c'. \varphi\vee\psi\in\mathcal{F}_{\omega_1}$.  Again, for any $r>b+c$ we may find such $b',c'$ with $b'+c'\leq r$, so $\rho(B\cup C)\leq b+c$, and therefore $\rho(B\cup C)=b+c$.
 
Now suppose $A,A_1,\ldots,A_n,\ldots\in\mathcal{B}^n_0$ and $A=\bigcup_{i}A_i$ where the $A_i$ are disjoint.  If there is some $n$ such that $A=\bigcup_{i\leq n}A_i$ then all but finitely many $A_i$ are empty and $\mu(A)=\sum_{i\leq n}\mu(A_i)=\sum_i\mu(A_i)$.  Otherwise, setting $\mathcal{S}=\{\exists x A(x)\wedge\bigwedge_{i\leq n}\neg A_i(x)\}$ consists of sentences in $\mathcal{F}_{\omega_1}$, and therefore $c_{\mathcal{S}}\in A\setminus\bigcup_i A_i$, contradicting the assumption that $A=\bigcup_iA_i$.

By the Carath\'eodory extension theorem, each $\rho$ extends to a measure $\mu^n$ on the $\mathcal{B}^n$ which agrees with $\rho$ on $\mathcal{B}^n_0$.

To show that $\mu^{m+n}$ extends $\mu^m\otimes\mu^n$, it suffices to show that $\mu^{m+n}(B\times C)=\mu^m(B)\mu^n(C)$ where $B,C$ belong to $\mathcal{B}^m_0$ and $\mathcal{B}^n_0$ respectively, since these are dense subsets of $\mathcal{B}^m$ and $\mathcal{B}^n$.  But this is immediately implied by the product axioms.

We complete the construction by taking $\mathcal{B}^{\mathfrak{M},n}_{\bowtie a}$ to be precisely those $B=\{t\setmid m_{\vec x}\bowtie a. \varphi\}$.  These sets are, by construction, measurable and compatible with the measure.

If $\mathcal{F}$ is consistent with $\mathbf{AML+I}$, we may assume that $\mathbf{AML+I}\subseteq\mathcal{F}_{\omega_1}$.  To conclude that $\M$ satisfies invariance under permutation of coordinates, it again suffices to show this for any element of $\mathcal{B}^n_0$, which follows immediately from $\mathbf{I}$.

If $\mathcal{F}$ is consistent with $\mathbf{AML+F}$ or $\mathbf{AML+I+F}$, we may assume that $\mathbf{AML+F}\subseteq\mathcal{F}_{\omega_1}$.  The same proof as in Proposition \ref{ensuring_fubini} shows that the measure $\mu$ satisfies the Fubini property, whence $\M$ is a measured structure.
\end{proof}

\appendix

\section{First-order logic}
\label{appendix:FOL}

In this section, we briefly describe the basic syntax and semantics of first-order logic.

\begin{df}
A \emph{first-order signature} is a set $\la=((c_i)_{i\in I},(F_j)_{j\in J}, (R_k)_{k\in K})$, where:
\begin{itemize}
\item each $c_i$ is a \emph{constant symbol};
\item each $F_j$ is a \emph{function symbol};
\item each $R_k$ is a \emph{predicate symbol}.
\end{itemize}
Furthermore, each function and predicate symbol come equipped with an \emph{arity}, which is a positive natural number.  If $n$ is the arity of $F_j$, then we say that $F_j$ is an \emph{$n$-ary} function symbol; likewise for predicate symbols.
\end{df}

The symbols in the signature $\la$ have no meaning on their own.  In order to achieve meaning, they must be \emph{interpreted} in an $\la$-structure:

\begin{df}
Suppose that $\la$ is a first-order signature.  Then an \emph{$\la$-structure} is a structure $\cM$ which consists of a nonempty set $M$, called the \emph{universe} of $M$, and such that $\cM$ provides \emph{interpretations} of the symbols in $\la$ as follows:
\begin{itemize}
\item For each constant symbol $c$ in $\la$, the interpretation of $c$ in $\cM$ is an element $c^{\cM}\in M$;
\item For each $n$-ary function symbol $F$ in $\la$, the interpretation of $F$ in $\cM$ is a function $F^{\cM}:M^n\to M$;
\item For each $n$-ary predicate symbol $P$ in $\la$, the interpretation of $P$ in $\cM$ is a subset $P^{\cM}\subseteq M^n$.
\end{itemize} 
\end{df}

We must stress that an $\la$-structure does not need to interpret the symbols of $\la$ in a ``sensible'' way.  For example, a group $(G,\cdot_G,1_G)$ is naturally a structure in the language $\la=\{\cdot,1\}$ consisting of a single $2$-ary (or binary) function symbol $\cdot$ and a single constant symbol $1$.  However, the set $\{1,2,3\}$ can be made into an $\la$-structure by interpreting $c$ as $3$ and interpreting the function symbol as the binary function which is constantly $2$; this structure is by no means a group.  In actual applications, however, structures usually interpret the symbols in a ``natural'' fashion.

Suppose that $\cM$ and $\cN$ are $\la$-structures with universes $M$ and $N$ respectively.  Then a map $j:M\to N$ is a \emph{homomorphism of $\la$-structures} if:
\begin{enumerate}
\item $j(c^{\cM})=c^{\cN}$ for all constant symbols $c$ in $\la$;
\item $j(F^{\cM}(\vec a))=F^{\cN}(j(\vec a))$ for all $n$-ary function symbols $F$ and all $\vec a\in M^n$;
\item $\vec a\in P^{\cM}\Leftrightarrow j(\vec a)\in P^{\cN}$ for all $n$-ary predicate symbols $P$ and all $\vec a\in M^n$.
\end{enumerate}

We now wish to discuss how to express statements in first-order logic and to understand when a given statement is true or false in a structure.  For the rest of this section, we fix a first-order signature $\la$.  We also fix an infinite set $V$ of \emph{variables}.

\begin{df}
The set of \emph{$\la$-terms} is the smallest set of (finite) strings of symbols satisfying:
\begin{itemize}
\item If $c$ is a constant symbol, then $c$ is an $\la$-term.  If $x$ is a variable, then $x$ is an $\la$-term.
\item If $t_1,\ldots,t_n$ are $\la$-terms and $F$ is an $n$-ary function symbol in $\la$, then $F(t_1,\ldots,t_n)$ is an $\la$-term.
\end{itemize}
\end{df}  

Observe that $\la$-terms name elements of the structure (after elements of the structure are ``plugged in'' for the variables).

\begin{df}
The set of \emph{$\la$-formulae} is the smallest set of (finite) strings of symbols satisfying:
\begin{itemize}
\item If $t_1$ and $t_2$ are $\la$-terms, then $t_1=t_2$ is an $\la$-formula.
\item If $t_1,\ldots,t_n$ are $\la$-terms and $P$ is an $n$-ary predicate symbol, then $P(t_1,\ldots,t_n)$ is an $\la$-formula.
\item If $\varphi_1$ and $\varphi_2$ are $\la$-formulae, then $\neg \varphi_1$ and $\varphi_1\wedge \varphi_2$ are $\la$-formulae.
\item If $\varphi$ is an $\la$-formula and $x$ is a variable, then $\forall x\varphi$ is an $\la$-formula.
\end{itemize}
\end{df}

The formulae in the first two bullets of the above definition are referred to as \emph{atomic $\la$-formulae}.  In proving facts about all formulae, one first proves the fact for atomic formulae and then one shows that the fact is preserved under the \emph{connectives} $\neg$ and $\wedge$ and the \emph{quantifier} $\forall$; such an argument is said to be done by \emph{induction on the complexity of the formula}.

If $\varphi$ is a formula and $x$ is a variable that appears in $\varphi$, then $x$ can either appear \emph{free} or \emph{bound} in $\varphi$, and the distinction can be defined by recursion on the complexity of formulae.  If $\varphi$ is atomic, then any variable appears free in $\varphi$.  If $x$ appears free in $\varphi$, then $x$ appears free in $\neg \varphi$.  $x$ appears free in $\varphi_1\wedge \varphi_2$ if and only if $x$ appears free in $\varphi_1$ or $x$ appears free in $\varphi_2$.  Finally, $x$ occurs free in $\forall y\varphi$ if and only if $x$ appears free in $\varphi$ and $x\not=y$.  We often write $\varphi(x_1,\ldots,x_n)$ to indicate that all of the free variables of $\varphi$ are among $x_1,\ldots,x_n$.

Fix an $\la$-structure $\cM$ with universe $M$.  A \emph{valuation on $\cM$} is a function $s:V\to M$.  Observe that any valuation extends naturally to a function $\overline{s}:T\to M$, where $T$ is the set of $\la$-terms, by defining $s(c)=c^{\cM}$ for each constant symbol $c$ of $\la$ and recursively defining $s(F(t_1,\ldots,t_n))=s(F^{\cM}(s(t_1),\ldots,s(t_n)))$.  Given a valuation $s$ on $\cM$, a tuple of variables $\vec x=(x_1,\ldots,x_n)$ and a tuple of elements $\vec a=(a_1,\ldots,a_n)$ from $M$, we define $s(\vec a/\vec x)$ to be the valuation on $\cM$ obtained from $s$ be redefining (if necessary) $s$ on $x_i$ to take the value $a_i$.

\begin{df}
Given an $\la$-formula $\varphi$ and a valuation $s$ on $\cM$, we define the relation $\cM\models \varphi[s]$, read ``$\cM$ models $\varphi$ with respect to the valuation $s$,'' by recursion on the complexity of formulae:
\begin{itemize}
\item $\cM\models t_1=t_2[s]$ if and only if $\overline{s}(t_1)=\overline{s}(t_2)$.
\item $\cM\models P(t_1,\ldots,t_n)[s]$ if and only if $(\overline{s}(t_1),\ldots,\overline{s}(t_n))\in P^{\cM}$.
\item $\cM\models \neg \varphi[s]$ if and only if $\cM\not\models \varphi[s]$.
\item $\cM\models \varphi_1\wedge \varphi_2[s]$ if and only if $\cM\models \varphi_1[s]$ and $\cM\models \varphi_2[s]$.
\item $\cM\models \forall x\varphi[s]$ if and only if $\cM\models \varphi[s(a/x)]$ for every $a\in M$. 
\end{itemize}
\end{df}

One should think of $\cM\models \varphi[s]$ as saying that ``$\varphi$ is true in $\cM$ when every variable $x$ is replaced by the element $s(x)$ of $M$.''  Given a formula $\varphi(x_1,\ldots,x_n)$ and a valuation $s$ on $\cM$, it is rather clear that whether or not $\cM\models \varphi[s]$ holds depends only on the values $s(x_1),\ldots,s(x_n)$.  Thus, given $a_1,\ldots,a_n\in M$, we often write $\cM\models \varphi(a_1,\ldots,a_n)$ to indicate that $\cM\models \varphi[s]$ for some (any) valuation $s$ on $\cM$ satisfying $s(x_i)=a_i$ for $i=1,\ldots,n$.

The notion of a \emph{definable set} is of critical importance in model theory.  Given an $\la$-structure $\cM$ and $X\subseteq M^m$, we say that $X$ is \emph{definable} if there is a formula $\varphi(x_1,\ldots,x_m,y_1,\ldots,y_n)$ and elements $b_1,\ldots,b_n\in M$ such that $X=\{(a_1,\ldots,a_n)\in M^m \ : \ \cM\models \varphi(a_1,\ldots,a_m,b_1,\ldots,b_n)\}$; if $B\subseteq M$ is such that $b_1,\ldots,b_n\in B$, we also say that $X$ is \emph{$B$-definable} or {definable over $B$}.  We may also write $X=\varphi(M^m,\vec b)$ or simply $X=\varphi(M,\vec b)$ if $m$ is clear from context.  For example, if $\mathcal{G}=(G,\cdot,1)$ is a group (treated as a structure in the natural language of groups mentioned above) and $g\in G$, then the centralizer of $g$ is a $\{g\}$-definable subset of $G$, for $$C(g)=\{h\in G \ : \ \mathcal{G}\models g\cdot h=h\cdot g\}.$$

\section{Ultrafilters and Ultraproducts}
\label{appendix:ultraproduct}

Given a first-order signature $\la$ and a family $(\cM_i \ : \ i\in I)$ of $\la$-structures, one would like to form an $\la$-structure $\cM$ which logically resembles ``almost all'' of the $\cM_i$ in the sense that, given any sentence $\sigma$ (of the relevant language), $\sigma$ is true in $\cM$ if and only if it is true in ``almost all'' of the $\cM_i$.  An \emph{ultrafilter} on $I$ gives us the appropriate notion of ``almost all.''

\begin{df}
Let $I$ be a nonempty set.
\begin{enumerate}
\item A nonempty set $\mathcal{F}$ of subsets of $I$ is a \emph{filter} on $I$ if and only if:
\begin{itemize}
\item $\emptyset\notin \mathcal{F}$;
\item If $A,B\in \mathcal{F}$, then $A\cap B\in \mathcal{F}$;
\item If $A\in \mathcal{F}$ and $A\subseteq B\subseteq I$, then $B\in \mathcal{F}$.
\end{itemize}
\item A filter $\cU$ on $I$ is an \emph{ultrafilter} on $I$ if and only if it is a maximal (with respect to inclusion) filter on $I$.  Equivalently, a filter $\cU$ is an ultrafilter on $I$ if and only if, for every subset $A$ of $I$, (exactly) one of $A$ and $I\setminus A$ belongs to $I$.
\end{enumerate}
\end{df}

Using Zorn's lemma, any filter on $I$ can be extended to an ultrafilter on $I$.  Since the set $\{I\}$ is a filter on $I$, we thus see that ultrafilters on $I$ exist.

Given an ultrafilter $\cU$ on $I$, one can then view the statement ``$P(i)$ holds for almost all $i$'' to mean $\{i\in I \ : \ P(i) \text{ holds }\}\in \cU$.  Observe that an ultrafilter on $I$ can be viewed as a $\{0,1\}$-valued measure on $\mathcal{P}(I)$, in which case ``$P(i)$ holds for almost all $i$'' agrees with the usual notion that ``$P(i)$ holds on a set of measure $1$''.

\begin{ex}
Let $I$ be a nonempty set.  For any $i\in I$, the \emph{principal filter on $I$ generated by $i$} is the filter $\cU_i:=\{A\subseteq I \ : \ i\in A\}$.  Observe that the principal ultrafilter on $I$ generated by $i$ is an ultrafilter.
\end{ex}

When working with the principal ultrafilter generated by $i_0\in I$, the statement ``$P(i)$ holds for almost all $i$'' degenerates to the statement ``$P(i_0)$ holds.''  In measure-theoretic terms, the measure is concentrated on the set $\{i_0\}$.  Since this clearly does not match the intuition of ``almost all,'' one primarily works with \emph{nonprincipal} ultrafilters on $I$.  (There are more technically precise reasons for preferring nonprincipal ultrafilters, which are evident from our applications throughout this article.)  Observe that a nonprincipal ultrafilter on $I$ cannot contain a finite subset of $I$.  Nonprincipal ultrafilters on $I$ can also be shown to exist using Zorn's lemma.
\begin{ex}
Let $\mathcal{F}=\{A\subseteq \n \ : \ A \text{ is cofinite}\}$.  Then $\mathcal{F}$ is a filter on $\n$, the so-called \emph{Frechet filter}, that is not an ultrafilter.  Any nonprincipal ultrafilter on $\n$ necessarily extends the Frechet filter.
\end{ex}

Suppose that $X$ is a topological space.  Given a family $(x_i \ : \ i\in X)$ from $X$, an element $x\in X$, and an ultrafilter $\cU$ on $I$, we say that $x$ is a \emph{$\cU$-ultralimit of $(x_i)$} if, for every open neighborhood $U$ of $x$, we have $\{i\in I \ : \ x_i\in U\}\in \cU$.  In this article, we will need the fact that if $X$ is a compact (hausdorff) space, then for every family $(x_i \ : \ i\in I)$ and every ultrafilter $\cU$ on $I$, a $\cU$-ultralimit of $(x_i)$ exists and is unique; see Section 3.1 of \cite{kapoleeb} for a proof of this fact.  In this case, we write $\lim_{\cU} x_i$ for the $\cU$-ultralimit of $(x_i)$.  If $\cU$ is a \emph{nonprincipal} ultrafilter on $\n$ and $(x_n\setmid n\in \n)$ is a sequence from a hausdorff space such that $\lim_{n\to \infty}x_n$ exists, then $\lim_{n\to \infty}x_n=\lim_{\cU}x_i$.  If $X=\r$, then the usual limit laws for bounded sequences apply; for example, if $(x_i\setmid i\in I)$ and $(y_i\setmid i\in I)$ are bounded sequences of real numbers, then $\lim_{\cU}(x_i\pm y_i)=\lim_{\cU}x_i\pm \lim_{\cU}y_i$.

We now carry out our objective from the beginning of this appendix.  Suppose that $\la$ is a first-order signature and that $(\cM_i \ : i \in I)$ is a family of $\la$-structures.  Given $f,g\in \prod_{i\in I} M_i$, we set $f\sim_\cU g$ if and only if $\{i\in I \ : \ f(i)=g(i)\}\in \cU$; in other words, $\sim_\cU$ is the just the relation of almost-everywhere agreement of functions.  Observe that $\sim_{\cU}$ is an equivalence relation on $\prod_{i\in I}M_i$.  We let $M:=\prod_{i\in I}M_i /\sim_\cU$ denote the quotient space.  Given $f=(f_i \ : \ i\in I)$, we will write $[f]_\cU$ to denote its $\sim_{\cU}$-equivalence class.  $M$ will be the underlying universe of a structure $\cM$, which is defined as follows.

\begin{itemize}
\item If $c$ is a constant symbol in the language $\la$, then $c^{\cM}:=[(c^{\cM_i})]_{\cU}$.
\item If $F$ is an $n$-ary function symbol then set $$F^{\cM}([(a^1_i)]_{\cU},\ldots,[(a^n_i)]_{\cU}):=[(F^{\cM_i}(a^1_i,\ldots,a^n_i))]_{\cU}.$$
\item If $R$ is an $n$-ary relation symbol, then $([(a^1_i)]_{\cU},\ldots,[(a^n_i)]_{\cU})\in R^{\cM}$ if and only if $$\{i\in I \ : \ (a^1_i,\ldots,a^n_i)\in R^{\cM_i}\}\in \cU.$$
\end{itemize}

We call the structure $\cM$ the \emph{ultraproduct of $(\cM_i \ : \ i\in I)$ with respect to $\cU$} and denote it by $\prod_\cU \cM_i$.  The following theorem is sometimes called the \emph{Fundamental Theorem of Ultraproducts} or \emph{\L o\'s} theorem.

\begin{thm}
Suppose that $\cU$ is an ultraproduct on $I$ and $\cM=\prod_{\cU}\cM_i$.  Then for any $\la$-formula $\varphi(x_1,\ldots,x_n)$ and any $([(a^1_i)]_{\cU},\ldots,[(a^n_i)]_{\cU})\in M^n$, we have $\cM\models \varphi([(a^1_i)]_{\cU},\ldots,[(a^n_i)]_{\cU})$ if and only if $$\{i\in I \ : \ \cM_i \models \varphi(a^1_i,\ldots,a^n_i)\}\in \cU.$$
\end{thm}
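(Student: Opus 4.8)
The plan is to prove the statement by induction on the complexity of the $\la$-formula $\varphi$, following the standard template for the classical \L o\'s theorem. Before treating formulas, I would first dispose of terms. Fix a valuation $s$ on $\cM$ and, for each $i$, a component valuation $s_i$ on $\cM_i$ so that $s(x) = [(s_i(x))]_{\cU}$ for every variable $x$ (such $s_i$ exist by choosing representatives). The claim is that for every $\la$-term $t$ one has $\overline{s}(t) = [(\overline{s_i}(t))]_{\cU}$, proved by induction on the structure of $t$: the base cases of a constant $c$ or a variable $x$ are immediate from $c^{\cM} = [(c^{\cM_i})]_{\cU}$ and $s(x) = [(s_i(x))]_{\cU}$, and the induction step for $F(t_1,\ldots,t_n)$ uses the definition of $F^{\cM}$ as the pointwise-mod-$\cU$ application of the $F^{\cM_i}$.

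With the term lemma in hand, the atomic cases follow directly. For $t_1 = t_2$, the statement $\cM \models (t_1 = t_2)[s]$ says $\overline{s}(t_1) = \overline{s}(t_2)$, that is $[(\overline{s_i}(t_1))]_{\cU} = [(\overline{s_i}(t_2))]_{\cU}$, which by the definition of $\sim_{\cU}$ means exactly $\{i : \overline{s_i}(t_1) = \overline{s_i}(t_2)\} \in \cU$. The case of a predicate $R(t_1,\ldots,t_n)$ is handled identically using the definition of $R^{\cM}$.

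For the inductive step I would treat the connectives and the quantifier. The conjunction case uses only that $\cU$ is closed under finite intersection and is upward closed: $\{i : \cM_i \models (\varphi_1 \wedge \varphi_2)[s_i]\}$ equals the intersection of $\{i : \cM_i \models \varphi_1[s_i]\}$ and $\{i : \cM_i \models \varphi_2[s_i]\}$, and this lies in $\cU$ iff both factors do. The negation case is where the maximality of $\cU$ (not merely its being a filter) is essential: since for every $A \subseteq I$ exactly one of $A$ and $I \setminus A$ belongs to $\cU$, we get $\{i : \cM_i \models \neg\varphi[s_i]\} \in \cU$ iff $\{i : \cM_i \models \varphi[s_i]\} \notin \cU$ iff (by the induction hypothesis) $\cM \not\models \varphi[s]$ iff $\cM \models \neg\varphi[s]$.

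The universal quantifier is the main obstacle. The easy direction: if $\{i : \cM_i \models \forall x\,\varphi[s_i]\} \in \cU$ and $[f]_{\cU} \in M$ is arbitrary with $f = (f_i)$, then on the $\cU$-large set above we have $\cM_i \models \varphi[s_i(f_i/x)]$, so the induction hypothesis gives $\cM \models \varphi[s([f]_{\cU}/x)]$; as $[f]_{\cU}$ was arbitrary, $\cM \models \forall x\,\varphi[s]$. The converse is the delicate part. Suppose $\{i : \cM_i \models \forall x\,\varphi[s_i]\} \notin \cU$; by maximality its complement $D = \{i : \cM_i \not\models \forall x\,\varphi[s_i]\}$ lies in $\cU$, and for each $i \in D$ there is a witness $b_i \in M_i$ with $\cM_i \not\models \varphi[s_i(b_i/x)]$. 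Here I would invoke the axiom of choice to select such witnesses simultaneously (taking $b_i$ arbitrary for $i \notin D$), forming $f = (b_i)$ and $[f]_{\cU} \in M$. Then $\{i : \cM_i \models \neg\varphi[s_i(b_i/x)]\} \supseteq D \in \cU$, so the induction hypothesis yields $\cM \models \neg\varphi[s([f]_{\cU}/x)]$, whence $\cM \not\models \forall x\,\varphi[s]$, completing the contrapositive. The only genuinely subtle points are the appeal to ultrafilter maximality in the negation and quantifier cases and the use of choice to extract the witnesses; everything else is bookkeeping.
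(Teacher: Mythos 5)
Your proof is correct and follows exactly the approach the paper indicates: induction on the complexity of formulae, with the atomic cases handled by the definition of the ultraproduct and the maximality of the ultrafilter doing the essential work in the negation (and witness-extraction) clauses. The paper itself only sketches this argument and defers the details to Chang and Keisler; your write-up (the term lemma, the valuation bookkeeping, and the use of choice to select witnesses in the universal-quantifier case) supplies precisely those standard details.
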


The proof of this theorem is proven by induction on the complexity of formulae.  We should note that the atomic formulae are taken care of essentially by the definition of $\cM$ and that the fact that we are dealing with an ultrafilter rather than just a filter is essential in taking care of the negation clause.  A detailed proof can be found in \cite{changkeisler}.

\begin{df}
An ultrafilter $\cU$ on $I$ is said to be \emph{countably incomplete} if there exist $(D_n \ : \ n\in \n)$ in $\cU$ such that $D_0\supset D_1\supset D_2\supset \cdots$ and such that $\bigcap_{n\in \n}D_n=\emptyset$.
\end{df}

Observe that any countably incomplete ultrafilter is nonprincipal and any nonprincipal ultrafilter on $\n$ is necessarily countably incomplete; this is seen by taking $D_n=\n\setminus \{0,1,\ldots,n\}$.  The importance of countably incomplete ultrafilters is the following proposition, whose proof we borrow from \cite{goldblatt}.

\begin{prop}
If $\cU$ is a countably incomplete ultrafilter on $I$, then $\cM:=\prod_{\cU}\cM_i$ is $\aleph_1$-saturated:  any family $(X^n \ : \ n\in \n)$ of nonempty definable subsets of $M^k$ which satisfies the finite intersection property has nonempty intersection.
\end{prop}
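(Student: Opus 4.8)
The plan is to exploit the countable incompleteness of $\cU$ to run a diagonalization that amalgamates the finitely-many-at-a-time witnesses guaranteed by the finite intersection property into a single point lying in every $X^n$. First I would fix, using countable incompleteness, a descending chain $I = D_0 \supseteq D_1 \supseteq D_2 \supseteq \cdots$ of members of $\cU$ with $\bigcap_m D_m = \emptyset$, and write each definable set as $X^n = \varphi_n(M^k, \vec b_n)$ for a formula $\varphi_n(\vec x, \vec y_n)$ and parameters $\vec b_n = [(\vec b_{n,i})]_\cU$ with $\vec b_{n,i} \in M_i^{|\vec y_n|}$.

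Next, the finite intersection property gives, for each $m$, that $\cM \models \exists \vec x \bigwedge_{n\le m}\varphi_n(\vec x, \vec b_n)$, so by \L o\'s' theorem the set $E_m := \{i \in I : \cM_i \models \exists\vec x\bigwedge_{n\le m}\varphi_n(\vec x,\vec b_{n,i})\}$ belongs to $\cU$. I would then set $J_m := D_m \cap \bigcap_{n\le m} E_n$, a descending chain of members of $\cU$ with $\bigcap_m J_m = \emptyset$ (since it sits inside $\bigcap_m D_m$). For each index $i \in I$, because the $J_m$ eventually exclude $i$, there is a largest $m$ with $i \in J_m$; call it $n(i)$, with the convention $n(i) = -1$ if $i \notin J_0$. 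Whenever $n(i) \ge 0$ we have $i \in J_{n(i)} \subseteq E_{n(i)}$, so I may choose a witness $a_i \in M_i^k$ with $\cM_i \models \bigwedge_{n\le n(i)}\varphi_n(a_i, \vec b_{n,i})$; for the remaining indices I pick $a_i$ arbitrarily. Put $a := [(a_i)]_\cU \in M^k$.

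Finally I would verify that $a \in \bigcap_n X^n$. Fixing $n$, the key observation is that $J_n \subseteq \{i \in I : \cM_i \models \varphi_n(a_i, \vec b_{n,i})\}$: for $i \in J_n$ we have $n(i) \ge n$, so the conjunction defining $a_i$ contains the conjunct $\varphi_n(a_i, \vec b_{n,i})$. Since $J_n \in \cU$, a further application of \L o\'s' theorem yields $\cM \models \varphi_n(a, \vec b_n)$, that is, $a \in X^n$. As this holds for every $n$, we conclude $\bigcap_n X^n \neq \emptyset$, which is exactly $\aleph_1$-saturation.

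The main obstacle is conceptual rather than computational: without countable incompleteness the diagonal element $a$ need not exist, since a choice of $a_i$ satisfying more and more conjuncts as $i$ varies could fail to satisfy any one fixed $\varphi_n$ on a set in $\cU$. The sets $D_m$ are introduced precisely to force each fixed conjunct $\varphi_n$ to be satisfied on the large set $J_n$; checking that $\bigcap_m J_m = \emptyset$, and hence that $n(i)$ is always finite so that each $a_i$ is well defined, is the one place where countable incompleteness is genuinely used and is the crux of the argument.
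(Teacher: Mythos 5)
Your proof is correct and uses essentially the same diagonalization as the paper: assign to each index $i$ a maximal level $n(i)$, choose a witness in $\cM_i$ satisfying the first $n(i)$ conditions, and apply \L o\'s' theorem to see the resulting element lies in every $X^n$. The only difference is that you run the argument over a general index set $I$ using the chain $(D_m)$ from countable incompleteness, whereas the paper simplifies to $I=\n$ (where $D_n=\{i : i\geq n\}$ is implicit) and relegates the general case to an ``easy technical modification''---which is precisely what you carried out.
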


\begin{proof}
For simplicity, we assume that $I=\n$; the more general case requires an easy technical modification of the argument we give.  Furthermore, without loss of generality, we may assume that $X^n\supseteq X^{n+1}$ for all $n$.  If $X^n=\{x\in M^k \ : \ \cM\models \varphi_n(x, a^n)\}$, then set $X^n_i:=\{x\in M_i^k \ : \ \cM_i\models \varphi_n(x,a^n_i)\}$.  Set $J^n:=\{i\ : \ X^1_i\supseteq \cdots \supseteq X^n_i\not=\emptyset\}$; by the \L o\'s theorem, we have that $J^n\in \cU$ for each $n\in \n$.  Observe that $J^n\supseteq J^{n+1}$ for each $n$.

We now show how to construct a sequence $(c_i)\in \prod_{i\in I}\cM_i$ such that, for each $n$, $c_i\in X^n_i$ for almost all $i$.  This is easy to accomplish if $\bigcap_nJ^n\not=\emptyset$, so we assume that $\bigcap_nJ_n=\emptyset$.  If $i\notin J^0$, let $c_i\in M_i$ be arbitrary.  Now assume that $i\in J^0$.  Let $n_i:=\max\{n \ : \ i\geq n \text{ and }i\in J^n\}$.  Choose $c_i\in X^{n_i}_i$.  Observe now that if $n\leq i$ and $i\in J^n$, then $c_i\in X_i^n$, whence $c_i\in J^n$.  Consequently, for each $n$, we have $\{i \ : \ n\leq i\}\cap J^n\subseteq \{i \ : \ c_i\in X_i^n\}\in \cU$ since $\cU$ is nonprincipal.
\end{proof}
\fi

\bibliographystyle{plain}
\bibliography{AML}

\end{document}